\newtheorem{theorem}{Theorem}[section]
\newtheorem{lemma}[theorem]{Lemma}
\theoremstyle{definition}
\newtheorem{definition}[theorem]{Definition}
\newtheorem{example}[theorem]{Example}
\theoremstyle{proposition}
\newtheorem{proposition}[theorem]{Proposition}
\theoremstyle{definition}
\newtheorem{remark}[theorem]{Remark}
\theoremstyle{corollary}
\newtheorem{corollary}[theorem]{Corollary}
\theoremstyle{question}
\newtheorem{question}[theorem]{Question}
\theoremstyle{conjecture}
\newtheorem{conjecture}[theorem]{Conjecture}
\theoremstyle{goal}
\newtheorem{goal}[theorem]{Goal}
\theoremstyle{convention}
\numberwithin{equation}{section}
\newcommand{\field}[1]{\mathbb{#1}}
\newcommand{\C}{\field{C}}
\newcommand{\R}{\field{R}}
\newcommand{\N}{\field{N}}
\newcommand{\Z}{\field{Z}}
\newcommand{\Q}{\field{Q}}
\newcommand{\tham}{(M,\omega,T,\phi)}
\newcommand{\id}{\operatorname{Id}}
\newcommand{\GKM}{(\Gamma_{GKM}, \eta)}
\newcommand{\skeleton}{(\Gamma, \sigma, \operatorname{Ord}(E^\sigma), \mathbf{d})}
\newcommand{\w}{\operatorname{w}}
\title{\textsc{Monotone Symplectic Six-Manifolds that admit a Hamiltonian GKM Action are 
diffeomorphic to Smooth Fano 
Threefolds }}
\date{$^1$University of Haifa, Department of Mathematics\\
Campus at Mount Carmel,  Mount Carmel 3498838, Haifa,  Israel\\
icharton@math.haifa.ac.il\\
$^2$University of Haifa,
Department of Mathematics, Physics and Computer Science \\
Campus at Oranim, 36006, Tivon, Israel\\
lkessler@math.haifa.ac.il}
\begin{document}
	
\author{Isabelle Charton$^1$  and Liat Kessler$^2$}	
\maketitle
\date

\begin{abstract}
Let $(M,\omega)$ be a compact symplectic manifold with a Hamiltonian GKM action of a compact torus.
We formulate a positive condition on the space; this condition is satisfied if the underlying
symplectic manifold is monotone.
The main result of this article is that the underlying manifold 
of a positive Hamiltonian GKM space of dimension six is diffeomorphic to a smooth Fano threefold.  
We prove the main result in two steps.\\
In the first step, we deduce from results of Goertsches, Konstantis, and Zoller
that if  the complexity of the action is zero or one then the equivariant and the ordinary cohomology 
with integer coefficients are determined by the GKM graph. 
This result,  in combination with a classification result by Jupp, Wall and \u Zubr for certain six-manifolds, implies 
that the diffeomorphism type of a compact symplectic six-manifold with a Hamiltonian GKM action is determined by the 
associated GKM graph.\\
In the second step, based on results by Godinho and Sabatini, we compute the complete list of the GKM graphs of 
positive Hamiltonian GKM spaces of dimension six. 
We deduce that any such GKM graph is isomorphic to a GKM graph of a smooth Fano threefold.
\end{abstract}

\tableofcontents

\begin{section}{Introduction}
	A symplectic manifold $(M,\omega)$ admits an almost complex structure $J \colon TM\rightarrow TM$, $J^2=-\id$, that 
	is 
	\textbf{compatible} with the symplectic form $\omega$, i.e., $\omega(\cdot, J\cdot)$ is a Riemannian 
	metric. By picking a compatible almost complex structure, we can consider the tangent bundle  $TM$ 
	as a complex vector bundle over $M$. We denote by $c_i(M)$ the $i$-th Chern 
	class of $(M,\omega)$; the Chern classes are well defined since the space of compatible almost complex structures 
	is 
	contractible.
	A compact symplectic manifold $(M,\omega)$ is called \textbf{monotone} if $c_1(M)=r\cdot [\omega]$ for some $r\in 
	\R$;  $(M,\omega)$ is called \textbf{positive monotone} if $r>0$.
	
	The algebraic counterparts of positive monotone symplectic manifolds are {smooth Fano varieties}.
	A \textbf{smooth Fano variety} $X$ is a compact complex manifold whose anticanonical 
	line bundle 
	$K_X^{-1}$ is ample. The ampleness of $K_X^{-1}$ implies that there exists a holomorphic embedding
	$i: X \hookrightarrow \C P^N$ such that $(K_X^{-1})^k=i^* \mathcal{O} (1)$ for some $N>0$ and $k>0$, known as 
	\textbf{polarisation} of $X$ by  $K_X^{-1}$.
	The form $i^*(\omega_{FS})$ is a symplectic form on $X$, where 
	$\omega_{FS}$ is the Fubini-Study form on $\C P^N$. 
	Moreover, the  almost complex structure $J \colon TM \to TM$ induced from the complex structure on $X$ is 
	compatible 
	with  $i^*(\omega_{FS})$.
	Since $c_1(X)=c_1(K_X^{-1})$ and $c_1(i^* \mathcal{O} 
	(1))=[i^*(\omega_{FS})]$,  the symplectic manifold $(X, i^*(\omega_{FS}))$ is positive monotone.

	Since smooth Fano varieties carry a lot of geometric and algebraic structures it is
	important to understand in which content positive monotone symplectic manifolds are similar to
	smooth Fano varieties.
	In real dimensions two and four, it is known that any positive monotone symplectic manifold is diffeomorphic to a 
	smooth Fano 
	variety. In dimension two, this fact follows from the work of Morse \cite{Morse}.
	In dimension four, this fact is a result of Ohta and Ono \cite{OhtaOno} based on works of Gromov \cite{Gromov},
	McDuff \cite{McDuff} and Taubes \cite{Taubes}. 
	In dimension greater or equal to twelve, Fine and Panov \cite{FinePanov} provide examples of positive 
	monotone symplectic manifolds that are not diffeomorphic to a smooth Fano variety. In dimensions six, eight, and 
	ten, 
	the question of whether any positive monotone symplectic manifold is diffeomorphic to a Fano variety is open. 
	
	If a positive monotone symplectic manifold $(M,\omega)$ admits an integrable 
	almost complex structure $J \colon TM 
	\rightarrow TM$ that is compatible with $\omega$,  then $M$ endowed with the complex atlas indicated by 
	$J$ is a smooth Fano variety. This is a consequence of the Kodaira Embedding Theorem, see \cite[Sect. 
	14.4]{McDuffSalamon}.
	In particular, if a compact symplectic manifold $(M,\omega)$ of dimension $2n$ is endowed with an effective and 
	Hamiltonian action of a compact torus $T={(S^1)}^n$ of half the dimension, then $(M,\omega)$ admits a $T$-invariant 
	and 
	integrable almost 
	complex structure $J$ that is compatible with $\omega$  $\cite{Delzant}$; if $(M,\omega)$ is positive monotone, 
	then 
	$M$  with the induced complex atlas is a smooth Fano variety.
	Moreover, it is enough then that $(M,\omega)$ is monotone;
	a monotone symplectic manifold that admits an effective and Hamiltonian action of a compact torus 
	is positive monotone, see \cite[Proposition 3.3]{css} and \cite[Lemma 5.2]{1224}.
	Recall that an action of a compact torus $T={(S^1)}^d$ on a symplectic manifold is \textbf{Hamiltonian} if there 
	exists a \textbf{moment 
		map} 
	$\phi: M \rightarrow \mathfrak{t}^*$, i.e., 
	a smooth and $T$-invariant map  whose 
	codomain is the dual of the Lie algebra $\mathfrak{t}$ of $T$
	such that
	\begin{align*}
	\iota_{X^\xi} \omega = -\operatorname{d} \left\langle \phi, \xi\right\rangle \quad \text{for all } \,\xi \in 
	\mathfrak{t},
	\end{align*} 
	where $X^{\xi}$ is the vector field on $M$ generated by $\xi$ and $\left\langle ,\right\rangle $ is the natural 
	pairing between $\mathfrak{t}^*$ and $\mathfrak{t}$.
	We note that on a simply connected manifold, (as is a smooth Fano variety), a smooth $T$-action is symplectic iff 
	it is Hamiltonian: the action is symplectic iff $\iota_{X^{\xi}}\omega$ is closed for all $\xi \in \mathfrak{t}$ 
	(by Cartan's formula) and Hamiltonian iff $\iota_{X^{\xi}}\omega$ is exact for all $\xi \in \mathfrak{t}$.
	Given a connected symplectic manifold $(M,\omega)$
	endowed with an effective 
	Hamiltonian $T$-action with moment map $\phi$, we call the quadruple $\tham$ a \textbf{Hamiltonian 
	$\mathbf{T}$-space}. 
	Since the
	orbits of a Hamiltonian $T$-action on $(M,\omega)$ are isotropic,  
	$\operatorname{dim}(T)\leq 
	\frac{1}{2}\operatorname{dim}(M)$; we call
	$k:=\frac{1}{2}\operatorname{dim}(M)-\operatorname{dim}(T)$ the \textbf{complexity} of $\tham$. 
	We also call $\tham$ a \textbf{complexity $\mathbf{k}$ space}.
	
	The algebraic counterparts of Hamiltonian $T$-actions on symplectic manifolds are holomorphic actions of algebraic 
	tori on 
	compact complex manifolds. 
	Recall that an algebraic torus $T_{\C}=(\C^*)^d$ is the complexification of a compact torus $T=(S^1)^d$;
	in particular, $T$ is contained in $T_{\C}$. If $X$ is a smooth Fano
	variety endowed with a holomorphic $T_{\C}$-action, then its anticanonical line bundle is $T_{\C}$-invariant.
	Hence, the polarisation of $X$ by $K_X^{-1}$ is $T_{\C}$-equivariant with respect to a $T_{\C}$-representation
	on $\C P^N$. The induced $T$-action on $\C P^N$ is Hamiltonian with respect to the Fubini-Study  symplectic form 
	$\omega_{FS}$. Hence, also the induced $T$-action on $X$ is Hamiltonian with respect to $i^*(\omega_{FS})$.
	
	Since the underlying manifold of a compact  monotone  Hamiltonian $T$-space of complexity zero 
	is diffeomorphic to a smooth Fano variety, it is natural to investigate the following question. 
	
	\begin{question}\label{Question: Positive Complexity}
		Let $\tham$ be a Hamiltonian $T$-space of positive complexity whose underlying symplectic manifold is  monotone.
		\begin{center}
			Is $M$ diffeomorphic to a smooth Fano variety?
		\end{center}
	\end{question}
	
	In dimension six, i.e., the lowest dimension in which it is not known if any positive monotone symplectic 
	manifold is diffeomorphic to a smooth Fano variety, Fine and Panov stated in \cite{FinePanovCon} 
	the following conjecture.
	
	\begin{conjecture}\label{ConjectureFinePanov} Let $(M,\omega)$ be a monotone symplectic manifold  of 
		dimension six that admits an effective and  Hamiltonian $S^1$-action. Then $M$ is diffeomorphic to a smooth 
		Fano 
		threefold. 
	\end{conjecture}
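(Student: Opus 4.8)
The plan is to treat the conjecture as a problem in equivariant Morse theory and symplectic reduction, with the ultimate aim of computing the diffeomorphism invariants of Jupp, Wall and \u Zubr --- the integral cohomology ring together with the Pontryagin class $p_1$ and the Stiefel--Whitney class $w_2$ --- directly from the $S^1$-fixed point data, and then matching them against the known invariants of the finite list of Fano threefolds (Iskovskikh, Mori--Mukai). First I would normalise the moment map $\phi\colon M\to\R$ and exploit that, for a \emph{monotone} Hamiltonian space, one may arrange the equivariant extensions of $c_1(M)$ and of $[\omega]$ to coincide in $H^*_{S^1}(M;\R)$. Restricting this identity to a component $F$ of the fixed set $M^{S^1}$ couples the value $\phi(F)$ to the isotropy weights on the normal bundle of $F$; at an isolated fixed point $p$ with weights $w_1,w_2,w_3$ it gives, after normalising $r=1$, the clean relation $\phi(p)=\sum_i w_i(p)$, and for a fixed surface or fixed four-fold an analogous relation involving $c_1(F)$ and the normal weights. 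These are exactly the positivity constraints of Godinho and Sabatini.

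Second I would use $\phi$ as a Morse--Bott function, whose critical submanifolds are the components of $M^{S^1}$: in dimension six these are isolated points and symplectic submanifolds of dimension two or four, the extrema $F_{\min}$ and $F_{\max}$ are connected, and every Morse--Bott index is even, so $\phi$ is perfect. Counting contributions to the Betti numbers and to $\chi(M)$, and combining these with the weight relations above, should cut the list of admissible fixed-point configurations down to finitely many combinatorial types. Along the way one must \emph{establish} that $M$ is simply connected with torsion-free cohomology, since this is the hypothesis under which the Jupp--Wall--\u Zubr classification applies. For each admissible type I would then compute the characteristic numbers $\int_M c_1(M)^3$, $\int_M c_1(M)\,p_1(M)$, the signature and the Euler number by Atiyah--Bott--Berline--Vergne localisation over $S^1$, and read off $H^2(M;\Z)$ together with the linear form $x\mapsto\langle p_1(M)\cup x,[M]\rangle$.

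The third and genuinely hard step is to recover the \emph{full trilinear cup form} $\mu\colon H^2(M;\Z)^{\otimes 3}\to\Z$ and the residual $w_2$ data demanded by Jupp--Wall--\u Zubr, and here lies the main obstacle --- precisely the place where the present article must invoke the GKM hypothesis. A rank-one torus does not localise finely enough: in the GKM setting a torus of rank $\ge 2$ with isolated fixed points and pairwise independent weights lets Kirwan's classes reconstruct the entire equivariant, hence ordinary, cohomology ring from the graph, whereas the bare $S^1$-data determine only integrals and not the product structure, so distinct six-manifolds can share the same $S^1$-fixed-point datum while carrying different cup forms. Worse, once $M^{S^1}$ contains a fixed surface or four-fold the graph formalism collapses entirely, since the isotropy submanifolds are no longer isolated invariant spheres.

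To break this underdetermination I would pass to the symplectic reduced spaces $M_t:=\phi^{-1}(t)/S^1$, four-dimensional symplectic orbifolds that inherit a monotonicity property, identify each of them by the classification of monotone symplectic four-orbifolds as orbifold rational or ruled surfaces, and track how they change by blow-up and blow-down as $t$ crosses the critical values of $\phi$ (Guillemin--Sternberg wall-crossing). Reassembling $M$ from this one-parameter family --- controlling the gluing, the orbifold strata arising from non-free isotropy, and the resulting global diffeomorphism type, and then certifying that the reconstructed cup form and $p_1$ pin down a unique entry of the Fano list --- is the step I expect to resist a clean solution. It is exactly this gap, absent in the GKM case, that keeps the Fine--Panov conjecture open and motivates the restricted theorem established in this article.
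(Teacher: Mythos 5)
You were asked to prove a statement that the paper itself does not prove: this is Conjecture \ref{ConjectureFinePanov}, which the article explicitly records as open and attributes to Fine and Panov. The paper's actual contribution is the special case in which the action extends to a GKM torus action (Theorem \ref{ManiThm: result main goal} and Corollary \ref{ManiCor: result main goal}), proved in two steps: first, Kirwan classes show that for a Hamiltonian GKM space satisfying the pairwise-coprimality condition \eqref{EQ: Technical Condition} (automatic in dimension six, by Lemma \ref{Lemma: complexity one, zero dim six implies weights pairwise coprime}) the GKM graph determines the integral equivariant and ordinary cohomology rings together with the Chern classes, and hence, via Jupp--Wall--\u Zubr, the diffeomorphism type; second, the Godinho--Sabatini constraint $\sum_{e\in E^\sigma}\mathcal{C}_1(e)=24$ and the resulting bound of $16$ fixed points feed a computer search over cubic graphs that produces a finite list of admissible GKM graphs, each realized by a holomorphic torus action on a Fano threefold. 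Your proposal is therefore correctly \emph{not} a proof, and to your credit it is honest about this: your diagnosis of the obstruction --- that bare $S^1$-fixed-point data determine localized integrals like $\int_M c_1^3$ and $\int_M c_1 c_2$ but not the trilinear cup form $\mu$ on $H^2(M;\Z)$ demanded by the Jupp--Wall--\u Zubr system of invariants, and that the graph formalism collapses once $M^{S^1}$ contains positive-dimensional components --- is precisely the reason the paper retreats to the GKM hypothesis, where a rank $\geq 2$ torus with isolated fixed points and pairwise independent weights makes the full ring structure computable (Proposition \ref{Prop: GKM coprime} and Theorem \ref{ManiThm: coprime weights GKM graph determines eq. cohomology}).

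Two remarks on the unproven portions of your plan. Your circle-only positivity relations and Morse--Bott bookkeeping are sound as far as they go (the normalisation $\phi(p)=\sum_i w_i(p)$ at isolated fixed points, perfection of $\phi^\xi$, simple connectivity via the minimum --- the last also following from Sabatini--Sepe in the monotone complexity-one setting), but they genuinely cannot close the gap you name, so the plan stalls exactly where you say it does. Your proposed rescue via reduced spaces $M_t=\phi^{-1}(t)/S^1$ and Guillemin--Sternberg wall-crossing is a different route from anything in this paper; it is closest in spirit to Cho's classification in the semifree case and to the Karshon--Tolman machinery for tall complexity-one spaces used in \cite{css}, and the difficulties you flag there (orbifold strata from non-free isotropy, reassembling the global diffeomorphism type, certifying the cup form) are exactly why that program has so far succeeded only under semifreeness or tallness --- neither of which holds for GKM actions, which are never tall. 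So your write-up is a faithful map of the open problem rather than a proof of the statement, and it is consistent with, though disjoint in method from, what the paper actually establishes.
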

	A recent work by  Lindsay and Panov \cite{LindsayPanov} provides results that support this conjecture.
	In a series \cite{ChoSemifree1, ChoSemifree2, ChoSemifree3} of papers, Cho classifies monotone symplectic  
	manifolds  of 
	dimension six admitting an effective and  Hamiltonian $S^1$-action that is
	semifree. He shows that such a space admits an $S^1$-invariant integrable almost complex 
	structure that is compatible with 
	the symplectic form. Hence, Conjecture \ref{ConjectureFinePanov} is true if the action is in addition semifree.
	But it is still open if  Conjecture \ref{ConjectureFinePanov} is true in general. 
	
	In this paper, we look at Question \ref{Question: Positive Complexity} in case the complexity is one.\\
	Karshon \cite{Karshon4} classifies compact complexity one spaces of dimension four, and shows that such a space
	admits an invariant compatible integrable almost complex structure.
	In dimension six and above, there are complexity one spaces that do not admit an invariant compatible integrable 
	almost 
	complex structure \cite{Tolman}. We note that these spaces might admit a non-invariant compatible integrable almost 
	complex structure \cite{GKZ2}.
	In any dimension, a special class of complexity one spaces, namely the {tall} ones,  are classified by Karshon 
	and  Tolman \cite{Tall1, Tall2, Tall3}. \textbf{Tall} means that for any $x$ in the dual Lie algebra  
	$\mathfrak{t}^*$ the reduced space $\phi^{-1}(x)/T$ is not a point. In \cite{css}, Sabatini, Sepe and the first 
	author 
	study compact  monotone tall complexity one spaces, and show that in such a space the Hamiltonian action extends to 
	an 
	effective Hamiltonian action of 
	complexity zero, and hence the space admits an invariant compatible integrable almost complex structure. This 
	result is based on the classification of Karshon and Tolman.
	However, so far there is no classification result for non-tall   Hamiltonian $T$-spaces of complexity one 
	in dimension greater than four. Sabatini and Sepe \cite{SabatiniSepe} prove that a compact complexity one  
	monotone space
	shares topological properties with smooth Fano varieties; namely, it is simply connected and its Todd  genus is one.

	The following is a special case of Conjecture \ref{ConjectureFinePanov}.
	\begin{goal}\label{MainGoal}
		Let $\tham$ be a  complexity one space where  $(M, \omega )$ is a  monotone symplectic  manifold  of dimension 
		six. 
		Then $M$ is diffeomorphic to a smooth Fano threefold. 
	\end{goal}
	By \cite{css}, this is true if the space is tall.  In this article, we prove Goal \ref{MainGoal} 
	under the assumption that the $T$-action is  {GKM}. 
	A Hamiltonian $T$-space is \textbf{GKM} if the set $M^T$ of fixed points is finite, for each codimensional one 
	subtorus 
	$H\subset T$ any connected component of $M^H$ has at most dimension two, and the underlying manifold is compact. 
	Note that a Hamiltonian GKM space is not tall. 
	
	We present our results in the following subsection.

\begin{subsection}{Results}
		
Given a Hamiltonian GKM space, one can associate to it canonically a graph, the so-called 
\textbf{GKM graph}; see Subsection \ref{SubSec: Hamiltonian GKM Spaces}.
Moreover,  any  edge $e$ of the GKM graph is associated in a canonical way to a $2$-dimensional 
symplectic two-sphere $S_e$  in the underlying symplectic manifold $(M,\omega)$. We call a Hamiltonian GKM space 
\textbf{positive} if for each edge $e$ the evaluation of the first Chern class of $(M,\omega)$ on $S_e$ is positive. 
We introduce the notion of a positive Hamiltonian GKM space in a careful way at the beginning of Section 
\ref{sec:pos}. In particular, if the underlying symplectic manifold is monotone then the Hamiltonian GKM space is 
positive (see Lemma \ref{Lemma: symplectic fano c1 positive}). %The main result of this article is stated in the 
%following theorem. 
		
\begin{theorem}\label{ManiThm: result main goal}
Let $\tham$ be a positive and six-dimensional Hamiltonian GKM space of complexity one. Then $M$ is 
diffeomorphic to a smooth  Fano threefold.
\end{theorem}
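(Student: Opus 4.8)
The plan is to assemble the two steps announced in the introduction. I would begin by recording why the underlying manifold lies in the range of the Jupp--Wall--\v Zubr classification. As $\tham$ is a Hamiltonian GKM space, the fixed set $M^T$ is finite, and for generic $\xi\in\mathfrak{t}$ the component $\langle\phi,\xi\rangle$ is a Morse function whose critical points are exactly the points of $M^T$, all of even index. Thus $M$ has a CW structure with cells only in even dimension, so $H^{\mathrm{odd}}(M;\Z)=0$, the groups $H^{\mathrm{even}}(M;\Z)$ are free, and $M$ is simply connected. Hence $M$ is a closed simply connected smooth six-manifold with torsion-free integral cohomology and $H^3(M;\Z)=0$, precisely the setting in which Jupp--Wall--\v Zubr show that the diffeomorphism type is determined by the cohomology ring $H^*(M;\Z)$, the first Pontryagin class $p_1(M)$, and the second Stiefel--Whitney class $w_2(M)$.

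By the first step of the paper, all of these invariants are read off from the GKM graph: the equivariant cohomology is given by the Kirwan classes, the ordinary integral ring with its cup product is recovered in dimension six, and the Chern classes $c_1(M),c_2(M)$---hence $p_1(M)=c_1(M)^2-2c_2(M)$ and $w_2(M)=c_1(M)\bmod 2$---are computed from the isotropy weights at the fixed points by localization. This uses only that the dimension is six, not positivity. Consequently, any two six-dimensional Hamiltonian GKM spaces with isomorphic GKM graphs have diffeomorphic underlying manifolds.

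Now I bring in positivity through the second step. Since $(M,\omega)$ is positive (automatic when it is monotone, by Lemma \ref{Lemma: symplectic fano c1 positive}), the classification of the GKM graphs of positive six-dimensional Hamiltonian GKM spaces yields a finite list, and every graph on it is isomorphic to the GKM graph of a smooth Fano threefold $X$ carrying a Hamiltonian GKM $T^2$-action. Such an $X$ is positive monotone, hence itself a six-dimensional Hamiltonian GKM space whose GKM graph agrees with that of $M$. Applying the previous paragraph to the pair $(M,X)$ gives that $M$ is diffeomorphic to $X$, as claimed.

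The main obstacle is not the final assembly but the two comparisons it rests on. On the topological side, one must verify that an isomorphism of GKM graphs transports the refined invariants $p_1$ and $w_2$, and not merely the rational cohomology ring; this is exactly why the cohomology must be computed over $\Z$ rather than $\Q$. On the geometric side, one must exhibit, for each graph produced by the classification, an actual Fano threefold with a compatible two-torus action realizing it, which is where the bulk of the case analysis of the second step lies.
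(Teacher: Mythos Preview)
Your proposal is correct and follows essentially the same route as the paper: Theorem~\ref{ManiThm: result main goal} is obtained by combining Theorem~\ref{ManiThm: dim six gkm graph determines diffeomorphisms type} (the GKM graph determines the diffeomorphism type in dimension six, via the Jupp--Wall--\v Zubr invariants and the integral Kirwan-class description of $H^*_T$) with Theorem~\ref{ManiThm: sympetic fano GKM graphs coming from fano threefolds.} (every positive six-dimensional GKM graph is realized by a Fano threefold with a holomorphic GKM action). Your summary also correctly identifies the two substantive ingredients---integral (not merely rational) control of the Chern classes, and the case-by-case realization of the graphs by Fano models---as the places where the real work lies.
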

		
This theorem supports Goal \ref{MainGoal}. Indeed a direct consequence of Theorem \ref{ManiThm: result main goal} 
and Lemma \ref{Lemma: symplectic fano c1 positive} is the following corollary.

\begin{corollary}\label{ManiCor: result main goal}
Let $\tham$ be a Hamiltonian GKM space of complexity one where  $(M, \omega )$ is a monotone symplectic 
manifold of dimension six. Then $M$ is diffeomorphic to a smooth Fano threefold.
\end{corollary}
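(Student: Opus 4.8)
The plan is to derive this corollary directly by chaining the two results already in place. First I would observe that the hypotheses of the corollary match precisely the hypothesis of Lemma \ref{Lemma: symplectic fano c1 positive}: we are handed a Hamiltonian GKM space $\tham$ whose underlying symplectic manifold $(M,\omega)$ is monotone. Applying that lemma converts the monotonicity assumption into the positivity condition introduced at the start of Section 4, namely that for every edge $e$ of the associated GKM graph the evaluation $\langle c_1(M), [S_e]\rangle$ is strictly positive. Thus $\tham$ is a positive Hamiltonian GKM space.

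Next I would note that the two remaining hypotheses of Theorem \ref{ManiThm: result main goal} are already part of the statement of the corollary: the real dimension of $M$ is six, and the complexity of the $T$-action is one. Having upgraded monotonicity to positivity in the previous step, every hypothesis of Theorem \ref{ManiThm: result main goal} is now verified, so I would simply invoke that theorem to conclude that $M$ is diffeomorphic to a smooth Fano threefold, which completes the argument.

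Because the corollary is a formal consequence of the two cited statements, there is essentially no independent obstacle at this level; the genuine content is entirely packaged into Lemma \ref{Lemma: symplectic fano c1 positive} and the main Theorem \ref{ManiThm: result main goal}. The only point I would double-check is that the positivity produced as the \emph{conclusion} of the lemma is literally the positivity required as the \emph{input} of the theorem, rather than a weaker or differently normalized variant. Since both are phrased in terms of the same edge-spheres $S_e$ and the same first Chern class $c_1(M)$, this compatibility should follow at once from the definitions, and no further work is needed.
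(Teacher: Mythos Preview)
Your proposal is correct and matches the paper's own reasoning exactly: the paper states the corollary as a direct consequence of Theorem~\ref{ManiThm: result main goal} and Lemma~\ref{Lemma: symplectic fano c1 positive}, and your two-step chaining (monotone $\Rightarrow$ positive via the lemma, then apply the theorem) is precisely that deduction.
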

We prove Theorem \ref{ManiThm: result main goal}  in two steps.\\
In the {\bf first step}, we relate the equivariant cohomology of a Hamiltonian GKM space of complexity one and its GKM 
graph, and deduce that the diffeotype of a Hamiltonian GKM space of dimension six is determined by its GKM graph. The 
positive condition is not needed in this step. 
		
\begin{theorem}\label{ManiThm: coprime weights GKM graph determines eq. cohomology}
Let  $(M_1,\omega_1, T, \phi_1)$ and $(M_2,\omega_2, T, \phi_2)$ be two Hamiltonian GKM spaces of 
complexity one  or zero. If 
there exists an isomorphism from the GKM graph of
$(M_1,\omega_1, T, \phi_1)$ to the one of $(M_2,\omega_2, T, \phi_2)$, then this isomorphism induces 
\begin{itemize}
\item[(a)] a ring isomorphism in equivariant cohomology
\begin{align*}
H_T^*(M_1;\Z) \rightarrow H_T^*(M_2;\Z)
\end{align*}
that maps the equivariant Chern classes of $(M_1,\omega_1, T, \phi_1)$ to the ones of  $(M_2,\omega_2, T, \phi_2)$ and 
\item[(b)] a ring isomorphism in cohomology
\begin{align*}
H^*(M_1;\Z) \rightarrow H^*(M_2;\Z)
\end{align*}
that maps the Chern classes of $(M_1,\omega_1)$ to the ones of  $(M_2,\omega_2)$.
\end{itemize}
\end{theorem}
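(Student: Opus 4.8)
The plan is to identify, for each of the two spaces, the integral equivariant cohomology with the combinatorial graph cohomology of its GKM graph, and then to observe that a graph isomorphism manifestly induces an isomorphism of the latter rings. First I would record the facts that hold for any Hamiltonian GKM space $\tham$ of dimension $2n$. A generic component $\phi^\xi=\left\langle\phi,\xi\right\rangle$ of the moment map is a Morse function whose critical set is $M^T$ and whose indices are all even; hence $H^*(M;\Z)$ is free and concentrated in even degrees, the space is equivariantly formal over $\Z$, and the restriction map
\[
\iota^*\colon H_T^*(M;\Z)\longrightarrow H_T^*(M^T;\Z)=\bigoplus_{p\in M^T}\Z[\mathfrak{t}^*]
\]
is an injection of free $\Z[\mathfrak{t}^*]$-modules (the kernel is torsion by localization but torsion-free as a submodule of a free module).

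Next I would define the graph cohomology $\mathcal{H}(\Gamma)\subseteq\bigoplus_p\Z[\mathfrak{t}^*]$ as the tuples $(f_p)_p$ satisfying the edge congruence $f_p\equiv f_q\pmod{\alpha_e}$ for every edge $e=(p,q)$ with weight $\alpha_e$; restricting a class to the invariant $2$-sphere $S_e\cong\C P^1$ and computing $H_T^*(S_e;\Z)$ shows that $\iota^*\bigl(H_T^*(M;\Z)\bigr)\subseteq\mathcal{H}(\Gamma)$. Since $\mathcal{H}(\Gamma)$ depends only on the GKM graph, a graph isomorphism $\psi\colon\Gamma_1\to\Gamma_2$ — a weight-preserving bijection of vertices and edges — induces, by relabelling tuples along the vertex bijection, a $\Z[\mathfrak{t}^*]$-algebra isomorphism $\mathcal{H}(\Gamma_1)\to\mathcal{H}(\Gamma_2)$.

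The heart of the argument, and where the technical condition \eqref{EQ: Technical Condition} enters, is to prove that $\iota^*$ is \emph{onto} $\mathcal{H}(\Gamma)$, so that integrally the graph cohomology is not merely a receptacle but exactly the image. Here I would use the Kirwan classes: equivariant Morse theory for $\phi^\xi$ provides classes $\kappa_p\in H_T^{2\lambda_p}(M;\Z)$, indexed by $M^T$, with $\kappa_p|_p=e_p^-:=\prod_{i\colon\alpha_{p,i}(\xi)<0}\alpha_{p,i}$ and $\kappa_p|_q=0$ whenever $\phi^\xi(q)<\phi^\xi(p)$. Given $f=(f_p)\in\mathcal{H}(\Gamma)$, I would peel it off against these classes in order of increasing $\phi^\xi$-value: if the residual $g$ vanishes at all vertices below $p$, then each down-edge from $p$ carries a negative weight $\alpha_{p,i}$ and joins $p$ to a lower vertex where $g$ vanishes, so the edge congruence forces $\alpha_{p,i}\mid g_p$ for every negative weight. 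This is exactly where coprimality is indispensable: the GKM condition makes the weights at $p$ pairwise non-parallel, and \eqref{EQ: Technical Condition} removes common integer factors, so the negative weights are pairwise coprime in the UFD $\Z[\mathfrak{t}^*]$; hence their product $e_p^-$ divides $g_p$. Subtracting $(g_p/e_p^-)\,\kappa_p$ then kills the value at $p$ without disturbing lower vertices, and the induction terminates with residual $0$ by injectivity of $\iota^*$. Thus $\iota^*$ identifies $H_T^*(M;\Z)$ with $\mathcal{H}(\Gamma)$ as $\Z[\mathfrak{t}^*]$-algebras.

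With the identifications $H_T^*(M_i;\Z)\cong\mathcal{H}(\Gamma_i)$ in hand, part (a) follows by composing with the isomorphism induced by $\psi$, which is a ring map because all three maps are. The equivariant total Chern class of $\tham$ restricts at $p$ to $\prod_i(1+\alpha_{p,i})$, a tuple determined entirely by the edge weights at the vertices, hence preserved by $\psi$; so equivariant Chern classes are matched. For part (b), equivariant formality gives $H^*(M_i;\Z)\cong H_T^*(M_i;\Z)\otimes_{\Z[\mathfrak{t}^*]}\Z$ via the augmentation $\mathfrak{t}^*\mapsto0$, and since the isomorphism of (a) is $\Z[\mathfrak{t}^*]$-linear it descends to a ring isomorphism $H^*(M_1;\Z)\to H^*(M_2;\Z)$ carrying ordinary Chern classes, the augmentations of the equivariant ones, to one another. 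The main obstacle is precisely the surjectivity step over $\Z$: the description of \cite{GKM} is only rational, and integrally the image of $\iota^*$ can be a proper sublattice of $\mathcal{H}(\Gamma)$ owing to higher-codimension face congruences. The content of the theorem is that \eqref{EQ: Technical Condition} upgrades the single-edge divisibilities $\alpha_{p,i}\mid g_p$ to the product divisibility $e_p^-\mid g_p$ matching the Kirwan leading terms exactly, and this is what forces the integral image to fill out all of $\mathcal{H}(\Gamma)$.
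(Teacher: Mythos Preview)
Your proof is correct and follows essentially the same route as the paper: both arguments identify $H_T^*(M_i;\Z)$ with the combinatorial graph cohomology via the Kirwan-class peeling argument (the paper's Proposition~\ref{Prop: GKM coprime}), using pairwise coprimality of the negative weights to pass from the individual divisibilities $\alpha_{p,i}\mid g_p$ to the product divisibility $e_p^-\mid g_p$, and then transport the identification along the graph isomorphism. One small imprecision: a GKM graph isomorphism in the paper's sense is a pair $(F,\theta)$ with $\theta\in\operatorname{GL}(\ell_T^*)$, so the induced map on $\bigoplus_p\Z[\mathfrak{t}^*]$ is $\theta$-semilinear rather than $\Z[\mathfrak{t}^*]$-linear; your descent to ordinary cohomology still goes through, but the reason is that $\widetilde\theta$ preserves the augmentation ideal, not that the map is module-linear.
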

		
\begin{theorem}\label{ManiThm: dim six gkm graph determines diffeomorphisms type}
Let  $(M_1,\omega_1, T, \phi_1)$ and $(M_2,\omega_2, T, \phi_2)$ be two Hamiltonian GKM spaces of dimension six. If 
there exists an isomorphism from the GKM graph of $(M_1,\omega_1, T, \phi_1)$ to the one of $(M_2,\omega_2, T,\phi_2)$, 
then there exists a (non-equivariant) diffeomorphism $M_2\rightarrow M_1$.
\end{theorem}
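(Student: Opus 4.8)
The plan is to feed the cohomological output of Theorem \ref{ManiThm: coprime weights GKM graph determines eq. cohomology} into the differential-topological classification of simply connected six-manifolds, so the first task is to record why the underlying manifolds fit the hypotheses of that classification. Since a Hamiltonian GKM space has only isolated fixed points, for generic $\xi\in\mathfrak{t}$ the component $\langle\phi_i,\xi\rangle$ is a Morse function whose critical set is exactly $M_i^T$ and whose Morse indices are all even (the index at a fixed point is twice the number of weights $\alpha$ with $\langle\alpha,\xi\rangle<0$). By the lacunary principle the Morse differential vanishes, so $H^{\mathrm{odd}}(M_i;\Z)=0$ and $H^*(M_i;\Z)$ is a finitely generated free abelian group; in particular it is torsion-free and concentrated in even degrees. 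Moreover the handle decomposition has a single $0$-handle and no $1$-handles, so each $M_i$ is simply connected. These are precisely the standing assumptions of the Jupp--Wall--\u{Z}ubr theorem in its torsion-free form.

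Next I would produce the cohomology isomorphism and read off the characteristic-class data. By Lemma \ref{Lemma: complexity one, zero dim six implies weights pairwise coprime} every six-dimensional Hamiltonian GKM space satisfies the technical condition \eqref{EQ: Technical Condition}, so Theorem \ref{ManiThm: coprime weights GKM graph determines eq. cohomology}(b) applies to the given graph isomorphism and yields a graded ring isomorphism
\[
\psi\colon H^*(M_1;\Z)\longrightarrow H^*(M_2;\Z)
\]
carrying $c_i(M_1,\omega_1)$ to $c_i(M_2,\omega_2)$ for all $i$. The Jupp--Wall--\u{Z}ubr theorem states that, for closed smooth simply connected six-manifolds with torsion-free cohomology, an isomorphism of cohomology rings is induced by an orientation-preserving diffeomorphism as soon as it matches the second Stiefel--Whitney class $w_2\in H^2(\,\cdot\,;\Z/2)$ and the first Pontryagin class $p_1\in H^4(\,\cdot\,;\Z)$. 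Because $w_2(M_i)$ is the mod-$2$ reduction of $c_1(M_i,\omega_i)$ and $p_1(M_i)=c_1(M_i,\omega_i)^2-2\,c_2(M_i,\omega_i)$, and $\psi$ is a ring map preserving every $c_i$, it automatically matches both $w_2$ and $p_1$.

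The one point requiring genuine care is the orientation, since the cubic cup-product form underlying the Wall invariants depends on a choice of fundamental class; I must check that $\psi$ respects the symplectic orientations given by $\omega_i^3$. Let $o_i\in H^6(M_i;\Z)$ be the generator dual to the symplectic (equivalently almost complex) orientation. As $\langle c_3(M_i,\omega_i),[M_i]\rangle=\chi(M_i)=|M_i^T|$ equals the number of vertices of the GKM graph, we have $c_3(M_i,\omega_i)=\chi(M_i)\,o_i$, and $\chi(M_1)=\chi(M_2)>0$ because the graphs are isomorphic. Since $\psi$ restricts to an isomorphism $H^6(M_1;\Z)\to H^6(M_2;\Z)$ it sends $o_1$ to $\pm o_2$; combining $\psi(c_3(M_1,\omega_1))=c_3(M_2,\omega_2)$ with positivity of the Euler characteristics forces the sign to be $+$, so $\psi(o_1)=o_2$ and $\psi$ is orientation-preserving.

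With these verifications in hand the conclusion is immediate: $\psi$ is a ring isomorphism of closed simply connected torsion-free six-manifolds preserving orientation, $w_2$ and $p_1$, so by Jupp--Wall--\u{Z}ubr it is realized by a diffeomorphism $M_2\to M_1$, which is the diffeomorphism induced by the original graph isomorphism. I expect the main obstacle to be bookkeeping rather than any single computation, namely matching our situation to the exact statement of the classification: confirming torsion-freeness, simple connectivity and the vanishing of odd cohomology so that the torsion-free version is available, and pinning down the orientation so that the cubic form transports correctly under $\psi$.
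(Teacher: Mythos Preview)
Your proposal is correct and follows essentially the same route as the paper: both arguments invoke Lemma \ref{Lemma: complexity one, zero dim six implies weights pairwise coprime} to feed Theorem \ref{ManiThm: coprime weights GKM graph determines eq. cohomology}(b), then match $w_2$, $p_1$ and the cubic form via the Chern classes (using $c_3$ to pin down the orientation) before invoking Jupp--Wall--\u{Z}ubr. Your treatment of the orientation via $\chi(M_i)=|M_i^T|>0$ is slightly more explicit than the paper's, but the idea is the same.
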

		
%%BCUT		
In \cite[Theorem 3.1]{Goertsches}, Goertsches, Konstantis, and
Zoller show that an isomorphism between the GKM graphs of GKM spaces,
not necessarily Hamiltonian, induces a ring isomorphism in
(equivariant) cohmology over $\Z$, assuming that the GKM spaces are
compact and connected, the odd cohomolgy rings
vanish, and the spaces satisfy the following technical condition:
For any point $q\in M$ that does not lie in the one skeleton
$$M_{(1)}= \left\lbrace p\in M \, \vert \, \dim (T \cdot p) \leq 1
\right\rbrace$$
of $M$, its stabilizer is contained in a proper subtorus of $T$.
If, in addition, the GKM spaces are equipped with invariant
almost complex structures and the graph-isomorphism is an isomorphism of
signed GKM graphs, then the induced ring isomorphism also maps the
(equivariant) Chern classes to (equivariant) Chern classes.
The proof of  \cite[Theorem 3.1]{Goertsches} relies on a result of
Franz and Puppe
\cite[Corollary 2.2]{franzPuppe2}  for (not necessarily Hamiltonian) $T$-spaces.
Moreover, using a classification result for compact and simply
connected six-dimensional manifolds by Jupp
\cite{Jupp}, Wall \cite{wall} and \u Zubr \cite{Zubr}, Goertsches,
Konstantis, and
Zoller show that the obtained isomorphism of cohomologies is induced
by a diffeomorphism between the manifolds, and hence the conclusion of
Theorem \ref{ManiThm: dim six gkm graph determines diffeomorphisms
	type}
holds, if the GKM spaces are, in addition, six-dimensional and simply connected. 		
	
In Section \ref{sec:equiv}, we show that the results  of Goertsches, Konstantis, and 
Zoller imply Theorems \ref{ManiThm: coprime weights GKM graph determines eq. cohomology} and 
\ref{ManiThm: dim six gkm graph determines diffeomorphisms type}.
We reformulate their technical condition
in terms of the weights at the fixed points, in compact Hamiltonian $T$-spaces. This formulation allows us to 
show that the condition holds if the complexity of the $T$-action is equal to zero or one. We deduce that 
Theorem \ref{ManiThm: coprime weights GKM graph determines eq. cohomology} follows from  \cite[Theorem 3.1]{Goertsches}.
Moreover, we show that if the Hamiltonian GKM spaces are of dimension six then the conclusion of the theorem 
holds. Since Hamiltonian GKM spaces are simply connected (see Remark \ref{rem: simply connected}), we also 	deduce 
Theorem \ref{ManiThm: dim six gkm graph determines diffeomorphisms type}.\\
In Appendix \ref{sec: alt proof}, we give an alternative proof of Theorem \ref{ManiThm: coprime weights GKM 
graph determines eq. cohomology}, relying on the existence of Kirwan classes that form a basis of the 
equivariant cohomology of a compact Hamiltonian $T$-space with isolated fixed points as a module over 
$H^{*}(BT;\Z)$. We use Kirwan classes in the second step of our proof of Theorem \ref{ManiThm: result main goal} as 
well.\\
				
In the {\bf second step}, we give a complete list of the GKM graphs that are coming from positive 
six-dimensional Hamiltonian GKM spaces. 
This result is based on a work by Godinho and Sabatini \cite{GodinhoSabatini}.
In \cite{GodinhoSabatini}, Godinho and Sabatini consider effective and  Hamiltonian $S^1$-actions with only isolated 
fixed points on compact symplectic manifolds. They show that any such space admits a graph that describes the action 
and that the weights (i.e., elements of $\ell_{S^1}^*\cong \Z$) of the $S^1$-representations on the tangent spaces at 
the fixed points satisfy certain linear relations with respect to the graph. 
Note that such a graph is not unique. 
If the action extends to an effective and Hamiltonian $T$-action that is GKM, then the GKM graph of the corresponding 
Hamiltonian GKM space naturally provides a graph that describes the $S^1$-action.
Note that the GKM graph is unique. 
In the GKM case, the weights (i.e., elements in $\ell_T^*\cong \Z^d$) of the $T$-representations on the tangent spaces 
at the fixed points also satisfy these linear relations.
Furthermore, the GKM condition imposes additional constraints on the weights.
In Subsection \ref{subsec:GKM skeletons}, we analyze these linear relations and constraints. 
For that we introduce GKM skeletons. 
A GKM skeleton is an $n$-valent simple graph together with an integer label for each edge. 
In particular, we show how an (abstract) GKM graph can be constructed from a GKM skeleton, and we discuss the 
uniqueness of such an (abstract) GKM graph.
Note that any (abstract) GKM graph can be constructed from a GKM skeleton.\\
Moreover, let $\tham$ be a positive Hamiltonian GKM space of dimension six and let $\GKM$ be its GKM graph.
So, $\Gamma_{GKM} = (V_{GKM}, E_{GKM})$ is a $3$-valent graph, and the weight map $\eta: E_{GKM} \rightarrow 
\ell_{T}^*$ assigns each edge in the set $E_{GKM}$ to an element in the dual lattice of the torus $T$.
By the work of  Godinho and Sabatini \cite{GodinhoSabatini}, the cardinality of the vertex set $V_{GKM}$ is at most 16.
In particular, there are only finitely many GKM skeletons from which a GKM graph of a positive Hamiltonian GKM space of 
dimension six can be constructed.
Furthermore, we use Kirwan classes to formulate a necessary condition for an abstract GKM graph to be realizable by the 
GKM graph of a positive  Hamiltonian GKM space of dimension six. \\
In Section 6, we sum the results of Sections 4 and  5 to create a computer program.
In this program, we utilize a classification result for $3$-valent graphs with a small number of vertices from 
\cite{DataBaseCubicGraphs}.
This computer program provides a finite list of (abstract) GKM graphs that must include all GKM graphs coming from 
positive six-dimensional Hamiltonian GKM spaces. 
It turns out that each (abstract) GKM graph in this list is a GKM graph that is 
\textbf{coming from a holomorphic  GKM  $T$-action on a smooth Fano threefold $X$}, i.e., it is the GKM graph of a  
Hamiltonian GKM $T$-action on $(X,i^{*}\omega_{FS})$ that is induced from a holomorphic $T_{\C}$-action on $X$. 
(We show this directly; see also \cite{suss}).\\
We prove the following theorem.
\newpage
\begin{theorem}\label{ManiThm: sympetic fano GKM graphs coming from fano threefolds.}
Let $\tham$ be a positive six-dimensional Hamiltonian GKM space of complexity one. Then its GKM graph is 
isomorphic to a GKM graph that is coming from a holomorphic GKM $T$-action on a Fano threefold.  
\end{theorem}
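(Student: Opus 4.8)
The plan is to reduce the statement to a finite combinatorial classification. Since $\tham$ has complexity one and $\dim M = 6$, the torus has dimension two, so at each fixed point the isotropy representation has three weights in $\ell_{T}^* \cong \Z^2$, the graph $\Gamma_{GKM} = (V_{GKM}, E_{GKM})$ is $3$-valent, and $\eta$ takes values in $\Z^2$. The GKM hypothesis forces the three weights at every vertex to be pairwise linearly independent, since otherwise a circle subgroup would fix a four-dimensional submanifold, violating the GKM condition. Moreover the axioms of a GKM graph provide, for each oriented edge $e$ from $p$ to $q$, a connection matching the two weights transverse to $S_e$ at $p$ with the corresponding ones at $q$ modulo integer multiples of $\eta(e)$; I would encode the whole combinatorial datum by the cubic graph together with $\eta$ and these connection integers.

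First I would extract the constraints coming from \cite{GodinhoSabatini}. Positivity enters through the identity $\langle c_1(M), [S_e] \rangle = 2 + k_1(e) + k_2(e)$, which expresses the Chern number of the edge-sphere in terms of the two connection integers along $e$; positivity then reads $2 + k_1(e) + k_2(e) > 0$ for every edge and bounds the connection integers from below. Together with the self-consistency of the connection around the cycles of the graph, the Morse-theoretic index relations, and the linear relations among the weights that \cite{GodinhoSabatini} derive from positivity, this bounds the connection integers from above as well and forces the number of vertices to be at most $16$. Up to the natural $\mathrm{GL}(2,\Z)$-action on $\Z^2$ and the choice of a generic direction $\xi \in \mathfrak{t}$ orienting the edges, the weight map $\eta$ is thereby determined by finitely many integer parameters confined to an explicit bounded range.

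With these bounds I would carry out the search by computer. Using the database \cite{DataBaseCubicGraphs} I enumerate all connected $3$-valent graphs with at most $16$ vertices, and for each of them solve the finite system of linear and congruence relations for all admissible weight maps and connections, discarding any that violate pairwise linear independence, the GKM congruences, or positivity. This yields a finite, explicit list of abstract GKM graphs that must contain the GKM graph of every positive six-dimensional Hamiltonian GKM space of complexity one. The delicate point is simultaneously completeness and termination: the derived relations must be strong enough that only finitely many admissible $\eta$ survive for each graph, and one must verify that no admissible weighted graph is erroneously discarded. I would guard against this by testing the relations on known examples and by recording the search so that its output can be audited.

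The final and, I expect, hardest step is realization: for each abstract GKM graph on the list I must exhibit a smooth Fano threefold $X$ with a holomorphic $T_{\C}$-action whose induced Hamiltonian $T$-action on $(X, i^*\omega_{FS})$ is GKM with an isomorphic GKM graph. I would compute directly, from fixed-point data and isotropy weights, the GKM graphs of the Fano threefolds admitting a complexity-one (or toric) holomorphic torus action, and match each entry of the list to one of them. The principal obstacle is exhaustiveness of this matching: one must be certain that every graph surviving the combinatorial search is genuinely realized by such a threefold, which requires a sufficiently complete inventory of the relevant Fano threefolds and their torus actions. Once the match is produced in each of the finitely many cases, the theorem follows.
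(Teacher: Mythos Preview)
Your overall strategy matches the paper's: bound the number of vertices using positivity and the Godinho--Sabatini identity, enumerate cubic graphs via \cite{DataBaseCubicGraphs}, solve for admissible weight maps, then match the survivors to Fano threefolds. Two points deserve attention, one structural and one a genuine gap.

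Structurally, the paper does not enumerate weight maps $\eta:E\to\Z^2$ directly. It first encodes the problem via \emph{GKM skeletons} $(\Gamma,\sigma,\mathrm{Ord}(E^\sigma),\mathbf{d})$, where $\mathbf{d}$ records the first Chern numbers $\mathcal{C}_1(e)$; positivity and the identity $\sum_e\mathcal{C}_1(e)=24$ make this a finite list. A linear-algebra analysis of the \emph{structure matrix} then determines the defect $\delta=\dim\ker(\mathbf{A}-\mathbf{D})$ and, via Kernel Conditions (K1) and (K2), whether a weight map exists and is unique up to isomorphism and projection. The payoff is that the case $\delta=3$ is recognized uniformly as coming from (projections of) smooth reflexive $3$-polytopes, hence from toric Fano threefolds via Delzant and Kodaira, while only the $\delta=2$ cases require individual matching (Appendix~A lists exactly seven). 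Your direct search over $\eta$ can be made to work but misses this clean dichotomy.

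The genuine gap is in your filtering. The constraints you list---positivity, pairwise linear independence, existence of a compatible connection, and the Godinho--Sabatini relations---do \emph{not} cut the list down to Hamiltonian GKM graphs. The paper exhibits (Example~\ref{Beispie}) a positive abstract $(3,2)$-GKM graph satisfying the $24$-Rule that is not Hamiltonian; such graphs would survive your search but admit no Fano (indeed no Hamiltonian) realization, so your final matching step would fail on them. The paper closes this gap with the \emph{Kirwan Class Test}: positivity forces the GKM graph to be weak index increasing (Lemma~\ref{Lemma: c1positive implies weak index increasing}), which in turn guarantees special Kirwan classes at index-one vertices (Proposition~\ref{Proposition: special Kirwan class lambda=1}); their existence is a nontrivial necessary condition for being Hamiltonian (Lemma~\ref{Lemma: special KC abstract case}) that is checkable from the graph alone and eliminates the spurious candidates. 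You need an analogue of this test, or your realization step will not terminate successfully.
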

		
Combining Theorem \ref{ManiThm: dim six gkm graph determines diffeomorphisms type} and Theorem \ref{ManiThm: 
sympetic fano GKM graphs coming from fano threefolds.} gives Theorem \ref{ManiThm: result main goal}.
\end{subsection}
	
\begin{subsection}{Structure of the Article}
		
\begin{itemize}
\item[\textbf{Section 2}] We introduce notations and recall preliminary results about 
equivariant cohomology and Hamiltonian GKM spaces that are needed in this paper.
\item[\textbf{Section 3}] We reformulate the technical condition of  \cite{Goertsches} in terms  of weights 
at fixed points in a compact Hamiltonian $T$-space and deduce Theorems 
\ref{ManiThm: coprime weights GKM graph determines eq. cohomology} and \ref{ManiThm: dim six gkm graph determines 
diffeomorphisms type} from \cite[Theorem 3.1]{Goertsches}.
\item[\textbf{Section 4}] We introduce positive Hamiltonian GKM spaces and prove basic properties of six 
dimensional ones. These results are needed to compute the list of graphs associated to such spaces.
\item[\textbf{Section 5}] We analyze the linear relations between the first Chern class maps and the weight 
maps of (abstract) GKM graphs.
\item[\textbf{Section 6}] We sum up the results of Sections 4 and 5 and explain how we use computer programs to 
classify the GKM graphs of positive six-dimensional Hamiltonian GKM spaces. We conclude by a proof of Theorem 
\ref{ManiThm: sympetic fano GKM graphs coming from fano threefolds.}.
			
\item[\textbf{Appendix A}] We give the complete list of GKM graphs of positive six-dimensional Hamiltonian 
GKM spaces  that are not projections of GKM graphs coming from smooth reflexive polytopes.
			
\item[\textbf{Appendix B}] We give an alternative proof, using Kirwan classes, of Theorem \ref{ManiThm: 
coprime weights GKM graph determines eq. cohomology}. 
\end{itemize}
\end{subsection}         
\end{section}

\textbf{Acknowledgments.}  The first author would like to thank Silvia Sabatini  for drawing her attention to the 
question of in which 
content smooth Fano varieties and monotone symplectic manifolds with a Hamiltonian GKM  action are different. We thank 
Leopold Zoller and Nikolas Wardenski  for clarifying remarks.
The first author began working on this project during her Ph.D. studies, which were supported 
by the SFB-TRR 191 grant Symplectic Structures in Geometry, Algebra and Dynamics funded by the German Research 
Foundation.
The second author is supported by the Israel Science Foundation, grant no. 570/20.

\begin{section}{Preliminaries on Equivariant Cohomology  and Hamiltonian GKM Spaces}

\begin{subsection}{Equivariant Cohomology}

Let $M$ be a topological  space endowed with a continuous action of a torus $T=(S^1)^d$. In the Borel model, the 
$T$-equivariant cohomology 
of  $M$ is defined as follows. Let $ET$ be a contractible space on which $T$ acts freely and let 
$BT=ET/T$ be the classifying space of $T$. The diagonal action of \(T\) on \(M\times ET\) is free.    
\(M \times_{T} ET\)  denotes the orbit space. The \(T\)-equivariant cohomology ring of \(M\) is
\begin{align*}
H_{T}^*(M;R)\colon =H^*(M \times_{T} ET;R),
\end{align*}
where \(R\) is the coefficients ring. In particular, if  the \(T\)-action on \(M\) is  trivial, then 
\begin{align*}
H_T^*(M;R)=H^*(M;R)\otimes H^*(BT;R).
\end{align*}
If $T=S^1$, then $ES^1$ is the unit sphere \(S^\infty\) in 
\(\C^\infty\) and \(BS^1\) is \(\C P^\infty \).  Hence
\begin{align*}
H_{S^1}^*(\left\lbrace \operatorname{point}\right\rbrace ;R)= H^*(\C P^\infty ;R)=R[x],
\end{align*}
where \(x\) has degree \(2\). Moreover, if \(T\) is a  \(d\)-dimensional torus, then \(BT\) is the 
\(d\)-times product of \(\C P^\infty\), and so
\begin{align*}
H_{T}^*(\left\lbrace \operatorname{point}\right\rbrace ;R)=	H^*(BT;R)=R[x_1,\dots, x_d],
\end{align*}
where \(\left\lbrace x_1,\dots, x_d\right\rbrace \) is a basis of  the dual lattice $\ell_T^*$ of $T$ and 
$\operatorname{deg}(x_i)=2$ for all $i=1,...,d$.
If the coefficient ring is $\Z$ then $H^2(BT;\Z)$ is equal to $\ell_T^*$ and if  the coefficient ring is 
$\R$ then $H^2(BT;\R)$ is equal to $\mathfrak{t}^*$.
 The projection map \(M \times ET \rightarrow ET\) is \(T\)-equivariant, so we  obtain a map
\begin{align*}
\pi:\,M \times_T ET \rightarrow BT.
\end{align*}
This makes \(M \times_T ET\)  an \(M\)-bundle over \(BT\),
\begin{align*}
M      \overset{r}\longrightarrow M \times_T ET \overset{\pi} \longrightarrow BT.
\end{align*}

So it induces a sequence of ring homomorphisms
\begin{align*}
H^*(BT;R) \overset{\pi^*}\longrightarrow H_T^*(M;R) \overset{r^*}\longrightarrow H^*(M;R).
\end{align*}
The map \(\pi^*\)  gives \(H_T^*(M;R)\) an \(H^*(BT;R)\)-module structure by
\begin{align*}
\alpha \cdot \beta =\pi^*(\alpha ) \cup \beta
\end{align*}
for \(\alpha \in  H^*(BT;R)\) and \(\beta \in H_{T}^*(M;R)\).

\begin{subsubsection}{Equivariant Chern Classes} 
Let $V\rightarrow M$ be a $T$-invariant vector bundle. Its 
equivariant Euler class $e^T(V)$ is the Euler class of the 
vector bundle 
\begin{align}\label{EQ: extension of vector bundles}
V\times_{T} ET \rightarrow M \times_{T} ET.
\end{align}
If  $V\rightarrow M$ is a complex $T$-invariant 
vector bundle, then \eqref{EQ: extension of vector bundles} is a complex vector bundle, and its $i$-th 
equivariant Chern class $c_i^T(V)$ is defined in the same way. These equivariant classes are mapped to the
ordinary Euler class resp. ordinary  Chern classes of  $V\rightarrow M$ under the map 
\begin{align*}
r^*: H_T^*(M;\Z)\rightarrow H^*(M;\Z)
\end{align*}
induced from $r: M \rightarrow M \times_T ET$. %Let $M=\{p\}$ be a point, then 
In case $M$ is a point,
a complex $T$-invariant vector bundle 
$V\rightarrow M= \{p\}$ of rank $k$ is  just  a complex vector space  of complex dimension $k$ with a 
$T$-representation. Let 
$\alpha_1,..., \alpha_k \in \ell_T^*$ be the weights of this representation, then the total equivariant Chern class of  
the complex vector bundle $V\rightarrow \{p\}$  is
\begin{align*}
c^T(V)= \prod_{i=1}^{k} \left( 1+\alpha_i \right)   \in H_T^*(\{p\}; \Z) \cong H^*(BT;\Z).
\end{align*}   
Hence, the $i$-th equivariant Chern class is
\begin{align*}
c_i^T(V)=\sigma_{k,i}(\alpha_1, \dots \alpha_k)\in H^{2i}(BT;\Z),
\end{align*}
where $\sigma_{k,i}$  is the  elementary symmetric polynomial in $k$ variables of degree $i$. \footnote{This means 
$\sigma_{k,0}=1$, $\sigma_{k,i}=0$  for $i>k$ and for $i=1,...,k$  $$\sigma_{k,i}(X_1,...,X_k)=\sum_{1\leq j_1 <...< 
j_i \leq k}X_{j_1}\cdot... \cdot X_{j_k}.$$}

In particular, the first equivariant Chern class is $c_1^T(V)=\sum_{i=1}^{k}\alpha_i$ and the equivariant Euler class  
is $e^T(V)=c_k^T(V)=\prod_{i=1}^{k}\alpha_i$.

\end{subsubsection}			
			
\begin{subsubsection}{The ABBV Localization Formula}
Let $M^T$ be the set of fixed points and assume that it is not empty. Let \(F\) be one of its connected 
components. The inclusion map \(i_F\colon F \rightarrow M\) is a \(T\)-equivariant  map, so it induces a map
\begin{align*}
i_F^*\colon H_T^*(M;R) \rightarrow H_T^*(F;R).
\end{align*}
Moreover, the map \(\pi \colon M \times_{T} ET \rightarrow BT \) induces a push-forward map in equivariant cohomology
\begin{align*}
H_{T}^*(M;R)\rightarrow H^{*-\operatorname{dim}(M)}(BT;R),
\end{align*}
which can be seen as integration along the fibers. So we denote it by $\int_M$. The following 
theorem, due to Atiyah-Bott \cite{AtiyahBott} and Berline-Vergne  \cite{BerlineVerngne}, gives a 
formula for the map \(\int_M\) in terms of the fixed point set data.
%\newpage
\begin{theorem}\label{Thm:ABBV}(ABBV Localization formula) Let \(M\) be a compact oriented manifold 
endowed with a smooth \(T\)-action. For \(\mu^T\in H_T^* (M;\Q) \)
\begin{align*}
\int_M \mu^T \,=\, \sum_{F\subset M^T} \int_F \dfrac{i^*_F\mu^T}{\text{e}^{T}(N_F)},
\end{align*}
where the sum is over all the connected components \(F\) of \(M^T\) and \(\text{e}^{T}(N_F)\) is the equivariant Euler 
class of the normal bundle $N_F \to F$. %\(N_F\) to \(F\).
\end{theorem}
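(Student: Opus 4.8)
The plan is to prove the formula by localizing the equivariant cohomology over the field of fractions of $H^*(BT;\Q)\cong \Q[x_1,\dots,x_d]$ and reducing $\int_M\mu^T$ to the fixed-point contributions by functoriality of the equivariant push-forward. Working with rational coefficients is essential here: the Euler classes that appear in the denominators only become invertible after inverting the nonzero weights, so the statement is inherently a statement over $\Q$.

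First I would set $R:=H^*(BT;\Q)$ and let $Q$ be its field of fractions. The foundational input is the Borel--Atiyah--Bott--Quillen localization theorem: the restriction homomorphism $i^*\colon H_T^*(M;\Q)\to H_T^*(M^T;\Q)$ becomes an isomorphism after applying $-\otimes_R Q$. The reason is that the kernel and cokernel of $i^*$ are $R$-modules supported on the finite union of hyperplanes in $\operatorname{Spec}(R)$ cut out by the weights of the isotropy representations on $M\setminus M^T$, and inverting all nonzero weights kills them. From this I would extract that for each connected component $F\subset M^T$ the equivariant Euler class $e^T(N_F)\in H_T^*(F;\Q)$ becomes invertible over $Q$: writing $e^T(N_F)=c\cdot(1+n)$, where $c$ is the product of the nonzero normal weights (hence a unit in $Q$) and $n$ collects positive-degree classes pulled back from the finite-dimensional $H^*(F;\Q)$ and is therefore nilpotent, the inverse is a finite geometric series.

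Second, I would record two standard properties of the equivariant Gysin push-forward $(i_F)_*\colon H_T^*(F;\Q)\to H_T^*(M;\Q)$ for the oriented submanifold $F$: the self-intersection formula $i_F^*(i_F)_*(\alpha)=e^T(N_F)\cup\alpha$ for $\alpha\in H_T^*(F;\Q)$, and the disjointness vanishing $i_{F'}^*(i_F)_*=0$ for $F\neq F'$ (since then $F$ and $F'$ are disjoint closed submanifolds and $(i_F)_*\alpha$ is supported on $F$). I would also use the functoriality $\int_M\circ(i_F)_*=\int_F$, coming from the factorisation of $M\to\{\operatorname{pt}\}$ through $F$. Then, given $\mu^T\in H_T^*(M;\Q)$, I set
\[
\nu:=\sum_{F\subset M^T}(i_F)_*\!\left(\frac{i_F^*\mu^T}{e^T(N_F)}\right)\in H_T^*(M;\Q)\otimes_R Q,
\]
which makes sense by the first step. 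Restricting $\nu$ to a component $F'$, only the $F=F'$ summand survives by disjointness, and the self-intersection formula gives $i_{F'}^*\nu=e^T(N_{F'})\cdot\frac{i_{F'}^*\mu^T}{e^T(N_{F'})}=i_{F'}^*\mu^T$. Thus $\nu$ and $\mu^T$ have equal restriction to $M^T$, so by the localization isomorphism $\mu^T=\nu$ in $H_T^*(M;\Q)\otimes_R Q$. Applying $\int_M$ and using $\int_M\circ(i_F)_*=\int_F$ term by term yields exactly $\int_M\mu^T=\sum_F\int_F i_F^*\mu^T/e^T(N_F)$.

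The step I expect to be the main obstacle is the localization theorem of the first paragraph: proving that $i^*$ is an isomorphism after inverting the nonzero weights, and with it the invertibility of the Euler classes, is where the genuine content lies and where compactness and the structure of the isotropy weights enter. An alternative, more differential-geometric route would replace this by the Berline--Vergne argument in the Cartan model: represent $\mu^T$ by an equivariantly closed form, choose an invariant metric and the dual one-form $\lambda$ of a generating vector field, and apply Stokes' theorem on the complement of shrinking tubular neighbourhoods of $M^T$, identifying the boundary limits with the Euler-class denominators; there the analytic estimate of the boundary integrals is the crux. Once localization is in hand, the remaining bookkeeping with push-forwards is formal.
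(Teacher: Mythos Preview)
The paper does not prove this theorem at all: it is stated as a cited result due to Atiyah--Bott \cite{AtiyahBott} and Berline--Vergne \cite{BerlineVerngne}, and is used throughout the paper as a black box. So there is no ``paper's own proof'' to compare against.

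That said, your outline is a correct sketch of the standard cohomological proof: invoke the Borel--Atiyah--Segal localization theorem to make $i^*$ an isomorphism over the fraction field $Q$, verify that each $e^T(N_F)$ is invertible over $Q$ (product of nonzero weights times a unipotent), and then use the self-intersection formula and functoriality of push-forward to identify $\mu^T$ with $\sum_F(i_F)_*\bigl(i_F^*\mu^T/e^T(N_F)\bigr)$ after localization. You correctly flag that the real content sits in the localization isomorphism. The alternative Cartan-model/Stokes argument you mention is precisely the original Berline--Vergne approach. Either route is standard; since the paper treats the result as known, your proposal goes well beyond what the paper itself supplies.
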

\end{subsubsection}

		\end{subsection}	

\begin{subsection}{Hamiltonian GKM Spaces}\label{SubSec: Hamiltonian GKM Spaces}
Given a $T$-action on a compact connected symplectic manifold of dimension $2n$, 
%Hamiltonian $T$-space $(M,\omega, T, \phi)$ of dimension $2n$, 
there exists a $T$-invariant 
almost complex structure $J: TM \rightarrow TM$ that is compatible with the symplectic form $\omega$; moreover, the 
space of $T$-invariant almost complex structures on $(M,\omega)$ is contractible (see e.g.,  
\cite[Proposition 4.1.1]{McDuffSalamon}). For each fixed 
point $p\in M^T$, the $T$-action induces a $T$-representation on $(T_pM, J_p)\cong \C^n$. This $T$-representation 
splits into a direct sum of one-dimensional $T$-representations, i.e., $T_pM=\oplus_{i=1}^n L_i$, where $L_i$ is a one-dimensional $T$-representation with weight $\alpha_{p,i}\in \ell_T^*$ for $i=1,...,n$. The 
elements $\alpha_{p,1},...,\alpha_{p,n}$ are called the \textbf{weights of the  $T$-representation on $T_pM$}.
These weights do not depend on the choice of such an almost complex structure since the space of $T$-invariant almost complex structures on $(M,\omega)$ is contractible. Hence, the weights are well-defined.
Since the $T$-action on $M$ is effective, the following lemma holds.

\begin{lemma}\label{Lemma: Z-span weights}
Given an effective $T$-action on a  connected symplectic $(M,\omega)$, 
let $p\in M^T$ be a fixed point and let $\alpha_{p,1},...,\alpha_{p,n}\in \ell_T^*$ be 
the weights of the  $T$-representation on $T_pM$. Then the $\Z$-span 
\begin{align*}
\left\lbrace \kappa_1 \cdot\alpha_{p,1}+\dots+ \kappa_n\cdot \alpha_{p,n} \, \vert \, \kappa_1,...,\kappa_n \in 
\Z\right\rbrace 
\end{align*} 
of the weights  $\alpha_{p,1},...,\alpha_{p,n}$ is equal to $\ell_T^*$.
\end{lemma}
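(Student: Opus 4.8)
The plan is to extract the desired lattice statement from the \emph{effectiveness} of the action, via the isotropy representation at $p$. The key realization is that effectiveness should not be used globally but rather through the local linear action at the single fixed point $p$: if some subgroup $K\subseteq T$ acts trivially on $T_pM$, it must act trivially on all of $M$, and hence be trivial.

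First I would introduce the closed subgroup
\[
K:=\bigcap_{i=1}^n \ker \chi_{\alpha_{p,i}}\subseteq T,
\]
where $\chi_\beta\colon T\to S^1$ denotes the character associated to a weight $\beta\in \ell_T^*$. By the definition of the weights, an element $t\in T$ acts on the weight line $L_i\subseteq T_pM$ by multiplication by $\chi_{\alpha_{p,i}}(t)$, so $K$ is exactly the subgroup of $T$ acting trivially on the whole tangent representation $T_pM=\bigoplus_{i=1}^n L_i$.

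Next I would upgrade this to a global triviality statement. Since $K$ is a compact abelian group fixing $p$, its fixed-point set $M^K$ is a closed embedded submanifold with $T_p(M^K)=(T_pM)^K$ (the standard fact that the tangent space to a fixed-point set is the fixed subspace of the isotropy representation). As $K$ acts trivially on $T_pM$, we have $(T_pM)^K=T_pM$, so the component of $M^K$ through $p$ has dimension $2n=\operatorname{dim}M$; being a full-dimensional embedded submanifold it is open, and it is closed since $M^K$ is closed, so by connectedness of $M$ it equals $M$. Thus $K$ fixes $M$ pointwise, and effectiveness forces $K=\{e\}$.

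Finally I would translate $K=\{e\}$ into the dual lattice. Writing $T=\mathfrak{t}/\ell_T$ and $\chi_\beta([\xi])=e^{2\pi i\langle\beta,\xi\rangle}$, the condition $K=\{e\}$ says that every $\xi\in\mathfrak{t}$ with $\langle\alpha_{p,i},\xi\rangle\in\Z$ for all $i$ already lies in $\ell_T$. Setting $\Lambda:=\langle\alpha_{p,1},\dots,\alpha_{p,n}\rangle_\Z$ and $\Lambda^\circ:=\{\xi\in\mathfrak{t}\mid\langle\Lambda,\xi\rangle\subseteq\Z\}$, this reads precisely $\Lambda^\circ=\ell_T$. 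If $\Lambda$ failed to span $\mathfrak{t}^*$, then $\Lambda^\circ$ would contain the positive-dimensional annihilator of $\Lambda$ and could not equal a lattice, so $\Lambda$ has full rank $d$ and $\Lambda^\circ$ is its dual lattice. The dual-index relation for full-rank lattices $\Lambda\subseteq\ell_T^*$ then yields $[\ell_T^*:\Lambda]=[\Lambda^\circ:\ell_T]=[\ell_T:\ell_T]=1$, hence $\Lambda=\ell_T^*$, as claimed. The only nontrivial inputs are the identity $T_p(M^K)=(T_pM)^K$ and the lattice-duality bookkeeping; the conceptual crux—and the main thing to get right—is channeling the global hypothesis of effectiveness through the purely local isotropy representation at $p$.
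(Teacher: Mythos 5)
Your proof is correct, and it reaches the same conceptual destination as the paper's — reducing global effectiveness to faithfulness of the isotropy representation at $p$, and then to the lattice statement — but by a genuinely different mechanism. The paper linearizes the action near $p$ (invariant neighborhoods $U\subseteq T_pM$ and $V\subseteq M$ that are equivariantly diffeomorphic) and invokes the Principal Orbit Theorem to conclude that effectiveness on the connected manifold $M$ forces effectiveness on $V$, hence on the linear model $U$; the final equivalence between effectiveness of a linear torus representation and the weights spanning $\ell_T^*$ is then simply asserted. You instead isolate the kernel $K$ of the isotropy representation, use the standard identity $T_p(M^K)=(T_pM)^K$ to show the component of $M^K$ through $p$ is open and closed, and conclude $K$ is trivial by connectedness and effectiveness; you then carry out the lattice-duality bookkeeping ($\Lambda^\circ=\ell_T$ forces full rank and index one) that the paper leaves implicit. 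Your route avoids the Principal Orbit Theorem in favor of the more elementary fixed-point-set argument, and has the merit of making the final step — which is where the actual lattice content lives — fully explicit; the paper's route is shorter on the page because it delegates both halves to cited or standard facts. Both are valid; no gaps.
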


\begin{proof}Since $T$ is compact there exist $T$-invariant neighborhoods $U$ resp. $V$ of $\mathbf{0}$ in $T_pM$ 
resp. of $p$ in 
$M$ that are  $T$-equivariantly diffeomorphic. Since the $T$-action on $M$ is effective and $M$ is connected, by the 
Principal Orbit Theorem \cite[Theorem 2.8.5]{DK}, the $T$-action 
on $V$ is effective. Hence, the $T$-action on $U$ is also effective. The $T$-action on $U$ is determined by the 
$T$-representation on $T_pM$ and is effective if and only if the  $\Z$-span of the weights 
$\alpha_{p,1},...,\alpha_{p,n}$ is 
equal to $\ell_T^*$.
\end{proof}

%Now we introduce  Hamiltonian GKM spaces.

\begin{definition}\label{Def: Hamiltonian GKM spaces}
A\textbf{ Hamiltonian GKM space} is a \textbf{compact} Hamiltonian $T$-space $(M,\omega, T, \phi)$ such that the 
following hold.
\begin{itemize}
\item[(i)] The set $M^T$ of fixed points of the $T$-action on $M$ is finite.
\item[(ii)] For each $p\in M^T$, the weights $\alpha_{p,1},...,\alpha_{p,n}\in \ell_T^*$ of the  $T$-representation on 
$T_pM$ are pairwise linearly independent.
\end{itemize}
We say that the Hamiltonian $T$-action is \textbf{GKM}.
\end{definition}

In the following lemma, we point out an obstruction for the dimension of the torus of a Hamiltonian GKM space.

\begin{lemma}\label{Lemma: GKM forces  dim(T) geq 2}
Let $\tham$ be a Hamiltonian GKM space of dimension $2n\geq 4$. Then the dimension of the torus $T$ is greater or equal 
to $2$.
\end{lemma}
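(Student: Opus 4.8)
The plan is to argue by contradiction, supposing $\dim(T)\leq 1$ and using the two defining conditions of a Hamiltonian GKM space (Definition \ref{Def: Hamiltonian GKM spaces}) to rule out both $\dim(T)=0$ and $\dim(T)=1$. The statement is really a bookkeeping consequence of the definitions, so I do not anticipate a serious obstacle; the only point requiring a little care is guaranteeing a fixed point to which condition (ii) can be applied.

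First I would dispose of the degenerate case $\dim(T)=0$. Then $T$ is the trivial group and the action is trivial, so $M^T=M$. Since $M$ is a manifold of dimension $2n\geq 4$, the set $M^T=M$ is infinite, contradicting condition (i), which requires $M^T$ to be finite. Hence $\dim(T)\geq 1$.

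The main case is $\dim(T)=1$, i.e.\ $T=S^1$. Here I would first recall that a compact Hamiltonian $T$-space has nonempty fixed point set: for a generic $\xi\in\mathfrak{t}$ the critical set of the smooth function $\langle \phi,\xi\rangle$ on the compact manifold $M$ is exactly $M^T$, and this function attains its maximum, producing a point $p\in M^T$. At such a $p$ the induced $T$-representation on $T_pM\cong\C^n$ has weights $\alpha_{p,1},\dots,\alpha_{p,n}\in \ell_T^*\subset\mathfrak{t}^*$, and since $2n\geq 4$ we have $n\geq 2$, so there are at least two of them. But $\dim(T)=1$ forces $\mathfrak{t}^*$ to be one-dimensional, and any two vectors in a one-dimensional real vector space are linearly dependent. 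In particular $\alpha_{p,1}$ and $\alpha_{p,2}$ are linearly dependent, contradicting condition (ii), which demands that the weights be pairwise linearly independent. This contradiction yields $\dim(T)\geq 2$.

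I would emphasize that the argument does not even require the weights to be nonzero, since in a one-dimensional space \emph{every} pair of vectors is linearly dependent; the entire content of the $\dim(T)=1$ case is that a rank-one weight lattice cannot accommodate two linearly independent weights, while the GKM condition (ii) insists on exactly that as soon as $n\geq 2$. Thus the only subtlety to spell out carefully is the nonemptiness of $M^T$, which is standard for compact Hamiltonian torus actions (the moment image is a convex polytope whose vertices are images of components of $M^T$).
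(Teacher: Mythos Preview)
Your proof is correct and follows essentially the same approach as the paper: both use compactness to guarantee a fixed point $p\in M^T$ and then observe that the pairwise linear independence of the $n\geq 2$ weights on $T_pM$ forces $\dim T\geq 2$. Your treatment is slightly more detailed (you separately dispose of $\dim T=0$ and spell out why $M^T\neq\emptyset$), but the core argument is the same.
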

\begin{proof}
Since $M$ is compact, the set of fixed points $M^T$ is 
not empty. Let $p\in M^T$ be a fixed point and let $\alpha_{p,1},...,\alpha_{p,n}\in \ell_T^*$ be the weights of 
the $T$-representation on $T_pM$. Since, by (ii) of Definition \ref{Def: Hamiltonian GKM spaces}, 
these weights are pairwise linearly independent, the dimension of the torus must be greater or equal to two. 
\end{proof}

Given a Hamiltonian GKM space $(M,\omega, T, \phi)$, the second condition of Definition \ref{Def: Hamiltonian GKM 
spaces} implies that for each codimensional one subtorus $H$ of $T$, any connected component of 
\begin{align*}
M^H:=\left\lbrace  p\in M \, \mid \, t\cdot p =p\, \text{ for all } t\in H\right\rbrace 
\end{align*}
has at most dimension two. Let $S$ be a two-dimensional component of $M^H$. Then $S$ is a symplectic embedded 
two-sphere and there exist exactly two points $p,q\in M^T$ such that $S \cap 
M^T=\{p,q\}$. Moreover, the inclusion  $H \hookrightarrow T$ induces an $H$-representation on $T_pM$ and the tangent 
$T_pS$ is equal to the subspace of $T_pM$ that is fixed by the $H$-representation. Hence, there exists an element  
$\alpha\in \ell_T^*$ that is a weight of the  $T$-representation on 
$T_pM$ such that 
\begin{align}\label{eq:hexp}
H=\exp (\operatorname{ker}\, \alpha),
\end{align}
where $\exp: \mathfrak{t} \rightarrow T$ is the exponential map and 
\begin{align*}
\operatorname{ker}\, \alpha = \left\lbrace \xi \in \mathfrak{t}\, \vert \, \left\langle \alpha, \xi \right\rangle=0
\right\rbrace,
\end{align*}
where $\left\langle ,\right\rangle$ is the natural pairing between $\mathfrak{t}^*$ and $\mathfrak{t}$.
Vice versa, $-\alpha$ is a weight of the  $T$-representation on $T_qM$.
Given $p,q \in M^T$ there exists at most one two-sphere $S$ that is fixed by a codimensional one subtorus such that $S \cap M^{T}=\{p,q\}$. This is since a point fixed by two different codimensional one subtori is in $M^{T}$. 
On the other hand, for $p \in M^T$ and a weight $\alpha$ of the $T$-representation on $T_{p}M$,
there is a codimensional one subtorus $H$ such that the connected component of $M^H$ that contains $p$ is
 a symplectic embedded two-sphere $S$ and the weight of the $T$-representation on $T_{p}S$ is $\alpha$; moreover, the subtorus $H$ must be as in (\ref{eq:hexp}), so this sphere is unique.
\newline 
The information about the $T$-representations on the tangent spaces of the fixed points and about the two-dimensional  
components fixed by a codimensional one subtorus can be stored in a graph, the so-called \textbf{GKM graph}, which is 
defined as follows.

\begin{definition}\label{Def: GKM Graph}
Let $(M,\omega, T, \phi)$ be a Hamiltonian GKM space. Its \textbf{GKM graph} $(\Gamma_{GKM}, \eta)$ is the graph 
$\Gamma_{GKM}=(V_{GKM},E_{GKM})$ with directed edges together with the map $\eta: E_{GKM}\rightarrow \ell_T^*$
defined as follows.
\begin{itemize}
\item[(i)] The set of vertices $V_{GKM}$ is equal to the set of fixed points $M^T$.
\item[(ii)] The set of edges is a subset of $V_{GKM}\times V_{GKM}$. There exists a directed edge 
$$e=(p,q)\in V_{GKM}\times V_{GKM}$$ if and 
only if there exists a two-sphere $S$ that is fixed by a codimensional one subtorus $H$ of $T$ such that $S\cap 
M^T=\{p,q\}$. 
\item[(iii)]  For each edge $e=(p,q)\in E_{GKM}$, we denote by $S_{(p,q)}$ the unique two-sphere as described in $(ii)$.
Then $\eta(p,q)$ is the weight of the $T$-representation on $T_pS_{(p,q)}$.
\end{itemize}
\end{definition} 
			
In the following remark, we sum up some properties of GKM graphs.

\begin{remark}\label{Rem: Properties GKM graphs}
Let $(\Gamma_{GKM}, \eta)$ be the GKM graph of a Hamiltonian GKM space $\tham$, then the 
following properties hold.
\begin{itemize}
\item[(i)] Let $2n$ be the dimension of $M$. Then the graph $\Gamma_{GKM}=(M^T, E_{GKM})$ is  \textbf{$n$-valent}, 
i.e., for each $p\in M^T$ there exist exactly $n$ points $p_1,\dots, p_n\in M^T \setminus \{p\}$ such that 
$(p,p_i)\in E_{GKM}$ for $i=1,...,n$. Moreover, $\eta(p,p_1),...,\eta(p,p_n)$ are the weights of the 
$T$-representation on $T_pM$.
\item[(ii)] Given $p,q\in M^T$, then $(p,q)$ belongs to $E_{GKM}$ if and only if $(q,p)$ belongs to $E_{GKM}$.
If $(p,q)\in E_{GKM}$, then $\eta(q,p)=-\eta(p,q)$.
\end{itemize}
\end{remark}

%Next we recall the definition of isomorphisms between GKM graphs.

\begin{definition}\label{Def: isomophic GKM graphs}
Given two Hamiltonian GKM spaces $(M_1,\omega_1, T, \phi_1)$ and $(M_2,\omega_2, T, \phi_2)$ of the same dimension, let 
$(\Gamma_{1,GKM}, \eta_1)$  and $(\Gamma_{2,GKM}, \eta_2)$ be their GKM graphs.
An \textbf{isomorphism} between these GKM graphs  is a pair $(F,\theta)$ such that 
 
\begin{itemize}
\item $F$ is an isomorphism between the underlying graphs 
$$\Gamma_{1,GKM}=(M_1^T,E_{1,GKM})\quad \text{and} \quad \Gamma_{2,GKM}=(M_2^T,E_{2,GKM}),$$
i.e., $F: M_1^T \rightarrow M_2^T$ is a bijection such that for any two points $p,q\in M_1^T$
$$(p,q)\in E_{1,GMK}\quad \text{if and only if}\quad  (F(p), F(q)) \in E_{2,GKM},$$
\item $\theta: \ell_T^* \rightarrow \ell_T^*$ is a linear isomorphism such that

\begin{align*}
\theta \left(\eta_1 (p, q)\right) =  \eta_2(F(p),F(q)),
\end{align*}
for all $(p,q)\in E_{GKM}$.
\end{itemize}
\end{definition}

Given a Hamiltonian GKM space $\tham$, let $\GKM$ be its GKM graph. The \textbf{initial map} 
$i:E_{GKM}\rightarrow V_{GKM}$ and the \textbf{terminal map} $t:E_{GKM}\rightarrow V_{GKM}$ are given by
\begin{align*}
i(p,q)=p \quad \text{and} \quad t(p,q)=q \quad \text{for every edge } e=(p,q) \in E_{GKM}.
\end{align*}
We associate to each point $p\in M^T= V_{GKM}$ the following two subsets of $E_{GKM}$, 
\begin{align*}
E_{GKM}^{p,i}:= \{e\in E_{GKM} \mid i(e)=p\} \quad \quad \text{and} \quad \quad E_{GKM}^{p,t}:= \{e\in E_{GKM} \mid 
t(e)=p\}.
\end{align*}

A \textbf{connection} along an edge $e=(p,q)\in E_{GKM}$ is a bijection 
\begin{align*}
\nabla_e: E_{GKM}^{p,i} \longrightarrow E_{GKM}^{q,i}
\end{align*}
such that $\nabla_e(p,q)=(q,p)$. The \textbf{connection is compatible} if for  all $e' \in E_{GKM}^{p,i} $ there 
exists an integer $a_{e,e'}\in \Z$, such that
\begin{align*}
\eta(e')-\eta(\nabla_e(e'))=a_{e,e'}\cdot\eta(e).
\end{align*}

Due to the next lemma, there always exists a compatible connection.

\begin{lemma}\label{Lemma: Existence Connection}
Let $\tham$ be a Hamiltonian GKM space and let  $\GKM$ be its GKM graph. For each edge $e \in 
E_{GKM}$ of $\Gamma_{GKM}$, there exists a connection along $e$ that is compatible.
\end{lemma}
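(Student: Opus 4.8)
The plan is to build the connection from the geometry of the invariant two-sphere attached to the edge, by decomposing its normal bundle into $T$-equivariant line bundles. Fix an edge $e=(p,q)\in E_{GKM}$ and set $\alpha=\eta(e)\in\ell_T^*$, so that $H=\exp(\operatorname{ker}\alpha)$ is the codimension one subtorus whose associated two-sphere $S:=S_{(p,q)}$ satisfies $S\cap M^T=\{p,q\}$, with tangent weight $\alpha$ at $p$ and (as noted in Subsection \ref{SubSec: Hamiltonian GKM Spaces}) tangent weight $-\alpha$ at $q$. First I would fix a $T$-invariant $\omega$-compatible almost complex structure, so that $TM$ becomes a complex $T$-bundle and along $S$ splits $T$-equivariantly as $TM|_S=TS\oplus N$, where $N\to S\cong\C P^1$ is the complex normal bundle, a $T$-equivariant complex vector bundle of rank $n-1$.

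The key input is that $N$ splits $T$-equivariantly into line bundles $N=\bigoplus_{i=2}^{n}L_i$. I would obtain this by first decomposing $N$ into its $H$-isotypic $T$-subbundles — which is legitimate because $H$ acts trivially on the base $S$ and linearly on the fibres — and then splitting each isotypic piece, on which the residual circle $T/H$ acts on $\C P^1$ with two fixed points, via the ($S^1$-)equivariant Birkhoff--Grothendieck theorem. Writing $\beta_i^p,\beta_i^q\in\ell_T^*$ for the $T$-weights of $L_i$ at $p$ and $q$, the basic fact about an equivariant line bundle over $\C P^1$ whose base has tangent weight $\alpha$ at one pole is that
\[
\beta_i^p-\beta_i^q=(\deg L_i)\,\alpha,
\]
an integer multiple of $\eta(e)$; the normalization is pinned down by the model $T\C P^1\cong\mathcal{O}(2)$, whose pole weights are $\alpha$ and $-\alpha$ with $\alpha-(-\alpha)=2\alpha$.

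To finish, recall from Remark \ref{Rem: Properties GKM graphs}(i) that the weights of the $T$-representation on $T_pM$ are exactly $\{\eta(e')\mid e'\in E_{GKM}^{p,i}\}$, i.e.\ $\alpha$ together with the normal weights $\beta_2^p,\dots,\beta_n^p$; by Definition \ref{Def: Hamiltonian GKM spaces}(ii) these are pairwise linearly independent, hence pairwise distinct, and likewise at $q$. Therefore each $\beta_i^p$ is the weight of a unique edge $e_i^p\in E_{GKM}^{p,i}$ and each $\beta_i^q$ of a unique $e_i^q\in E_{GKM}^{q,i}$. I would then define $\nabla_e(p,q)=(q,p)$ and $\nabla_e(e_i^p)=e_i^q$. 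Distinctness of the weights makes $\nabla_e$ a well-defined bijection $E_{GKM}^{p,i}\to E_{GKM}^{q,i}$ with $\nabla_e(p,q)=(q,p)$, and the weight formula gives $\eta(e_i^p)-\eta(\nabla_e(e_i^p))=\beta_i^p-\beta_i^q=(\deg L_i)\,\eta(e)$, while for the edge $e$ itself $\eta(e)-\eta(q,p)=2\,\eta(e)$; so $\nabla_e$ is compatible with $a_{e,e_i^p}=\deg L_i\in\Z$.

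The step I expect to be the main obstacle is the $T$-equivariant splitting of $N$ into line bundles over $\C P^1$ together with the precise weight-difference formula: the splitting requires the equivariant Birkhoff--Grothendieck theorem (handled cleanly through the $H$-isotypic reduction to a residual circle action), and the identity $\beta_i^p-\beta_i^q=(\deg L_i)\,\eta(e)$ must be verified on the tautological and tangent line-bundle models. Everything else — well-definedness and the bijection property of $\nabla_e$ — follows formally from the pairwise linear independence of the weights guaranteed by the GKM condition.
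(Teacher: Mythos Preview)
Your argument is correct and is precisely the construction underlying \cite[Theorem~1.1.2]{GuilleminZara}. The paper does not give its own proof of this lemma; it simply cites that reference, whose proof proceeds via the $T$-equivariant splitting of the normal bundle of the invariant sphere into line bundles and the weight-difference identity $\beta_i^p-\beta_i^q=(\deg L_i)\,\eta(e)$ exactly as you describe.
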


This lemma is by \cite[Theorem 1.1.2]{GuilleminZara}.

\begin{remark}\label{Rem: Independence of GKM graph}
Let $\tham$ be a Hamiltonian GKM space. Its GKM graph does not depend on the moment map $\phi$.
Moreover, if $\lambda\in \R$ is non-zero, then the $T$-action on $(M,\lambda\cdot\omega)$ is also Hamiltonian and GKM 
and  $\phi'=\lambda\cdot \phi$ is a moment map. If $\lambda>0$, then the GKM graphs of $\tham$ and 
$(M,\lambda\cdot \omega, T, \lambda\cdot \phi)$ are the same.   
\end{remark}

\begin{remark}\label{Rem: images weight relation}
Let $\tham$ be a Hamiltonian GKM space and let $\GKM$ be its GKM graph. Let $e=(p,q)\in E_{GKM}$ be an edge. 
The moment map images $\phi(p)$ and $\phi(q)$ determine $\eta(p,q)$ up to positive integer multiple. Namely,
let $S$ be the unique two-sphere in $M$ that is fixed by a codimensional one subtorus of $T$ such that 
$S\cap M^T=\{p,q\}$. The moment  map image $\phi(S)$ is given by the compact line segment through  $\phi(p)$
and $\phi(q)$ in the vector space $\mathfrak{t}^*$. This line segment is rational, i.e., there exists a unique
primitive vector $\alpha\in \ell_{T}^* \setminus \{0\}\subset \mathfrak{t}^*\setminus \{0\}$ such that 
$\phi(q)-\phi(p)=r \cdot \alpha $ for some real number $r>0$. In particular,
$$\eta(p,q)=m\cdot \alpha  \quad \text{for some positive integer } m.$$
\end{remark}

\begin{subsubsection}{GKM Graphs of Symplectic Toric Manifolds}

By the famous Convexity Theorem of Atiyah \cite{Atiyah} and Guillemin-Sternberg \cite{GSconvex}, the moment map 
image of a compact Hamiltonian $T$-space is a convex polytope. So the moment map image is called the \textbf{moment map 
polytope}. Moreover, compact symplectic toric manifolds, i.e., compact complexity zero spaces, are determined by their 
moment map polytopes (up to isomorphism) and the moment map polytope of such a space is smooth \cite{Delzant}. 
We recall this and the definition of smooth polytopes in the following.  

\begin{definition}\label{Def: smooth polytope}
Let $\Delta$ be a $d$-dimensional polytope in $\R^d$. A vertex $v$  of $\Delta$ is called \textbf{smooth} if 
there exist exactly $d$ edges of $\Delta$ that contain $v$ and a $\Z$-basis $\alpha_{v,1},...,\alpha_{v,d} $
of $\Z^d$ such that the edges that contain $v$ are given by 
$$\left( v+\R_{\geq0}\cdot \alpha_{v,j}\right)  \cap \Delta $$ 
for $j=1,...,d$. The polytope $\Delta$ is called \textbf{smooth} if all of its vertices are smooth. 
\end{definition}

\begin{remark}\label{Rem: smooth polytopes in the dual Lie algebra}
Let $T$ be a $d$-dimensional torus. By considering its dual Lie algebra $\mathfrak{t}^*\cong \R^d$ together with the 
dual lattice $\ell^*_T\cong \Z^d$, the definition of smooth polytopes in $\R^d$ naturally extends to polytopes in 
$\mathfrak{t}^*$. 
\end{remark}

\begin{theorem}\label{Thm: Delzant}\cite[Delzant]{Delzant}
Let $\tham$ be a compact symplectic toric manifold. The space admits an integrable almost complex structure 
that is T-invariant and compatible with $\omega$ and the moment map polytope is a smooth polytope.
Moreover, if $\Delta$ is a smooth polytope in $\mathfrak{t}^*$, then there exists a unique (up to isomorphism) compact 
symplectic toric manifold whose moment map image is $\Delta$.
\end{theorem}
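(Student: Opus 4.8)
The plan is to establish the four assertions of Delzant's theorem in turn: smoothness of the moment polytope, existence of a toric manifold over a given smooth polytope, the integrable compatible (K\"ahler) structure, and uniqueness up to isomorphism. Throughout I work in the complexity zero setting, so $d=\dim T=n=\tfrac12\dim M$.

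First I would prove smoothness of the moment polytope via the equivariant local normal form. By the Convexity Theorem the image $\Delta=\phi(M)\subset\mathfrak{t}^*$ is a convex polytope whose vertices are the images of the fixed points in $M^T$. Fix $p\in M^T$. Since the action has maximal dimension, the equivariant Darboux theorem identifies a $T$-invariant neighborhood of $p$ with a neighborhood of $0$ in the linear $T$-representation on $(T_pM,J_p)\cong\C^n$ with weights $\alpha_{p,1},\dots,\alpha_{p,n}\in\ell_T^*\cong\Z^n$, and in this model the moment map is $z\mapsto\phi(p)+\sum_i\tfrac12|z_i|^2\alpha_{p,i}$. Thus the $n$ edges of $\Delta$ through the vertex $\phi(p)$ point in the directions $\alpha_{p,i}$. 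By Lemma \ref{Lemma: Z-span weights} these $n$ weights $\Z$-span the rank-$n$ lattice $\ell_T^*$, and a spanning set of $n$ elements in a free $\Z$-module of rank $n$ is automatically a $\Z$-basis; so the vertex is smooth in the sense of Definition \ref{Def: smooth polytope}.

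Next, for existence I would realize an arbitrary smooth polytope $\Delta$ as a symplectic reduction of $\C^N$. Write $\Delta=\{x\in\mathfrak{t}^*:\langle x,u_i\rangle\geq\lambda_i,\ i=1,\dots,N\}$ with $u_i\in\ell_T$ the primitive inward conormals to the $N$ facets. Smoothness (the dual formulation of the edge-basis condition) guarantees that the $n$ conormals meeting at each vertex form a $\Z$-basis of $\ell_T$, so the map $\Z^N\to\ell_T$, $e_i\mapsto u_i$, is surjective and induces a surjection of tori $\pi\colon(S^1)^N\to T$ whose kernel $K$ is a subtorus of dimension $N-n$. Let $(S^1)^N$ act on $\C^N$ in the standard way with moment map $\Phi(z)=\tfrac12(|z_1|^2,\dots,|z_N|^2)$, restrict to $K$, and reduce at the level prescribed by $(\lambda_i)$. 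The reduced space $M_\Delta:=\Phi_K^{-1}(c)/K$ is a compact manifold carrying a residual Hamiltonian action of $(S^1)^N/K\cong T$, and a direct computation of the reduced moment map identifies its image with $\Delta$. Since $\C^N$ is K\"ahler with $(S^1)^N$ acting holomorphically and isometrically, the K\"ahler reduction simultaneously endows $M_\Delta$ with a $T$-invariant integrable complex structure compatible with the reduced symplectic form; this delivers the asserted integrable almost complex structure.

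Finally I would prove uniqueness by an equivariant Moser argument, which I expect to be the main obstacle. Given two compact symplectic toric manifolds $(M_1,\omega_1)$ and $(M_2,\omega_2)$ with the same polytope $\Delta$, the local normal form shows that $\Delta$ together with its face combinatorics reconstructs an invariant neighborhood of each orbit; gluing these yields a $T$-equivariant diffeomorphism $F$ intertwining the moment maps. Pulling back, one has two $T$-invariant symplectic forms $\omega_1$ and $F^*\omega_2$ on one manifold with the same moment map, hence the same class in $H^2(M;\R)$, since the cohomology class of a toric symplectic form is determined by $\Delta$. The hard part is to upgrade this agreement to an actual equivariant symplectomorphism: writing $\omega_1-F^*\omega_2=d\beta$ with $\beta$ chosen $T$-basic, one interpolates $\omega_t=(1-t)\omega_1+tF^*\omega_2$, verifies each $\omega_t$ is a $T$-invariant symplectic form with moment map independent of $t$, and solves the Moser equation $\iota_{X_t}\omega_t=\beta$ for a time-dependent $T$-invariant vector field $X_t$. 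Integrating $X_t$ produces the desired $T$-equivariant symplectomorphism. The technical weight lies in keeping every choice $T$-invariant, basic, and compatible with the moment map uniformly across the strata of $\Delta$, which is precisely what makes the equivariant version of Moser's trick delicate.
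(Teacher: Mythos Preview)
The paper does not prove this theorem at all: it is stated as a background result with a citation to Delzant's original paper \cite{Delzant}, and no argument is given. So there is nothing to compare your proposal against in terms of the paper's own proof.

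That said, your sketch is a faithful outline of the standard proof of Delzant's theorem (local normal form for smoothness of the polytope, the symplectic quotient construction of $\C^N$ by the kernel torus for existence and the K\"ahler structure, and an equivariant Moser argument for uniqueness), and the individual steps are correctly identified. One small caution: in the uniqueness step you assert that the two symplectic forms have the same cohomology class ``since the cohomology class of a toric symplectic form is determined by $\Delta$''; this is true but is itself a nontrivial fact that in most treatments is established alongside the equivariant diffeomorphism rather than assumed. In any case, since the paper treats this as a black-box citation, a full proof is not expected here.
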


Let $\tham$ be a compact symplectic toric manifold. Then the space is also GKM and its GKM graph $\GKM$ is determined 
by the moment map polytope $\Delta_M\colon =\phi(M)$, as follows. 
The moment map induces a bijection from the set of fixed points to the set of vertices of $\Delta_M$. Hence, the set 
$V_{GKM}$ of vertices of the  graph $\Gamma_{GKM}$ can be identified with the set of vertices of the polytope 
$\Delta_M$. There exists an edge $(p,q)\in E_{GKM}\subset M^T \times M^T$ if and  only if  the corresponding vertices 
$\phi(p)$ and $\phi(q)$ of $\Delta_M$ are connected  by an edge of the polytope $\Delta_M$. If $(p,q)\in E_{GKM}$, then 
$\eta(p,q)$ is the unique primitive element in $\ell_{T}^*$ such that $\phi(q)-\phi(p)=r\cdot \eta(p,q)$ for some 
$r>0$.  In the following, we say that 
\textbf{the GKM graph of a compact symplectic  toric manifold $\tham$ is coming from the smooth polytope 
$\Delta_M\colon =\phi(M)$}. We note that the GKM graph does not determine the moment map polytope. In particular, 
the GKM graph does not contain information about the symplectic form.

\begin{example}\label{Example: GKM do not give polytope}
In Figure \ref{Figure: Smooth and Reflexive}, the polytopes (A), (B) and (C) are smooth.
\begin{itemize}
\item The polytope (A) is the moment map polytope of the standard $(S^1)^2$-action on $(\C P^2, \omega_{FS})$.
\item The polytope (B) is the moment map polytope of the standard  $(S^1)^2$-action on $(S^2\times S^2, \omega\oplus 
\omega)$, where $\omega$ is volume form on $S^2$ with $\int_{S^2} \omega =2$. 
\item The polytope (C) is the moment map polytope of the standard  $(S^1)^2$-action on $(S^2\times S^2, \omega\oplus 
\frac{5}{4}\omega)$.
\end{itemize}
The polytopes (A) and (B) induce the same GKM graph.
\end{example}

\begin{definition}\label{Def: reflexive polytope}
Let $\Delta$ be a $d$-dimensional polytope in $\R^d$ that is \textbf{integral}, i.e., all of its vertices lie in $\Z^d$.
For each facet $F$ of $\Delta$, let $l_F\in \Z^d$ be the primitive outward normal vector to the hyperplane defining
$F$, i.e., 
$$F= \left\lbrace x \in \R^d \, \vert \, \left\langle l_F, x \right\rangle_{\R^d} =c_F \right\rbrace \cap \Delta
\quad \text{and}\quad \Delta\subseteq  \left\lbrace x \in \R^d \, \vert \, \left\langle l_F, x \right\rangle_{\R^d} 
\leq c_F 
\right\rbrace
$$
for some  $c_F \in \R$, where $\left\langle , \right\rangle _{\R^d}$ is the standard scalar product on $\R^d$.
The polytope $\Delta$ is \textbf{reflexive} if  $c_F=1$ for any facet $F$ of $\Delta$. 
\end{definition}

\begin{figure}[htbp]
\begin{center}
\includegraphics[width=12cm]{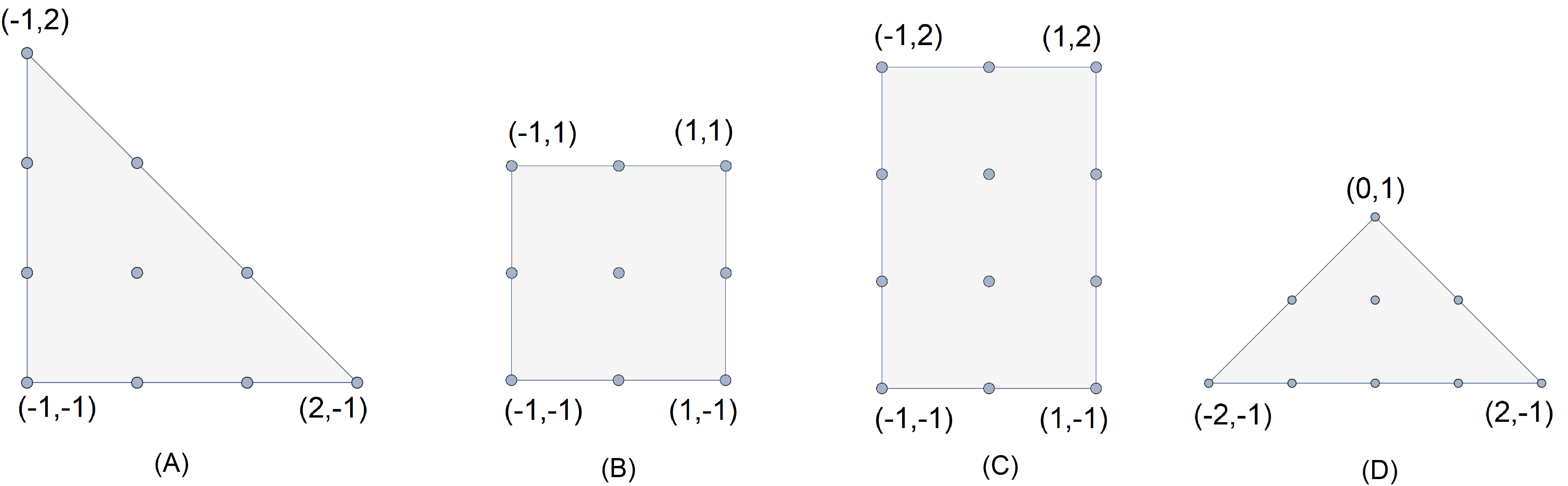}
\caption{The polytopes (A) and (B) are smooth and reflexive. The polytope (C) is smooth, but not reflexive. The 
polytope (D) is reflexive, but not smooth.}
\label{Figure: Smooth and Reflexive}
\end{center}
\end{figure}
Monotone symplectic toric manifolds and reflexive polytopes are related as follows. 
\begin{proposition}\label{Pro: toricReflMono}
Let $\Delta$ be a smooth polytope in $\mathfrak{t}^*$ and let $\tham$ the compact symplectic toric manifold whose 
moment map image is $\Delta$. Then the following two conditions are equivalent.
\begin{itemize}
\item There exists $\nu \in \mathfrak{t}^*$ such that $\Delta+\nu$ is reflexive.
\item $(M,\omega)$ is monotone with $c_1(M)=[\omega]$.
\end{itemize}
\end{proposition}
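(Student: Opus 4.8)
The plan is to reduce both conditions to a single linear condition on the facet constants of $\Delta$, via the standard combinatorial dictionary for symplectic toric manifolds. Write $\Delta=\{x\in\mathfrak t^*\mid \langle l_i,x\rangle\le c_i,\ i=1,\dots,N\}$, where $F_1,\dots,F_N$ are the facets, $l_i\in\ell_T$ is the primitive outward normal of $F_i$ (here $\ell_T\subset\mathfrak t$ denotes the integral lattice dual to $\ell_T^*$), and $c_i\in\R$. For each facet let $D_i:=\phi^{-1}(F_i)$, a $T$-invariant codimension-two symplectic submanifold, and denote by $[D_i]\in H^2(M;\Z)$ its Poincar\'e dual. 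I would use three facts from the Delzant--Guillemin theory: (i) the classes $[D_i]$ generate $H^2(M;\R)$, and all linear relations among them are generated by the relations $\sum_i\langle l_i,m\rangle\,[D_i]=0$, $m\in\mathfrak t^*$; equivalently $H^2(M;\R)\cong\R^N/\iota(\mathfrak t^*)$ with $\iota(m)=(\langle l_i,m\rangle)_{i}$, and $\iota$ is injective because the $l_i$ span $\mathfrak t$ (as $\Delta$ is bounded); (ii) $c_1(M)=\sum_{i=1}^N[D_i]$; and (iii) $[\omega]=\sum_{i=1}^N c_i\,[D_i]$.

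Granting these, the argument is pure linear algebra. By (ii) and (iii), $(M,\omega)$ is monotone with $c_1(M)=[\omega]$ precisely when $\sum_{i=1}^N(1-c_i)\,[D_i]=0$ in $H^2(M;\R)$; by (i) this holds iff the tuple $(1-c_i)_i$ lies in $\iota(\mathfrak t^*)$, i.e. iff there exists $m\in\mathfrak t^*$ with $\langle l_i,m\rangle=1-c_i$ for all $i$. On the other hand, for $\nu\in\mathfrak t^*$ one has $\Delta+\nu=\{x\mid\langle l_i,x\rangle\le c_i+\langle l_i,\nu\rangle\}$, so the facet constants of $\Delta+\nu$ are $c_i+\langle l_i,\nu\rangle$, and these are all equal to $1$ exactly when $\langle l_i,\nu\rangle=1-c_i$ for all $i$. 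Thus the existence of the vector $m$ above and the existence of a $\nu$ making all constants of $\Delta+\nu$ equal to $1$ are the same statement, with $\nu=m$; this settles both implications at the level of constants.

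It then remains to check that ``all facet constants equal $1$'' coincides with ``$\Delta+\nu$ is reflexive,'' i.e. that the integrality demanded by reflexivity comes for free. Here I would use that $\Delta$, hence $\Delta+\nu$, is smooth. At any vertex $w$ of $\Delta+\nu$ the normals $l_{i_1},\dots,l_{i_d}$ of the $d$ facets through $w$ form a $\Z$-basis of $\ell_T$, being dual (up to sign) to the edge basis of $\ell_T^*$ guaranteed by Definition \ref{Def: smooth polytope}; since $w$ is the unique solution of $\langle l_{i_j},w\rangle=1$ and the $l_{i_j}$ are a lattice basis, $w\in\ell_T^*$. Hence every vertex is integral, and $0$ is interior because $\langle l_i,0\rangle=0<1$, so $\Delta+\nu$ is reflexive; conversely reflexivity forces all constants to equal $1$.

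I expect the only genuine work to lie in pinning down the dictionary (ii)--(iii) with a consistent normalization: (ii) is the familiar anticanonical formula, while (iii) is the point that ties the polytope constants to $[\omega]$. I would establish (iii) (and, crucially, its normalization relative to (ii), which is all the equivalence actually uses) by restricting the equivariant extensions to the fixed points. Over the vertex $\mathbf v$ the weights of the $T$-representation are the primitive edge directions $\alpha_1,\dots,\alpha_d$ (a $\Z$-basis of $\ell_T^*$), and from the local normal form one reads off $c_1^T|_{p}=\sum_j\alpha_j=\sum_i[D_i]^T|_{p}$ and $[\omega]_T|_{p}=\sum_i c_i\,[D_i]^T|_{p}$. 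Since $M$ has isolated fixed points, the restriction $H_T^*(M;\R)\to\bigoplus_{p\in M^T}H^*(BT;\R)$ is injective, which upgrades these vertexwise identities to (ii) and (iii). Everything downstream of this dictionary is the elementary linear algebra carried out above.
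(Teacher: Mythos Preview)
Your argument is correct and is the standard one; the paper itself does not give a proof of this proposition but only refers to \cite[Proposition~1.8]{EntovPolterovich} and \cite[Sect.~3]{McDuffMonotone}, where essentially the same dictionary (your facts (i)--(iii)) is invoked. In particular, your observation that smoothness forces integrality of the vertices once all facet constants equal $1$ is exactly the point that makes the ``reflexive'' side match up, and your localization sketch for (ii) and (iii) is the right way to pin down the normalization.
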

A proof of this proposition can be found in \cite[Proposition 1.8]{EntovPolterovich} and \cite[Sect.3]{McDuffMonotone}

\end{subsubsection}

\begin{subsubsection}{Projections of GKM Graphs}
Given a Hamiltonian GKM space $\tham$, let $T'$ be a  subtorus of $T$. 
The restriction of the $T$-action to $T'$ is again a Hamiltonian action on $(M,\omega)$ with moment map
\begin{align*}
i^*\circ \phi: M \rightarrow (\mathfrak{t'})^*,
\end{align*}
where $i^*: \mathfrak{t}^* \rightarrow (\mathfrak{t}')^*$ is the dual  of the inclusion $i: \mathfrak{t}' 
\hookrightarrow \mathfrak{t}$ from the Lie algebra of $T'$ to the one of $T$.
So the quadruple 
\begin{align*}
(M,\omega, T', i^*\circ \phi)
\end{align*}
is a compact Hamiltonian $T$-space. The map  $i^*$ maps the dual lattice $\ell_T^*$ of $T$ to the dual lattice
$\ell_{T'}^*$ of $T'$. Note that the quadruple is a Hamiltonian GKM space if and only if for each fixed point $p\in 
M^T$, the  weights  of the $T$-representation on $T_pM$ are mapped to pairwise linearly independent  weights. 
In this case the GKM graph of $(M,\omega, T', i^*\circ \phi)$ is $(\Gamma_{GKM}, i^* \circ \eta)$, where 
$\GKM$ is the GKM graph of $\tham$. We say that
the GKM graph $(\Gamma_{GKM}, i^* \circ \eta)$  is a \textbf{projection} of $(\Gamma_{GKM}, \eta)$.
\end{subsubsection}
\end{subsection}

%%%%%%%%%%%%%%%%%%%%%%%%%%%%%%%%%%%%%%%%%%%%%%%%%%%%%%%%

\begin{subsection}{Compact Hamiltonian $T$-spaces with only Isolated Fixed Points}

\begin{subsubsection}{Generic Vectors and Morse Theory}\label{SubSec: Generic Vectors and Morse Theory}
Let $\tham$ be a compact Hamiltonian  $T$-space with only isolated fixed points. A vector $\xi \in \mathfrak{t}$ is 
called\textbf{ generic} if $\left\langle \alpha, \xi \right\rangle \neq 0$ for each weight $\alpha \in \ell_T^*$
of the $T$-representation of $T_pM$ for every $p\in M^T$. If $\xi \in \mathfrak{t}$ is generic, then the 
$\xi$-component 
of the moment map
\begin{align*}
\phi^\xi : M \rightarrow \R\, , \quad p \mapsto \left\langle \phi (p), \xi \right\rangle 
\end{align*}
is a Morse function and its set of critical points coincides with the set of fixed points $M^T$.

Given $p\in M^T$, the Morse index of $p$  with respect to $\phi^\xi$ is equal to twice the number of weights of the 
$T$-representation on $T_pM$ that satisfy $\left\langle \alpha, \xi\right\rangle <0$.

For any fixed  point $p\in M^T$ we define the \textbf{index} $\lambda (p)$  of $p$ with respect to $\xi$ as follows.
\begin{align*}
\lambda (p) \,\text{equals half of  the  Morse index of } p\, \text{with respect to  } \phi^\xi,
\end{align*}
or equivalently,
\begin{center}
	$\lambda(p)$ is the number of weights of the  $T$-representation on $T_pM$ that satisfy
	$\left\langle \alpha, \xi \right\rangle <0.$
\end{center}

%Since for each $p\in M^T$ the Morse index is even and $M$ is connected, $M$ is homotopy equivalent to a CW complex 
%with only even cells. 
%This implies that $M$ is simply connected and $H^{\operatorname{odd}}(M;\Z)=0$. The $2i$-th Betti number $b_{2i}(M)$ 
%%%of 
%$M$ is equal to the number of fixed points with $\lambda(p)=i$ and $H^{2i}(M;\Z)\cong \Z ^{b_{2i}(M)}$. Let $2n$ be 
%%%the 
%dimension of $M$. Since $M$ is compact and connected and $\omega^n$ is a volume form, we have that $b_0(M)$ and 
%$b_{2n}(M)$ are equal to one. Hence, there exists a unique fixed point $p_{min}$ resp. $p_{max}$ with 
%$\lambda(p_{min})=0$ resp. $\lambda(p_{max})=n$. Moreover, $p_{min}$ resp. $p_{max}$ is the unique point in $M$ on 
%which $\phi^{\xi}$ attains its minimum resp. maximum.

\begin{remark}\label{rem: simply connected}
In a Hamiltonian $T$-space with only isolated fixed points, for each
$p \in M^T$  its  Morse index with respect to $\phi^\xi$ (for a generic vector $\xi$) is even.
Hence, if $M$ is also compact, it is homotopy equivalent to a CW
complex with only even cells. This implies that $M$ is simply
connected, $H^{\operatorname{odd}}(M;\Z)=0$ and the odd Betti numbers of $M$ are zero.
\end{remark}

\begin{remark}\label{Rem: Morse and GKM}
Assume that $\tham$ is a Hamiltonian GKM space and let $\GKM$ be its GKM graph.
Given a fixed point $p\in M^T$, let $p_1,\dots,p_{n}\in M^T\setminus\{p\}$ be the unique fixed points such that 
$(p,p_i)\in E_{GKM}$. Since $\eta(p,p_1),..., \eta(p,p_n)$ are the weights of the $T$-representation on $T_pM$, we have 
that  $\lambda(p)$ is equal to the number of edges of $(p,p_1),..., (p,p_n)$ that satisfy
\begin{align*}
	\phi^\xi(p_i)< \phi^\xi(p), \, \text{or  equivalently, }\, \left\langle \eta(p,p_i),\xi \right\rangle <0.
\end{align*}
\end{remark} 

\begin{remark}\label{Rem: Hamiltonian GKM Graphs are connected}
Note that since the underlying manifold of a Hamiltonian GKM space is connected, its GKM graph $\GKM$ is also 
connected. 
The latter means that for two different points $p,q\in M^T$, there exists a sequence 
$p_0,..., p_k \in M^T$ such that $p_0=p$, $p_k=q$ and $(p_i,p_{i+1})\in E_{GKM}$ for $i=0,\dots,k-1$. 
We say that $p$ and $q$ are \emph{connected by a path}. 
To see this fix a generic vector $\xi \in \mathfrak{t}$ and let $p_{min}$ be
the unique fixed point on which $\phi^\xi$ attains its minimum. Let $p\in M^T \setminus \{p_{min}\}$ be another fixed 
point.
We set $p_0=p$. By Remark \ref{Rem: Morse and GKM} there exists a fixed point $p_1\in M^T\setminus \{p_0\}$ with
$\phi^\xi(p_1)<\phi^\xi(p_0)$. It might be that $p_1=p_{min}$; otherwise, we repeat this step. Note that since $M^T$ is finite,
we conclude that there exist $p_0,..., p_k$ such that $p_0=p$, $p_k=p_{min}$ and $(p_i,p_{i+1})\in E_{GKM}$ for 
$i=0,\dots,k-1$. Hence, each $p\in M^T\setminus\{p_{min}\}$ is connected with $p_{min}$. Moreover, since $(p,q)\in 
E_{GKM}$ implies $(q,p)\in E_{GKM}$, it follows that any two fixed points are connected by a path.
\end{remark}
\end{subsubsection}

\begin{subsubsection}{Kirwan Classes}\label{sec: kirwan}
Let $\tham$ be a compact Hamiltonian $T$-space with only isolated fixed points. By the Kirwan Injectivity Theorem 
\cite{Kirwan}, the inclusion $i:M^T \hookrightarrow 
M$ induces an injective map $i^*: H_T^*(M;\Z) \rightarrow H_T^*(M^T; \Z)$. Therefore, we consider  $H_T^*(M;\Z)$ as a 
subring of $H_T^*(M^T; \Z)$.  Since $H^*_T(\{p\};\Z)\cong H^*(BT;\Z)$, we consider the ring $H_T^*(M^T; \Z)$ as the 
ring 
of 
maps from $M^T$ to $H^*(BT;\Z)$, denoted by
$$\operatorname{Maps}\left( M^T, H^*(BT;\Z)\right).$$
So any class $\alpha\in H_T^*(M;\Z)$ is 
completely determined by the restrictions
\begin{align*}
i^*_{\{p\}}(\alpha) \in H^*_T(\{p\}; \Z) \cong H^*(BT;\Z)
\end{align*}  
for all $p\in M^T$, where $i^*_{\{p\}}$ is the map induced by the inclusion $p\hookrightarrow M$. For simplicity, in 
the following we write $\alpha(p)$ instead of $i^*_{\{p\}}(\alpha)$. Since we can consider $H^*_T(M;\Z)$ as a 
subring of $\operatorname{Maps}\left( M^T, H^*(BT;\Z)\right)$, we have that 
\begin{align*}
H^{2i}_T(M;\Z)= H_T^*(M;\Z)	\cap \operatorname{Maps}\left( M^T, H^{2i}(BT;\Z)\right) .
\end{align*}

We use results by Kirwan \cite{Kirwan} on the existence of 
so-called Kirwan classes. We repeat the definition of Kirwan classes. Let $\tham$ be a compact Hamiltonian 
$T$-space with only isolated fixed points and let $\xi \in \mathfrak{t}$ be a generic vector. For each  $p\in M^T$ 
the \textbf{equivariant Euler class of negative tangent bundle} at $p$ with respect to $\phi^\xi$ is
\begin{align*}
\Lambda_p^-= \prod_{\substack{ i=1,...,n\\\left\langle a_{p_i}, \xi\right\rangle<0  }} \alpha_{p,i},
\end{align*}
where $\alpha_{p,1},...,\alpha_{p,n}$ are the weights of the $T$-representation on $T_pM$.
Here, the empty product is equal to the multiplicative identity of $H^*(BT;\Z)$.

\begin{lemma}\label{Lemma: Kirwan Classes}(Kirwan \cite{Kirwan})
	Let $\tham$ be a compact Hamiltonian $T$-space with only isolated fixed points and let $\xi \in \mathfrak{t}$ be a 
	generic vector. For every fixed point $p\in M^T$, there exists an equivariant  class $\gamma_p\in 
	H_T^{2\lambda(p)}(M;\Z)$ such that
	\begin{itemize}
		\item[(i)] $\gamma_p(p)=\Lambda_p^-$ and 
		\item[(ii)] $\gamma_p(q)=0$  for every fixed point $q\in M^T\setminus\{p\}$ with $\phi^\xi(q)\leq\phi^\xi(p)$. 
	\end{itemize}
	Moreover, for any choice of such classes,  the set $\{\gamma_p\}_{p\in M^T}$ is a basis for $H_T^*(M;\Z)$ as a 
	module over 
	$H^*(BT;\Z)$.
\end{lemma}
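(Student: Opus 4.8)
The plan is to prove this by equivariant Morse theory for the moment-map component $f:=\phi^\xi$, in the spirit of Atiyah--Bott \cite{AtiyahBott} and Kirwan \cite{Kirwan}. As recalled in Subsection \ref{SubSec: Generic Vectors and Morse Theory}, $f$ is a Morse function whose critical set is exactly $M^T$ and whose critical point $p$ has even Morse index $2\lambda(p)$; the equivariant Euler class of its negative normal bundle is $\Lambda_p^-$. Let $c_1<\cdots<c_r$ be the distinct critical values of $f$ and, for $1\le k\le r$, let $M_k:=\{x\in M\mid f(x)\le c_k+\varepsilon\}$ for a small $\varepsilon>0$, so that $M_k$ is obtained from $M_{k-1}$ by attaching the negative disk bundles of the isolated critical points on the level $c_k$. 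Since these critical points have disjoint local Morse data, the Thom isomorphism identifies
\[
H_T^*(M_k,M_{k-1};\Z)\ \cong\ \bigoplus_{p\,:\,f(p)=c_k} H_T^{*-2\lambda(p)}(\{p\};\Z),
\]
a free $H^*(BT;\Z)$-module generated by the equivariant Thom classes $\tau_p$ and concentrated in even degrees.

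The key technical point, and the main obstacle in passing from $\Q$ to $\Z$, is that $f$ is equivariantly perfect over $\Z$. Under the above identification, the composite of $\iota_k\colon H_T^*(M_k,M_{k-1};\Z)\to H_T^*(M_k;\Z)$ with the restriction to a fixed point $p$ on level $c_k$ carries the summand $\tau_p$ to $\Lambda_p^-$ and every other summand to $0$. Assembling these over all $p$ on the level, one obtains a map to $\bigoplus_{p}H^*(BT;\Z)$ that is diagonal with diagonal entries multiplication by $\Lambda_p^-$; as each $\Lambda_p^-$ is a nonzero product of weights in $H^*(BT;\Z)\cong\Z[x_1,\dots,x_d]$, an integral domain, it is a non-zero-divisor, so this map—and therefore $\iota_k$—is injective \cite{AtiyahBott}. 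Since all groups involved live in even degrees, the connecting homomorphisms of the long exact sequence of the pair $(M_k,M_{k-1})$ vanish, and it collapses to short exact sequences
\[
0\longrightarrow H_T^*(M_k,M_{k-1};\Z)\xrightarrow{\ \iota_k\ }H_T^*(M_k;\Z)\xrightarrow{\ r_k\ }H_T^*(M_{k-1};\Z)\longrightarrow 0 .
\]
In particular each $r_k$ is surjective and each $H_T^*(M_k;\Z)$ is a free $H^*(BT;\Z)$-module.

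I would then construct the classes as follows. For a fixed point $p$ on level $c_k$, set $\bar\tau_p:=\iota_k(\tau_p)\in H_T^{2\lambda(p)}(M_k;\Z)$. Using the surjectivity of the restrictions $r_{k+1},\dots,r_r$, lift $\bar\tau_p$ successively to a class $\gamma_p\in H_T^{2\lambda(p)}(M;\Z)$ whose restriction to $M_k$ is $\bar\tau_p$. By construction $\gamma_p$ restricts to $\Lambda_p^-$ at $p$, giving (i); it vanishes on $M_{k-1}$, hence at every fixed point strictly below level $c_k$, while at the remaining fixed points on level $c_k$ it vanishes because $\bar\tau_p$ lies in the $p$-summand above. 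This is precisely condition (ii).

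Finally I would verify the basis statement. For linear independence, suppose $\sum_{p}c_p\gamma_p=0$ with $c_p\in H^*(BT;\Z)$ not all zero, and let $c_{\min}$ be the smallest value $f(q)$ among the $q$ with $c_q\ne0$. Evaluating the relation at any such $q$ with $f(q)=c_{\min}$ kills every term except $c_q\Lambda_q^-$: the summands with $f(p)>c_{\min}$ vanish at $q$ by (ii), those with $f(p)<c_{\min}$ have $c_p=0$, and the summands on the level $c_{\min}$ other than $q$ vanish at $q$ again by (ii)—here the inequality $\le$ in (ii) is exactly what is needed. Hence $c_q\Lambda_q^-=0$, and since $\Lambda_q^-$ is a non-zero-divisor, $c_q=0$, a contradiction. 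For spanning, I argue by induction on $k$ that $\{\gamma_p|_{M_k}\}_{f(p)\le c_k}$ is an $H^*(BT;\Z)$-basis of $H_T^*(M_k;\Z)$: in the split short exact sequence above, $r_k$ maps $\gamma_p|_{M_k}\mapsto\gamma_p|_{M_{k-1}}$ for $f(p)<c_k$, giving a basis of $H_T^*(M_{k-1};\Z)$ by induction, while the classes $\bar\tau_p=\gamma_p|_{M_k}$ with $f(p)=c_k$ freely generate $\operatorname{im}\iota_k=\ker r_k$. Taking $k=r$, so that $M_r=M$, shows that $\{\gamma_p\}_{p\in M^T}$ is a basis of $H_T^*(M;\Z)$ over $H^*(BT;\Z)$, as claimed.
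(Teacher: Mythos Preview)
The paper does not prove this lemma; it simply attributes the result to Kirwan \cite{Kirwan} and uses it as a black box. Your proposal supplies the standard equivariant Morse-theoretic argument (the Atiyah--Bott self-completing principle applied to $\phi^\xi$), which is precisely the argument underlying Kirwan's original result, so in that sense your approach is exactly the expected one.

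Your argument is essentially correct, but two small points deserve attention. First, the sentence ``Since all groups involved live in even degrees, the connecting homomorphisms \dots\ vanish'' is not the cleanest justification: you do not know a priori that $H_T^*(M_{k-1};\Z)$ is concentrated in even degrees. The vanishing of the connecting map follows directly from the injectivity of $\iota_k$ that you have just established (this is the whole point of the self-completing argument), and the even-degree concentration then follows by induction. Second, the lemma asserts that \emph{any} choice of classes satisfying (i) and (ii) yields a basis, whereas your spanning argument is specific to the classes you constructed. To close this gap, note that if $\{\gamma'_p\}$ is another such family and you expand $\gamma'_p=\sum_q c_{pq}\gamma_q$ in your constructed basis, then evaluating successively at fixed points (ordered by $\phi^\xi$, using both (i) and the $\leq$ in (ii)) shows that the transition matrix $(c_{pq})$ is upper-triangular with $1$'s on the diagonal, hence invertible over $H^*(BT;\Z)$; thus $\{\gamma'_p\}$ is also a basis. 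Your linear independence argument already works for arbitrary such families.
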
 
A class that satisfies properties (i) and (ii) of Lemma \ref{Lemma: Kirwan Classes} is called a \textbf{Kirwan 
	class} at $p$. 

In general, Kirwan classes are not unique. Let  $p$ and $q$ be fixed points and let $\gamma_p$ and $\gamma_{q}$ be 
Kirwan classes at $p$ and $q$. If $\lambda(p)\leq\lambda(q)$ and $\phi^\xi(p)\leq \phi^\xi(q)$, then 
$\gamma_p+f\cdot\gamma_q$ is also a Kirwan class at $p$ for any $f\in H^{2(\lambda(p)-\lambda(q))}(BT;\Z)$. 
In order to recover the multiplicative structure of $H_T^*(M;\Z)$  it is enough to compute the 
restrictions $\gamma_p(q)$ for all $p,q\in M^T$ for some set of Kirwan classes $\{\gamma_p\}_{p\in M^T}$. 
In this case one can easily compute the\textbf{ equivariant structure constants} with respect to this basis; these are, 
for all $q,p,r\in M^T$,  the unique elements $c^r_{p,q}\in H^*(BT;\Z)$ such that $\gamma_p\cdot \gamma_q= \sum_{r\in 
	M^T}c^r_{p,q} \gamma_r$. Furthermore, if one knows these equivariant structure constants, then the ordinary 
cohomology ring is also known. Indeed, by the Kirwan Surjectivity Theorem \cite{Kirwan}, the restriction map
\begin{align*}
r^*:H_T^*(M;\Z) \rightarrow H^*(M;\Z)
\end{align*}
is surjective and its kernel is the ideal generated by $H^2(BT;\Z)$. If we denote by $\tau_p$  the image of $\gamma_p$ 
for $p\in M^T$, then $H^{2i}(M;\Z)\cong \Z^{b_{2i}(M)}$ is the free abelian group generated by the elements $\tau_p$ 
with 
$\lambda(p)=i$ and $H^{odd}(M;\Z)=0$. The multiplicative structure of $H^*(M;\Z)$ is given by
\begin{align*}
\tau_p\cdot \tau_q = \sum_{\substack{r\in M^T \\ \lambda(r)=\lambda(p)+\lambda(q)}} c_{p,q}^r \tau_r\, ,
\end{align*}
where the empty sum is equal to zero.
\begin{remark}\label{Rem: Compute classes with respect to basis}
	Let $\alpha\in H_T^*(M;\Z)$ be a class and let $\{\gamma_p\}_{p\in M^T}$ be a set of Kirwan classes.
	For each $p\in M^T$, there exists a unique element $c_p\in H^*(BT;\Z)$ such that  $\alpha= \sum_{p \in M^T} 
	c_p\cdot 
	\gamma_p$. These 
	coefficients can be computed as follows. Given $p\in M^T$ and suppose that we already know $c_q$ for all 
	$q\in 
	M^T$ with $\phi^\xi(q)< \phi^\xi(p)$. We have
	\begin{align}\label{EQ1:Rem: Compute classes with respect to basis}
	\alpha(p)= c_p \cdot \Lambda_p^- + \sum_{\substack{q\in M^T \\\phi^\xi(q)< \phi^\xi(p) }} c_q \cdot \gamma_{q}(p).
	\end{align}
	Since $\Lambda_p^-\neq 0$, $c_p$ can be simply recovered from equation  \eqref{EQ1:Rem: Compute classes 
		with respect to basis}.
\end{remark} 
A simple consequence of \eqref{EQ1:Rem: Compute classes with respect to basis} is the following.
\begin{lemma}\label{Lemma: Properties of classes}
	Let $\tham$ be a Hamiltonian $T$-space with only isolated fixed points, let $\xi \in \mathfrak{t}$ be a generic 
	vector 
	and let $\alpha \in H_T^*(M;\Z)$.
	\begin{itemize}
		\item[(i)] If $p$ is a fixed point such that $\alpha(q)=0$ for all $q\in M^T$ with $\phi^\xi(q)<\phi^\xi(p)$,
		then $\alpha(p)=f\cdot \Lambda_p^-$ for some $f\in H^*(BT;\Z)$.
		\item[(ii)] If $\alpha \in H^{2i}_T(M;\Z)$ and $\alpha(q)=0$ for all $q\in M^T$ with $\lambda(q)\leq i$, then 
		$\alpha=0$.
	\end{itemize}
\end{lemma}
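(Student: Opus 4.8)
The plan is to read both statements off the inductive coefficient formula \eqref{EQ1:Rem: Compute classes with respect to basis} that expresses an equivariant class in a Kirwan basis, using only the fact that $H^*(BT;\Z)\cong\Z[x_1,\dots,x_d]$ is an integral domain and that each negative Euler class $\Lambda_p^-$ is a nonzero homogeneous element (being a product of nonzero weights, with the empty product equal to $1$).

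For (i), I would fix a set of Kirwan classes $\{\gamma_p\}_{p\in M^T}$ as in Lemma \ref{Lemma: Kirwan Classes} and write $\alpha=\sum_{r\in M^T}c_r\gamma_r$ with $c_r\in H^*(BT;\Z)$. The claim is that $c_q=0$ for every $q$ with $\phi^\xi(q)<\phi^\xi(p)$, which I would prove by induction on the value of $\phi^\xi$. For such a $q$, formula \eqref{EQ1:Rem: Compute classes with respect to basis} reads $\alpha(q)=c_q\Lambda_q^-+\sum_{\phi^\xi(q')<\phi^\xi(q)}c_{q'}\gamma_{q'}(q)$; the hypothesis $\alpha(q)=0$ together with the inductive vanishing of the $c_{q'}$ leaves $c_q\Lambda_q^-=0$, and since $\Lambda_q^-\neq0$ and the coefficient ring has no zero divisors, $c_q=0$. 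Feeding this back into \eqref{EQ1:Rem: Compute classes with respect to basis} at $p$ collapses the whole sum and yields $\alpha(p)=c_p\Lambda_p^-$, so $f:=c_p$ works.

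For (ii), I would deduce it from (i) by a minimality-and-degree argument rather than repeating the induction. Assuming $\alpha\neq0$, I pick a fixed point $p$ with $\alpha(p)\neq0$ at which $\phi^\xi$ is minimal among such points; then $\alpha(q)=0$ for all $q$ with $\phi^\xi(q)<\phi^\xi(p)$, so (i) gives $\alpha(p)=f\cdot\Lambda_p^-$ for some $f\in H^*(BT;\Z)$. Since $\alpha\in H^{2i}_T(M;\Z)$, the restriction $\alpha(p)$ lies in $H^{2i}(BT;\Z)$, while $\Lambda_p^-\in H^{2\lambda(p)}(BT;\Z)$; as $\alpha(p)\neq0$ forces $f\neq0$, comparing degrees gives $2i\geq2\lambda(p)$, i.e. $\lambda(p)\leq i$. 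But then the hypothesis of (ii) forces $\alpha(p)=0$, a contradiction, so $\alpha=0$.

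The arguments are genuinely short; the only step that requires care is the ring-theoretic input, namely that $H^*(BT;\Z)$ has no zero divisors and that each $\Lambda_p^-$ is a nonzero homogeneous element, which is exactly what lets one cancel $\Lambda_p^-$ in both parts. No localization or passage to rational coefficients is needed, so the integrality of the statement is preserved throughout.
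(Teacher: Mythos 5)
Your proof is correct and follows essentially the same route as the paper, which simply declares the lemma to be a direct consequence of the coefficient recursion \eqref{EQ1:Rem: Compute classes with respect to basis}; your argument just makes the induction on the $\phi^\xi$-order and the cancellation of the nonzero $\Lambda_q^-$ in the integral domain $H^*(BT;\Z)$ explicit. Part (ii) via a minimal counterexample plus the degree comparison $2i\geq 2\lambda(p)$ is an equivalent repackaging of the intended observation that the coefficient $c_p$ would have to live in negative degree.
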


\end{subsubsection}
 
 \end{subsection}
\end{section}
	
\begin{section}{The Equivariant Cohomology of a Hamiltonian GKM Space is determined by its GKM Graph}\label{sec:equiv}
	
In this section, we prove the following proposition that is needed for the proof of Theorems \ref{ManiThm: coprime 
weights GKM graph determines eq. cohomology} and \ref{ManiThm: dim six gkm graph determines diffeomorphisms type}
that we give at the end of this section.

\begin{proposition}\label{Pro: complexity one proper subtorus}
Let $\tham$ be a compact Hamiltonian $T$-space of complexity zero or one. Let $p\in M$ be a point that does not lie in 
the one 
skeleton
$$M_{(1)}= \left\lbrace q \in M \, \vert \, \dim(T \cdot q) \leq 1 \right\rbrace.$$
Then the stabilizer of $p$ is contained in a proper subtorus of $T$.
\end{proposition}

\begin{subsection}{Subgroups of $T $ defined by subsets of $\ell_{T}^{*}$}
Let $T$ be a compact $d$-dimensional torus. We identify its Lie algebra 
$\mathfrak{t}$ with $\R^d$. Let $ \exp: \mathfrak t \rightarrow T$
be the exponential map. The lattice of $T$ is 
$$ \ell_T= \ker \left( \exp \right) \cong \Z^d.$$
Let $\mathfrak{t}^*$ be the dual Lie algebra. The dual lattice is
$$\ell_T^* = \left\lbrace \alpha \in \mathfrak{t}^* \, \vert \, \alpha(\xi)\in \Z \, \text{for all }\xi \in 
\ell_T\right\rbrace \cong \Z^d. $$ 
A subset $A \subset \ell_T^*$  defines a  subgroup of $\mathfrak t$, namely
\begin{align*}
\mathcal{K}(A):=\left\lbrace  \xi \in \mathfrak t \, \vert \, \alpha(\xi)\in \Z \,\, \text{for all } 
\alpha \in A\right\rbrace .
\end{align*}
If $A$ is empty, we set $\mathcal K (A)=\mathfrak t$. Through the exponential map $A$ defines also a subgroup 
of $T$, namely $\exp \left(  \mathcal K(A)\right)$ .

\begin{example}\label{Ex: expK1}
If the $\Z$-span of the elements of $A$ is equal to $\ell_T ^*$, then $\mathcal K (A)$ is $\ell_T $ and 
$\exp \left( \mathcal K (A)\right) $ is trivial.
\end{example}

A vector in $\ell_T^*$ is called \textbf{primitive} if  there exists no integer $p\geq2$
that divides the vector in $\ell_T^*\cong \Z^d$. In particular, a primitive vector is non-zero. 
Let $\alpha\in \ell_T^*$ be a non-zero vector. There exist a unique primitive vector $v$ and a unique integer $k\geq1$ 
such that $\alpha=k \cdot v$. Namely, $k$ is the largest integer that divides $\alpha$ in $\ell_T^*$. In particular,
$\alpha$ is primitive if and only if $k=1$.

\begin{example}\label{Ex: expK2}
Let $\alpha \in \ell_T^*$ be a non-zero vector. Let $k\geq 1$ be the unique integer such that $\alpha= k \cdot v_1$,
where $v_1\in \ell_{T}^*$ is primitive. Since $v_1$ is primitive, there exist vectors $v_2,...,v_d$ such 
that $v_1,...,v_d$ is a $\Z$-basis of $\ell_{T}^*$. Let $\xi_1,...,\xi_d\in \ell_T$ be the dual basis, i.e.,
$v_i(\xi_i)=1$ and $v_i(\xi_j)=0$ if $i\neq j$. We have that
\begin{align*}
&\left\lbrace  \xi \in \mathfrak t\, \vert \, \alpha(\xi) =0\right\rbrace = 
\left\lbrace \lambda_2\cdot \xi_2+...+ \lambda_d\cdot \xi_d \, \vert \, \lambda_2,..., \lambda_d \in 
\R\right\rbrace \quad \text{and} \\
&\mathcal K (\{\alpha\})= \frac{1}{k}\Z \cdot \xi_1 \oplus \left\lbrace  \xi \in \mathfrak t\, \vert \, 
\alpha(\xi) 
=0\right\rbrace.
\end{align*} 
If $\alpha$ is not primitive, i.e, $k\geq2$, then 
$\exp \left( \mathcal K (\{\alpha\})\right) =Z \otimes H,$ 
where $Z\cong \Z_k$ is the cyclic subgroup of $T$ that is generated by $\exp \left( \frac{1}{k}\cdot \xi_1\right)$ and 
$H$ is the codimensional one subtorus of $T$ whose Lie algebra is 
$\left\lbrace \xi \in \mathfrak t \, \vert \, \alpha(\xi)=0 \right\rbrace.$
If $\alpha$ is primitive, i.e., $k=1$, then $\exp\left(  \mathcal K (\{\alpha\})\right) =H$.
\end{example}
		
\begin{definition}\label{Def: coprime}
Given two vectors $\alpha_1,\alpha_2\in \ell_{T}^*$, we say that the vectors are \textbf{coprime} if there 
exists no integer $p\in \Z_{\geq 2}$ that divides both vectors in $\ell_T^*$. 
\end{definition}

\begin{remark}\label{Rem: Coprime and primitive vectors}
Let $\alpha_1,\alpha_2\in \ell_{T}^*$ be two vectors. If $\alpha_1$ is primitive, then the vectors are coprime.
On the other side, if $\alpha_1$ is zero, then the vectors are coprime if and only if $\alpha_2$ is primitive.
Moreover, if the vectors are linearly dependent, then there exist a primitive vector $v$ and integers $m_1,m_2\in \Z$
such that $\alpha_1=m_1 \cdot v$ and $\alpha_2=m_2 \cdot v$. In this case, $\alpha_1$ and $\alpha_2$ are coprime
if and only if  $\gcd(m_1,m_2)=1$ \footnote{If $m_1=m_2=0$, then $\gcd(m_1,m_2)=0$.}. 
\end{remark}
		
\begin{lemma}\label{Lem: two weights cotained in subtours}
Let $\alpha_1$ and $\alpha_2$ be two vectors in $\ell_T^* \cong \Z^d$. \hspace{-0.2cm}The vectors are coprime if and 
only if  $\exp \left( \mathcal K (\{\alpha_1, \alpha_2\})\right)$ is contained in a proper subtorus of $T$. 
\end{lemma}
\begin{proof}
If 	$\alpha_1$ and $\alpha_2$ are coprime, then by Lemma \ref{Lemma: existence primitive vector} below, there exists a 
primitive vector $\alpha \in \ell_T^*$ that lies in the $\Z$-span of $\alpha_1$ and 
$\alpha_2$. Hence, for any $\xi \in \mathcal K (\{\alpha_1, \alpha_2\}) $, we have $\alpha(\xi)\in \Z$. This 
implies
$$ \mathcal K (\{\alpha_1, \alpha_2\}) \subset \mathcal K (\{\alpha\}) \quad \text{and} \quad 
\exp\left( \mathcal K (\{\alpha_1, \alpha_2\}) \right)  \subset \exp\left(  \mathcal K (\{\alpha\})\right).$$
Since $\alpha$ is primitive, $\exp\left( \mathcal K (\{\alpha\})\right)$ is a codimensional one subtorus  of $T$ (see 
Example \ref{Ex: expK2}).\\
\noindent On the other side, assume that $\alpha_1$ and $\alpha_2$ are not coprime. There exists an integer $p\geq 2$
that divides $\alpha_1$ and $\alpha_2$ in $\ell_T^*$. Hence, $\mathcal K (\{\alpha_1, \alpha_2\})$ contains 
$\frac{1}{p}\ell_T$ and $\exp\left( \mathcal K (\{\alpha_1, \alpha_2\}) \right)$ contains all elements of $T$ of order 
$p$. Hence, $\exp\left( \mathcal K (\{\alpha_1, \alpha_2\}) \right)$ does not lie in a proper subtorus of $T$.
\end{proof}

\begin{lemma}\label{Lemma: existence primitive vector}
Let $\alpha_1$ and $\alpha_2$ be two vectors in $\ell_T^* \cong \Z^d$ that are 
coprime. Then there exists a primitive vector in the $\Z$-span of $\alpha_1$ and $\alpha_2$.
\end{lemma}
\begin{proof}
First assume that $\alpha_1$ and $\alpha_2$ are linearly dependent. Then there exist a primitive vector $v$ and 
integers $m_1,m_2\in \Z$ such that $\alpha_1=m_1 \cdot v$, $\alpha_2=m_2 \cdot v$ and $\gcd(m_1,m_2)=1$ (see Remark
\ref{Rem: Coprime and primitive vectors}). Since $\gcd(m_1,m_2)=1$, there exist integers $k_1,k_2 \in \Z$ such that 
$k_1\cdot m_1+k_2\cdot m_2=1$. Hence, $k_1\cdot \alpha_1+ k_2 \cdot \alpha_2=v.$\\ 
\hspace{0.3cm}Now assume that $\alpha_1$ and $\alpha_2$ are linearly independent. In particular, both vectors are 
non-zero. Let $k\geq1$ be the unique integer and let $v_1\in \ell_{T}^*$ be the unique primitive vector such that 
$\alpha_1=k\cdot v_1$. Since $v_1$ is primitive, it can be extended to a $\Z$-basis $v_1,v'_2...,v'_d$ of $\ell_T^*$.
So we have $\alpha_2=m\cdot v_1+ w$, where $m\in \Z$ and $w$ lies in the $\Z$-span of $v'_2...,v'_d$. Since $\alpha_1$
and $\alpha_2$ are linearly independent, the vector $w$ is non-zero. So let $l>0$ be the unique integer and $v_2$ be 
the unique primitive vector such that $w=l\cdot v_2$. In particular, $v_2$ lies in the $\Z$-span of $v'_2...,v'_d$. 
Hence, $v_2$ can be extended to a $\Z$-basis $v_2,...,v_d$ of the $\Z$-span of $v'_2...,v'_d$. In particular, 
$v_1,...,v_d$ is a $\Z$-basis of $\ell_T^*$ and 
\begin{align*}
\alpha_1=k \cdot v_1 \quad \text{and} \quad \alpha_2=m\cdot v_1 + l\cdot v_2.
\end{align*}
Since $\alpha_1$ and $\alpha_2$  are linearly independent and  coprime, we have that 
$k,l\neq 0$ and $\gcd(k,m,l)=1$.
Consider the following set 
$$S=\left\lbrace  q\in \Z_{\geq2}\, \text{is prime}\, \vert\,  q\mid l, \, q\nmid k \, \text{and} \,q\nmid 
m\right\rbrace. $$
If $S$ is empty, we set $p=1$, otherwise we set $p=\prod_{q\in S} q$. By construction $p$ is an integer such 
that for any prime integer $q$ the following holds
\begin{align*}
q\mid p \quad \text{if and only if }\quad \left(  q\mid l, \, q\nmid k \,\, \text{and}\,\, q\nmid m\right). 
\end{align*}

The vector 
\begin{align}\label{EQ:Lemma: existence primitive vector}
p\cdot\alpha_1+ \alpha_2= (pk+m)\cdot v_1+ l\cdot v_2
\end{align}
lies in the $\Z$-span of $\alpha_1$ and $\alpha_2$. We show that \eqref{EQ:Lemma: existence primitive vector} is 
primitive.\\ 
Since $\ell_T^*\cong \Z^d$ is a lattice in the $\R$-vector space  $\mathfrak t^* \cong \R^d$ of full 
rank
and $v_1,\dots,v_d$ is a $\Z$-basis of $\ell_T^*$ , $v_1,\dots,v_d$ is an $\R$-basis of  $\mathfrak t^*$, i.e., for a 
vector 
$v\in \mathfrak t^*$ there exist unique coefficients 
$\lambda_1,...,\lambda_d\in \R$ such that 
$$v=\lambda_1\cdot v_1+...+ \lambda_d \cdot v_d.$$
In particular, $v\in \ell_{T}^*$ if and only if $\lambda_1,...,\lambda_d\in \Z$.\\
Hence, in order to prove that the vector \eqref{EQ:Lemma: existence primitive vector} is primitive,  we need to show 
that any prime integer that divides $l$, does not divide $pk+m$.

So let $q$ be a prime integer that divides $l$. There are three case, namely $\left( q \mid k\right) $ or 
$\left( q\mid m\right) $ or both do not hold.\\
\noindent\textbf{1.case ($q \mid k$):} If $q$ divides $k$, then $q$ does not divide $m$, because $\gcd(k,m,l)=1$.
Hence, $q$ does not divide  $pk+m$.\\
\noindent\textbf{2.case ($q \mid m$):} If $q$ divides $m$, then
\begin{itemize}
\item $q$ does not divide $k$, because $\gcd(k,m,l)=1$,
\item $q$ does not divide $p$, by the construction of $p$.
\end{itemize}
Since $q$ is prime, it does not divide $pk$ and $pk+m$.\\
\noindent\textbf{3.case ($q \nmid k$ and $q \nmid m$):} By construction $q$ divides $p$.
Hence, $q$ does not divide  $pk+m$.
\end{proof}
\end{subsection}

\begin{subsection}{Isotropy Groups of Compact Hamiltonian $T$-Spaces}
In the following we relate the isotropy groups of a (compact) Hamiltonian $T$-space and the weights of the 
$T$-representations on tangent spaces of fixed points. Let $\tham$ be a Hamiltonian $T$-space of dimension 
$2n$ and let $p\in M^T$ be a fixed point. There exist complex coordinates $z_1,\dots,z_n$ centered at $p$ such that the 
$T$-action is given by
$$\exp (\xi)\cdot (z_1,...,z_n)= (\operatorname{e}^{2\pi i \alpha_{p,1}(\xi)}\cdot z_1,...,\operatorname{e}^{2\pi i 
\alpha_{p,n}(\xi)}\cdot z_n),$$ 
where $\alpha_{p,1},\dots, \alpha_{p,n}\in \ell_{T}^*$ are the weights of the $T$-representation on $T_pM$.
For a point $q=(z_1,...,z_n)$, its stabilizer is given by
$$ \exp \left( \mathcal K (\{a_{p,j}\, \vert z_j \neq 0\})\right).$$
We obtain the following lemma.
\begin{lemma}\label{Lem: isogroups near p}
Let $\tham$ be a Hamiltonian $T$-space and let $p\in M^T$ be a fixed point. There exists an open neighborhood $U$ of 
$p$ such that the following hold.
\begin{itemize}
\item For any $q\in U$ there exists a subset $A$ of the weights of the $T$-representation on $T_pM$ such that the 
stabilizer of $q$ is $\exp \left( \mathcal K (A)\right) $. 
\item For any subset  $A$ of the weights of the $T$-representation on $T_pM$, there exists a point $q\in U$ such that 
the stabilizer of $q$ is $\exp \left( \mathcal K (A)\right)$.
\end{itemize}
\end{lemma}

Moreover, if $\tham$ is also compact, then the following holds.

\begin{lemma}\label{Lemma: iso Groups Hamm}
Let $\tham$ be a compact Hamiltonian $T$-space and let $H$ be an isotropy  subgroup of $T$. Then there exists 
a  fixed point  $p$ and a subset $A$ of the set of the weights of the $T$-representation on $T_pM$ such that 
$$H= \exp \left( \mathcal K (A)\right).$$
\end{lemma}
\begin{proof}
Since $H$ is an isotropy  subgroup of $T$, there exists a point $q\in M$ such that the stabilizer of $q$ is $H$.
Let $X$ be the connected component of $M^H$ that contains $q$. So $X$ is a compact and symplectic 
submanifold of $M$. Since $T$ is abelian, $X$ is also $T$-invariant. The $T$-action on $(X,\omega\vert_X)$
is also Hamiltonian. Hence, $X\cap M^T$ is not empty, because $X$ is compact. Moreover, since $T$ is abelian, by the 
Principal Orbit Theorem \cite[Theorem 2.8.5]{DK},
the set of points of $X$ whose stabilizer is $H$ is dense in $X$. 
Let $p\in X \cap M^T$ be a fixed point and let $U$ be an open neighborhood of $p$ in $M$ as in Lemma \ref{Lem: 
isogroups near p}. Now there 
exists a point $q'\in U$  whose stabilizer  is $H$ and the lemma follows.
\end{proof}

\begin{proposition}\label{Pro: proper subtours = coprime}
Let $\tham$ be a compact Hamiltonian $T$-space. The following two conditions are equivalent.
\begin{itemize}
\item[(1)] Let  $p\in M^T$ be a fixed point and let $\alpha_1, \alpha_2 \in \ell_T^*$ two linearly 
independent vectors that occur as weights of the $T$-representation on $T_pM$. Then $\alpha_1$ and $\alpha_2$ are 
coprime.
\item[(2)] For any point $q\in M$ that does not lie in the one skeleton 
$$M_{(1)}= \left\lbrace p\in M \, \vert \, \dim (T\cdot p) \leq 1 \right\rbrace $$
of $M$, its stabilizer is contained in a proper subtorus of $T$.
\end{itemize}
\end{proposition}
\begin{proof}
$\mathbf{(1)\implies (2)}:$ Assume that $(1)$ holds. Let $q\in M$ be a point and let $H$ be its stabilizer. 
By Lemma \ref{Lemma: iso Groups Hamm} there exist a fixed point $p\in M^T$ and a subset $A$ of the weights 
$\alpha_{p,1},..., \alpha_{p,n}$ of the 
$T$-representation on $T_pM$ such that $H=\exp \left( \mathcal K (A)\right).$ If the dimension of the $\R$-span
of the elements in $A$ is equal to zero resp. one, then the dimension of $H$ is $d$ resp. $d-1$, where $d=\dim (T)$. 
Hence, $q \in M_{(1)}$.
So let us assume that the dimension of the $\R$-span
of the elements in $A$ is greater or equal to $2$. Hence, $A$ contains two weights that are linearly 
independent, say $\alpha_{p,1}, \alpha_{p,2}\in A$. Because $\{\alpha_{p,1}, \alpha_{p,2}\} \subset A$,
we have $$ \mathcal K (A) \subset \mathcal K (\{\alpha_{p,1},\alpha_{p,2} \})\quad \text{and}
		\quad \exp \left( \mathcal K (A) \right) \subset \exp  \left( \mathcal K (\{\alpha_{p,1},\alpha_{p,2} 
		\})\right) . $$
Since the condition $(1)$ holds,   $\alpha_{p,1}$ and $ \alpha_{p,2}$ are  coprime. Hence, by Lemma \ref{Lem: two 
weights cotained in subtours}, 
$\exp  \left( \mathcal K (\{\alpha_{p,1},\alpha_{p,2} \})\right)$ is contained in a proper subtorus 
of $T$ and so is $H=\exp \left( \mathcal K (A)\right).$\\
\noindent { $\mathbf{(2)\implies (1)}:$} Assume that $(2)$ holds. Let $\alpha_1, \alpha_2\in \ell_T^*$ be two 
linearly independent vectors that occur as weights of the $T$-representation on $T_pM$ for some fixed point $p$. We 
need to show that these weights are coprime. By Lemma \ref{Lem: isogroups near p}, there exists a point $q$ whose 
stabilizer is  $H=\exp  \left( \mathcal K 
(\{\alpha_{p,1},\alpha_{p,2} \})\right)$. Since the weights are linearly independent, the dimension of $H$ is 
$d-2$ and $q\notin M_{(1)}$. Since $(2)$ holds, $H$ is contained in a proper subtorus of $T$. By Lemma \ref{Lem: two 
weights cotained in subtours}, we have that $\alpha_1$ and $\alpha_2$ are coprime. 
\end{proof}
\end{subsection}

\begin{subsection} {Proof of Proposition \ref{Pro: complexity one proper subtorus}}
The missing ingredient for the proof of Proposition \ref{Pro: complexity one proper subtorus} is the following lemma.

\begin{lemma}\label{Lemma: complexity one, zero dim six implies weights pairwise coprime}
Let $\tham$ be a Hamiltonian $T$-space. Assume that the complexity of the $T$-action is equal to zero or one.
Then for each fixed point $p\in M^T$ the weights of the $T$-representation on $T_pM$ are pairwise coprime.  
\end{lemma}
\begin{proof}
Let $2n$ be the dimension of the manifold $M$ and let $d$ be the dimension of the torus $T$. So the complexity 
of the $T$-action is $k=n-d$. Let $p\in M^T$ be a fixed point and let $\alpha_{p,1},\dots, \alpha_{p,n}$ be the 
weights of the $T$-representation on $T_pM$. By Lemma \ref{Lemma: Z-span weights}, the $\Z$-span of the weights 
$\alpha_{p,1},\dots, \alpha_{p,n}$ is equal to $\ell_T^*\cong \Z^d$. If the complexity is equal to zero, i.e., $d=n$, 
then the $n$ weights form a $\Z$-basis of $\ell_T^*$. In particular, all the weights are primitive and so they are 
pairwise coprime. Now assume that the complexity is equal to one, i.e., 
\vspace{-0.1cm}
$d=n-1$. Let $\beta_1,...,\beta_{n-1}$ be a $\Z$-basis of $\ell_T^*\cong \Z^{n-1}$ and let 
		\begin{align*}
		\operatorname{det}\colon \left( \ell_{T}^*\right)^{n-1}\rightarrow \Z
		\end{align*}
		be the determinant map such that
		\begin{align*}
		\operatorname{det}\left( \beta_1,...,\beta_{n-1}\right) =1.
		\end{align*}
		For each $i=1,...,n-1$ there exist integers $A_{i,1},...,A_{i,n}$ such that $\beta_i=\sum_{j=1}^{n} 
		A_{i,j}\cdot 
		\alpha_{p,j}$. Therefore, we have
		\begin{align}\label{EQ:Lemma: complexity one, zero dim six implies weights pairwise coprime}
		\operatorname{det}\left( \beta_1,...,\beta_{n-1}\right) \,=\, \sum_{j=1}^{n} C_j \cdot 
		\operatorname{det}\left( \alpha_{p,1},...,\widehat{\alpha_{p,j}},...,\alpha_{n}\right) ,
		\end{align}
		where $C_1,...,C_n$ are integers. Now let $m\in\Z \setminus \{0\}$ be an integer that divides at least 
		two of the weights $\alpha_{p,1},\dots, \alpha_{p,n}$ in $H^2(BT;\Z)$. Then each of the $n$ terms in the 
		right-hand 
		side of 
		\eqref{EQ:Lemma: complexity one, zero dim six implies weights pairwise coprime}
		is an integer multiple of $m$. Since the left-hand side of \eqref{EQ:Lemma: complexity one, zero dim six 
		implies 
			weights pairwise coprime} is equal to $1$, we have $m=\pm 1$. 
Hence, the weights are indeed pairwise coprime.
\end{proof}
	
\begin{proof}[Proof of Proposition \ref{Pro: complexity one proper subtorus}]
The proposition follows directly from Proposition \ref{Pro: proper subtours = coprime} and Lemma \ref{Lemma: complexity 
one, zero dim six implies weights pairwise coprime}.
\end{proof}

\end{subsection}

\begin{subsection}{Proof of Theorems \ref{ManiThm: coprime weights GKM graph determines eq. cohomology} and 
\ref{ManiThm: dim six gkm graph determines diffeomorphisms type}}
In this subsection  we deduce Theorems \ref{ManiThm: coprime weights GKM graph determines eq. cohomology} and 
\ref{ManiThm: dim six gkm graph determines diffeomorphisms type}  from results of Goertsches, Konstantis, 
and Zoller \cite{Goertsches}  and 
Proposition \ref{Pro: complexity one proper subtorus}. In order to do so, we need to clarify some notations.
In this article we consider Hamiltonian $T$-actions on compact symplectic manifolds that are GKM, i.e,  
Hamiltonian GKM spaces.  The GKM condition can be also defined for smooth $T$-actions on compact (non 
symplectic) manifolds. In the literature there are various definitions of GKM actions, e.g., sometimes it is assumed 
that the odd cohomology  of the underlying manifold vanishes and sometimes not. Here we refer to the GKM condition as 
in \cite{Goertsches}. Namely, a smooth torus action on a compact and orientable manifold is GKM if 
$M^T$ is a finite set of points, and the one skeleton $M_{(1)}$ is a finite union of T-invariant two-spheres.
In this case there exists a graph associated to the action, namely the unsigned
GKM graph. In the GKM graph, a vertex corresponds to a fixed point, an
(unoriented) edge between two vertices corresponds to an invariant
two-sphere $S$ connecting the corresponding two fixed points in $S$,
and any edge is labeled by the weight mod $\pm 1$ of the corresponding two-sphere.
A Hamiltonian GKM space is a GKM
space; indeed, the underlying manifold is symplectic hence orientable,
and, by definition, the manifold is compact, $M^T$ is a finite set of points and $M_{(1)}$
is a finite union of T-invariant two-spheres.
If a compact and orientable manifold with a GKM action admits 
a $T$-invariant almost complex structure, then the action admits a signed GKM graph (see \cite{Goertsches}). A compact 
Hamiltonian 
$T$-space admits a $T$-invariant almost complex structure and  the signed GKM graph coincides with our definition of 
a GKM  graph of a Hamiltonian GKM space. 
In particular, our definition of isomorphisms between Hamiltonian GKM graphs coincides 
with the definition of isomorphisms between signed GKM graphs in \cite{Goertsches}.       

\begin{proof}[Proof of Theorems \ref{ManiThm: coprime weights GKM graph determines eq. cohomology} and
\ref{ManiThm: dim six gkm graph determines diffeomorphisms type}]
Let  $(M_1, \omega_1, T, \phi_1)$ and $(M_2, \omega_2, T, \phi_2)$ be  Hamiltonian GKM spaces of complexity one or 
zero. By definition, a Hamiltonian GKM space is a compact, connected GKM space.
By Remark \ref{rem: simply connected}, the manifolds $M_1$ and $M_2$ are simply connected and $H^{odd}(M_1;\Z)=0$ 
and $H^{odd}(M_2;\Z)=0$. By Proposition \ref{Pro: complexity one proper subtorus},
for any $p\in M_1$ that does not lie in the one skeleton of $M_1$, its stabilizer lies in a proper subtorus of 
$T$ and the same holds for $M_2$.\\
 Now assume that there exists an isomorphism from the GKM graph of $(M_1, \omega_1, T, 
\phi_1)$ to the one of $(M_2, \omega_2, T, \phi_2)$. 
In the terms of \cite{Goertsches}, this is an isomorphism of signed GKM graph.
Then, by the proof of \cite[Proposition 3.4]{Goertsches}, this isomorphism
induces a ring isomorphism $H_T^*(M_1;\Z) \rightarrow H_T^*(M_2;\Z)$ that maps the 
equivariant Chern classes of $M_1$ to the ones of $M_2$.  Moreover, by \cite[Theorem 3.1 (a), (c)]{Goertsches}, the 
graph isomorphism induces a ring isomorphism $H^*(M_1;\Z) \rightarrow H^*(M_2;\Z)$ that maps the  Chern classes of 
$M_1$ to the ones of $M_2$. This proves our Theorem \ref{ManiThm: coprime weights GKM graph determines eq. 
cohomology}.\\
Now assume, in addition, that the dimension of $M_1$ and $M_2$ is six. 
By \cite[Theorem 3.1 (b)]{Goertsches} the ring isomorphism $H^*(M_1;\Z) \rightarrow H^*(M_2;\Z)$ is induced by a 
(non-equivariant) diffeomorphism $M_2\rightarrow M_1$.
This proves our Theorem \ref{ManiThm: dim six gkm graph determines diffeomorphisms type}.
Indeed, since by  Lemma \ref{Lemma: GKM forces  dim(T) geq 2}, the complexity of  a  six-dimensional 
Hamiltonian GKM space is one or zero.
\end{proof}
\end{subsection}
	
\end{section}

\begin{section}{Positive Hamiltonian GKM Spaces}\label{sec:pos}

In this section, we introduce the notion of being \textbf{positive} for Hamiltonian GKM spaces. In 
particular, we prove some properties of six-dimensional positive Hamiltonian GKM spaces. 
First, we define the first Chern Class map of a  Hamiltonian GKM space.

\begin{definition}\label{Definition: First Chern Class Map }
Let $\tham$ be a Hamiltonian GKM space, $(\Gamma_{GKM},\eta)$ its GKM graph, and  $c_1(M)$ 
the first Chern class of $(M,\omega)$. The \textbf{first Chern class map} of $\tham$ is the map $\mathcal{C}_1: 
E_{GKM} 
\rightarrow 
\Z$ given by 
\begin{align*}
\mathcal{C}_1(e)= \int_{S_{(p,q)}} i^*_{S_{(p,q)}} \left( c_1(M)\right) 
\end{align*}
for each edge $e=(p,q) \in E_{GKM}$, where $S_{(p,q)}$ is the unique $T$-invariant two-sphere fixed by a
codimensional one subtorus of $T$ that contains $p$ and $q$, and $i^*_{S_{(p,q)}}$ is the map induced on 
$H^2(\,\cdot\,; 
\Z)$ by the inclusion 
$i: S_{(p,q)}\hookrightarrow M$.
\end{definition}

\begin{definition}\label{Def: positive Hamiltonian GKM space}
A Hamiltonian GKM space is called \textbf{positive} if its first Chern class map  $\mathcal{C}_1: 
E_{GKM} \rightarrow \Z$ is positive, i.e., $\mathcal{C}_1(e)>0$ for all $e\in E_{GKM}$.
\end{definition}

Due to the ABBV localization formula, the first Chern Class map can be computed from the GKM graph.
This is the content of the following lemma. In particular, the GKM graph contains the information of whenever  the 
Hamiltonian GKM space is positive.

\begin{lemma}\label{Lemma: Compute first Chern Class map}
Consider a Hamiltonian GKM space $\tham$ with GKM graph $\GKM$.  For each $e=(p,q)\in E_{GKM}$ the following holds,
\begin{align}\label{EQ1:Lemma: Compute first Chern Class map}
\mathcal{C}_1(e)=\frac{\sum_{e'\in E_{GKM}^{p,i}} \eta (e')-\sum_{e'\in E_{GKM}^{q,i}} \eta (e')}{\eta(e)}.
\end{align}
\end{lemma}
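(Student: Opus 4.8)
The plan is to lift the ordinary first Chern class $c_1(M)$ to the equivariant first Chern class $c_1^T(M)\in H_T^2(M;\Z)$, evaluate the equivariant integral of its restriction to the invariant two-sphere $S_e$ by means of the ABBV localization formula (Theorem \ref{Thm:ABBV}), and finally identify this equivariant integral with the ordinary integral that defines $\mathcal{C}_1(e)$. This is the same localization computation already carried out in the proof of Lemma \ref{Lemma: divisble condition}, applied here to the specific class $c_1^T(M)$.

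First I would record two facts from Section 2. Writing $\alpha(p):=i^*_{\{p\}}(\alpha)$ as before, the restriction of $c_1^T(M)$ to a fixed point is the sum of the weights at that point; by Remark \ref{Rem: Properties GKM graphs}(i) these weights are exactly the labels of the edges issuing from $p$, so
$$c_1^T(M)(p)=\sum_{e'\in E_{GKM}^{p,i}}\eta(e') \qquad\text{and}\qquad c_1^T(M)(q)=\sum_{e'\in E_{GKM}^{q,i}}\eta(e').$$
Moreover $r^*(c_1^T(M))=c_1(M)$, where $r^*\colon H_T^*(M;\Z)\to H^*(M;\Z)$ is the restriction map. Next, with $e=(p,q)$ and $\beta:=i^*_{S_e}(c_1^T(M))\in H_T^2(S_e;\Z)$, I would apply ABBV to $S_e$, whose $T$-fixed set is $\{p,q\}$ with equivariant Euler classes of the normal bundles $T_pS_e$ and $T_qS_e$ equal to $\eta(p,q)=\eta(e)$ and $\eta(q,p)=-\eta(e)$ respectively. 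Exactly as in Lemma \ref{Lemma: divisble condition}, this yields
$$\int_{S_e}\beta=\frac{\beta(p)-\beta(q)}{\eta(e)}=\frac{c_1^T(M)(p)-c_1^T(M)(q)}{\eta(e)},$$
which is the right-hand side of \eqref{EQ1:Lemma: Compute first Chern Class map}; the quotient is a genuine element of $H^*(BT;\Z)$ by the divisibility established in Lemma \ref{Lemma: divisble condition}.

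It then remains to show that the left-hand side $\int_{S_e}\beta$, an equivariant push-forward, equals $\mathcal{C}_1(e)=\int_{S_e}i^*_{S_e}(c_1(M))$. Since $\beta$ sits in equivariant degree $2=\dim_{\R}S_e$, the push-forward lands in $H^0(BT;\Z)=\Z$, and I would invoke the compatibility of equivariant and ordinary integration along the fiber: in top equivariant degree one has $\int_{S_e}\beta=\int_{S_e}r^*_{S_e}(\beta)$. Finally $r^*_{S_e}(\beta)=r^*_{S_e}(i^*_{S_e}(c_1^T(M)))=i^*_{S_e}(r^*(c_1^T(M)))=i^*_{S_e}(c_1(M))$ by naturality of $r^*$, so $\int_{S_e}\beta=\mathcal{C}_1(e)$, which proves the claim. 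The one step needing care is this last identification — that the degree-zero equivariant push-forward of $\beta$ coincides with the ordinary integral of its non-equivariant restriction; everything else is a direct application of ABBV together with the description of $c_1^T$ at a fixed point recorded in Section 2.
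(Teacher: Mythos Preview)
Your proposal is correct and follows essentially the same argument as the paper: both lift $c_1(M)$ to its equivariant extension $c_1^T(M)$, restrict to the invariant sphere $S_e$, and apply the ABBV localization formula together with the identification of $c_1^T(M)(p)$ as the sum of the edge weights at $p$ from Remark~\ref{Rem: Properties GKM graphs}(i). The paper states the equality $\int_{S_e} i^*_{S_e}(c_1(M))=\int_{S_e} i^*_{S_e}(c_1^T(M))$ in one line, whereas you spell out the degree-zero reasoning for why the equivariant push-forward agrees with the ordinary integral, but this is the same step.
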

\begin{proof}
Let $c_1^T(M)\in H^2_T(M,\Z)$ be the equivariant extension of the first Chern class $c_1(M)$ of $(M,\omega)$. Since 
$S_{(p,q)}$ is a $T$-invariant two-sphere, we have
\begin{align}\label{EQ2:Lemma: Compute first Chern Class map}
	\mathcal{C}_1(e)= \int_{S_{(p,q)}} i^*_{S_{(p,q)}} \left( c_1(M)\right) = \int_{S_{(p,q)}} i^*_{S_{(p,q)}} \left( 
	c_1^T(M)\right).	
\end{align}
	The torus $T$ acts on $S_{(p,q)}$ with isolated fixed points $p$ and $q$. The weight of the
	$T$-representation 
	on $T_pS_{(p,q)}$ resp. $T_qS_{(p,q)}$  is $\eta(e)$ resp. $-\eta(e)$. Hence, the equivariant Euler class of the 
	normal 
	bundle of $p$ resp. $q$ is   $\eta(e)$ resp. $-\eta(e)$. Moreover, the restriction $i_{\{p\}}^*(c_1^T(M))$ is 
	equal 
	to the sum of the weights of the $T$-representation on $T_pM$. Due to Remark \ref{Rem: Properties GKM graphs} 
	$(i)$, the 
	latter 
	is 
	equal to $\sum_{e'\in E_{GKM}^{p,i}} \eta (e')$. For the same reason $i_{\{q\}}^*(c_1^T(M))=\sum_{e'\in 
	E_{GKM}^{q,i}} 
	\eta (e')$. Therefore, the ABBV localization formula (Theorem \ref{Thm:ABBV}) implies that the right-hand sides of 
	equations \eqref{EQ1:Lemma: Compute first Chern Class map} and \eqref{EQ2:Lemma: Compute first Chern Class map} 
	coincide.  
\end{proof} 

\begin{remark}
Since $\mathcal{C}_1$ is completely determined by the GKM graph  $\GKM$, we call $\mathcal{C}_1$  
also the first Chern class map of  $\GKM$. Moreover, we say that $\GKM$ is \textbf{positive} if 
$\mathcal{C}_1(e)>0$ for all  $e\in E_{GKM}$. 
\end{remark}

\begin{remark}\label{Rem: Compute first Chern class map}
Let $e=(p,q)\in E_{GKM}$ be an edge. 
By Lemma \ref{Lemma: Existence Connection} there exists a compatible connection 
$$\nabla_{e}: E_{GKM}^{p,i}\longrightarrow E_{GKM}^{q,i}$$ 
along $e$. 
Let $2n$ be the dimension of $M$ and let 
$p_1,\dots, p_{n-1}$ resp. $q_1,\dots, q_{n-1}$ 
be the unique $n-1$ fixed points in $M^T\setminus\{p,q\}$ such that 
$$(p,p_i)\in E_{GKM}^{p,i}\quad \text{and} \quad (q,q_i)\in E_{GKM}^{q,i},$$
ordered so that
$$\nabla_e(p,p_i)=(q,q_i)$$ 
for all $i=1,...,n-1.$ So there exist integers $a_1,...,a_{n-1}$ such that 
\begin{align*}
\eta(p,p_i)-\eta(q,q_i)=a_i\cdot \eta(p,q) \quad \text{for all } i=1,...,n-1.
\end{align*}
By the formula in Lemma \ref{Lemma: Compute first Chern Class map} we have
\begin{align*}
\mathcal{C}_1(e)&=\frac{\sum_{e'\in E_{GKM}^{p,i}} \eta (e')-\sum_{e'\in E_{GKM}^{q,i}} \eta (e')}{\eta(e)}\\
&=\frac{\eta(p,q)-\eta(q,p)}{\eta(p,q)}+\sum_{i=1}^{n-1}\frac{\eta(p,p_i)-\eta(q,q_i)}{\eta(p,q)}\\
&=2+a_1+ \dots a_{n-1}.
\end{align*}

\end{remark}

\begin{lemma}\label{Lemma: symplectic fano c1 positive}
Let $\tham$ be a Hamiltonian GKM space, where $(M,\omega)$ is a monotone symplectic manifold. Then this 
space is positive.
\end{lemma}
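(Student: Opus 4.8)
The plan is to reduce positivity directly to two inputs already available in the excerpt: the monotonicity constant is strictly positive, and each edge sphere is symplectic. First I would unwind the monotonicity hypothesis, which gives a real number $r$ with $c_1(M) = r \cdot [\omega]$. Since a Hamiltonian GKM space is in particular a Hamiltonian $T$-space, its $T$-action is effective; and an effective Hamiltonian torus action on a monotone symplectic manifold forces $(M,\omega)$ to be \emph{positive} monotone, i.e.\ $r > 0$. This is exactly the statement recalled earlier from \cite[Proposition 3.3]{css} and \cite[Lemma 5.2]{1224}. So the constant $r$ is positive.

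Next I would fix an arbitrary edge $e = (p,q) \in E_{GKM}$ and recall that the associated two-sphere $S_e = S_{(p,q)}$ is a two-dimensional component of $M^H$ for the relevant codimension-one subtorus $H$, hence a symplectically embedded sphere (as established in Subsection \ref{SubSec: Hamiltonian GKM Spaces}). Consequently $i^*_{S_e}\omega$ is an area form on $S_e$, and $S_e$ carries the orientation that this form induces, which I take to be the orientation implicit in the definition of $\mathcal{C}_1$ (the ``evaluation on $S_e$''). With respect to it, $\int_{S_e} i^*_{S_e}\omega > 0$, since integrating an area form over a surface in its own orientation yields a positive number.

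Finally I would combine the two observations: pulling the monotonicity relation back along $i_{S_e}$ gives $i^*_{S_e}(c_1(M)) = r\cdot i^*_{S_e}[\omega]$, whence
\begin{align*}
\mathcal{C}_1(e) = \int_{S_e} i^*_{S_e}\left(c_1(M)\right) = r\int_{S_e} i^*_{S_e}\omega > 0,
\end{align*}
because $r>0$ and the integral is positive. As $e$ was arbitrary, $\mathcal{C}_1(e)>0$ for every edge, so the space is positive by Definition \ref{Def: positive Hamiltonian GKM space}. No Morse theory or localization is needed here; once positive monotonicity is in hand, the statement is essentially bookkeeping. The only point demanding care is the orientation convention, and this is precisely where the GKM structure helps: $S_e$ being a symplectic submanifold pins down a canonical (symplectic) orientation for which $\int_{S_e}\omega$ is positive, and this is the orientation used in computing $\mathcal{C}_1$, so no sign ambiguity remains.
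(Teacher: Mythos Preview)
Your proof is correct and follows essentially the same route as the paper: invoke \cite[Lemma 5.2]{1224} (the paper cites only this, though your additional reference to \cite{css} is fine) to upgrade monotone to positive monotone, then use that each edge sphere $S_{(p,q)}$ is symplectically embedded so that $\mathcal{C}_1(e)=r\int_{S_{(p,q)}}i^*\omega>0$. Your extra remarks on the orientation convention are a helpful clarification but not a different argument.
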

\begin{proof}
Since $\tham$ is GKM the fixed point set $M^T$ is finite. Since $(M,\omega)$ is monotone
 and the torus action is effective and Hamiltonian, by \cite[Proposition 
5.2]{1224},  $(M,\omega)$ is positive monotone, i.e., there exists an $r\in \R_{>0}$ such that $c_1(M)=r\cdot [\omega]$. Let $e=(p,q)\in E_{GKM}$ be an edge. We 
have that
\begin{align}\label{EQ1:Lemma: symplectic fano c1 positive}
\mathcal{C}_1(e)= \int_{S_{(p,q)}} i^*_{S_{(p,q)}} \left( c_1(M)\right) = r\cdot \int_{S_{(p,q)}} i^*_{S_{(p,q)}}  
\omega.	
\end{align}
Since $S_{(p,q)}$ is an embedded symplectic submanifold of $(M,\omega)$, the right hand side of Equation \eqref{EQ1:Lemma: 
symplectic fano c1 positive} is positive.   
\end{proof}

\begin{subsection}{An Upper Bound for the Number of Fixed Points in Dimension Six}

In this subsection we prove that the number of fixed points of a positive six-dimensional Hamiltonian GKM space is at 
most $16$ and we give an obstruction for the first Chern class map. This is the content of Corollary \ref{Cor: 
Ingerdients for proof}. This corollary is a direct consequence of  results by Godinho-Sabatini \cite{GodinhoSabatini} 
and Godinho-von Heymann-Sabatini \cite{1224}, which we  recall here.  
 
\begin{lemma}\label{Lemma: GodinhoSabatini c1cn-1 Betti numbers}\cite[Corollary 3.1]{GodinhoSabatini}
Let $\tham$ be a compact Hamiltonian $T$-space of dimension $2n$ with only isolated fixed points. For 
$p=0,...,n$ let $b_{2p}$ be the  $2p$-th Betti number of $M$. Then 
\begin{align*}
\int_M c_1(M)c_{n-1}(M)= \sum_{p=0}^{n} b_{2p} \left[ 6p(p-1)+ \frac{5n-3n^2}{2}\right], 
\end{align*}
where $c_1(M)$ resp. $c_{n-1}(M)$ is the first resp. $(n-1)$-th Chern class of $(M,\omega)$. 
\end{lemma}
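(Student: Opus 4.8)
The plan is to compute the Chern number $\int_M c_1(M)c_{n-1}(M)$ by playing two descriptions of the Hirzebruch $\chi_y$-genus of $M$ against each other. Since $M$ carries a $T$-invariant almost complex structure compatible with $\omega$, its Chern classes are well defined and one may form the $\chi_y$-genus
\[
\chi_y(M)=\int_M \prod_{i=1}^n Q_y(x_i),\qquad Q_y(x)=\frac{x\,(1+y\,e^{-x})}{1-e^{-x}},
\]
where $x_1,\dots,x_n$ are the Chern roots; by Hirzebruch--Riemann--Roch (Atiyah--Singer) this is a polynomial in $y$ whose coefficients are Chern numbers of $(M,\omega)$.

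First I would establish a universal Libgober--Wood type identity, valid for \emph{every} compact almost complex $2n$-manifold,
\[
\int_M c_1 c_{n-1}=6\,\frac{d^2}{dy^2}\chi_y(M)\Big|_{y=-1}+\frac{5n-3n^2}{2}\int_M c_n(M).
\]
This is a pure identity among Chern numbers: it is proved by expanding the universal series $\prod_i Q_y(x_i)$ to the order that detects the symmetric function associated to $c_1c_{n-1}$, and comparing with the expansions of $c_n$ and of $\chi_y''$ at $y=-1$, which forces the constants $6$ and $\tfrac{5n-3n^2}{2}$. (One can sanity-check this on $\C P^1$ and $\C P^2$.) This expansion is routine but is the computational heart of the argument, and I expect it to be the main obstacle; a key point is that it must be stated for \emph{Chern numbers}, not Hodge numbers, so that the non-integrable almost complex setting is covered.

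Next I would compute $\chi_y(M)$ topologically using the Hamiltonian structure. Fixing a generic $\xi\in\mathfrak{t}$, the function $\phi^\xi$ is a perfect Morse function whose critical set is $M^T$, so by the discussion recalled in Subsection~\ref{SubSec: Generic Vectors and Morse Theory} one has $H^{\operatorname{odd}}(M;\Z)=0$ and $b_{2p}=\#\{q\in M^T : \lambda(q)=p\}$. By the rigidity of the $\chi_y$-genus under the circle subgroup generated by $\xi$ (Kosniowski's fixed-point formula), $\chi_y(M)=\sum_{q\in M^T}(-y)^{\lambda(q)}=\sum_{p=0}^n b_{2p}\,(-y)^p$. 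In particular $\int_M c_n=\chi(M)=\sum_p b_{2p}$, and differentiating twice and evaluating at $y=-1$ gives $\frac{d^2}{dy^2}\chi_y(M)|_{y=-1}=\sum_p p(p-1)\,b_{2p}$. This rigidity/localization step is the only other nontrivial input, and it is exactly what converts the Chern-number quantity $\chi_y''(-1)$ into the purely combinatorial moment $\sum_p p(p-1)b_{2p}$.

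Finally, substituting both evaluations into the universal identity yields
\[
\int_M c_1 c_{n-1}=6\sum_{p=0}^n p(p-1)\,b_{2p}+\frac{5n-3n^2}{2}\sum_{p=0}^n b_{2p}=\sum_{p=0}^n b_{2p}\Big[\,6p(p-1)+\tfrac{5n-3n^2}{2}\,\Big],
\]
which is the assertion. Thus the whole proof reduces to the universal Chern-number identity of the second paragraph plus the genus rigidity of the third; everything else is formal bookkeeping.
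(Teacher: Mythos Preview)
Your approach is correct and is essentially the standard proof of this identity (the Libgober--Wood/Salamon identity combined with Kosniowski rigidity of the $\chi_y$-genus), which is also how \cite{GodinhoSabatini} proves it. Note, however, that the present paper does \emph{not} give its own proof of this lemma: it simply cites \cite[Corollary~3.1]{GodinhoSabatini} for the $S^1$ case and then observes (in the remark immediately following the lemma) that the general torus case reduces to the circle case by passing to a generic subcircle $S^1\subset T$ with $M^{S^1}=M^T$. So what you have written is not a comparison against the paper's argument but rather a reconstruction of the argument in the cited reference; as such it is fine, and your care in phrasing the universal identity in terms of Chern numbers (rather than Hodge numbers) so that it applies in the almost complex setting is exactly the point that makes the argument go through.
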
	

\begin{remark}\label{Rem: Lemma: GodinhoSabatini c1cn-1 Betti numbers}
	That Lemma \ref{Lemma: GodinhoSabatini c1cn-1 Betti numbers} holds in case  the torus $T$ has 
	dimension one, i.e.,  $T\cong S^1$, is the content of \cite[Corollary 3.1]{GodinhoSabatini}. Since each 
	Hamiltonian 
	$T$-space admits a subcircle $S^1\subset T$ such that $M^{S^1}=M^T$ holds, it follows that Lemma \ref{Lemma: 
		GodinhoSabatini c1cn-1 Betti numbers}  is also true whenever the dimension of the torus is greater than one.
\end{remark}

Before we state the next lemma, we need to introduce a notation.

\begin{definition}\label{Def: orientation edge set}
Let $\tham$ be a Hamiltonian GKM space and let $\GKM$ be its GKM graph. An \textbf{orientation} $\sigma$ for the edge 
set $E_{GKM}$ is a subset $E^\sigma_{GKM}$ of $E_{GKM}$ such that for each $(p,q)\in E_{GKM}$ exactly one of the 
following two conditions 
is true.
\begin{itemize}
\item $(p,q)\in E^\sigma_{GKM}$ and  $(q,p)\notin E^\sigma_{GKM}$
\item $(q,p)\in E^\sigma_{GKM}$ and  $(p,q)\notin E^\sigma_{GKM}$
\end{itemize} 
\end{definition}

\begin{remark}\label{Remark: cardinality edge set}
Let $2n$ be the dimension of $M$. The graph $\Gamma_{GKM}=(M^T, 
E_{GKM})$ is $n$-valent, i.e., for each fixed point $p$ there exist exactly $n$ edges whose initial point is $p$.
Therefore, the cardinalities of $M^T$ and $E_{GKM}$ are related by
\begin{align*}
\left| E_{GKM} \right| = n \cdot \left| M^T \right|. 
\end{align*}
Whenever $\sigma$ is an orientation of the edge set, then the cardinality of $E_{GKM}^\sigma$ is equal to half of 
the one of $E_{GKM}$. Hence,
\begin{align*}
\left| E_{GKM}^\sigma \right| = \frac{n}{2} \cdot \left| M^T \right|. 
\end{align*}

\end{remark}

\begin{lemma}\label{Lemma: Poincare dual}(c.f.  \cite[Lemma 4.13]{1224})
 Choose an orientation $\sigma$ of the edge set $E_{GKM}$. Then 
\begin{align*}
\sum_{e\in E^\sigma_{GKM}} \mathcal{C}_1(e)=\int_M c_1(M)c_{n-1}(M).
\end{align*}
\end{lemma}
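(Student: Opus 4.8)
The plan is to express both sides of the asserted identity as the \emph{same} sum of fixed-point contributions obtained by localisation, and then observe that the two sums agree. Throughout, for a fixed point $p\in M^T$ let $\alpha_{p,1},\dots,\alpha_{p,n}$ denote the weights of the $T$-representation on $T_pM$. By Remark~\ref{Rem: Properties GKM graphs}(i) these are exactly the labels $\eta(e)$ of the $n$ edges $e\in E_{GKM}^{p,i}$, and since the fixed points are isolated the normal bundle of $\{p\}$ in $M$ is $T_pM$, so its equivariant Euler class is $e^{T}(T_pM)=\prod_{i=1}^n\alpha_{p,i}$.

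First I would compute the right-hand side. Let $c_1^T(M)\in H^2_T(M;\Z)$ and $c_{n-1}^T(M)\in H^{2(n-1)}_T(M;\Z)$ be the equivariant Chern classes. Their product lies in top equivariant degree $2n$, so $\int_M$ sends it to $H^0(BT;\Z)=\Z$, and this value equals the ordinary integral $\int_M c_1(M)c_{n-1}(M)$ (integration along the fibre $M$ of $M\times_T ET\to BT$ reduces to ordinary integration in the bottom degree of $BT$). Applying the ABBV localisation formula (Theorem~\ref{Thm:ABBV}) with $c_1^T(M)(p)=\sum_{i}\alpha_{p,i}$, $c_{n-1}^T(M)(p)=\sigma_{n,n-1}(\alpha_{p,1},\dots,\alpha_{p,n})$, and the identity $\sigma_{n,n-1}(\alpha_{p,1},\dots,\alpha_{p,n})=\big(\prod_{i}\alpha_{p,i}\big)\sum_{j}\alpha_{p,j}^{-1}$, I obtain
\[
\int_M c_1(M)c_{n-1}(M)=\sum_{p\in M^T}\frac{c_1^T(M)(p)\cdot c_{n-1}^T(M)(p)}{\prod_{i=1}^n\alpha_{p,i}}=\sum_{p\in M^T}\Big(\sum_{i=1}^n\alpha_{p,i}\Big)\Big(\sum_{j=1}^n\frac{1}{\alpha_{p,j}}\Big).
\]

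Next I would bring the left-hand side into the same shape. By Lemma~\ref{Lemma: Compute first Chern Class map}, for $(p,q)\in E_{GKM}$ one has $\mathcal{C}_1(p,q)=\big(c_1^T(M)(p)-c_1^T(M)(q)\big)/\eta(p,q)$, and since $S_{(p,q)}=S_{(q,p)}$ the quantity $\mathcal{C}_1$ depends only on the underlying unoriented edge; hence $\sum_{e\in E_{GKM}}\mathcal{C}_1(e)=2\sum_{e\in E^\sigma_{GKM}}\mathcal{C}_1(e)$, independently of $\sigma$. Summing $\mathcal{C}_1$ over \emph{all} oriented edges and splitting the fraction gives a sum $A$ carrying $c_1^T(M)(p)$ and a sum $B$ carrying $c_1^T(M)(q)$; applying the edge-reversal bijection $(p,q)\mapsto(q,p)$ of $E_{GKM}$ to $B$ and using $\eta(q,p)=-\eta(p,q)$ shows $B=-A$, so $\sum_{E_{GKM}}\mathcal{C}_1=2A$. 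Dividing by $2$ and grouping by initial vertex yields
\[
\sum_{e\in E^\sigma_{GKM}}\mathcal{C}_1(e)=\sum_{(p,q)\in E_{GKM}}\frac{c_1^T(M)(p)}{\eta(p,q)}=\sum_{p\in M^T}c_1^T(M)(p)\sum_{e\in E_{GKM}^{p,i}}\frac{1}{\eta(e)}=\sum_{p\in M^T}\Big(\sum_{i=1}^n\alpha_{p,i}\Big)\Big(\sum_{j=1}^n\frac{1}{\alpha_{p,j}}\Big),
\]
where the last equality uses again that the labels of the edges at $p$ are the weights $\alpha_{p,1},\dots,\alpha_{p,n}$. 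Comparing the two displays proves the lemma.

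The computations are elementary; the only points demanding care are the reduction of the equivariant top-degree integral to the ordinary one in the first step, and the bookkeeping of the edge-reversal substitution in the second. The latter is where one must be attentive: one has to sum over the full oriented set $E_{GKM}$ (not over $E^\sigma_{GKM}$) to apply the reversal bijection, and only then reintroduce the factor $2$ via the orientation-independence of $\mathcal{C}_1$, the sign $\eta(q,p)=-\eta(p,q)$ being exactly what turns the difference of symmetric sums into twice a single sum. I expect no genuine obstacle beyond this.
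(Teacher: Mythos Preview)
Your proof is correct and follows essentially the same route as the paper: both compute $\int_M c_1 c_{n-1}$ by ABBV localisation at the isolated fixed points and rewrite $\sum_{e\in E^\sigma}\mathcal C_1(e)$ via Lemma~\ref{Lemma: Compute first Chern Class map} as the same fixed-point sum $\sum_{p}\big(\sum_i\alpha_{p,i}\big)\big(\sum_j\alpha_{p,j}^{-1}\big)$. The only cosmetic difference is bookkeeping on the edge side---the paper argues directly that each pair $(p,\alpha_{p,j})$ contributes once to the sum over $E^\sigma$, whereas you sum over all of $E_{GKM}$, exploit the symmetry of $\mathcal C_1$ and antisymmetry of $\eta$, and divide by two.
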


Lemma \ref{Lemma: Poincare dual}  follows directly from \cite[Lemma 4.13]{1224}. Since our setting is slightly  
different from the one in \cite{1224}, we give  the proof of Lemma \ref{Lemma: Poincare dual}.

\begin{proof}[Proof of Lemma \ref{Lemma: Poincare dual}] 
The proof is a simple application of the ABBV Localization Formula (Theorem \ref{Thm:ABBV}).
For each fixed point $p\in M^T$, let $\alpha_{p,1},\dots,\alpha_{p,n}$ be the weights of the $T$-representation on 
$T_pM$. Note that the set of these weights is equal to the set $\{\eta(e')\}_{e'\in E_{GKM}^{p,i}}$. 
Let $e=(p,q)\in E_{GKM}$ be an edge. By Lemma \ref{Lemma: Compute first Chern Class map} we have
	
\begin{align*}
\mathcal{C}_1(e)=\dfrac{\alpha_{p,1}+\dots +\alpha_{p,n}}{\eta(e)}-\dfrac{\alpha_{q,n}+\dots +\alpha_{q,n}}{\eta(e)}.
\end{align*}
	
For each fixed point $p$ and each weight $\alpha_{p,j}$ there exists exactly one edge $e\in E_{GKM}$ such that 
$i(e)=p$ and $\eta(e)=\alpha_{p,j}$. Note that $\eta(\bar{e})=-\alpha_{p,j}$ and either 
$e\in E^\sigma_{GKM}$ or $\bar{e}\in E^\sigma_{GKM}$, where $\bar{e}$ is the unique edge with $(i(\bar{e}), t(\bar{e}))=(t(e), 
i(e))$. 
We conclude that 
\begin{align*}
	\sum_{e\in E^\sigma_{GKM}} \mathcal{C}_1(e)=
	\sum_{p\in M^T}\left[ \sum_{j=1}^{n}\dfrac{\alpha_{p,1}+...+\alpha_{p,n}}{\alpha_{p,j}} \right]. 
	\end{align*}
	For the equivariant extensions $c_1^T(M)$ and $c_{n-1}^T(M)$  of $c_1(M)$ and $c_{n-1}(M)$ we have
	\begin{align*}
	\int_M c_1(M)c_{n-1}(M)=\int_M c_1^T(M)c_{n-1}^T(M).
	\end{align*}
	For each fixed point $p\in M^T$ we have
	\begin{align*}
	i^*_{p}\left( c_1^T(M)c_{n-1}^T(M)\right) =\left( \sum_{j=1}^{n}\alpha_{p,j}\right) \cdot \left( \sum_{k=1}^{n} 
	\prod_{l=1,l\neq k}^{n} \alpha_{p,l}\right) 
	\end{align*}
	and the equivariant Euler class of $T_pM$ is $\prod_{j=1}^{n} \alpha_{p,j}$. Therefore, the ABBV Localization 
	Formula
	gives
	\begin{align*}
	\int_M c_1^T(M)c_{n-1}^T(M)= \sum_{p\in M^T}\left[ \frac{i^*_{p}\left( c_1^T(M)c_{n-1}^T(M)\right)}{\prod_{j=1}^{n} 
		\alpha_{p,j}}\right]= \sum_{p\in M^T}\left[ \sum_{j=1}^{n}\dfrac{\alpha_{p,1}+...+\alpha_{p,n}}{\alpha_{p,j}} 
		\right].
	\end{align*}
The lemma follows.
\end{proof} 

The following corollary  is a direct consequence of  Lemma \ref{Lemma: GodinhoSabatini c1cn-1 Betti numbers} and
Lemma  \ref{Lemma: Poincare dual}.

\begin{corollary}\label{Cor: Ingerdients for proof}
Let $\tham$ be a Hamiltonian GKM space of dimension $2n$ and let $\GKM$ be its GKM graph. 
Choose an orientation $\sigma$ of the edge set $E_{GKM}$. Then the following hold.
\begin{itemize}
\item[(i)] 
\begin{align*}
\sum_{e\in E^\sigma_{GKM}} \mathcal{C}_1(e)=\sum_{p=0}^{n} b_{2p} \left[ 6p(p-1)+ \frac{5n-3n^2}{2}\right],
\end{align*}
where $b_0,\dots,b_{2n}$ are the even Betti numbers of $M$.
\item[(ii)] If the dimension of $M$ is equal to six, then
\begin{align*}
\sum_{e\in E^\sigma_{GKM}} \mathcal{C}_1(e)=24.
\end{align*} 
\item[(iii)] If $\tham$ is positive and  the dimension of $M$ is equal to six, then the number of 
fixed points is at most  $16$. 
\end{itemize}
\end{corollary}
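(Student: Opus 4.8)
The plan is to derive all three statements directly from Lemma \ref{Lemma: Poincare dual} and Lemma \ref{Lemma: GodinhoSabatini c1cn-1 Betti numbers}, which already carry all the analytic content.

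For part (i), I would simply chain the two lemmas. Fix an orientation $\sigma$ of $E_{GKM}$. Lemma \ref{Lemma: Poincare dual} identifies $\sum_{e\in E^\sigma_{GKM}} \mathcal{C}_1(e)$ with the Chern number $\int_M c_1(M)c_{n-1}(M)$, and Lemma \ref{Lemma: GodinhoSabatini c1cn-1 Betti numbers} evaluates that Chern number as $\sum_{p=0}^{n} b_{2p}\left[6p(p-1)+\tfrac{5n-3n^2}{2}\right]$. Equating the two right-hand sides yields (i) with no further work.

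For part (ii), I would specialize (i) to $n=3$. Here $\tfrac{5n-3n^2}{2}=-6$, so the bracketed coefficients are $-6,-6,6,30$ for $p=0,1,2,3$ respectively, and the sum reads $-6b_0-6b_2+6b_4+30b_6$. Recall from Subsection \ref{SubSec: Generic Vectors and Morse Theory} that $M$ is compact and connected with $b_0=b_6=1$; moreover $M$ is a closed oriented manifold, so Poincaré duality gives $b_2=b_4$. The $b_2$ and $b_4$ terms therefore cancel and the sum collapses to $-6+30=24$, proving (ii).

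For part (iii), I would combine (ii) with positivity and the edge count of Remark \ref{Remark: cardinality edge set}. Positivity means $\mathcal{C}_1(e)\ge 1$ for every $e\in E_{GKM}$, so each of the $\left|E^\sigma_{GKM}\right|$ summands in (ii) is at least one, whence $24\ge \left|E^\sigma_{GKM}\right|$. By Remark \ref{Remark: cardinality edge set}, with $n=3$ we have $\left|E^\sigma_{GKM}\right|=\tfrac{3}{2}\left|M^T\right|$, so $24\ge \tfrac{3}{2}\left|M^T\right|$, giving $\left|M^T\right|\le 16$. None of the three steps is a genuine obstacle; the only points that require care are invoking Poincaré duality to cancel $b_2$ against $b_4$ in part (ii), and using both the integrality and the positivity of $\mathcal{C}_1$ together with the vertex--edge relation to convert the numerical identity into the bound in part (iii).
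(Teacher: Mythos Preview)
Your proposal is correct and follows essentially the same approach as the paper: chain Lemma \ref{Lemma: Poincare dual} and Lemma \ref{Lemma: GodinhoSabatini c1cn-1 Betti numbers} for (i), specialize to $n=3$ using $b_0=b_6=1$ and Poincar\'e duality $b_2=b_4$ for (ii), and combine positivity with the edge count of Remark \ref{Remark: cardinality edge set} for (iii). The only difference is that you spell out the numerical coefficients $-6,-6,6,30$ explicitly, whereas the paper leaves that computation implicit.
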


\begin{proof}
\begin{itemize}
\item[(i)] 
This statement follows directly from Lemma \ref{Lemma: GodinhoSabatini c1cn-1 Betti numbers} and 
Lemma \ref{Lemma: Poincare dual}.
\item[(ii)] 
Assume that the dimension of $M$ is equal to six, i.e., $n$ is equal to $3$. Since $\tham$ is a Hamiltonian 
GKM space, the underlying symplectic manifold  is compact and connected. Therefore, $b_0=b_6=1$ holds. Moreover, by the 
Poincar\'e Duality Theorem we have that $b_2=b_4$ holds. Hence, the statement follows from $(i)$.
\item [(iii)] 
Assume that the dimension of $M$ is equal to six and that the space is positive, i.e., 
$\mathcal{C}_1(e)$  is a positive integer for all $e\in E_{GKM}$.
Then it follows from $(ii)$ that
\begin{align*}
	\left| E_{GKM}^\sigma\right| \leq 24.
\end{align*} 
By Remark \ref{Remark: cardinality edge set}  we have $\left| E^\sigma_{GKM}\right| =\frac{3}{2}\left| M^T\right| $. 
Hence, the number of fixed points is at most $16$.
\end{itemize}
\end{proof}

\end{subsection} 

\begin{subsection}{Special Kirwan Classes}

In this subsection we consider Hamiltonian GKM spaces that are weak index increasing with respect to 
a generic vector $\xi$ and we prove the existence of special Kirwan classes for such a space. We also show that  
six-dimensional positive Hamiltonian GKM spaces are weak index increasing with respect to any generic vector.

\begin{definition}\label{Definition: weak indexing increasing}
Let $\tham$ be a Hamiltonian GKM space and let $\xi\in \mathfrak{t}$ be a generic vector. 
Its GKM graph $\GKM$ is called \textbf{index increasing} resp. \textbf{weak index increasing} with respect to 
$\xi$ if
\begin{align*}
\lambda(p)< \lambda(q) \quad \text{resp.} \quad \lambda(p)\leq \lambda(q)
\end{align*} 
holds for any edge $(p,q)\in E_{GKM}$ with $\phi^\xi(p)< \phi(q)^\xi$.
\end{definition}

\begin{lemma}\label{Lemma: c1positive implies weak index increasing}
Let $\tham$ be a six-dimensional positive Hamiltonian GKM space. Then its GKM graph $\GKM$
is weak index increasing with respect to each generic vector $\xi\in \mathfrak{t}$.	
\end{lemma}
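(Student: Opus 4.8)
The plan is to work directly with the GKM graph and to exploit that in dimension six ($n=3$) the graph $\GKM$ is $3$-valent, so every vertex has exactly three incident edges and each index lies in $\{0,1,2,3\}$. Fix a generic vector $\xi\in\mathfrak{t}$ and an edge $(p,q)\in E_{GKM}$ with $\phi^\xi(p)<\phi^\xi(q)$, equivalently $\langle\eta(p,q),\xi\rangle>0$. By Remark \ref{Rem: Morse and GKM}, $\lambda(p)$ is the number of edges at $p$ whose weight pairs negatively with $\xi$, and likewise for $\lambda(q)$. Since $\langle\eta(p,q),\xi\rangle>0$, the edge $(p,q)$ does not count toward $\lambda(p)$, whereas $\langle\eta(q,p),\xi\rangle=-\langle\eta(p,q),\xi\rangle<0$, so the edge $(q,p)$ does count toward $\lambda(q)$. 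Hence it suffices to compare the contributions of the two remaining edges at each vertex.

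First I would invoke Lemma \ref{Lemma: Existence Connection} to fix a compatible connection $\nabla_e$ along $e=(p,q)$ and, exactly as in Remark \ref{Rem: Compute first Chern class map}, label the two other edges at $p$ by $(p,p_1),(p,p_2)$ and those at $q$ by $(q,q_1),(q,q_2)$, arranged so that $\nabla_e(p,p_i)=(q,q_i)$. Compatibility yields integers $a_1,a_2$ with $\eta(p,p_i)-\eta(q,q_i)=a_i\,\eta(p,q)$, and the computation of Remark \ref{Rem: Compute first Chern class map} gives $\mathcal{C}_1(e)=2+a_1+a_2$. Since the space is positive, $\mathcal{C}_1(e)>0$, and therefore $a_1+a_2\geq -1$. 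This single arithmetic inequality is the only quantitative input driving the argument.

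The heart of the proof is then a short sign count. Writing $x_i=\langle\eta(p,p_i),\xi\rangle$ and $y_i=\langle\eta(q,q_i),\xi\rangle$, pairing the compatibility relation with $\xi$ gives $x_i-y_i=a_i\langle\eta(p,q),\xi\rangle$; as $\langle\eta(p,q),\xi\rangle>0$, the sign of $x_i-y_i$ equals the sign of $a_i$. By genericity of $\xi$, all of $x_1,x_2,y_1,y_2$ are nonzero. From the reduction above, $\lambda(p)=\#\{i:x_i<0\}$ and $\lambda(q)=1+\#\{i:y_i<0\}$, so setting $\delta_i=[x_i<0]-[y_i<0]\in\{-1,0,1\}$ I obtain $\lambda(p)-\lambda(q)=\delta_1+\delta_2-1$. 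The key observation is that $\delta_i=1$ (i.e.\ $x_i<0$ and $y_i>0$) forces $x_i-y_i<0$, hence $a_i<0$, that is $a_i\leq -1$.

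Finally, weak index increasing, $\lambda(p)\leq\lambda(q)$, is equivalent to $\delta_1+\delta_2\leq 1$, which can fail only if $\delta_1=\delta_2=1$. In that case the previous step gives $a_1+a_2\leq -2$, contradicting $a_1+a_2\geq -1$. Hence $\lambda(p)\leq\lambda(q)$, as required. I do not anticipate a genuine obstacle: once the formula $\mathcal{C}_1(e)=2+a_1+a_2$ is in hand, the proof is pure bookkeeping, and the only point demanding care is keeping straight which edge is counted in which index, namely the asymmetry that $(p,q)$ is not counted at $p$ while $(q,p)$ is counted at $q$.
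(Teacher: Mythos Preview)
Your proof is correct and follows essentially the same route as the paper: both use a compatible connection along $e=(p,q)$ to write $\mathcal{C}_1(e)=2+a_1+a_2$ via Remark \ref{Rem: Compute first Chern class map}, and then argue that a violation of weak index increasing forces $a_1,a_2\leq -1$, contradicting positivity. The paper phrases this as a direct contradiction (noting that $\lambda(p)>\lambda(q)$ forces $\lambda(p)=2$, $\lambda(q)=1$, hence all $x_i<0$ and all $y_i>0$), whereas your $\delta_i$ bookkeeping reaches the same conclusion a bit more systematically, but the underlying argument is identical.
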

\begin{proof}
Let $\xi\in \mathfrak{t}$ be a generic vector and assume that $\GKM$ is not weak index increasing with respect to 
$\xi$. We show that  $\tham$ is not positive. Indeed by this assumption there exists an edge 
$e=(p,q)\in E_{GKM}$ such that $\lambda(p)>\lambda(q)$ and $\phi^\xi(p)< \phi^\xi(q)$. By Remark 
\ref{Rem: images 
weight relation} there exists an $\tilde{r}>0$ such that 
$\eta(p,q)=\tilde{r}\cdot\left( \phi(q)-\phi(p)\right) $. So we have 

\begin{align}\label{EQ1:Lemma: c1positive implies weak index increasing}
\left\langle \eta(p,q)\,,\, \xi  \right\rangle= \tilde{r}\cdot \left(\left\langle \phi(q), \xi \right\rangle - 
\left\langle 
\phi(p), \xi \right\rangle\right)= \tilde{r}\cdot \left( \phi^\xi(q)-\phi^\xi(p)\right)    >0.
\end{align}
Since $M$ is six-dimensional, we have that 
$$\lambda(p),\lambda(q)\in \{0,1,2,3\}.$$
Note that $\lambda(p)=3$ can not happen, because in this case $\phi^\xi$ attains its maximum at $p$,
which contradicts $\phi^\xi(p)< \phi^\xi(q)$. For the same reason $\lambda(q)=0$ can not happen, because in this case 
$\phi^\xi$ attains its minimum at $q$. Hence, we have  $\lambda(p)=2$ and $\lambda(q)=1$. 
Let 
$$\nabla_e: E_{GKM}^{p,i} \rightarrow E_{GKM}^{q,i}$$
be a compatible connection along the edge $e=(p,q)$ and let $p_1,p_2$ and 
$q_1,q_2$ be the fixed points in $M^T\setminus\{p,q\}$ with $(p,p_i),(q,q_i)\in E_{GKM}$ for $i=1,2$, 
ordered so that 
\begin{align*}
\nabla_e(p,p_i)=(q, q_i)
\end{align*}
for $i=1,2$. So there exist integers $a_1$ and $a_2$ such that 
\begin{align}\label{EQ2:Lemma: c1positive implies weak index increasing}
\eta(p,p_i)-\eta(q,q_i)=a_i \cdot  \eta(p,q)
\end{align}
for $i=1,2$. Recall that $\lambda(p)$ resp. $\lambda(q)$ is the number of weights of the 
$T$-representation on $T_pM$ resp. $T_qM$ such that $\left\langle \cdot\, , \xi \right\rangle $ is negative. Since 
$\lambda(p)=2$, $\lambda(q)=1$ and $\left\langle \eta(p,q),\xi\right\rangle >0$, we have 
\begin{align}\label{EQ3:Lemma: c1positive implies weak index increasing}
\left\langle \eta(p,p_i)\,,\, \xi  \right\rangle  <0 \quad \text{and }\quad\left\langle \eta(q,q_i)\, ,\, \xi  
\right\rangle  >0
\end{align}
for $i=1,2$. By combining \eqref{EQ1:Lemma: c1positive implies weak index increasing},
\eqref{EQ2:Lemma: c1positive implies weak index increasing} and 
\eqref{EQ3:Lemma: c1positive implies weak index increasing}, we conclude that $a_1$ and $a_2$ are negative integers. 
So by Lemma \ref{Lemma: Compute first Chern Class map} and its Remark \ref{Rem: Compute first Chern class map} we have 
that
\begin{align*}
	\mathcal{C}_1(e) = 2 + a_1 + a_2 \leq 0.
\end{align*} 
Hence, the space is indeed not positive.
\end{proof}

\begin{definition}\label{Def: paths}
Given a Hamiltonian GKM space $\tham$, let $\GKM$ be its GKM graph 
and let $\xi \in \mathfrak{t}$ be a generic vector. An \textbf{ascending path} from 
a fixed point $p$ to another fixed point $q$ is a 
$(k+1)$-tuple $\nu=(p_0,...,p_k)$ of points in $M^T$ such that $p_0=p$, $p_k=q$ and 

$$(p_{i-1},p_i)\in E_{GKM} \,\,\text{and}\,\, \phi^\xi(p_{i-1})< \phi^\xi (p_i)$$
for $i=1,\dots,k$. Moreover, for each fixed point $p\in 
M^T$, the \textbf{stable set of $p$}, denoted by $\Xi_p$ is the set of points $q\in M^T$ such that there 
exists an ascending path from $p$ to $q$, including $p$ itself.
\end{definition}

In the following lemma we sum up some properties about ascending paths and the stable sets. These properties follow 
directly from the definitions. 

\begin{lemma}\label{Lemma: simple properties about paths}
Let $p,q\in M^T$ be two different fixed 
points. Then the following hold.
\begin{itemize}
\item[(i)] If $q\in \Xi_p$ then  $\phi^\xi(p) < \phi^\xi(q)$.
\item[(ii)] If $\lambda(q)=0$, then  $q\notin \Xi_p$. If $\lambda(q)>0$, then  $q\notin \Xi_p$ 
if and only if for all $r\in M^T$ with $(r,q)\in E_{GKM}$ and $\phi^\xi(r)<\phi^\xi(q)$ we have $r \notin \Xi_p$.
\item[(iii)] If the GKM graph of $\tham$ is  index increasing resp. weak index increasing  with respect to $\xi$ and 
 $q\in \Xi_p$ , then  
$\lambda(p) <  \lambda(q)$ resp.  $\lambda(p) \leq  \lambda(q)$.
\end{itemize}
\end{lemma}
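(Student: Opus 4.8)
The plan is to prove all three parts by directly unwinding Definition \ref{Def: paths}, since the statement involves only the combinatorial data of the GKM graph together with the Morse-type function $\phi^\xi$. The single structural observation I would set up first is this: because $p\neq q$ is assumed, the membership $q\in\Xi_p$ forces the existence of an ascending path $\nu=(p_0,\dots,p_k)$ with $p_0=p$, $p_k=q$ and $k\geq 1$ (a path of length $0$ would give $q=p$). For part (i) this is essentially all that is needed: by the definition of an ascending path $\phi^\xi$ strictly increases at each step, so $\phi^\xi(p)=\phi^\xi(p_0)<\phi^\xi(p_1)<\dots<\phi^\xi(p_k)=\phi^\xi(q)$, whence $\phi^\xi(p)<\phi^\xi(q)$.

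For part (ii) I would first record the index interpretation from Remark \ref{Rem: Morse and GKM}: $\lambda(q)$ equals the number of edges $(q,r)\in E_{GKM}$ with $\phi^\xi(r)<\phi^\xi(q)$, so that $\lambda(q)=0$ means $q$ has no such downward edge. If $q\in\Xi_p$, the final edge $(p_{k-1},q)$ of an ascending path satisfies $\phi^\xi(p_{k-1})<\phi^\xi(q)$, and by the edge-reversal symmetry of Remark \ref{Rem: Properties GKM graphs}(ii) this produces an edge $(q,p_{k-1})$ running downward from $q$, contradicting $\lambda(q)=0$; hence $\lambda(q)=0$ forces $q\notin\Xi_p$. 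For the equivalence when $\lambda(q)>0$, both directions come from extending or truncating ascending paths at the penultimate vertex: if some admissible $r$ (meaning $(r,q)\in E_{GKM}$ and $\phi^\xi(r)<\phi^\xi(q)$) lies in $\Xi_p$, appending $q$ to a path witnessing $r\in\Xi_p$ yields an ascending path to $q$, so $q\in\Xi_p$; conversely, if $q\in\Xi_p$, then $r:=p_{k-1}$ is admissible and the truncation $(p_0,\dots,p_{k-1})$ witnesses $r\in\Xi_p$. Taking contrapositives of these two implications gives the stated ``if and only if''.

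For part (iii) I would chain the defining inequality of (weak) index increasingness, Definition \ref{Definition: weak indexing increasing}, along an ascending path from $p$ to $q$: each edge $(p_{i-1},p_i)$ has $\phi^\xi(p_{i-1})<\phi^\xi(p_i)$, so the hypothesis gives $\lambda(p_{i-1})<\lambda(p_i)$ in the index-increasing case and $\lambda(p_{i-1})\leq\lambda(p_i)$ in the weak case. Composing these inequalities over $i=1,\dots,k$ yields $\lambda(p)<\lambda(q)$, respectively $\lambda(p)\leq\lambda(q)$.

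I do not anticipate a genuine obstacle: the content is purely definitional bookkeeping, as the surrounding text already indicates. The only points requiring care are (a) using $p\neq q$ to guarantee an ascending path of positive length, so that a penultimate vertex exists and the extension/truncation arguments in (ii) make sense, and (b) invoking the edge-reversal symmetry of Remark \ref{Rem: Properties GKM graphs}(ii) to reinterpret the last edge of an ascending path as a downward edge \emph{out of} $q$ when applying the index count; overlooking either would break the argument for (ii).
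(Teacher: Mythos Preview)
Your proposal is correct and matches the paper's approach: the paper itself offers no detailed argument, stating only that ``these properties follow directly from the definitions,'' and what you have written is precisely the straightforward unwinding of Definition~\ref{Def: paths} (together with Remark~\ref{Rem: Morse and GKM} and Remark~\ref{Rem: Properties GKM graphs}(ii)) that this phrase implicitly invokes. The care you take with the $k\geq 1$ / penultimate-vertex issue and the edge-reversal symmetry is appropriate and fills in exactly the bookkeeping the paper omits.
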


\begin{proposition}\label{Proposition: special Kirwan class lambda=1}
Let $\tham$ be a Hamiltonian GKM space and let $\xi \in \mathfrak{t}$ be a generic vector such that 
the GKM graph $\GKM$ is  weak index increasing with respect to $\xi$.
Let $p\in M^T$ with $\lambda(p)=1$. Then there exists a unique Kirwan class $\gamma_p\in H_T^{2}(M;\Z)$ at $p$ such 
that for $q\in M^T$ the following hold. 
\begin{itemize}
 \item[(i)] $\gamma_p(q)=\Lambda_p^-$  if $\lambda(q)=1$ and $q\in \Xi_p$. 
\item[(ii)] $\gamma_p(q)=0$ if $q\notin \Xi_p$.  
\end{itemize}
\end{proposition}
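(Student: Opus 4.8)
The plan is to prove uniqueness first (it is immediate) and then construct the class by inductively correcting an arbitrary Kirwan class at $p$, working through the fixed points in order of increasing $\phi^\xi$.

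\emph{Uniqueness.} Suppose $\gamma_p$ and $\gamma_p'$ both satisfy (i) and (ii). Since $\lambda(p)=1$, any $q\in\Xi_p$ has $\lambda(q)\ge\lambda(p)=1$ by Lemma \ref{Lemma: simple properties about paths}(iii), and any $q$ with $\lambda(q)=0$ satisfies $q\notin\Xi_p$. Hence conditions (i) and (ii) already prescribe the value of any such class on every $q$ with $\lambda(q)\le 1$: it is $\Lambda_p^-$ when $\lambda(q)=1$ and $q\in\Xi_p$, and $0$ in all remaining cases with $\lambda(q)\le 1$. Therefore $\delta:=\gamma_p-\gamma_p'\in H_T^2(M;\Z)$ vanishes on all $q$ with $\lambda(q)\le 1$, and Lemma \ref{Lemma: Properties of classes}(ii) with $i=1$ forces $\delta=0$.

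\emph{Existence.} I would start from any Kirwan class $\gamma_p$ at $p$ produced by Lemma \ref{Lemma: Kirwan Classes}, and process the fixed points $q$ with $\phi^\xi(q)>\phi^\xi(p)$ in order of increasing $\phi^\xi$, modifying $\gamma_p$ only at the points of index one by adding integer multiples of index-one Kirwan classes $\gamma_q$. Such classes lie in $H_T^2(M;\Z)$ and vanish on every fixed point below $q$, so these additions keep $\gamma_p$ in $H_T^2(M;\Z)$, preserve the Kirwan property at $p$, and do not disturb the values already fixed at lower points. The crucial point is that for $q$ with $\lambda(q)=1$ the weak index increasing hypothesis forces the unique downward neighbor $r$ (the endpoint of the unique edge $(q,r)\in E_{GKM}$ with $\phi^\xi(r)<\phi^\xi(q)$) to satisfy $\lambda(r)\le 1$, and Lemma \ref{Lemma: simple properties about paths}(ii) gives $q\in\Xi_p$ if and only if $r\in\Xi_p$. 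Consequently the target value $v(q)$, namely $\Lambda_p^-$ if $q\in\Xi_p$ and $0$ otherwise, coincides with the already-fixed value $\gamma_p(r)$ in both cases. Since $\gamma_p(q)-\gamma_p(r)$ is divisible by $\eta(q,r)=\Lambda_q^-$ by Lemma \ref{Lemma: divisble condition} and Remark \ref{Remark: LABDA p- GKM case}, the difference $v(q)-\gamma_p(q)$ is an integer multiple of $\Lambda_q^-$, so adding the corresponding integer multiple of $\gamma_q$ corrects the value at $q$ to $v(q)$. After this process $\gamma_p$ takes the prescribed values at all points of index at most one, which yields (i) together with the part of (ii) concerning $q\notin\Xi_p$ with $\lambda(q)\le 1$.

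It remains to check (ii) at points $q\notin\Xi_p$ with $\lambda(q)\ge 2$, and here no further adjustment is available since adding $\gamma_q$ would raise the degree; the value must therefore vanish automatically. I would prove $\gamma_p(q)=0$ for all $q\notin\Xi_p$ by strong induction on $\phi^\xi(q)$: for such a $q$ with $\lambda(q)\ge 2$, Lemma \ref{Lemma: simple properties about paths}(ii) shows that every downward neighbor $r$ lies outside $\Xi_p$, so $\gamma_p(r)=0$ by the induction hypothesis, and then Lemma \ref{Lemma: divisble condition} makes $\gamma_p(q)$ divisible by each of the at least two weights $\eta(q,r)$. Writing $\gamma_p(q)=c_1\,\eta(q,r_1)=c_2\,\eta(q,r_2)$ with $c_1,c_2\in\Z$ (the quotients have degree zero), the pairwise linear independence of the weights from Definition \ref{Def: Hamiltonian GKM spaces}(ii) forces $c_1=c_2=0$, hence $\gamma_p(q)=0$. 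This last step is the main obstacle: conditions (i) and (ii) do not constrain the high-index values directly, and one must show the vanishing at index-$\ge 2$ points outside $\Xi_p$ is forced. This is exactly where the GKM hypothesis enters, since a degree-two class leaves no room in $H^*(BT;\Z)$ for a nontrivial common multiple of two linearly independent weights.
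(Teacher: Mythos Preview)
Your proof is correct and follows essentially the same approach as the paper's: correct the value at index-one points using the unique downward neighbor (whose index is forced to be at most one by the weak index increasing hypothesis), and force the vanishing at index-$\geq 2$ points outside $\Xi_p$ by divisibility of a degree-two element by two linearly independent weights. The only difference is organizational: the paper runs a single induction over the points $q$ with $\phi^\xi(q)\ge\phi^\xi(p)$ with a four-case split (interleaving the index-one corrections with the index-$\geq 2$ verifications), whereas you separate these into two phases; both are valid since the later corrections by $\gamma_{q'}$ leave the already-established values at lower points unchanged.
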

\begin{proof}
Note that $p\in \Xi_p$. By Lemma \ref{Lemma: simple properties about paths} $(i)$  for all $q\in M^T\setminus 
\{p\}$ 
with 
$\phi^\xi(q)\leq \phi^\xi(p)$ we have $q 
\notin \Xi_p$. Hence, 
a class $\gamma_{p}$ with   properties $(i)$ and $(ii)$ is indeed a Kirwan class at $p$. Moreover,  
that such a class is unique is easy to see. Namely, let $\gamma_{p}$ and $ \widetilde{\gamma}_p$ be two such Kirwan 
classes at $p$. For all $q\in M^T$  with $\lambda(q)\leq 1$, $\gamma_{p}(q) =\widetilde{\gamma}_p(q)$ holds and the 
degree of each of these classes is two. Hence, by Lemma \ref{Lemma: Properties of classes} (ii), we have $\gamma_{p} 
=\widetilde{\gamma}_p$.\newline
Now we prove the existence of such a class. Consider the set 
\begin{align*}
\mathcal{L}&:= \left\lbrace q\in M^T \,\,\vert\,\,  
\phi^\xi(p)\leq\phi^\xi(q)\right\rbrace. 
\end{align*} 
Note that by Lemma \ref{Lemma: simple properties about paths} $(i)$ we have $\Xi_p\subset \mathcal{L}$. Let 
\begin{align*}
p=q_1,q_2,...,q_{\left| \mathcal{L}\right|-1 },q_{\left| \mathcal{L}\right| }
\end{align*}
be the points of $\mathcal{L}$ ordered so that
\begin{align*}
\phi^\xi (q_1)\leq\phi^\xi(q_2)\leq...\leq\phi^\xi(q_{\left| \mathcal{L}\right|-1 }) \leq\phi^\xi(q_{\left| 
\mathcal{L}\right| }).
\end{align*}
By induction over $i=1,...,\left| \mathcal{L}\right|$ we show that for all such $i$
there exists a class $\beta_i\in H_T^2(M;\Z)$ that satisfies the following properties (a.$i$) and (b).
\begin{itemize}
\item[(a.$i$)] For all $j=1,...,i$
\begin{align*}
&\beta_i(q_j)=\Lambda_q^- \quad \text{if}\,\, \lambda(q_j)=1 \,\, \text{and}\,\, q_j\in \Xi_p\\
&\beta_i(q_j)=0 \quad\,\,\,\,\, \text{if} \,\, q_j\notin \Xi_p.
\end{align*}
\item[(b)] $\beta_i(r)=0$ if $r\in M^T$ and $\phi^\xi(r)<\phi^\xi(p)$.
\end{itemize}
The induction base is true. Any Kirwan class at $p$ satisfies the 
properties (a.$1$) and (b). Now assume that for a fixed $i=1,...,\left| \mathcal{L}\right| -1$ there exists  a class 
$\beta_i\in H_T^2(M;\Z)$ that satisfies (a.$i$) and (b). Consider the fixed point $q_{i+1}$. We have four cases.
\begin{itemize}
\item[] \textsc{1.case :} $\lambda(q_{i+1})\geq2$ and $q_{i+1}\in \Xi_p$
\item[]\textsc{2.case :} $\lambda(q_{i+1})\geq2$ and $q_{i+1}\notin \Xi_p$
\item[]\textsc{3.case :} $\lambda(q_{i+1})=1$ and $q_{i+1}\in \Xi_p$
\item[]\textsc{4.case :} $\lambda(q_{i+1})=1$ and $q_{i+1}\notin \Xi_p$
\end{itemize}
\textsc{1.case :} Since (a.${i+1}$) and (b) do not force any obstruction for $\beta_{i+1}(q_{i+1})$, we can choose
$\beta_{i+1}=\beta_i$.\newline
\textsc{2.case :} We show that $\beta_i(q_{i+1})=0$. Hence, we can choose $\beta_{i+1}=\beta_i$.  Since 
$\lambda(q_{i+1})\geq2$
there exist two different fixed points $r_1,r_2\in M^T\setminus\{q_{i+1}\}$ such that 
$$(r_1, q_{i+1}),(r_2, q_{i+1})\in E_{GKM}\quad \text{and}\quad \phi^\xi(r_1),\phi^\xi(r_2)<\phi^\xi(q_{i+1}).$$
If $\phi^\xi(r_1)<\phi^\xi(p)$, then (b) implies $\beta_i(r_1)=0$. If $\phi^\xi(p)\leq\phi^\xi(r_1)$, then 
$r_1\in \mathcal{L}$ and $\phi^\xi(r_1)<\phi^\xi(q_{i+1})$. This implies that $r_1=q_j$ for 
some $j=1,...,i$. Moreover, since $q_{i+1}\notin \Xi_{p}$, Lemma \ref{Lemma: simple properties about paths} $(ii)$ 
implies that $r_1=q_j \notin \Xi_p$. Therefore, (a.$i$)
implies $\beta_i(r_i)=0$.  Hence, in both cases $\phi^\xi(r_1)<\phi^\xi(p)$ or $\phi^\xi(p)\leq\phi^\xi(r_1)$, 
we have $\beta_i(r_1)=0$. For the same reason we have also $\beta_i(r_2)=0$. Since $(r_1, q_{i+1}),(r_2, q_{i+1})\in 
E_{GKM}$, by Lemma \ref{Lemma: divisble condition} there exist integers $A_1$ and $A_2$ such that 
\begin{align*}
&\beta_i(q_{i+1})=\beta_i(q_{i+1})-\beta_i(r_1)=A_1\cdot \eta(q_{i+1},r_1)\quad \text{and}\\
&\beta_i(q_{i+1})=\beta_i(q_{i+1})-\beta_i(r_2)=A_2\cdot \eta(q_{i+1},r_2).
\end{align*}
Since $\eta(q_{i+1},r_1)$ and $\eta(q_{i+1},r_2)$ are linearly independent, we conclude that $A_1=A_2=0$ 
and $\beta_i(q_{i+1})=0$.\newline
\textsc{3.case :} Since $\lambda(q_{i+1})=1$ there exists a unique fixed point $r\in M^T$ such that 
$$(r, q_{i+1})\in E_{GKM}\quad \text{and}\quad \phi^\xi(r)<\phi^\xi(q_{i+1}).$$
Since $q_{i+1}\in \Xi_p$, by Lemma \ref{Lemma: simple properties about paths} $(ii)$ we have $r\in \Xi_p$. Moreover, 
since $\Xi_p \subset \mathcal{L}$
we have $r=q_j$ for some $j=1,...,i$. Note that also  $q_{i+1}\in \Xi_r$. Since the GKM graph of $\tham$ is weak index 
increasing
with respect to $\xi$, by Lemma \ref{Lemma: simple properties about paths} $(iii)$ we have
$$1=\lambda(p)\leq \lambda(r)\leq \lambda(q_{i+1})=1.$$
So we have that $\lambda(r)=1$. Since (a.$i$) holds for the class $\beta_i$, we conclude that $\beta_i(r)=\Lambda_p^-$.
Since $(r,q_{i+1})\in E_{GKM}$, by Lemma \ref{Lemma: divisble condition} there exists an integer $A$ such that
$$\beta_i(q_{i+1})-\Lambda_p^-= \beta_i(q_{i+1})-\beta_i(r)=A\cdot \eta(q_{i+1},r).$$
Note that since $\lambda(q_{i+1})=1$  holds, we have $\eta(q_{i+1},r)=\Lambda_{q_{i+1}}^-$.
So we have 
$$\beta_i(q_{i+1})=\Lambda_p^-+ A\cdot \Lambda_{q_{i+1}}^-.$$
Let $\alpha_{q_{i+1}}\in H_T^2(M;\Z)$ be a Kirwan class at $q_{i+1}$. So the class
$$\beta_{i+1}=\beta_i-A\cdot \alpha_{q_{i+1}}$$
satisfies the properties (a.$i+1$) and (b).\newline
\textsc{4.case :} Since $\lambda(q_{i+1})=1$, there exists a unique fixed point $r\in M^T$ such that  
$$(r, q_{i+1})\in E_{GKM}\quad \text{and}\quad \phi^\xi(r)<\phi^\xi(q_{i+1}).$$
Since $q_{i+1}\notin \Xi_p$, by Lemma \ref{Lemma: simple properties about paths} $(ii)$ we have $r\notin \Xi_p$. So if 
$r\in \mathcal{L}$ then we have $r=q_j$
for some $j=1,...,i$. So $(a.$i$)$ and $r\notin \Xi_p$ implies that $\beta_i(r)=0$. If $r\notin \mathcal{L}$
then (b) also implies $\beta_i(r)=0$. Since $(r,q_{i+1})\in E_{GKM}$, there exists an integer $A$ such that
$$\beta_i(q_{i+1})= \beta_i(q_{i+1})-\beta_i(r)=A\cdot \eta(q_{i+1,r}).$$
Note that since $\lambda(q_{i+1})=1$  holds, we have $\eta(q_{i+1},r)=\Lambda_{q_{i+1}}^-$.
So we have 
$$\beta_i(q_{i+1})= A\cdot \Lambda_{q_{i+1}}^-.$$
Let $\alpha_{q_{i+1}}\in H_T^2(M;\Z)$ be a Kirwan class at $q_{i+1}$. So the class
$$\beta_{i+1}=\beta_i-A\cdot \alpha_{q_{i+1}}$$
satisfies the properties (a.$i+1$) and (b).\newline
We conclude that there exists a class $\beta_{\left| \mathcal{L}\right| }\in H^2_T(M;\Z)$ that
satisfies the properties (a.$\left| \mathcal{L}\right|$) and (b). In fact, such  a class $\beta_{\left| 
\mathcal{L}\right| }$  satisfies the desired properties $(i)$ and $(ii)$. 
\end{proof}

\end{subsection}

\end{section}

\begin{section}{Constructing (Abstract) GKM Graphs}

In this section, we prove further statements
that are needed for the classification of
%, which are needed for the proof of the  classification result of 
GKM graphs 
of 
positive Hamiltonian GKM spaces of dimension six. Let $\GKM$ be the GKM graph of such a space. 
Then the graph $\Gamma_{GKM}$ is a simple, connected and $3$-valent (see Definition  \ref{Def:simple n-valent graph}). 
By Corollary \ref{Cor: Ingerdients for proof}, the graph has at most $16$ vertices and we have only finitely many 
possibilities for the first Chern class map $\mathcal{C}_1: E_{GKM}\rightarrow \Z$. Since simple and connected 
$3$-valent graphs with at most $16$ vertices are classified \cite{DataBaseCubicGraphs}, the classification problem is 
strongly related to 
the following question.  

\begin{question}\label{Question: Does there exists an (abstract) GKM graph?}
Let $T$ be a torus with dual lattice $\ell_T^*$. Let $\Gamma=(V,E)$ be a simple, connected and $n$-valent 
graph and let $\mathcal{D}\colon E \rightarrow \Z$ be a map.
\begin{itemize}
\item Does the pair $(\Gamma, \mathcal{D})$ \textbf{support} a Hamiltonian GKM graph, i.e., does there exist a map 
$\eta: E \rightarrow \ell_T^* $ such that the pair $(\Gamma, \eta)$ is the GKM graph of a Hamiltonian GKM space
$\tham$ of dimension $2n$ and for all edges $e\in E$ 
\begin{align*}
\mathcal{C}_1(e)= \mathcal{D}(e), 
\end{align*}
where $\mathcal{C}_1\colon E \rightarrow \Z$ is the first Chern class map?
\item And if so, how many  such maps $\eta: E \rightarrow \ell_T^* $  exist (up to 
isomorphisms and projections of GKM graphs), and can we compute such maps?
\end{itemize}
\end{question}

In the first part of this section, we introduce abstract GKM graphs and formulate Question \ref{Question: 
Does there exists an (abstract) GKM graph?}  for abstract GKM graphs. In the second part of this 
section, we show that  Question \ref{Question: Does there exists an (abstract) GKM graph?} for abstract GKM graphs 
can be solved by methods of linear algebra. In the last part of this section, we consider 
positive (abstract) GKM graphs that are coming from six-dimensional Hamiltonian GKM spaces.

\begin{subsection}{Abstract GKM Graphs}

Let  $\Gamma=(V,E)$ be a graph with directed edges. This means that there exist an \textbf{initial map} $i:E\rightarrow 
V$ and a 
\textbf{terminal map} $t:E\rightarrow V$. We associate to each vertex $v\in V$ the following two sets
\begin{align*}
E_v^i:= \{e\in E \mid i(e)=v\} \quad \quad \text{and} \quad \quad E_v^t:= \{e\in E \mid t(e)=v\}.
\end{align*}

\begin{definition}\label{Def:simple n-valent graph}	
A graph $\Gamma=(V,E)$ with directed edges is called \textbf{simple} if the following three conditions are true.
\begin{itemize}
\item The graph has no loops, i.e., $i(e)\neq t(e)$ for all $e\in E$. 
\item The graph has no double edges, i.e., if  $i(e)=i(e')$ and $t(e)=t(e')$ for $e,e'\in E$ then $e=e'$.
\item For each edge $e\in E$ there exists a unique edge $\bar{e}$  such that $i(\bar{e})=t(e)$ and $t(\bar{e})=i(e)$.
\end{itemize}
Moreover, such a graph $\Gamma=(V,E)$ is called \textbf{$n$-valent} if for each vertex $v\in V$ the cardinality 
of $E_v^i$ (or equivalently, the cardinality of $E_v^t$) is equal to $n$.
\end{definition}

Note that by the second item in Definition \ref{Def:simple n-valent graph} we can consider the edge set of a simple 
graph $\Gamma=(V,E)$ as a subset of $V\times V$. Hence, we write an edge $e\in E$ also as $(v,w)$, where $i(e)=v$ and 
$t(e)=w$. Such a graph is called \textbf{connected} if for any two different vertices $v,w \in V$, there 
exists a sequence $v_0,...,v_k$ in $V$ such that $v_0=v$, $v_k=w$ and $(v_i,v_{i+1})\in E$ for $i=0,\dots,k-1$. 
 A \textbf{connection along} an edge $e\in E$  is a bijection  
\begin{align*}
\nabla_e: E_{i(e)}^i \longrightarrow E_{t(e)}^i
\end{align*}
such that $\nabla_e(e)=\bar{e}$, where $\bar{e}$ is the edge with $(i(\bar{e}), t(\bar{e}))=(t(e),i(e))$.

\begin{definition}\label{Def: abstract GKM graph}
Let $n$ and $d$ be positive integers. An \textbf{abstract $(n,d)$-GKM graph} $(\Gamma, \operatorname{w})$ is a 
connected simple and $n$-valent graph $\Gamma=(V,E)$ together with a \textbf{weight map} 
$\operatorname{w}:E\rightarrow \Z^d\setminus\{0\}$ that is \textbf{antisymmetric}, i.e., for all $e\in E$ we have
$\operatorname{w}(\bar{e})=-\operatorname{w}(e)$,  such that the following hold.
\begin{itemize}
\item[(i)] For each vertex $v\in V$, the $\Z$-span of vectors $\operatorname{w}(e)$ for $ e\in E^i_v$  
 is equal to $\Z^d$. 
\item[(ii)] For each vertex $v\in V$, the vectors $\operatorname{w}(e)$ for $ e\in E^i_v$ are 
pairwise linearly independent in $\Z^d$ over $\Z$. 
\item[(iii)]  For each  $e\in E$, there exists a connection %, i.e., a bijection 
\begin{align*}
\nabla_e: E_{i(e)}^i \longrightarrow E_{t(e)}^i
\end{align*}
that is \textbf{compatible} with the weight map $\operatorname{w}$. The latter means that
 for each $e'\in E_{i(e)}^i$ 
there exists an integer $a_{e,e'}$ such that 
\begin{align*}
\operatorname{w}(e')-\operatorname{w}(\nabla_e(e'))= a_{e,e'}\cdot \w(e);
\end{align*}
the former means that $\nabla_e$ is a bijection and $\nabla_{e}(e)=\bar{e}$.
\end{itemize}
\end{definition}

\begin{remark}\label{Rem: relation n, d abstract GKM graph}
If  $n\geq2$, then for an abstract $(n,d)$-GKM graph we have $2\leq d\leq n$, as
follows from items $(i)$ and $(ii)$ of Definition \ref{Def: abstract GKM graph}.
If $n = 1$ then $d = 1$ as well.
\end{remark}

\begin{definition}\label{Def: isomorphism simple graphs}
	Let $\Gamma_1=(E_1,V_1)$ and  $\Gamma_2=(E_2,V_2)$ be two simple graphs. An \textbf{isomorphism} between $\Gamma_1$ and $\Gamma_2$ is a bijection $F:V_1 \rightarrow V_2$ such that for each two vertices $v,w\in V_1$, we have that $(v,w)\in 
	E_1$  if and only if $(F(v),F(w))\in E_2$.
\end{definition}

\begin{remark}\label{Rem: bijection edges}
An isomorphism $F:V_1 \rightarrow V_2$ between simple graphs $\Gamma_1=(E_1,V_1)$ and  $\Gamma_2=(E_2,V_2)$ induces a 
bijection  $E_1\rightarrow E_2$, $(v,w)\mapsto (F(v),F(w)).$
\end{remark}

\begin{definition}\label{Def: isomorphisms abstract GKM graphs}
	Let $(\Gamma_1=(V_1,E_1), \operatorname{w_1})$ and $(\Gamma_2=(V_2,E_2), \operatorname{w_2})$ be two abstract
	$(n,d)$- GKM graphs. An \textbf{isomorphism} $(F, \theta)$ between $(\Gamma_1,\operatorname{w_1})$ and $(\Gamma_2,\operatorname{w_2})$ is an isomorphism $F$ between the 
	simple graphs $\Gamma_1$ and $\Gamma_2$ together with a linear isomorphism $\theta:\Z^d \rightarrow \Z^d$ such that 
	for 
	all $(v,w)\in E_1$ 
	\begin{align*}
	\theta(\operatorname{w_1(v,w)})=\operatorname{w_2}(F(v),F(w)).
	\end{align*} 
\end{definition}

\begin{subsubsection}{Relations between Hamiltonian and Abstract GKM Graphs}

The GKM graphs of Hamiltonian GKM spaces give in a natural way examples of abstract GKM graphs. Let $\tham$ be a
Hamiltonian GKM space of dimension $2n$ and let $\GKM$ be its GKM graph. The graph $\Gamma_{GKM}$ is a simple and 
connected $n$-valent graph. Let $\chi: \ell_T^* \rightarrow \Z^d$ be a linear isomorphism, where $d$ is the dimension 
of the torus $T$. Then the graph $\Gamma_{GKM}$ together with the composition $\chi\circ \eta: E_{GKM}\rightarrow \Z^d$ 
is an abstract $(n,d)$-GKM graph. In particular, $\chi$ induces a map

\begin{align*}
\mathcal{L}_\chi\colon 
\left\lbrace  \begin{array}{r}
\text{GKM graphs of Hamiltonian }  \\ 
\text{ GKM $T$-spaces of dimension $2n$}  \\
\end{array}\right\rbrace 
\longrightarrow
\left\lbrace  \begin{array}{r}
\text{abstract $(n,d)$ }  \\ 
\text{ -GKM graphs}  \\
\end{array}\right\rbrace.
\end{align*}

Note that two GKM graphs of Hamiltonian GKM $T$-spaces are isomorphic 
(in the sense of Definition \ref{Def: isomophic GKM graphs}) if and only if their images under $\mathcal{L}_\chi$ 
are isomorphic abstract GKM graphs (in the sense of Definition \ref{Def: isomorphisms abstract GKM graphs}). Hence,
$\mathcal{L}_\chi$ induces an injective map 

\begin{align}\label{EQ: Lnd}
\mathcal{L}_{n,d}\colon 
\left\lbrace  \begin{array}{r}
\text{isomorphism classes of}  \\ 
\text{GKM graphs of Hamiltonian }  \\ 
\text{ GKM $T$-spaces of dimension $2n$}  \\
\end{array}\right\rbrace 
\longrightarrow
\left\lbrace  \begin{array}{r}
\text{isomorphism classes of}\\
\text{abstract $(n,d)$ }  \\ 
\text{ -GKM graphs}  \\
\end{array}\right\rbrace.
\end{align}

The map $\mathcal{L}_{n,d}$ is canonical. Namely, it does not depend on the choice of the linear isomorphism $\chi: 
\ell_T^* \rightarrow \Z^d$. Since $\mathcal{L}_{n,d}$ is injective, we can consider the set of  
isomorphism classes of the GKM graphs of Hamiltonian GKM  $T$-space of dimension $2n$ as a subset of the isomorphism 
classes of  abstract 
$(n,d)$-GKM graphs. An abstract $(n,d)$-GKM  graph resp. its isomorphism 
class is called  \textbf{Hamiltonian} if its isomorphism class lies in the image of  $\mathcal{L}_{n,d}$.\newline

Moreover, the concepts of projections and of the first Chern class maps defined for GKM graphs of Hamiltonian GKM 
spaces 
generalize to abstract GKM graphs.

\begin{definition}\label{Definition: Projections abstract GKM graphs}
Let $n$, $d$ and $d'$ be positive integers with $d'<d\leq n$. Let $\Gamma=(V,E)$ be a simple and connected $n$-valent 
graph
and let $\operatorname{w}:E \rightarrow \Z^d\setminus\{0\}$ and $\operatorname{w}':E \rightarrow 
\Z^{d'}\setminus\{0\}$ maps such that $(\Gamma,\operatorname{w})$ and $(\Gamma,\operatorname{w}')$ are abstract GKM 
graphs. Then $(\Gamma,\operatorname{w}')$ is a \textbf{projection} of $(\Gamma,\operatorname{w})$ if there exists a 
linear surjection $\theta: \Z^d\rightarrow \Z^{d'}$ such that 
\begin{align*}
\operatorname{w}'(e)= \theta \left( \operatorname{w}(e)\right) 
\end{align*}
for all $e\in E$.
\end{definition} 

The concept of projection of an abstract GKM graph can be considered
as a generalization of projection of a Hamiltonian GKM graph. Let $\tham$ be a Hamiltonian GKM space with GKM graph 
$\GKM$ and let $T'$ be a subtorus of $T$ such that the $T'$-action on $(M,\omega)$ is also GKM.
Note that $(M, \omega, T', i^*\circ \phi)$ is   a Hamiltonian GKM space, where $i^*:\mathfrak{t}^*\rightarrow 
(\mathfrak{t}')^*$ is the dual map of the inclusion $i: \mathfrak{t}'\rightarrow \mathfrak{t}$ from the Lie algebra of 
$T'$ to the one of $T$.
The GKM graph of this Hamiltonian GKM space is $(\Gamma_{GKM},  \eta')$, where $\eta'=i^* \circ \eta$.
So the Hamiltonian GKM graph $(\Gamma_{GKM}, \eta')$ is a projection of $\GKM$.
Let 
\begin{align*}
\chi: \ell_T^* \rightarrow \Z^d \quad \text{and}\quad \chi': \ell_{T'}^* \rightarrow \Z^{d'}
\end{align*}
be linear isomorphisms, where $d$ resp. $d'$ is the dimension of $T$ resp. $T'$.\\
Then the abstract GKM graph $(\Gamma_{GKM}, \chi' \circ \eta')$ is a projection of $(\Gamma_{GKM}, \chi \circ \eta)$.
Indeed,
\begin{align*}
\theta= \chi' \circ i^* \circ \chi^{-1}\colon \Z^d \rightarrow \Z^{d'}
\end{align*}
is a linear surjection such that
\begin{align*}
\theta\left( \chi \circ \eta (e)\right) =\chi'\circ \eta' (e)
\end{align*}
for all $e\in E_{GKM}$. That the converse is true is the statement of the following lemma.

\begin{lemma}\label{Lemma: Projections abstract GKM graphs}
Let $(\Gamma, \w)$ be an abstract $(n,d)$-GKM graph that is Hamiltonian and let $(\Gamma, \w')$ be an abstract 
$(n,d')$-GKM graph that is a  projection of $(\Gamma, \w)$. Then $(\Gamma, \w')$ is Hamiltonian. Explicitly, if $\tham$ 
is a Hamiltonian GKM space with GKM graph $\GKM$ such that
\begin{itemize}
\item $\Gamma_{GKM}=\Gamma$, and 
\item $\w=\chi \circ \eta$, where $\chi: \ell_T^*\rightarrow \Z^d$ is a linear isomorphism,
\end{itemize}
then there exists a subtorus $T'$ of $T$ such that 
\begin{itemize}
\item $(M, \omega, T', i^*\circ \phi)$ is a Hamiltonian GKM space, and
\item $\w'=\chi' \circ \eta'$, where $(\Gamma, \eta')$ is the GKM graph of $(M, \omega, T', i^*\circ \phi)$ and $\chi': 
\ell_{T'}^* \rightarrow \Z^{d'}$ is a linear isomorphism.
\end{itemize}
\end{lemma}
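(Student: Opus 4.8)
The plan is to realise the prescribed linear surjection $\theta$ by an honest subtorus of $T$ and then invoke the characterisation of GKM projections recalled when projections of GKM graphs were introduced. Write $\ell_T\subseteq \mathfrak{t}$ for the integral lattice of $T$, so that $\ell_T^*=\operatorname{Hom}(\ell_T,\Z)$ is its dual, and recall the given data: a Hamiltonian GKM space $\tham$ with GKM graph $\GKM$ such that $\Gamma_{GKM}=\Gamma$ and $\w=\chi\circ\eta$, together with the linear surjection $\theta\colon \Z^d\to\Z^{d'}$ defining the projection. First I would form the surjection of lattices $\psi:=\theta\circ\chi\colon \ell_T^*\to\Z^{d'}$ and set $K:=\ker\psi$. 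Since $\Z^{d'}$ is free, the short exact sequence $0\to K\to \ell_T^*\to\Z^{d'}\to 0$ splits, so $K$ is a saturated sublattice of $\ell_T^*$ of rank $d-d'$.

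Next I would produce the subtorus. Let $\ell_{T'}:=K^{\perp}=\{v\in\ell_T:\langle\alpha,v\rangle=0\text{ for all }\alpha\in K\}$ be the annihilator of $K$ in $\ell_T$. Because $K$ is saturated, $\ell_{T'}$ is a saturated (primitive) sublattice of $\ell_T$ of rank $d'$ with $(\ell_{T'})^{\perp}=K$; hence $\mathfrak{t}':=\ell_{T'}\otimes\R$ is the Lie algebra of a subtorus $T'\subseteq T$ of dimension $d'$. The restriction map $i^*\colon \ell_T^*\to\ell_{T'}^*$ dual to the inclusion $i\colon \mathfrak{t}'\hookrightarrow\mathfrak{t}$ is surjective (a homomorphism defined on the direct summand $\ell_{T'}$ extends to $\ell_T$) with kernel exactly $(\ell_{T'})^{\perp}=K$. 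Since $i^*$ and $\psi$ are two surjections out of $\ell_T^*$ with the same kernel $K$, the universal property of the quotient supplies a unique linear isomorphism $\chi'\colon \ell_{T'}^*\to\Z^{d'}$ with $\chi'\circ i^*=\psi=\theta\circ\chi$.

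It then remains to check that the restricted quadruple $(M,\omega,T',i^*\circ\phi)$ is a Hamiltonian GKM space with GKM graph $(\Gamma,\eta')$, where $\eta':=i^*\circ\eta$. The restricted action is automatically Hamiltonian with moment map $i^*\circ\phi$, so by the characterisation of when a projection of a GKM space is again GKM it suffices to verify that, at each $p\in M^T$, the images under $i^*$ of the weights of the $T$-representation on $T_pM$ are pairwise linearly independent. Translating through $\w=\chi\circ\eta$ and $\chi'\circ i^*=\theta\circ\chi$, for every edge $e\in E^i_v$ I obtain
\begin{align*}
\chi'\bigl(\eta'(e)\bigr)=\chi'\bigl(i^*(\eta(e))\bigr)=\theta\bigl(\chi(\eta(e))\bigr)=\theta\bigl(\w(e)\bigr)=\w'(e).
\end{align*}
Because $(\Gamma,\w')$ is an abstract $(n,d')$-GKM graph, the vectors $\w'(e)$ for $e\in E^i_v$ are nonzero and pairwise linearly independent by items $(i)$ and $(ii)$ of Definition~\ref{Def: abstract GKM graph}; as $\chi'$ is an isomorphism, the same holds for the $\eta'(e)=i^*(\eta(e))$. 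This is exactly the required condition, so $(M,\omega,T',i^*\circ\phi)$ is GKM with GKM graph $(\Gamma_{GKM},i^*\circ\eta)=(\Gamma,\eta')$, and the displayed identity reads $\w'=\chi'\circ\eta'$, as wanted.

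The main obstacle is the second paragraph: one must turn the abstract lattice surjection $\theta$ into a genuine subtorus, which rests on the facts that the kernel of a surjection onto a free abelian group is saturated and that saturated sublattices correspond to subtori via the annihilator, matching up the dual map $i^*$ with $\psi$. Once $T'$ and the isomorphism $\chi'$ are in place, the GKM verification is immediate from the axioms of an abstract GKM graph together with the already-established characterisation of GKM projections, so no delicate fixed-point analysis is needed beyond what that characterisation provides.
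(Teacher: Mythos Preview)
Your proof is correct and follows essentially the same route as the paper's: both form $K=\ker(\theta\circ\chi)$, take the subtorus $T'$ whose Lie algebra is the annihilator of $K$, extract the isomorphism $\chi'$ from the resulting commutative square, and then verify the GKM condition at each fixed point by transporting the pairwise linear independence of the $\w'(e)$ back through $\chi'$. Your version is slightly more explicit about the lattice-theoretic justification (saturation of $K$, double annihilator), but there is no substantive difference in strategy.
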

\begin{proof}
Since $(\Gamma, \w')$ is a projection of $(\Gamma, \w)$, there exists a linear surjection $\theta: \Z^d 
\rightarrow \Z^{d'}$ such that $\w'=\theta \circ \w$. 
The kernel of  $\theta \circ \chi$ is a subgroup of
$\ell_{T}^*$ of rank $d-d'$. Let $T'$ be the subtorus of $T$ whose Lie algebra is  
\begin{align*}
\left\lbrace \xi \in \mathfrak{t} \, \vert \, \left\langle x, \xi\right\rangle =0 \text{ for all }
x\in \ker(\theta\circ\chi)\right\rbrace. 
\end{align*}
The dimension of $T'$ is $d'$. Let $i:\mathfrak{t}'\rightarrow \mathfrak{t}$ be the inclusion from the Lie 
algebra of $T'$ to the one of $T$ and let $i^*:\mathfrak{t}^*\rightarrow (\mathfrak{t}')^*$ be its dual map.
We have that 
\begin{align*}
\ker(i^*) \cap \ell_{T}^* = \ker(\theta\circ\chi).
\end{align*}
In particular, there exists a unique linear map $\chi': \ell_{T'}^* \rightarrow \Z^{d'}$ such that the following 
diagram commutes.
\begin{center}
\begin{tikzpicture}
% Tell it where the nodes are
\node (A) {$\ell_{T}^*$};
\node (B) [below=of A] {$\ell_{T'}^*$};
\node (C) [right=of A] {$\,\,\Z^d$};
\node (D) [right=of B] {$\Z^{d'}$};
% Tell it what arrows to draw
\draw[-stealth] (A)-- node[left] {\small $i^*$} (B);
\draw[-stealth] (B)-- node [below] {\small $\chi'$} (D);
\draw[-stealth] (A)-- node [above] {\small $\chi$} (C);
\draw[-stealth] (C)-- node [right] {\small $\theta$} (D);
\end{tikzpicture}
\end{center}
Since $\theta\circ \chi$ is surjective, we have that $\chi'$ is a linear isomorphism. 
Moreover, $i^*\circ \phi$ is a moment map for the $T'$-action on $(M,\omega)$. So $(M, \omega, T', i^*\circ \phi)$ is a 
Hamiltonian $T'$-space. We show that this space is GKM. Let $p\in M^T$ be a fixed point of the $T$-action 
and let $\alpha_{p,1},...,\alpha_{p,n}\in \ell_{T}^*$ be the weights of the $T$-representation on $T_pM$.
We need to show that $i^*\left( \alpha_{p,1}\right) ,\dots,i^*\left( \alpha_{p,n}\right) \in \ell_{T'}^*$
are pairwise linearly independent. By Remark \ref{Rem: Properties GKM graphs} $(i)$ the weights 
$\alpha_{p,1},...,\alpha_{p,n}$ are equal to $\eta(p,p_1),...,\eta(p,p_n)$, where $p_1,...,p_n \in M^T\setminus\{p\}$
are the $n$ fixed points such that $(p,p_i)\in E$ for $i=1,...,n$. Since $\w'=\theta\circ\w$, $\w=\chi\circ \eta$ and
$\theta\circ \chi = \chi' \circ i^*$, we have that 
$$\w'(p,p_i)=\theta\left( \w(p,p_i)\right) =\theta \circ \chi \left( \eta(p,p_i)\right) =\chi' \circ i^* \left( 
\eta(p,p_i)\right)$$
for all $i$. Since $(\Gamma, \w')$ is an abstract GKM graph, we have that $\w'(p,p_i)$ and $\w'(p,p_j)$ are linearly 
independent for $i\neq j$. So $i^* \left( \eta(p,p_i)\right)$ and $i^* \left( \eta(p,p_j)\right)$ are linearly 
independent for $i\neq j$. We  
conclude that $(M, \omega, T', i^*\circ \phi)$ is GKM. Its GKM graph is $(\Gamma, \eta')$, where $\eta' 
= i^* \circ \eta$ and $\operatorname{w}'=\chi' \circ \eta'$. Hence, $(\Gamma, \w')$ is Hamiltonian.
\end{proof}

The first Chern class  map of a Hamiltonian GKM space is the map $\mathcal{C}_1: E_{GKM}\rightarrow \Z$ that maps an 
edge $e\in E_{GKM}$ of the GKM graph to the evaluation of the first Chern class on the symplectic two-sphere that 
belongs to the edge $e$. By Lemma \ref{Lemma: Compute first Chern Class map} this map can be computed from the 
GKM graph. Therefore, we can extend the definition of the first Chern class map to abstract 
GKM graphs.

\begin{definition}\label{Def: first Chern class map for abstract GKM graphs}
	Let $(\Gamma=(V,E), \operatorname{w})$ be an abstract $(n,d)$-GKM graph. Its \textbf{first Chern class map} 
	$\mathcal{C}_1:E \rightarrow \Z$ is the map such that 
	\begin{align}\label{EQ1:Def: first Chern class map}
	\sum_{e'\in E_{i(e)}^i} \operatorname{w}(e')\, - \, \sum_{e'\in E_{t(e)}^i} 
	\operatorname{w}(e')=\mathcal{C}_1(e)\cdot
	\operatorname{w}(e)
	\end{align}
	holds for all $e\in E$. The first Chern class map is well defined, because $\operatorname{w}(e)\neq0$ for all $e 
	\in E$ 
	and by Definition \ref{Def: abstract GKM graph} $(iii)$ the left hand side of Equation \eqref{EQ1:Def: 
	first Chern class map}  is an integer multiple of $\operatorname{w}(e)$. 
\end{definition}

\begin{remark}\label{Rem: Ham vs abstract first Chern class map}
Let $\tham$ be a Hamiltonian GKM space with GKM graph $\GKM$ and let $\chi: \ell_{T}^* \rightarrow \Z^{d}$ be a linear 
isomorphism. Then by Lemma \ref{Lemma: Compute first Chern Class map} the first Chern class map $\mathcal{C}_1^{Ham}: 
E_{GKM}\rightarrow \Z$ of $\GKM$ and the first Chern class map $\mathcal{C}_1^{Abs}: E_{GKM}\rightarrow \Z$ of the 
abstract GKM graph $(\Gamma_{GKM}, \chi \circ \eta)$ coincide. Hence, Definition \ref{Def: first Chern class map for 
abstract GKM graphs} extends the definition of the first Chern  class (Definition \ref{Definition: First Chern Class 
Map }) 
for abstract GKM graphs.
\end{remark}
We close this subsection with two lemmas about  properties of the first Chern class map.

\begin{lemma}\label{Lemma: simple properties first chern map invariant under iso and projection.}
Let $(\Gamma_1, \operatorname{w}_1)$ and $(\Gamma_2, \operatorname{w}_2)$ be two abstract GKM graphs and let 
$\mathcal{C}_1^{1}: E_{1}\rightarrow \Z$ and $\mathcal{C}_1^{2}: E_{2}\rightarrow \Z$ be their first Chern class 
maps. Then the following hold.
\begin{itemize}
\item The first Chern class map is \textbf{invariant under isomorphism}, i.e., if there exists an isomorphism $(F, 
\theta)$ between $(\Gamma_1, \operatorname{w}_1)$ and $(\Gamma_2, 
\operatorname{w}_2)$, then 
$$\mathcal{C}_1^{1}(v,w)=\mathcal{C}_1^{2}((F(v), F(w)))$$
for all $(v,w)\in E_1$.
\item The first Chern class map is \textbf{invariant under projections}, i.e., if $(\Gamma_2, 
\operatorname{w}_2)$ is a 
projection of $(\Gamma_1, \operatorname{w}_1)$, so 
$\Gamma_1=\Gamma_2$ and $E_1=E_2$, then  
$$\mathcal{C}_1^{1}(e)=\mathcal{C}_1^{2}(e)$$
for all $e\in E_1=E_2$.
\end{itemize}
\end{lemma}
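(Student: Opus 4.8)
The plan is to prove both invariance statements by the same mechanism: apply the relevant linear map to the single defining relation \eqref{EQ1:Def: first Chern class map} of the first Chern class map and then cancel a nonzero weight. Recall that $\mathcal{C}_1(e)$ is characterised by
$$\sum_{e'\in E_{i(e)}^i} \w(e') - \sum_{e'\in E_{t(e)}^i} \w(e') = \mathcal{C}_1(e)\cdot \w(e),$$
so everything reduces to checking that the left-hand side transforms correctly under the map at hand.

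First I would treat invariance under isomorphism. Let $(F,\theta)$ be an isomorphism from $(\Gamma_1,\w_1)$ to $(\Gamma_2,\w_2)$ and fix an edge $e=(v,w)\in E_1$. The key combinatorial observation is that, by Remark \ref{Rem: bijection edges}, $F$ restricts to bijections $E_v^i\to E_{F(v)}^i$ and $E_w^i\to E_{F(w)}^i$. Applying $\theta$ to the defining relation for $\mathcal{C}_1^{1}(e)$, using its linearity, and substituting $\theta(\w_1(v',w'))=\w_2(F(v'),F(w'))$, reindexes the two sums as sums over $E_{F(v)}^i$ and $E_{F(w)}^i$. The outcome is precisely the left-hand side of the defining relation for the edge $(F(v),F(w))\in E_2$, equated to $\mathcal{C}_1^{1}(e)\cdot\w_2(F(v),F(w))$; comparison with the defining relation for $\mathcal{C}_1^{2}(F(v),F(w))$ then gives
$$\mathcal{C}_1^{2}(F(v),F(w))\cdot \w_2(F(v),F(w)) = \mathcal{C}_1^{1}(e)\cdot \w_2(F(v),F(w)),$$
whence the claim after cancellation.

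For invariance under projections there is even less to do: here $\Gamma_1=\Gamma_2$ with a common edge set and common initial/terminal maps, and $\w_2=\theta\circ\w_1$ for a linear surjection $\theta\colon\Z^d\to\Z^{d'}$. Applying $\theta$ to the defining relation for $\mathcal{C}_1^{1}(e)$ and using linearity directly converts the $\w_1$-sums into the corresponding $\w_2$-sums, yielding $\mathcal{C}_1^{2}(e)\cdot\w_2(e)=\mathcal{C}_1^{1}(e)\cdot\w_2(e)$ upon comparison with the defining relation for $\mathcal{C}_1^{2}$.

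The one step requiring (minor) care — and the closest thing to an obstacle — is the cancellation of the weight factor. It is justified because in an abstract GKM graph the weights lie in $\Z^d\setminus\{0\}$ (respectively $\Z^{d'}\setminus\{0\}$) and the ambient lattice is torsion-free, so $a\cdot\w(e)=b\cdot\w(e)$ with $\w(e)\neq0$ and $a,b\in\Z$ forces $a=b$. Beyond this I expect no difficulty: the lemma is essentially the naturality of the relation \eqref{EQ1:Def: first Chern class map} under linear maps of the weight lattice, and the proof is a direct unwinding of Definition \ref{Def: first Chern class map for abstract GKM graphs}.
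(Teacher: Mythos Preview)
Your proposal is correct and is precisely the direct unwinding of Definition \ref{Def: first Chern class map for abstract GKM graphs} that the paper has in mind; the paper's own proof consists of the single sentence ``The proof of this lemma follows directly from the definitions.'' Your write-up is therefore a faithful (and considerably more explicit) realisation of the same approach.
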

\begin{proof}
The proof of this lemma follows directly from the definitions.
\end{proof}

\begin{lemma}\label{Lemma: first Chern map is symmetric}
Let $(\Gamma, \operatorname{w})$ be an abstract GKM graph. Then its first Chern class map $\mathcal{C}_1:E \rightarrow 
\Z$ is \textbf{symmetric},  i.e., $\mathcal{C}_1(\bar{e})=\mathcal{C}_1(e)$ for all $e\in E$. 
\end{lemma}
\begin{proof}
By the definition of the first Chern class map we have
\begin{align*}
\sum_{e'\in E_{i(\bar{e})}^i} \operatorname{w}(e')\, - \, \sum_{e'\in E_{t(\bar{e})}^i} 
\operatorname{w}(e')=\mathcal{C}_1(\bar{e})\cdot
\operatorname{w}(\bar{e}).
\end{align*}
The left hand side of this equation is equal  to
\begin{align*}
- \left( \sum_{e'\in E_{i({e})}^i} \operatorname{w}(e')\, - \, \sum_{e'\in E_{t({e})}^i} 
\operatorname{w}(e')\right), 
\end{align*}
because $i(\bar{e})=t(e)$ and $t(\bar{e})=i(e)$. Again by the definition of first the Chern class map this latter term 
is equal to $- \mathcal{C}_1(e)\cdot \operatorname{w}(e)$. We obtain $\mathcal{C}_1(\bar{e})\cdot 
\operatorname{w}(\bar{e})=-\mathcal{C}_1(e)\cdot \operatorname{w}(e)$. Since $\operatorname{w}$ is antisymmetric, i.e.,
$\operatorname{w}(\bar{e})=-\operatorname{w}(e)$ and $\operatorname{w}(e)\neq 0$, the claim of the lemma follows.
\end{proof}

\end{subsubsection}
\end{subsection}

\begin{subsection}{GKM Skeletons}\label{subsec:GKM skeletons}

We return  to Question \ref{Question: Does there exists an (abstract) GKM graph?} for abstract GKM graphs. 
First we generalize the notation of orientation as in Definition \ref{Def: orientation edge set} to simple graphs.

\begin{definition}\label{Def: orientation edge set abstract case}
Let $\Gamma=(V,E)$ be a simple $n$-valent graph. An \textbf{orientation} $\sigma$ of the edge set $E$ is a 
subset $E^\sigma$ of $E$ such that for each $e\in E$ exactly one of the following two conditions is true.
\begin{itemize}
\item $e\in E^\sigma$ and  $\bar{e}\notin E^\sigma$
\item $\bar{e}\in E^\sigma$ and  $e\notin E^\sigma$
\end{itemize}
Here, $\bar{e}$ is the edge such that $i(\bar{e})=t(e)$ and $t(\bar{e})=i(e)$.
\end{definition}

\begin{remark}\label{Remark: orientation abs GKM graphs cardinality}
Let $\Gamma=(V,E)$ be a simple $n$-valent graph and let   $\sigma$ be an orientation for the edge set $E$. The 
cardinalities of the vertex set $V$, the edge set $E$ and the set $E^\sigma$ are related by the equations
\begin{align*}
\left| E\right| = n \cdot \left| V \right|  \quad \text{and} \quad \left| E^\sigma\right|=\frac{1}{2}\left| E\right|.
\end{align*}
 \end{remark}

Let $\Gamma=(V,E)$ be a connected, simple and $n$-valent graph and let $\mathcal{D}:E \rightarrow \Z$ be a map.
The analogue of Question \ref{Question: Does there exists an (abstract) GKM graph?}  for abstract GKM graphs is to ask 
whenever there exists a map  $\operatorname{w}: E \rightarrow \Z^d\setminus\{0\}$ 
such that $(\Gamma , \operatorname{w}) $ is an abstract $(n,d)$-GKM graph and
$\mathcal{C}_1(e)=\mathcal{D}(e)$
for all $e\in E$. Note that, by definition of the first Chern class map, such a map $\operatorname{w}$ must satisfy 
the linear equation
\begin{align}\label{EQ: linear relations D}
\sum_{e'\in E_{i(e)}^i} \operatorname{w}(e')\, - \, \sum_{e'\in E_{t(e)}^i} 
\operatorname{w}(e')=\mathcal{D}(e)\cdot
\operatorname{w}(e)
\end{align}
for all $e\in E$. So this question can be considered as a task of linear algebra. Note that if such a map 
$\operatorname{w}$ exists, then it is antisymmetric, i.e., $\operatorname{w}(\bar{e})=-\operatorname{w}(e)$ for all 
$e\in E$. Moreover, since the first Chern class map is symmetric 
(Lemma \ref{Lemma: first Chern map is symmetric}), we have that $\mathcal{D}$ is symmetric. In particular,
\eqref{EQ: linear relations D} holds for $e\in E$  if and only if it holds for $\bar{e}$. Let us fix an orientation 
$\sigma$ of the edge set $E$. So we have that \eqref{EQ: linear relations D} holds for all $e\in E$  if and only if it 
holds for all $e\in E^\sigma$. 
In order to analyze  equations \eqref{EQ: linear relations D} for all  $e\in E^\sigma$, it makes sense to fix an 
order for the 
elements of $E^\sigma$. 
For this reason we introduce GKM skeletons.

\begin{definition}\label{Def: n-GKM skeleton, supporting abstract GKM graphs}
An \textbf{$n$-GKM skeleton} $(\Gamma, \sigma, \operatorname{Ord}(E^\sigma),\mathbf{d} )$ is a connected simple 
$n$-valent graph $\Gamma=(V,E)$ together with an orientation $\sigma$ of the edge set, an ordering\footnote{By 
Remark \ref{Remark: orientation abs GKM graphs cardinality} the cardinality of $E^\sigma$ is equal to half of the one 
of 
$E$} 
$\operatorname{Ord}(E^\sigma)$ 
\begin{align*}
 e_1,\dots, e_{\frac{1}{2}\left| E\right| }
\end{align*} 
of $E^\sigma$ and a vector  $\mathbf{d}=(d_1,...,d_{\frac{1}{2}\left| E\right| })$  with integers entries.\newline
We say that  $(\Gamma, \sigma, \operatorname{Ord}(E^\sigma),\mathbf{d} )$ \textbf{supports an abstract 
$(n,d)$-GKM graph} if there exists a map $\operatorname{w}: E \rightarrow \Z^d\setminus \{0\}$ such that 
$(\Gamma, \operatorname{w})$ is an abstract $(n,d)$-GKM graph and such that $\mathcal{C}_1(e_i)= d_i$
for all $i=1,...,\frac{1}{2}\left| E\right|$, where $\mathcal{C}_1: E \rightarrow \Z$ is the first Chern class map of 
$(\Gamma, \operatorname{w})$.
\end{definition}

By the discussion above it is clear that the analog of Question \ref{Question: Does there exists an (abstract) GKM 
graph?} for abstract GKM graphs is equivalent to the question of whenever a given GKM skeleton supports an abstract 
GKM graph. 

Next we rewrite  equations  \eqref{EQ: linear relations D} so that the terms $\operatorname{w}(e')$ for $e'\in 
E_{i(e)}^i, E_{t(e)}^i $ are replaced by the  terms $\operatorname{w}(e_i)$ for $i=1,\dots. \frac{1}{2}\left| E\right| 
$. In order to do so we introduce the structure 
matrix.
\begin{definition}\label{Def: Structure Matrix}
	The \textbf{structure 
	matrix} 
	of an $n$-GKM skeleton $(\Gamma, \sigma, \operatorname{Ord}(E^\sigma), \mathbf{d})$ is the $(\frac{1}{2}\left| 
	E\right|\times \frac{1}{2}\left| E\right|)$-matrix $\mathbf{A}=(a_{j,k})_{j,k}$  given by
	\begin{align*}
	a_{j,k}= \begin{cases}
	2 \quad \quad\,\,\,\, \text{if}\, j=k, \\ 
	1 \quad \quad\,\,\,\, \text{if}\, j\neq k \,\text{and}\, i(e_j)=i(e_k)\, \text{or}\,\, t(e_j)=t(e_k),\\
	-1 \quad \quad \text{if}\, j\neq k \,\text{and}\ i(e_j)=t(e_k)\, \text{or}\,\, t(e_j)=i(e_k),\\
	0 \quad \quad\,\,\,\, \text{otherwise}.
	\end{cases}
	\end{align*}	
\end{definition}

\begin{remark}\label{Rem: Def: structure matrix}
The structure matrix $\mathbf{A}$ of an $n$-GKM skeleton $\skeleton$  does not depend on the vector $\mathbf{d}$. 
Since the graph $\Gamma$ is simple, for two indices with $j\neq k$ only the condition $ i(e_j)=i(e_k)\, \text{or}\, 
t(e_j)=t(e_k)$ or only  the condition $i(e_j)=t(e_k)\, \text{or}\, t(e_j)=i(e_k)$ can be satisfied. 
Hence, the structure matrix $\mathbf{A}$ is well defined. 
It is clear that $\mathbf{A}$ is symmetric. 
Moreover, since the graph $\Gamma$ is $n$-valent, for each $j$ there exists exactly $n-1$ indices $k\neq j$ such that 
$i(e_j)=i(e_k)\, \text{or}\, i(e_j)=t(e_k)$ and there exists exactly $n-1$ indices $k\neq j$ such that $ 
t(e_j)=i(e_k)\, \text{or}\, t(e_j)=t(e_k)$.
Hence, each row vector resp. column vector of $\mathbf{A}$ has one entry that is equal to $2$, $2(n-1)$ entries 
that are equal to $1$ or $-1$ and the remaining other $\frac{1}{2}\left| E\right| -2(n-1)-1$ entries are equal to 
zero.
\end{remark}

\begin{lemma}\label{Lemma: structure matrix eq}
Let $\skeleton$ be a GKM skeleton and let $\w: E \rightarrow \Z^d$ be an antisymmetric map, i.e., $\w(\bar{e})=-\w(e)$.
Then for each $j=1,\dots, \frac{1}{2}\left| E\right|$
\begin{align*}
\sum_{e\in E_{i(e_j)}^i} \operatorname{w}(e)\, - \, \sum_{e\in E_{t(e_j)}^i} 
\operatorname{w}(e)=\sum_{k=1}^{\frac{1}{2}\left| E\right|} a_{j,k} \operatorname{w}(e_k),
\end{align*}
where $\mathbf{A}=(a_{j,k})_{j,k=1,...,\frac{1}{2}\left| E\right|}$ is the structure matrix of the GKM skeleton.
\end{lemma}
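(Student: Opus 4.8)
The plan is purely combinatorial: the only hypothesis on $\w$ that I use is antisymmetry, so I would first rewrite each of the two sums on the left-hand side (the weights of the edges emanating from a given vertex) as a \emph{signed} sum over the chosen oriented edges $e_1,\dots,e_{\frac12|E|}$, and then compare the resulting coefficient of each $\w(e_k)$ with the entry $a_{j,k}$ of the structure matrix.

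The key step is the following rewriting identity, valid for every vertex $v\in V$:
\[
\sum_{e\in E_v^i}\w(e)\;=\;\sum_{k:\,i(e_k)=v}\w(e_k)\;-\;\sum_{k:\,t(e_k)=v}\w(e_k).
\]
To justify it I would note that each edge $e$ with $i(e)=v$ either lies in $E^\sigma$, in which case $e=e_k$ for a unique $k$ with $i(e_k)=v$ and it contributes $\w(e_k)$, or else its reverse lies in $E^\sigma$, in which case $e=\bar{e_k}$ with $i(\bar{e_k})=t(e_k)=v$ and, by antisymmetry, it contributes $\w(\bar{e_k})=-\w(e_k)$. These two families are disjoint and exhaust the $n$ edges of $E_v^i$, which gives the identity.

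Applying this identity at $v=i(e_j)$ and at $v=t(e_j)$ and subtracting, the coefficient of $\w(e_k)$ on the left-hand side becomes
\[
[\,i(e_k)=i(e_j)\,]-[\,t(e_k)=i(e_j)\,]-[\,i(e_k)=t(e_j)\,]+[\,t(e_k)=t(e_j)\,],
\]
where each bracket is $1$ if the displayed equality holds and $0$ otherwise. It then remains to check that this coefficient equals $a_{j,k}$ for every $k$, which is the last step and the only delicate one. For $j=k$ the first and last brackets give $1$ each while the middle two vanish because $\Gamma$ has no loops, so the coefficient is $2$; for $j\neq k$ the four brackets reproduce precisely the four cases defining the structure matrix. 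The point requiring care is that at most one bracket can be nonzero, since two simultaneous coincidences among the vertices $i(e_j),t(e_j),i(e_k),t(e_k)$ would force either a loop, a double edge, or the relation $e_k=\bar{e_j}$ — and the last is impossible because $e_j$ and $e_k$ both lie in $E^\sigma$. Once this is verified the coefficient matches $a_{j,k}$ in each case, and the lemma follows.
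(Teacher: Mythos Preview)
Your proof is correct and follows essentially the same approach as the paper's: both arguments use antisymmetry to replace each edge by the corresponding $\pm\w(e_k)$ and then match the resulting coefficients with the definition of the structure matrix, invoking simplicity of $\Gamma$ (and the orientation condition) to rule out coincidences among the endpoints. The paper's version is just terser---it leaves the Iverson-bracket bookkeeping and the case analysis implicit---whereas you have written these steps out in full.
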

\begin{proof}
Note that for each $e\in E$ there exists a unique index $k\in \{1,\dots, \frac{1}{2}\left| E\right|\} $ with either
$e=e_k$	or $\bar{e}=e_k$. Since $\Gamma$ is a simple graph we have 
\begin{align*}
e\in E_{i(e_j)}^i \,\, \text{and}\,\,\bar{e} \in E_{t(e_j)}^t\quad \implies \quad e=e_j.
\end{align*}
Moreover, since the map $\w$ is antisymmetric, by the definition of the structure matrix it is clear that the lemma is 
true.
\end{proof}

\begin{corollary}\label{Cor: linera equation structure matrix}
Let $(\Gamma, \sigma, \operatorname{Ord}(E^\sigma), \mathbf{d})$ be  a GKM skeleton
and let $\w: E\rightarrow \Z^d \setminus\{0\}$ be a map such that $(\Gamma, \w)$ is an abstract GKM graph.
Then the following conditions are equivalent.
\begin{itemize}
\item The abstract GKM graph $(\Gamma, \w)$ is supported by the GKM skeleton $\skeleton$.
\item For each $j=1,\dots, \frac{1}{2}\left| E\right|$
\begin{align}\label{EQ1:Cor: linera equation structure matrix}
\sum_{k=1}^{\frac{1}{2}\left| E\right|} a_{j,k} \operatorname{w}(e_k)\,=\, d_j\cdot\operatorname{w}(e_j),
\end{align}
where $\mathbf{A}=(a_{j,k})_{j,k=1,...,\frac{1}{2}\left| E\right|}$ is the structure matrix of the GKM skeleton. 
\end{itemize}
\end{corollary}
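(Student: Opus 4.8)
The plan is to derive the statement as a direct combination of two facts already established in the excerpt: Lemma \ref{Lemma: structure matrix eq}, which rewrites the alternating vertex sums of an antisymmetric weight map in terms of the structure matrix, and Definition \ref{Def: first Chern class map for abstract GKM graphs}, which identifies those same alternating sums with a scalar multiple of the weight $\w(e_j)$. No genuinely new idea is needed; the content of the corollary is the repackaging of the defining relation of the first Chern class map as a system of linear equations encoded by the structure matrix.

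First I would note that since $(\Gamma, \w)$ is an abstract GKM graph, the weight map $\w$ is antisymmetric by Definition \ref{Def: abstract GKM graph}, so the hypotheses of Lemma \ref{Lemma: structure matrix eq} are met. Applying that lemma at the edge $e_j$ gives, for each $j = 1, \dots, \frac{1}{2}\left| E\right|$,
\[
\sum_{e\in E_{i(e_j)}^i} \w(e)\, - \, \sum_{e\in E_{t(e_j)}^i} \w(e)=\sum_{k=1}^{\frac{1}{2}\left| E\right|} a_{j,k} \,\w(e_k).
\]
On the other hand, the defining equation of the first Chern class map (Definition \ref{Def: first Chern class map for abstract GKM graphs}) applied to $e = e_j$ identifies the left-hand side with $\mathcal{C}_1(e_j)\cdot \w(e_j)$. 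Combining these two identities yields the key relation
\[
\mathcal{C}_1(e_j)\cdot \w(e_j)=\sum_{k=1}^{\frac{1}{2}\left| E\right|} a_{j,k}\, \w(e_k),
\]
valid for every $j$.

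With this key relation in hand, both implications become immediate. For the forward direction, if the GKM skeleton $\skeleton$ supports $(\Gamma, \w)$, then by Definition \ref{Def: n-GKM skeleton, supporting abstract GKM graphs} one has $\mathcal{C}_1(e_j) = d_j$ for all $j$; substituting this into the key relation produces exactly equation \eqref{EQ1:Cor: linera equation structure matrix}. For the converse, assuming \eqref{EQ1:Cor: linera equation structure matrix} and comparing it with the key relation gives $\bigl(\mathcal{C}_1(e_j)-d_j\bigr)\cdot \w(e_j)=0$ in $\Z^d$ for each $j$.

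The only point requiring (minor) care — and the one I would single out as the delicate step, though it is far from a real obstacle — is the cancellation in the converse. Since $\w$ takes values in $\Z^d\setminus\{0\}$, the vector $\w(e_j)$ is nonzero, and because $\Z^d$ is torsion-free an integer scalar annihilating a nonzero vector must itself be zero. Hence $\mathcal{C}_1(e_j)=d_j$ for all $j$, which is precisely the supporting condition of Definition \ref{Def: n-GKM skeleton, supporting abstract GKM graphs}. This completes the equivalence, and simultaneously realizes the reformulation of Question \ref{Question: Does there exists an (abstract) GKM graph?} as a problem solvable by linear algebra.
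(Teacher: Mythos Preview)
Your proof is correct and follows essentially the same route as the paper: both combine Lemma \ref{Lemma: structure matrix eq} with the defining equation of the first Chern class map (Definition \ref{Def: first Chern class map for abstract GKM graphs}) to pass between the alternating vertex sums and the structure-matrix expression, and both use $\w(e_j)\neq 0$ to cancel in the converse. Your explicit mention of torsion-freeness is a slight elaboration of what the paper leaves implicit, but there is no substantive difference.
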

\begin{proof}
If $(\Gamma, \w)$ is supported by $\skeleton$, then $d_j=\mathcal{C}_1(e_j)$ for all $j=1,...,\frac{1}{2}\left| 
E\right|$. By the definition of the first Chern class map, we have
\begin{align}\label{EQ2:Cor: linera equation structure matrix}
\sum_{e\in E_{i(e_j)}^i} \operatorname{w}(e)\, - \, \sum_{e\in E_{t(e_j)}^i} 
\operatorname{w}(e)=d_j\cdot\operatorname{w}(e_j),
\end{align}
for all $j=1,...,\frac{1}{2}\left| E\right|$. Since the weight map $\w$ is antisymmetric, by Lemma \ref{Lemma: 
structure matrix eq} the left-hand sides of \eqref{EQ1:Cor: linera equation structure matrix}
and \eqref{EQ2:Cor: linera equation structure matrix} coincide for all $j=1,...,\frac{1}{2}\left| E\right|$.
Hence, \eqref{EQ1:Cor: linera equation structure matrix} holds for all $j=1,...,\frac{1}{2}\left| E\right|$.
\newline
On the other hand, if \eqref{EQ1:Cor: linera equation structure matrix} holds for all $j=1,...,\frac{1}{2}\left| 
E\right|$,
then by Lemma \ref{Lemma: structure matrix eq}  also \eqref{EQ2:Cor: linera equation structure matrix} holds for
all $j=1,...,\frac{1}{2}\left| E\right|$. So by the definition of the first Chern class map $d_j=\mathcal{C}_1(e_j)$ 
for all $j=1,...,\frac{1}{2}\left| E\right|$, i.e., $(\Gamma, \w)$ is supported by $\skeleton$.
\end{proof}

\begin{definition}\label{Def: defect, fundamental system}
Let  $(\Gamma, \sigma, \operatorname{Ord}(E^\sigma),\mathbf{d} )$  be an $n$-GKM skeleton. The \textbf{defect} 
$\delta$ of the $n$-GKM skeleton is the dimension of the kernel of $\mathbf{A-D}$, where $\mathbf{A}$ is the structure 
matrix and $\mathbf{D}$ is the diagonal matrix given by
\begin{align*}
\mathbf{D}=\operatorname{Diag}(d_1,\dots, d_{\frac{1}{2}\left| E\right| }).
\end{align*}
If the defect $\delta$ is positive, a \textbf{fundamental system} of 
	$(\Gamma,\sigma,\operatorname{Ord}(E^\sigma),\mathbf{d} )$ is a collection of  $\frac{1}{2}\left| E\right|$ vectors
	$f_1,\dots, f_{\frac{1}{2}\left| E\right|}$ in $\Q^\delta$ such that the transposes of the row vectors of the 
	$\left( \delta \times\frac{1}{2}\left| 
	E\right|\right) $-matrix
	\begin{align*}
	\mathbf{F}= \left( \begin{matrix}
	\vert & &\vert \\
	f_1   &\dots &f_{\frac{1}{2}\left| E\right|}   \\
	\vert & &\vert
	\end{matrix}\right) 
	\end{align*}
	form a basis of the kernel of $\mathbf{A-D}$.  
\end{definition}

\begin{remark}\label{Rem: Def: defect, fundamental system}
	Let $(\Gamma, \sigma, \operatorname{Ord}(E^\sigma),\mathbf{d} )$  be an $n$-GKM skeleton with a positive defect 
	$\delta$.
	Since the matrices  $\mathbf{A}$ and $\mathbf{D}$ have only integer entries, there exist vectors with integer 
	entries
	that form a basis of the kernel of $\mathbf{A-D}$. Hence, an  $n$-GKM skeleton with a positive defect always admits
	a fundamental system. Furthermore, a fundamental system is unique up to 
	$\operatorname{GL}(\Q,\delta)$-transformations.
	This means that whenever $f_1,\dots, f_{\frac{1}{2}\left| E\right|}$ and $f'_1,\dots, f'_{\frac{1}{2}\left| 
	E\right|}$ 
	are 
	two fundamental systems for $(\Gamma, \sigma, \operatorname{Ord}(E^\sigma),\mathbf{d} )$, then there exists an  
	invertible linear map $\Q^\delta \rightarrow \Q^\delta$ that maps $f_i$ onto $f'_i$ for all $i=1,..., 
	\frac{1}{2}\left| E\right|$ .
\end{remark}

\begin{proposition}\label{Pro: defect and FS}
Let $(\Gamma, \sigma, \operatorname{Ord}(E^\sigma),\mathbf{d} )$  be an $n$-GKM skeleton and let $\w: E\rightarrow \Z^d \setminus\{0\}$
be a map such that $(\Gamma, \w)$  is an abstract $(n,d)$-GKM graph. The following two conditions are equivalent.
\begin{itemize}
\item The abstract GKM graph $(\Gamma, \w)$ is supported by the GKM skeleton $\skeleton$.
\item The defect $\delta$ of $\skeleton$ is greater or equal to $d$ and for any fundamental system 
$f_1,\dots, f_{\frac{1}{2}\left| E\right|}$ of $\skeleton$, 
there exists a $(d \times \delta)$- matrix $\mathbf{M}$ of rank $d$ such that 
\begin{align*}
\mathbf{M}\cdot f_i = \operatorname{w}(e_i)
\end{align*}
for all $i=1,..., \frac{1}{2}\left| E\right|$ .	 
\end{itemize}
\end{proposition}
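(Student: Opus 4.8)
The plan is to translate both conditions into linear algebra over $\Q$ and match them. Set $m := \frac{1}{2}\left| E\right|$ and let $\mathbf{W}$ be the $(d\times m)$-matrix whose $k$-th column is the weight vector $\operatorname{w}(e_k)\in\Z^d$. By Corollary \ref{Cor: linera equation structure matrix}, the abstract GKM graph $(\Gamma,\operatorname{w})$ is supported by the skeleton if and only if for every $j$ one has $\sum_{k} a_{j,k}\operatorname{w}(e_k)=d_j\operatorname{w}(e_j)$, i.e. $\sum_k (\mathbf{A}-\mathbf{D})_{j,k}\operatorname{w}(e_k)=0$. Reading this columnwise, it is exactly the matrix identity $\mathbf{W}(\mathbf{A}-\mathbf{D})=0$, where I use that $\mathbf{A}-\mathbf{D}$ is symmetric. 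Hence support is equivalent to the statement that every row of $\mathbf{W}$, transposed to a vector in $\Q^m$, lies in $\ker(\mathbf{A}-\mathbf{D})$.

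First I would record two facts about $\mathbf{W}$ and $\mathbf{F}$. On the one hand, by Definition \ref{Def: abstract GKM graph} (i) the vectors $\operatorname{w}(e)$ $\Z$-span $\Z^d$ already at a single vertex, so in particular the columns of $\mathbf{W}$ span $\Q^d$ and $\operatorname{rank}\mathbf{W}=d$. On the other hand, by the definition of a fundamental system the transposed rows of $\mathbf{F}$ form a basis of $\ker(\mathbf{A}-\mathbf{D})$; equivalently, the row space of $\mathbf{F}$ equals $\ker(\mathbf{A}-\mathbf{D})$, the $\delta$ rows of $\mathbf{F}$ are linearly independent, and $\mathbf{F}(\mathbf{A}-\mathbf{D})=0$ (again using symmetry).

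For the implication from support, by the reformulation above each row of $\mathbf{W}$ lies in $\ker(\mathbf{A}-\mathbf{D})$, i.e. in the row space of $\mathbf{F}$. Writing each row of $\mathbf{W}$ as the corresponding $\Q$-linear combination of the rows of $\mathbf{F}$ produces a $(d\times\delta)$-matrix $\mathbf{M}$ with $\mathbf{W}=\mathbf{M}\mathbf{F}$, which is precisely $\mathbf{M}f_i=\operatorname{w}(e_i)$ for all $i$. Since $\operatorname{rank}\mathbf{W}=d$ and $\operatorname{rank}(\mathbf{M}\mathbf{F})\le\operatorname{rank}\mathbf{M}\le d$, we obtain $\operatorname{rank}\mathbf{M}=d$ and in particular $\delta\ge d$. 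For the converse, assuming $\delta\ge d$ and $\mathbf{W}=\mathbf{M}\mathbf{F}$ with such an $\mathbf{M}$, I would simply compute $\mathbf{W}(\mathbf{A}-\mathbf{D})=\mathbf{M}\mathbf{F}(\mathbf{A}-\mathbf{D})=0$ using $\mathbf{F}(\mathbf{A}-\mathbf{D})=0$, so support follows from the reformulation.

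Finally, to match the quantifier ``for any fundamental system'' I would invoke Remark \ref{Rem: Def: defect, fundamental system}: two fundamental systems differ by a matrix $G\in\operatorname{GL}(\Q,\delta)$, i.e. $\mathbf{F}'=G\mathbf{F}$, whence $\mathbf{W}=\mathbf{M}\mathbf{F}=(\mathbf{M}G^{-1})\mathbf{F}'$ and $\mathbf{M}G^{-1}$ again has rank $d$; thus existence of $\mathbf{M}$ for one fundamental system is equivalent to its existence for all. The only genuine subtlety is the bookkeeping of the row/column conventions---that the support condition becomes $\mathbf{W}(\mathbf{A}-\mathbf{D})=0$ and that the fundamental system is encoded in the \emph{rows} of $\mathbf{F}$---together with the rank inequality that upgrades the mere existence of $\mathbf{M}$ to $\operatorname{rank}\mathbf{M}=d$ (and hence $\delta\ge d$); everything else is immediate from Corollary \ref{Cor: linera equation structure matrix} and Definition \ref{Def: abstract GKM graph}.
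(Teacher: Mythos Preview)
Your proof is correct and follows essentially the same approach as the paper: both translate the support condition via Corollary~\ref{Cor: linera equation structure matrix} into the matrix identity $(\mathbf{A}-\mathbf{D})\mathbf{W}^\intercal=0$ (equivalently your $\mathbf{W}(\mathbf{A}-\mathbf{D})=0$, using symmetry), observe that $\operatorname{rank}\mathbf{W}=d$ from Definition~\ref{Def: abstract GKM graph}~(i), and then express the rows of $\mathbf{W}$ in terms of the basis of $\ker(\mathbf{A}-\mathbf{D})$ encoded by $\mathbf{F}$. Your explicit treatment of the quantifier ``for any fundamental system'' via the $\operatorname{GL}(\Q,\delta)$-action is a minor addition the paper leaves implicit.
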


\begin{proof}
Let $\mathbf{W}$  be the $\left( d \times\frac{1}{2}\left| E\right|\right) $-matrix whose $i$-th column vector 
is $\operatorname{w}(e_i)$. 
Note that by $(i)$ of Definition \ref{Def: abstract GKM graph}, we have that the dimension of the span of the vectors
$\w(e)$ for $e\in E$ is  $d$. Since $\w(\bar{e})=-\w(e)$ and for each $e\in E$ there exists a unique index  $k$ such 
that either $e=e_k$ or $\bar{e}=e_k$, the rank of $\mathbf{W}$ is $d$. \\
Assume that $(\Gamma,\w)$ is supported by  $\skeleton$. 
By Corollary \ref{Cor: linera equation structure matrix} for each $j=1,\dots, \frac{1}{2}\left| E\right|$ we have
\begin{align}\label{EQ1:Pro: defect and FS}
\sum_{k=1}^{\frac{1}{2}\left| E\right|} a_{j,k} \operatorname{w}(e_k)\,=\, d_j\cdot\operatorname{w}(e_j),
\end{align}
where $\mathbf{A}=(a_{j,k})_{j,k=1,...,\frac{1}{2}\left| E\right|}$ is the structure matrix.
This is equivalent to the matrix equation
\begin{align*}
\mathbf{A}\cdot \mathbf{W}^\intercal\, = \,\mathbf{D} \cdot \mathbf{W}^\intercal,
\end{align*}
where $\mathbf{D}=\operatorname{Diag}(d_1,\dots, d_{\frac{1}{2}\left| E\right| })$ and $\mathbf{W}^\intercal$ is 
the 
transpose of $\mathbf{W}$. We deduce that the column vectors of $\mathbf{W}^\intercal$ are elements of the kernel of 
$\mathbf{A-D}$. Since the matrix rank of $\mathbf{W}^\intercal$ is also equal to $d$, we have that the dimension of 
the kernel of $\mathbf{A-D}$ is greater or equal to $d$. Hence, the defect $\delta$ must be greater or equal to $d$.
Moreover, let $f_1,\dots, f_{\frac{1}{2}\left| E\right| }$ be a fundamental system. Since transposes of the row 
	vectors 
	of the matrix
	\begin{align}\label{EQ2:Pro: defect and FS}
	\mathbf{F}= \left( \begin{matrix}
	\vert & &\vert \\
	f_1   &\dots &f_{\frac{1}{2}\left| E\right|}   \\
	\vert & &\vert
	\end{matrix}\right) 
	\end{align}
	form a basis of the kernel of $\mathbf{A-D}$, there exists a $(d \times \delta)$- matrix $\mathbf{M}$ such 
	that 
	\begin{align*}
	\mathbf{M}\cdot \mathbf{F} = \mathbf{W}.
	\end{align*}
	This is equivalent to $\mathbf{M}\cdot f_i = \operatorname{w}(e_i)$  for all $i=1,..., \frac{1}{2}\left| E\right|$. 
	Moreover, since the matrix rank of $\mathbf{W}$ is $d$, the matrix rank of $\mathbf{M}$ is also $d$.\\
	On the other hand, assume that the defect $\delta$ is greater or equal to $d$ and that there exists a   $(d \times 
	\delta)$- matrix $\mathbf{M}$ such that 
	 $\mathbf{M}\cdot f_i = \operatorname{w}(e_i)$  for all $i=1,..., \frac{1}{2}\left| E\right|$, where  $f_1,\dots, 
	 f_{\frac{1}{2}\left| E\right| }$ is a fundamental system. \\
	 So we have $\mathbf{M}\cdot \mathbf{F} = \mathbf{W}$, where $\mathbf{F}$ is the $\left( \delta 
	 \times\frac{1}{2}\left| E\right|\right) $ -matrix as in \eqref{EQ2:Pro: defect and FS}. Note that 
	 $\mathbf{A}\cdot \mathbf{F}^\intercal = \mathbf{D}\cdot \mathbf{F}^\intercal$. Therefore, we have
\begin{align*}
\mathbf{A}\cdot \mathbf{W}^\intercal= 
\mathbf{A} \cdot \left( \mathbf{M}\cdot\mathbf{F}\right)^\intercal=
\left( \mathbf{A} \cdot \mathbf{F}^\intercal \right)  \cdot \mathbf{M}^\intercal= 
\left( \mathbf{D} \cdot \mathbf{F}^\intercal \right)  \cdot \mathbf{M}^\intercal=
\mathbf{D} \cdot \left( \mathbf{M}\cdot\mathbf{F}\right)^\intercal=
\mathbf{D}\cdot \mathbf{W}^\intercal.
\end{align*}
Note that $\mathbf{A}\cdot \mathbf{W}^\intercal=\mathbf{D}\cdot \mathbf{W}^\intercal$ implies that \eqref{EQ1:Pro: 
defect and FS} holds for all
$j=1,\dots, \frac{1}{2}\left| E\right|$. Hence, by Corollary \ref{Cor: linera equation structure matrix} the abstract 
GKM graph is supported by the GKM 
skeleton.
	
\end{proof}

\begin{remark}\label{Rem1:Pro: defect and FS }
	Let $(\Gamma, \sigma, \operatorname{Ord}(E^\sigma),\mathbf{d} )$  be an $n$-GKM skeleton that supports an abstract
	$(n,d)$-GKM graph $(\Gamma, \operatorname{w})$ and let $f_1,\dots, f_{\frac{1}{2}\left|  E\right| }$ be a 
	fundamental 
	system. By Proposition \ref{Pro: defect and FS}, there exists a matrix $\mathbf{M}$ such that $\mathbf{M}\cdot f_i = 
	\operatorname{w}(e_i)$ for $i=1,\dots ,\frac{1}{2}\left|  E\right|$. Note that this matrix $\mathbf{M}$ is unique,
	so we call it the \textbf{weight transformation matrix}.
\end{remark}

Proposition \ref{Pro: defect and FS} gives a necessary condition for an $n$-GKM skeleton to support an abstract 
$(n,d)$-GKM 
graph, namely its defect must be  greater or equal to $d$, and an approach to construct a weight map $\operatorname{w}$ 
such that $(\Gamma, \operatorname{w})$ is an abstract GKM graph supported by the $n$-GKM skeleton. In the following 
example we apply this approach.

\begin{example}\label{Example: tori with four vertices}
Let $\Gamma =(V,E)$ be the complete graph with exactly four vertices $V=\{v_1,v_2,v_3,v_4\}$, i.e., the edge set is 
$E=\left\lbrace (v_i,v_j)\, \vert \, i,j=1,...4\,\, \text{and}\,\, i\neq j \right\rbrace .$
	
\begin{figure}[htbp]
\begin{center}
\includegraphics[width=8cm]{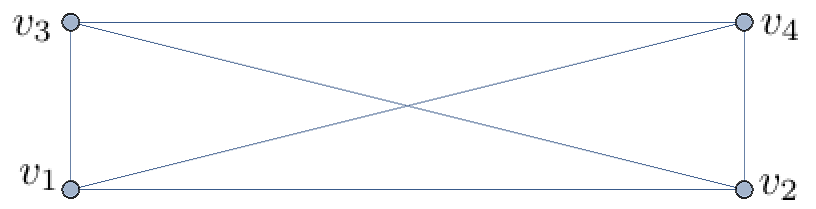}
\caption{The complete graph with  four vertices}
\label{Figure: The Complete Graph with  four Vertices}
\end{center}
\end{figure}

This graph is connected, simple and $3$-valent. Let $\sigma$ be the orientation of the edge set $E$ given by 
$(v_i,v_j)\in E^\sigma $ if and only if $i<j$. We fix the following ordering $\operatorname{Ord(E^\sigma)}$
\begin{align*}
e_1:=(v_1,v_2), \quad  e_2:=(v_1,v_3), \quad e_3:=(v_1,v_4),\\
e_4:=(v_2,v_3), \quad  e_5:=(v_2,v_4), \quad e_6:=(v_3,v_4).
\end{align*}
	
Let $\mathbf{d}$ be equal to $(4,4,4,4,4,4)$. So $\skeleton$  is a $3$-GKM skeleton. The structure matrix is 
	
\begin{align*}
\mathbf{A}=\left( \begin{array}{rrrrrr}
2 & 1 & 1 &$-1$ &$-1$&0 \\ 
1 & 2 & 1 &1  &0 &$-1$ \\
1 & 1 & 2 &0  &1 &1 \\
$-1$ & 1 & 0 &2  &1 &$-1$ \\
$-1$ & 0 & 1 &1  &2 &1 \\
0 & $-1$ & 1 &$-1$  &1 &2 \\
\end{array}\right).
\end{align*}
	
Let $\mathbf{D}$ be the diagonal matrix given by $\operatorname{Diag}(4,4,4,4,4,4)$. The dimension of the kernel of 
$\mathbf{A-D}$ is $3$. So the defect of the $3$-GKM skeleton is $3$. Consider the matrix
	
\begin{align*}
\mathbf{F}=\left( \begin{array}{rrrrrr}
0 & $-1$ & 0 &$-1$ &0&1 \\ 
$-1$& 0 & 0 &1  &1 &0 \\
1 & 1 & 1&0  &0 &0 \\
\end{array}\right).
\end{align*}

The transposes of its row vectors form a basis of the kernel of $\mathbf{A-D}$. So the column vectors $f_1,...,f_6$	of 
$\mathbf{F}$ form a fundamental system of the $3$-GKM skeleton. 

By Proposition \ref{Pro: defect and FS}, the $3$-GKM skeleton supports an 
abstract $(3,d)$-GKM graph if and only if there
exists a $(d\times 3)$-matrix $\mathbf{M}$ such that the antisymmetric map $\operatorname{w}:E\rightarrow \Z^d$ given by

\begin{align*}
\operatorname{w}(e_i)=\mathbf{M}\cdot f_i\quad \text{for} \quad i=1,...,6 
\end{align*}
forms together with $\Gamma$ an abstract $(3,d)$-GKM graph. 

Let $\mathbf{M}$ be the identity matrix of size $(3\times 3)$, then $(\Gamma, \operatorname{w})$ is an abstract 
$(3,3)$-GKM graph. Indeed, the pair $(\Gamma, \operatorname{w})$ satisfies condition $(i)$ and $(ii)$ of Definition 
\ref{Def: abstract GKM graph}.  Moreover,  $(iii)$ of Definition \ref{Def: abstract GKM graph} is also satisfied.  For 
example, consider the edge $e_1=(v_1,v_2)$. We have 
	
	\begin{align*}
	E_{v_1}^i=\{(v_1,v_2),(v_1,v_3),(v_1,v_4)\} \quad  \text{and}\quad E_{v_2}^i=\{(v_2,v_1),(v_2,v_3),(v_2,v_4)\}.
	\end{align*}
	Let $\nabla_{(v_1,v_2)}\colon E_{v_1}^i \rightarrow E_{v_2}^i$ be the connection  given by
	\begin{align*}
	(v_1,v_2)\mapsto (v_2,v_1)\,,\quad (v_1,v_3)\mapsto (v_2,v_4)\quad \text{and}\quad (v_1,v_3)\mapsto (v_2,v_4).
	\end{align*}
	Then 
	\begin{align*}
	&\operatorname{w}(v_1,v_2)\,-\, \operatorname{w}(\nabla_{(v_1,v_2)}(v_1,v_2))\,=\, 2\cdot 
	\operatorname{w}(v_1,v_2),\\
	&\operatorname{w}(v_1,v_3)\,-\, \operatorname{w}(\nabla_{(v_1,v_2)}(v_1,v_3))\,=\, 1\cdot 
	\operatorname{w}(v_1,v_2)\,\,
	\quad \text{and}\\
	&\operatorname{w}(v_1,v_4)\,-\, \operatorname{w}(\nabla_{(v_1,v_2)}(v_1,v_4))\,=\, 1\cdot \operatorname{w}(v_1,v_2).
	\end{align*}
	So $\nabla_{(v_1,v_2)}$ is a compatible connection along the edge $(v_1,v_2)$. Similarly, for each edge in $E$ there exists a compatible connection.  Hence, $(\Gamma, \w)$ is an abstract $(3,3)$-GKM 
	graph that is supported by the $3$-GKM skeleton.
	 Note that this abstract $(3,3)$-GKM graph is Hamiltonian. 
	Indeed it comes from the standard toric action on $\C P^3$.
\end{example}

\begin{subsubsection}{Isomorphic GKM Skeletons}
\begin{definition}\label{Def: Isom skeleton}
	Let $\skeleton$ and $(\Gamma',\sigma', \operatorname{Ord}((E')^{\sigma'}), \mathbf{d'})$ be two GKM skeletons. An 
	\textbf{isomorphism} between  these GKM skeletons is an isomorphism $F: V \rightarrow V'$ between the simple graphs 
	$\Gamma=(V,E)$ and $\Gamma'=(V',E')$ (see Definition \ref{Def: isomorphism simple graphs}) such that 
	the following holds. Let  $i=1,...,\frac{1}{2}\left| E\right|$ be any index and let $j\in \{1,...,\frac{1}{2}\left| 
	E\right|\}$ be the unique index such that the $i$-th edge of $E^\sigma$ is mapped under the bijection $E\rightarrow 
	E'$ induced by $F$ (see Remark \ref{Rem: bijection edges}) to either the $j$-th edge of $(E')^{\sigma'}$ or to the 
	reversed of the  $j$-th edge of $(E')^{\sigma'}$. Then $d_i=d'_j$, where $d_i$ is the $i$-th entry of the vector 
	$\mathbf{d}$ and $d'_j$ is the $j$-th entry of the vector $\mathbf{d'}$.
\end{definition} 

\begin{corollary}\label{Cor: Isom skeleton}
	Let $\skeleton$ and $(\Gamma',\sigma', \operatorname{Ord}((E')^{\sigma'}), \mathbf{d'})$ be two isomorphic $n$-GKM 
	skeletons. Let $(\Gamma, \w)$ be an abstract $(n,d)$-GKM graph that is supported by $\skeleton$. Then there exists 
	an 
	abstract $(n,d)$-GKM graph that is supported by $(\Gamma',\sigma', \operatorname{Ord}((E')^{\sigma'}), \mathbf{d'})$
	and that is isomorphic to $(\Gamma,\w)$. 
\end{corollary}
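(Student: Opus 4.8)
The plan is to transport the weight map $\w$ along the graph isomorphism $F\colon V\to V'$ underlying the given isomorphism of the two GKM skeletons, and to take the identity as the accompanying lattice isomorphism. First I would use the induced edge bijection $F_E\colon E\to E'$, $(v,w)\mapsto (F(v),F(w))$, from Remark \ref{Rem: bijection edges} to define a map $\w'\colon E'\to \Z^d\setminus\{0\}$ by
\begin{align*}
\w'(e') = \w\!\left(F_E^{-1}(e')\right) \quad \text{for all } e'\in E'.
\end{align*}
Since $F_E$ is a bijection, $\w'$ is well defined and takes values in $\Z^d\setminus\{0\}$; and since $F_E$ commutes with edge reversal (it sends $\bar e$ to $\overline{F_E(e)}$), the antisymmetry of $\w$ passes to $\w'$.

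Next I would verify that $(\Gamma',\w')$ is an abstract $(n,d)$-GKM graph by checking the three conditions of Definition \ref{Def: abstract GKM graph}. Because $F$ is a graph isomorphism, $F_E$ restricts to a bijection $E^i_{v}\to E^i_{F(v)}$ for each vertex $v$, compatibly with $\w$ and $\w'$; hence conditions (i) (the $\Z$-span equals $\Z^d$) and (ii) (pairwise linear independence) at a vertex $F(v)$ follow at once from the corresponding conditions at $v$. For condition (iii), given a compatible connection $\nabla_e$ along an edge $e$ of $\Gamma$, I would set $\nabla'_{F_E(e)} := F_E \circ \nabla_e \circ F_E^{-1}$; this is again a connection along $F_E(e)$ sending it to its reverse, and it is compatible with $\w'$ with the same integers $a_{e,e'}$. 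By construction $\w'(F(v),F(w)) = \w(v,w)$, so the pair $(F,\id)$ is an isomorphism of abstract GKM graphs between $(\Gamma,\w)$ and $(\Gamma',\w')$ in the sense of Definition \ref{Def: isomorphisms abstract GKM graphs}; in particular these two abstract GKM graphs are isomorphic, as required.

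It remains to show that $(\Gamma',\w')$ is supported by the second skeleton, i.e.\ that its first Chern class map satisfies $\mathcal{C}_1^{\w'}(e'_j)=d'_j$ for every $j$, where $e'_j$ denotes the $j$-th edge of $(E')^{\sigma'}$. Let $e_i$ be the $i$-th edge of $E^\sigma$ for which $F_E^{-1}(e'_j)$ equals $e_i$ or $\bar e_i$; by the defining condition of a skeleton isomorphism (Definition \ref{Def: Isom skeleton}) we then have $d_i=d'_j$. Since $(\Gamma,\w)$ is supported by the first skeleton (Definition \ref{Def: n-GKM skeleton, supporting abstract GKM graphs}), $\mathcal{C}_1^{\w}(e_i)=d_i$. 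Invariance of the first Chern class map under isomorphism (Lemma \ref{Lemma: simple properties first chern map invariant under iso and projection.}) gives $\mathcal{C}_1^{\w'}(F_E(e))=\mathcal{C}_1^{\w}(e)$ for every edge $e$, and its symmetry (Lemma \ref{Lemma: first Chern map is symmetric}) gives $\mathcal{C}_1^{\w}(\bar e_i)=\mathcal{C}_1^{\w}(e_i)$. Combining these in both cases yields $\mathcal{C}_1^{\w'}(e'_j)=d_i=d'_j$, which is exactly the support condition.

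The argument is essentially a transport of structure, so no single step is genuinely difficult. The one point that requires care is the support condition: the edge $F_E^{-1}(e'_j)$ need not itself lie in $E^\sigma$ but may be the reverse of an oriented edge, and it is precisely this possibility that forces me to invoke the symmetry of the first Chern class map (Lemma \ref{Lemma: first Chern map is symmetric}) alongside its invariance under isomorphism.
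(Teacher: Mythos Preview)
Your proof is correct and follows essentially the same approach as the paper: transport $\w$ along the graph isomorphism $F$, pair it with the identity on $\Z^d$, and invoke Lemma~\ref{Lemma: simple properties first chern map invariant under iso and projection.} to obtain the support condition. The paper's proof is considerably terser---it simply asserts that $(\Gamma',\w')$ is an abstract GKM graph and that the support condition follows from that lemma---whereas you spell out the verification of Definition~\ref{Def: abstract GKM graph} and explicitly handle the edge-reversal case via Lemma~\ref{Lemma: first Chern map is symmetric}, a detail the paper leaves implicit.
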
 
\begin{proof}
Let $F:V \rightarrow V'$ be an isomorphism between $\skeleton$ and $(\Gamma',\sigma', 
\operatorname{Ord}((E')^{\sigma'}), \mathbf{d'})$. Let $\w':E' \rightarrow \Z^d\setminus \{0\}$ be the map defined by
$$\w'(v',w')=\w(F^{-1}(v), F^{-1}(w')) $$
for any edge $(v',w')\in E'$. The pair $(\Gamma', \w')$ is an abstract GKM graph and $(F, \operatorname{Id})$
is an isomorphism between $(\Gamma, \w)$ and  $(\Gamma', \w')$, where $\operatorname{Id}: \Z^d \rightarrow \Z^d$
is the identity map. Moreover, by Lemma \ref{Lemma: simple properties first chern map invariant under iso and 
projection.} the abstract GKM graph $(\Gamma' \w')$ is supported by  $(\Gamma',\sigma', 
\operatorname{Ord}((E')^{\sigma'})$.
\end{proof}

\begin{remark}\label{Rem: iso skeletons}
Let $(\Gamma,\w)$ be an abstract GKM graph, let $E^\sigma$ be an orientation for the edge set, let 
$\operatorname{Ord}(E^\sigma)$ be an ordering and let $\mathbf{d}\in \Z^{\frac{1}{2}\left| E\right|} $ the vector
whose $j$-entry is $\mathcal{C}_1(e_j)$. Then $\skeleton$ is a GKM skeleton that supports $(\Gamma, \w)$.
In particular, any two GKM skeletons that both support $(\Gamma, \w)$ are isomorphic.
\end{remark}
\end{subsubsection}

\begin{subsubsection}{The Kernel Condition (K1)}

We derive a necessary condition for a GKM skeleton to support an abstract GKM graph out of item $(ii)$ of Definition 
\ref{Def: abstract GKM graph}, saying that for any abstract $(n,d)$-GKM graph $(\Gamma, \w)$ and any vertex $v$ of 
$\Gamma$ the vectors $\w(e)$ for $e\in E$ with $i(e)=v$ are pairwise linearly independent  in $\Z^d$ over $\Z$.

\begin{lemma}\label{Lem: linarly inde. Cond. for skeletons}
Let $\skeleton$ be an $n$-GKM skeleton and let $\mathbf{A}=(a_{j,k})_{j,k=1,...,\frac{1}{2}\left| E\right|}$ be its 
structure matrix. Let $\w: E \rightarrow \Z^d$ be an antisymmetric map. Then the map $\w$ satisfies 
condition $(ii)$ of Definition \ref{Def: abstract GKM graph} if and  only if  for any two indices $j,k\in\{1,\dots, 
\frac{1}{2}\left| E\right|\}$ with $a_{j,k}=1$ or $a_{j,k}=-1$ the vectors $\w(e_j)$ and $\w(e_k)$ are linearly 
independent.
\end{lemma}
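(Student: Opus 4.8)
The plan is to set up a dictionary between pairs of edges in $E^\sigma$ whose structure-matrix entry is $\pm 1$ and pairs of \emph{distinct} edges sharing a common initial vertex, and then to exploit that linear independence of weight vectors is insensitive to sign changes (which is all that antisymmetry of $\w$ contributes here). The whole argument is combinatorial bookkeeping; there is no analytic content.

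First I would record the meaning of the off-diagonal entries. Since $\Gamma$ is simple (no loops, no double edges), two distinct edges $e_j,e_k\in E^\sigma$ meet in at most one vertex, and that vertex is realized in exactly one of the four ways $i(e_j)=i(e_k)$, $t(e_j)=t(e_k)$, $i(e_j)=t(e_k)$, $t(e_j)=i(e_k)$; any two of these would force either a loop or a double edge. Hence for $j\neq k$ one has $a_{j,k}\in\{-1,0,1\}$, and $a_{j,k}=\pm 1$ precisely when $e_j$ and $e_k$ share a vertex. Moreover, in each of the four cases there is a common vertex $v$ and a canonical choice of representatives lying in $E_v^i$: if $i(e_j)=i(e_k)=v$ then $e_j,e_k\in E_v^i$; if $t(e_j)=t(e_k)=v$ then $\bar{e}_j,\bar{e}_k\in E_v^i$; if $i(e_j)=t(e_k)=v$ then $e_j,\bar{e}_k\in E_v^i$; and symmetrically in the remaining case. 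In all four cases the two chosen representatives are distinct (because $e_j\neq e_k$ and $\Gamma$ has no loops), and by antisymmetry of $\w$ their weights are $\pm\w(e_j)$ and $\pm\w(e_k)$.

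For the forward implication, assume $\w$ satisfies condition $(ii)$ and fix $j\neq k$ with $a_{j,k}=\pm 1$. Producing the common vertex $v$ and the distinct edges $f,f'\in E_v^i$ as above, I have $\{\w(f),\w(f')\}=\{\pm\w(e_j),\,\pm\w(e_k)\}$. Condition $(ii)$ gives that $\w(f)$ and $\w(f')$ are linearly independent, and since these vectors agree with $\w(e_j),\w(e_k)$ up to sign, $\w(e_j)$ and $\w(e_k)$ are linearly independent as well. For the converse, fix a vertex $v$ and two distinct edges $f,f'\in E_v^i$. Each of $f,f'$ equals $e_j$ or $\bar{e}_j$ for a unique $e_j\in E^\sigma$; the two associated indices $j,k$ must be distinct, since otherwise $\{f,f'\}=\{e_j,\bar{e}_j\}$ would force $i(f)\neq i(f')$, contradicting $f,f'\in E_v^i$. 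The manner in which $v$ occurs as an endpoint of $f$ and of $f'$ then places us in one of the four cases above, so $a_{j,k}=\pm 1$. By hypothesis $\w(e_j)$ and $\w(e_k)$ are linearly independent, hence so are $\w(f)=\pm\w(e_j)$ and $\w(f')=\pm\w(e_k)$.

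The only real obstacle is keeping the sign conventions from antisymmetry consistent across the four vertex-sharing cases and verifying that these cases are mutually exclusive and exhaustive; here simplicity of $\Gamma$ does all the work, ruling out loops and double edges so that each relevant pair of edges shares exactly one vertex in exactly one way. No deeper input is needed.
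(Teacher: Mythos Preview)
Your proof is correct and follows essentially the same approach as the paper's. The paper's argument is terser: it simply observes that, by antisymmetry, condition $(ii)$ is equivalent to the statement that $\w(e)$ and $\w(e')$ are linearly independent whenever $e\neq e'$ in $E^\sigma$ share a vertex (in any of the four ways you list), and then notes that this sharing condition is exactly the definition of $a_{j,k}=\pm 1$. You spell out both implications and the case analysis with more care, which is perfectly fine.
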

\begin{proof}
Note that since the map $\w$ is antisymmetric, it satisfies condition $(ii)$ of Definition \ref{Def: abstract GKM 
graph} if and only if for any two edges $e,e' \in E^{\sigma}$  with $e\neq e'$  and such that 
one of the following conditions holds:
\begin{align}\label{EQ1:Lem: linarly inde. Cond. for skeletons}
i(e)=i(e'),\quad i(e)=t(e'), \quad t(e)=i(e')\quad \text{or}\quad t(e)=t(e'),
\end{align}
the vectors $\w(e)$ and $\w(e')$ are linearly independent. 
%Moreover, for two edges $e,e'\in E$ with $e\neq e'$ and 
%$\bar{e}\neq e'$ there exist exactly two indices $j,k\in\{1,\dots, \frac{1}{2}\left| E\right|\}$ with $j\neq k$ such 
%that
%\begin{align*}
%\left( \text{either}\, e=e_j\,\, \text{or}\,\, \bar{e}=e_j\right) \quad \text{and} \quad 
%\left( \text{either}\,\, e'=e_k\, \text{or}\,\, \bar{e}'=e_k\right).
%\end{align*}
By the definition of the structure matrix, one of the condition in \eqref{EQ1:Lem: linarly inde. Cond. for skeletons}
is satisfied if and only if  $a_{j,k}=1$ or $a_{j,k}=-1$. 
With these considerations it is obvious that the statement of the lemma is true.
\end{proof}

\begin{definition}\label{Def: Kernel Condition (K1)}
A GKM skeleton with a positive defect satisfies the  \textbf{Kernel Condition (K1)} if for any 
fundamental 
system $f_1,\dots, f_{\frac{1}{2}\left| E\right| }$ and for each two indices $j,k\in\{1,\dots, \frac{1}{2}\left| 
E\right|\}$ with $a_{j,k}=1$ or $a_{j,k}=-1$, the vectors $f_j$ and $f_k$ are linearly independent.
\end{definition} 

That the Kernel Condition (K1) is necessary for a GKM skeleton to support an abstract GKM  is a direct consequence 
of Proposition  \ref{Pro: defect and FS} and Lemma \ref{Lem: linarly inde. Cond. for skeletons}.

\begin{corollary}\label{Cor: (K1) is a necessary condition}
A GKM skeleton that supports an abstract GKM graph has a positive defect and  satisfies the Kernel Condition (K1).
\end{corollary}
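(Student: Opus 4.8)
The plan is to deduce both assertions formally from Proposition \ref{Pro: defect and FS} together with Lemma \ref{Lem: linarly inde. Cond. for skeletons}, exactly as announced just before the corollary. So I would fix an abstract $(n,d)$-GKM graph $(\Gamma,\w)$ that is supported by the skeleton $\skeleton$, and work with the linear-algebraic data attached to it.

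For the positivity of the defect, I would invoke Proposition \ref{Pro: defect and FS}, which gives $\delta\geq d$. Since $d$ is a positive integer by Definition \ref{Def: abstract GKM graph} (indeed $d\geq 2$ when $n\geq 2$ by Remark \ref{Rem: relation n, d abstract GKM graph}), it follows that $\delta\geq 1$, so the defect is positive. The same proposition also supplies, for any chosen fundamental system $f_1,\dots,f_{\frac{1}{2}\lvert E\rvert}$, a $(d\times\delta)$-matrix $\mathbf{M}$ of rank $d$ with $\mathbf{M}\cdot f_i=\w(e_i)$ for all $i$; this matrix is the link I would exploit to transfer independence statements from the $\w(e_i)$ back to the $f_i$.

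To verify the Kernel Condition (K1), I would fix indices $j,k$ with $a_{j,k}=1$ or $a_{j,k}=-1$. Because $(\Gamma,\w)$ is an abstract GKM graph, its weight map is antisymmetric and satisfies item $(ii)$ of Definition \ref{Def: abstract GKM graph}, so Lemma \ref{Lem: linarly inde. Cond. for skeletons} shows that $\w(e_j)$ and $\w(e_k)$ are linearly independent. Suppose toward a contradiction that $f_j$ and $f_k$ were linearly dependent over $\Q$, say $\alpha f_j+\beta f_k=0$ with $(\alpha,\beta)\neq(0,0)$; applying $\mathbf{M}$ yields $\alpha\,\w(e_j)+\beta\,\w(e_k)=0$, contradicting the independence of $\w(e_j)$ and $\w(e_k)$. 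Hence $f_j$ and $f_k$ are independent, which is precisely (K1). Since the fundamental system and the pair $(j,k)$ were arbitrary, (K1) holds. I do not expect any genuine obstacle here: the argument is a direct pullback of linear independence through $\mathbf{M}$, and the only point requiring a word of care is that, for a pair of integer vectors, linear independence over $\Z$ and over $\Q$ coincide, so the scalars $\alpha,\beta$ produced above may be taken rational.
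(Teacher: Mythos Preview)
Your proposal is correct and follows essentially the same approach as the paper's own proof: invoke Proposition \ref{Pro: defect and FS} to obtain $\delta\geq d>0$ and the weight transformation matrix $\mathbf{M}$, then use Lemma \ref{Lem: linarly inde. Cond. for skeletons} to get linear independence of $\w(e_j),\w(e_k)$ and pull this back through $\mathbf{M}$ to the $f_j,f_k$. Your exposition is slightly more explicit about the contradiction step, but the argument is identical.
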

\begin{proof}
Let $(\Gamma, \sigma, \operatorname{Ord}(E^\sigma), \mathbf{d})$ be  an $n$-GKM skeleton that supports an abstract 
$(n,d)$-GKM 
graph $(\Gamma, \w)$. By Proposition \ref{Pro: defect and FS}  the defect $\delta$ of the GKM skeleton is  greater or 
equal to $d$ and $\delta$ is indeed positive. Again by Proposition \ref{Pro: defect and FS}, whenever $f_1,\dots, 
f_{\frac{1}{2}\left| E\right|}$ is a fundamental system,  there exists a $(d\times \delta)$-matrix $\mathbf{M}$ 
such that
\begin{align*}
\w(e_i)=\mathbf{M}\cdot f_i
\end{align*}
for all $i=1,\dots, \frac{1}{2}\left| E\right|$. By Lemma \ref{Lem: linarly inde. Cond. for skeletons} we have that 
if $a_{j,k}=1$ or $a_{j,k}=-1$, then $\w(e_j)$ and $\w(e_k)$ are linearly independent. So $f_j$  and $f_k$ are linearly 
independent  if $a_{j,k}=1$ or $a_{j,k}=-1$. Hence, $\skeleton$ satisfies the Kernel Condition (K1). 
\end{proof}

\end{subsubsection}

\begin{subsubsection}{The Kernel Condition (K2)}
	
Now we consider item $(iii)$ of Definition \ref{Def: abstract GKM graph},  the existence of a compatible 
connection for each edge. Let $(\Gamma, \sigma, \operatorname{Ord}(E^\sigma), \mathbf{d})$ be a GKM skeleton and 
let $V$ be the vertex set of $\Gamma$. We associate to each vertex $v \in V$  the following  set of indices

\begin{align*}
\operatorname{IND}_v=\left\lbrace k \in \{1,\dots,\dfrac{1}{2} \left| E\right|\} \, \mid \, i(e_k)=v \, \text{or} 
\,\, t(e_k)=v \right\rbrace.
\end{align*}
Recall that a connection along an edge $e_j=(v,w)\in E^\sigma$ is a bijection $\nabla_{e_j}: E_v^i \rightarrow 
E_w^i$ such that $\nabla_{e_j}(v,w)=(w,v)$. Such a connection induces a bijection
	
\begin{align*}
\widetilde{\nabla}_j \colon \operatorname{IND}_v \rightarrow \operatorname{IND}_w
\end{align*}
such that $\widetilde{\nabla}_j(j)=j$. Indeed for each $e\in E_v^i $ resp. $e'\in E_w^i $ there exists
	a unique $k\in \operatorname{IND}_v$ resp. $k'\in \operatorname{IND}_w$ such that either $e=e_k$ or $\bar{e}=e_k$
	resp. either $e'=e_{k'}$ or $\bar{e}'=e_{k'}$. So $k$ is mapped under $\widetilde{\nabla}_j$ to $k'$ if
	$\nabla_{e_j}(e)=e'$. Note that since $e_j=(v,w)$, the graph is simple,  and $\nabla_{e_j}(v,w)=(w,v)$, we have that 
	$\operatorname{IND}_v\cap\operatorname{IND}_w=\{j\}$ and $\widetilde{\nabla}_j(j)=j$.

\begin{example}\label{Exam: Maps induced by Connections}
Consider the complete graph with four vertices together with the orientation and the ordering as in Example 
\ref{Example: tori with four vertices}. Consider the edge $e_1=(v_1,v_2)$. We have 
\begin{align*}
E_{v_1}^i=\{(v_1,v_2),(v_1,v_3),(v_1,v_4)\} \quad  \text{and}\quad E_{v_2}^i=\{(v_2,v_1),(v_2,v_3),(v_2,v_4)\}.
\end{align*}
There exist exactly two connections $ E_{v_1}^i \rightarrow E_{v_2}^i$ along the edge $e_1$, namely 
\begin{align*}
\left\lbrace \begin{array}{r} (v_1,v_2) \mapsto (v_2,v_1)  \\ (v_1,v_3) \mapsto (v_2,v_3)  \\  (v_1,v_4) 
\mapsto 
(v_2,v_4)
\end{array}\right\rbrace 
\quad \text{and} \quad
\left\lbrace \begin{array}{r} (v_1,v_2) \mapsto (v_2,v_1)  \\ (v_1,v_3) \mapsto (v_2,v_4)  \\  (v_1,v_4) 
\mapsto 
(v_2,v_3)
\end{array}\right\rbrace. 
\end{align*}
Moreover, we have that $e_2=(v_1,v_2)$, $e_3=(v_1,v_3)$, $e_4=(v_2,v_3)$ and $e_5=(v_2,v_4)$.  Hence, 
\begin{align*}
\operatorname{IND}_{v_1}=\{1,2,3\} \quad \text{and} \quad \operatorname{IND}_{v_2}=\{1,4,5\}
\end{align*} 
and the bijections $\operatorname{IND}_{v_1}\rightarrow \operatorname{IND}_{v_2}$ induced by the two 
connections along $e_1$ are 
\begin{align*}
\left\lbrace \begin{array}{r} 1 \mapsto 1  \\ 2 \mapsto 4  \\  3 \mapsto 5 \end{array}\right\rbrace 
\quad \text{and} \quad
\left\lbrace \begin{array}{r} 1 \mapsto 1  \\ 2 \mapsto 5  \\  3 \mapsto 4 \end{array}\right\rbrace. 
\end{align*}
\end{example}

\begin{lemma}\label{Lem: connections for skeltons}
Let $\skeleton$ be  a GKM skeleton and let $\mathbf{A}=(a_{j,k})_{j,k=1,...,\frac{1}{2}\left| E\right|}$ be its 
structure 
matrix. Let $\w : E \rightarrow \Z^d$ be an antisymmetric map. Then the map $\w$ satisfies  condition $(iii)$ of 
Definition \ref{Def: abstract GKM graph} if and only if for all $j=1,\dots, \frac{1}{2}\left| E\right|$ there exists a 
connection $\nabla_{e_j}$ along the edge $e_j$ such that  for all $k\in \operatorname{IND}_{i(e_j)}$
\begin{align*}
a_{j,k} \cdot \w(e_k)+a_{j,\widetilde{\nabla}_j(k)} \cdot  
\w(e_{\widetilde{\nabla}_j(k)}) \quad \text{is an integer multiple of} \,\, \w(e_j),
\end{align*}
where $\widetilde{\nabla}_j \colon \operatorname{IND}_{i(e_j)} \rightarrow \operatorname{IND}_{t(e_j)}$ is the 
bijection induced by $\nabla_{e_j}$.
\end{lemma}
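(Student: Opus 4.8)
The plan is to rewrite the edge-level compatibility in condition (iii) of Definition \ref{Def: abstract GKM graph} in terms of the indexing dictionary between $E^i_v$ and $\operatorname{IND}_v$ that precedes the lemma. Throughout fix $j$ and put $v=i(e_j)$, $w=t(e_j)$, so that $e_j=(v,w)$ and $\bar e_j=(w,v)$.

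First I would record a reduction: although condition (iii) asks for a compatible connection along every edge $e\in E$, it suffices to produce one along each $e_j$ with $j\in\{1,\dots,\tfrac12|E|\}$. Indeed, if $\nabla_{e_j}\colon E_v^i\to E_w^i$ is compatible, then its inverse $\nabla_{e_j}^{-1}\colon E_w^i\to E_v^i$ sends $\bar e_j$ to $e_j$ and, for $e''\in E_w^i$, satisfies $\w(e'')-\w(\nabla_{e_j}^{-1}(e''))=-\bigl(\w(\nabla_{e_j}^{-1}(e''))-\w(e'')\bigr)$, an integer multiple of $\w(e_j)=-\w(\bar e_j)$; hence $\nabla_{e_j}^{-1}$ is a compatible connection along $\bar e_j$. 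So the two orientations of an edge are handled simultaneously.

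The heart of the argument is a sign computation. For $k\in\operatorname{IND}_v$ let $\epsilon_k\in E_v^i$ be the unique edge equal to $e_k$ or $\bar e_k$, and for $k'\in\operatorname{IND}_w$ let $\epsilon'_{k'}\in E_w^i$ be defined analogously. Using the definition of the structure matrix together with the antisymmetry $\w(\bar e)=-\w(e)$, a four-case check (edge starting versus ending at $v$, resp.\ $w$) yields
\begin{align*}
\w(\epsilon_k)=a_{j,k}\,\w(e_k)\quad(k\in\operatorname{IND}_v\setminus\{j\}),\qquad \w(\epsilon'_{k'})=-a_{j,k'}\,\w(e_{k'})\quad(k'\in\operatorname{IND}_w\setminus\{j\}).
\end{align*}
Recalling that a connection $\nabla_{e_j}$ with $\nabla_{e_j}(e_j)=\bar e_j$ is the same datum as a bijection $\widetilde\nabla_j\colon\operatorname{IND}_v\to\operatorname{IND}_w$ fixing $j$, via $\nabla_{e_j}(\epsilon_k)=\epsilon'_{\widetilde\nabla_j(k)}$, for $k\neq j$ the compatibility increment becomes
\begin{align*}
\w(\epsilon_k)-\w\bigl(\nabla_{e_j}(\epsilon_k)\bigr)=a_{j,k}\,\w(e_k)+a_{j,\widetilde\nabla_j(k)}\,\w(e_{\widetilde\nabla_j(k)}),
\end{align*}
so divisibility by $\w(e_j)$ is precisely the displayed condition of the lemma. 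For $k=j$ both sides are automatic: the edge increment is $\w(e_j)-\w(\bar e_j)=2\,\w(e_j)$ and the lemma's expression equals $4\,\w(e_j)$, each a multiple of $\w(e_j)$. Quantifying over all $k$ and all $j$, and using the reduction above, gives the stated equivalence.

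I expect the only real difficulty to be bookkeeping rather than conceptual: getting the four sign cases consistent with the matrix conventions, and verifying that the passage $\nabla_{e_j}\leftrightarrow\widetilde\nabla_j$ is a genuine bijection matching $\nabla_{e_j}(e_j)=\bar e_j$ with $\widetilde\nabla_j(j)=j$, so that the existential quantifier over connections on one side corresponds exactly to the one over index bijections on the other.
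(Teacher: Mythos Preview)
Your proposal is correct and follows essentially the same approach as the paper: both first reduce from all of $E$ to the oriented half $E^\sigma$ via the observation that the inverse of a compatible connection is a compatible connection along the reversed edge, then perform the same four-case sign computation identifying $\w(\epsilon_k)=a_{j,k}\,\w(e_k)$ and $\w(\epsilon'_{k'})=-a_{j,k'}\,\w(e_{k'})$, handle $k=j$ separately (obtaining $2\,\w(e_j)$ versus $4\,\w(e_j)$), and conclude via the dictionary $\nabla_{e_j}\leftrightarrow\widetilde\nabla_j$.
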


\begin{proof}
We first note that since the map $\w$ is antisymmetric, a connection $\nabla_e: E^i_{i(e)}\rightarrow E^i_{t(e)}$ along 
an edge $e$ 
is compatible with $\w$ if and only if its inverse $\nabla_e^{-1}: E^i_{t(e)}\rightarrow E^i_{i(e)}$ is compatible with
$\w$. Therefore, $\w$ satisfies $(iii)$ of Definition \ref{Def: abstract GKM graph} if and only if for each $j=1,\dots, \frac{1}{2}\left| E\right|$ there 	exists 
a connection $\nabla_{e_j}$ along the edge $e_j$ that is compatible with $\w$. 
We show that for $j=1,\dots, \frac{1}{2}\left| E\right|$, a connection $$\nabla_{e_j}: E_{i(e_j)}^i \rightarrow 
E_{t(e_j)}^i $$ 
 along the edge $e_j$ is compatible with $\w$, i.e., 
 \begin{align}\label{EQ1: Lem: After projection compatible connnections}
\operatorname{w}(e)- \operatorname{w} (\nabla_{e_j}(e))
\end{align}
 is an integer multiple of $\w(e_j)$ for all $e\in E_{i(e_j)}^i$ if and only if
 \begin{align}\label{EQ2: Lem: After projection compatible connnections}
a_{j,k}\cdot \operatorname{w}(e_k)+a_{j,\widetilde{\nabla}_j(k)}  \operatorname{w} 
(e_{\widetilde{\nabla}_j(k)})
\end{align}
 is an integer multiple of $\w(e_j)$ for all $k\in \operatorname{IND}_{i(e_j)}$,
 where $\widetilde{\nabla}_j \colon \operatorname{IND}_{i(e_j)} \rightarrow \operatorname{IND}_{t(e_j)}$ is the 
bijection induced by $\nabla_{e_j}$.

Indeed, for $e\in E_{i(e_j)}^i$, let $k\in \operatorname{IND}_{i(e_j)}$ be the unique index such that either $e_k=e$ or $e_k=\bar{e}$.
If $k=j$, then 
$\nabla_{e_j}(e)=\bar{e}$ and $\widetilde{\nabla}_{e_j}(j)=j$. Since $\w$ is antisymmetric the term 
\eqref{EQ1: Lem: After projection compatible connnections} is equal to $2\cdot \w(e)$. Since $a_{j,j}=2$ the term 
\eqref{EQ2: Lem: After projection compatible connnections}
is equal to $4\cdot \w(e)$.

Now assume that $k\neq j$. If $e=e_k$, then $i(e_k)=i(e_j)$ and by the definition of the structure matrix we have that 
$a_{j,k}=1$ and $\w(e)=a_{j,k} \cdot \w(e_k)$. If $\bar{e}=e_k$, then $t(e_k)=i(e_j)$ and by the definition of the 
structure matrix we have that $a_{j,k}=-1$ and $\w(e)=-\w(\bar{e})=a_{j,k} \cdot \w(e_k)$. For the same reason 
we have $-\w(\nabla_{e_j}(e))=a_{j,\widetilde{\nabla}_j(k)} \cdot \operatorname{w} (e_{\widetilde{\nabla}_j(k)})$.
So the terms \eqref{EQ1: Lem: After projection compatible connnections} and \eqref{EQ2: Lem: After projection 
compatible connnections} are equal.

%We conclude that $\nabla_{e_j}$ is a compatible connection, i.e., \eqref{EQ1: Lem: After projection compatible 
%connnections} is an integer multiple of $\w(e_j)$ for all $e\in E_{i(e_j)}^i$, if and only if \eqref{EQ2: Lem: After 
%projection compatible connnections} is an integer multiple of $\w(e_j)$ for all $k\in \operatorname{IND}_{i(e_j)}$.
\end{proof}
	
\begin{definition}\label{Def: Kernel Condition (K2)}
Assume that the defect is positive. We say that  
$(\Gamma, \sigma, \operatorname{Ord}(E^\sigma), \mathbf{d})$ satisfies the \textbf{Kernel Condition (K2)} if for any 
fundamental system $f_1,\dots, f_{\frac{1}{2}\left| E\right| }$ and for each $e_j\in E^\sigma$  there exists 
a connection $\nabla_{e_j}$ along $e_j$ such that for each $k\in\operatorname{IND}_{i(e_j)}$
\begin{align*}
a_{j,k}\cdot f_k+a_{j,\widetilde{\nabla}_j(k)} \cdot f_{\widetilde{\nabla}_j(k)}\quad \text{is an integer multiple 
of}\,\, f_j, 
\end{align*}
where $\widetilde{\nabla}_j \colon \operatorname{IND}_{i(e_j)} \rightarrow \operatorname{IND}_{t(e_j)}$ is the 
bijection induced by   $\nabla_{e_j}$ . 
\end{definition}
	
\begin{corollary}\label{Cor: low defect and (K2)}
Let $(\Gamma, \sigma, \operatorname{Ord}(E^\sigma), \mathbf{d})$ be  an $n$-GKM skeleton with a positive defect 
$\delta$ that supports an abstract $(n,\delta)$-GKM graph $(\Gamma, \w)$. Then this GKM skeleton satisfies the Kernel 
Condition (K2).
\end{corollary}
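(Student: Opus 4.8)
The plan is to exploit the hypothesis that the defect $\delta$ equals the dimension $d=\delta$ of the supported abstract $(n,\delta)$-GKM graph, which forces the weight transformation matrix from Proposition \ref{Pro: defect and FS} to be invertible. Concretely, since $\skeleton$ supports the abstract $(n,\delta)$-GKM graph $(\Gamma,\w)$, Proposition \ref{Pro: defect and FS} supplies, for any fixed fundamental system $f_1,\dots,f_{\frac{1}{2}\left| E\right|}$, a $(\delta\times\delta)$-matrix $\mathbf{M}$ of rank $\delta$ with $\mathbf{M}\cdot f_i=\w(e_i)$ for all $i$. Being a square matrix of full rank, $\mathbf{M}$ is invertible over $\Q$, so $f_i=\mathbf{M}^{-1}\w(e_i)$. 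This two-sided dictionary between the $\w(e_i)$ and the $f_i$ is the heart of the argument.

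First I would record, from the abstract GKM graph side, the connections provided by item $(iii)$ of Definition \ref{Def: abstract GKM graph}. Since $(\Gamma,\w)$ is an abstract $(n,\delta)$-GKM graph it satisfies condition $(iii)$; hence by Lemma \ref{Lem: connections for skeltons}, for each $j=1,\dots,\frac{1}{2}\left| E\right|$ there is a connection $\nabla_{e_j}$ along $e_j$ such that for every $k\in\operatorname{IND}_{i(e_j)}$ the vector $a_{j,k}\,\w(e_k)+a_{j,\widetilde{\nabla}_j(k)}\,\w(e_{\widetilde{\nabla}_j(k)})$ is an integer multiple of $\w(e_j)$, say $m_{j,k}\,\w(e_j)$ with $m_{j,k}\in\Z$. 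These connections are determined by $(\Gamma,\w)$ itself and do not depend on the choice of fundamental system.

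The second step is to transport this relation through $\mathbf{M}^{-1}$. Applying the $\Q$-linear map $\mathbf{M}^{-1}$ to the identity $a_{j,k}\,\w(e_k)+a_{j,\widetilde{\nabla}_j(k)}\,\w(e_{\widetilde{\nabla}_j(k)})=m_{j,k}\,\w(e_j)$ and using $f_i=\mathbf{M}^{-1}\w(e_i)$ yields $a_{j,k}\,f_k+a_{j,\widetilde{\nabla}_j(k)}\,f_{\widetilde{\nabla}_j(k)}=m_{j,k}\,f_j$, an integer multiple of $f_j$. Since the same connection $\nabla_{e_j}$ works and the fundamental system was arbitrary (any other differs by a $\operatorname{GL}(\Q,\delta)$-transformation by Remark \ref{Rem: Def: defect, fundamental system}, and Proposition \ref{Pro: defect and FS} furnishes a corresponding invertible $\mathbf{M}$ for each), this verifies the Kernel Condition (K2) of Definition \ref{Def: Kernel Condition (K2)}.

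The one delicate point, which I would flag as the crux, is that the integer coefficient $m_{j,k}$ survives the passage from the $\w$-vectors to the $f$-vectors. This works precisely because $\mathbf{M}^{-1}$ is a fixed $\Q$-linear isomorphism through which the scalar $m_{j,k}$ factors unchanged; no entries of $\mathbf{M}$ enter the coefficient. This is exactly where the equality $d=\delta$ is essential: for $d<\delta$ the matrix $\mathbf{M}$ has no left inverse carrying $\w(e_i)$ back to $f_i$, so one cannot pull the relations back, and the corollary genuinely requires the supported graph to have dimension equal to the defect.
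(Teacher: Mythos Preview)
Your proposal is correct and follows essentially the same route as the paper: invoke Proposition \ref{Pro: defect and FS} to obtain an invertible $(\delta\times\delta)$-matrix $\mathbf{M}$ with $\mathbf{M}\cdot f_i=\w(e_i)$, apply Lemma \ref{Lem: connections for skeltons} on the $\w$-side to produce the compatible connections, and then pull the resulting integer-multiple relations back to the $f$-side via $\mathbf{M}^{-1}$. The paper's proof is terser but identical in structure; your additional remarks on why the scalar $m_{j,k}$ survives and why $d=\delta$ is essential are correct elaborations.
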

\begin{proof}
Let $f_1,\dots, f_{\frac{1}{2}\left| E\right|}$ be a fundamental system. By Proposition \ref{Pro: defect and FS}
there exists an invertible $(\delta \times \delta)$-matrix $\mathbf{M}$ such that  $\mathbf{M}\cdot 
f_j=\operatorname{w}(e_j)$ for all $j=1,\dots, \frac{1}{2}\left| E\right|$. By Lemma \ref{Lem: connections for skeltons}
for all $j=1,\dots, \frac{1}{2}\left| E\right|$ there exists a 
connection $\nabla_{e_j}$ along the edge $e_j$ such that  for all $k\in \operatorname{IND}_{i(e_j)}$
\begin{align*}
a_{j,k}\cdot \w(e_k)+a_{j,\widetilde{\nabla}_j(k)} \cdot  
\w(e_{\widetilde{\nabla}_j(k)}) \quad \text{is an integer multiple of} \, \w(e_j),
\end{align*}
where $\widetilde{\nabla}_j \colon \operatorname{IND}_{i(e_j)} \rightarrow \operatorname{IND}_{t(e_j)}$ is the 
bijection induced by $\nabla_{e_j}$ and $\mathbf{A}=(a_{j,k})_{j,k=1,...,\frac{1}{2}\left| E\right|}$ is the structure 
matrix. Since $\mathbf{M}$ is invertible, the  corollary follows. 
\end{proof}
	
\begin{remark}\label{Rem: (K2) is not a necessary condition}
Let $(\Gamma, \sigma, \operatorname{Ord}(E^\sigma), \mathbf{d})$ be  an $n$-GKM skeleton.  
Whenever the GKM skeleton supports an abstract $(n,d)$-GKM graph, then by Proposition \ref{Pro: defect and FS} its 
defect $\delta$ is greater or equal to $d$. Moreover, if $d=\delta$, then by Corollary \ref{Cor: low defect and (K2)} 
the Kernel Condition (K2) is satisfied. 
But note that if $d<\delta$, then the Kernel Condition (K2) might not be satisfied. In 
Example 
\ref{Example: (K2) not satisfied but supporting GKM graph} we consider a $3$-GKM skeleton that supports an abstract 
$(3,2)$-GKM graph but  does 
not satisfy the  Kernel Condition (K2).
\end{remark}

\begin{example}\label{Example: (K2) not satisfied but supporting GKM graph}
Let $(\Gamma, \sigma, \operatorname{Ord}(E^\sigma), \mathbf{d})$ be the $3$-GKM skeleton where  $(\Gamma, \sigma, 
\operatorname{Ord}(E^\sigma))$ is as in Example \ref{Example: tori with four vertices} and $\mathbf{d}$ is the zero 
vector. This $3$-GKM skeleton has the same structure matrix $\mathbf{A}$ as the one in Example \ref{Example: tori with 
four 
vertices}. The defect of this $3$-GKM skeleton is $3$ and a fundamental system is given by 	

\begin{align*}
&\quad\,\,\, f_1\,\,\,\, f_2\,\,\,\,\, f_3\,\,\,\,\, f_4\,\,\,\, f_5\,\,\, f_6\\
&\left( \begin{array}{rrrrrr}
0&	1&	$-1$&	0&	0&	1 \\ 
1&	0&	$-1$&	0&	1&	0\\
1&	$-1$&	0&	1&	0&	0\\
\end{array}\right).
\end{align*}	
The Kernel Condition (K1) is satisfied. But the Kernel Condition (K2) fails to be true.
Consider the $(2\times 3)$ matrix $\mathbf{M}$ given by 

\begin{align*}
\mathbf{M}=
\left( \begin{array}{rrr}
$-1$&	$1$&	$0$ \\ 
$2$&	$-1$&	$1$ \\
\end{array}\right).
\end{align*}
Let  $\operatorname{w}:E \rightarrow \Z^2\setminus\{0\}$ be the antisymmetric map given by 
\begin{align*}
\operatorname{w}(e_i)=\mathbf{M}\cdot f_i \,\,\,\, \text{for all } i=1,...,6.
\end{align*}
This map together with $\Gamma$ form an abstract $(3,2)$-GKM graph, i.e., the 
abstract GKM graph $(\Gamma, \operatorname{w})$ is supported by the $3$-GKM skeleton.
\end{example}

So far we proved that the Kernel Condition (K1) is a necessary condition for an $n$-GKM skeleton to support an abstract 
$(n,d)$-GKM graph (Corollary \ref{Cor: (K1) is a necessary condition}), and that (K2) is a necessary condition if 
$d=\delta$ (Corollary \ref{Cor: low defect and (K2)}). Now we show that  a 
sufficient condition is that  (K1) and (K2) hold. This is  part of the following proposition.

\begin{proposition}\label{Pro: (K1)+(K2) gives a unique GKM graph}
Let $(\Gamma, \sigma, \operatorname{Ord}(E^\sigma), \mathbf{d})$ be  an $n$-GKM skeleton with  a positive defect 
$\delta$ that satisfies the Kernel Conditions (K1) and (K2). Then there exists an abstract $(n,\delta)$-GKM graph 
$(\Gamma, \operatorname{w})$ that is supported by the GKM skeleton. Moreover, the GKM graph $(\Gamma, \w)$ is unique up 
to isomorphism
and projection. This means that whenever $(\Gamma, 
\operatorname{w}')$  is  an abstract $(n,d)$-GKM graph that  is also supported by this GKM skeleton, then
either $d=\delta$ and $(\Gamma, \operatorname{w})$ and $(\Gamma, \operatorname{w}')$   are isomorphic or  $d<\delta$ 
and $(\Gamma, \operatorname{w}')$ is a projection of $(\Gamma, \operatorname{w})$.
\end{proposition}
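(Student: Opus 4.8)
The plan is to build the weight map directly from a fundamental system and then check the three axioms of Definition \ref{Def: abstract GKM graph}. By Remark \ref{Rem: Def: defect, fundamental system} I may fix a fundamental system $f_1,\dots,f_{\frac{1}{2}|E|}\in\Z^\delta$ with integer entries; transposing the defining relation $(\mathbf A-\mathbf D)\mathbf F^\intercal=0$ and using the symmetry of $\mathbf A$ and $\mathbf D$ shows that these columns satisfy $\sum_k a_{j,k}f_k=d_j f_j$ for every $j$. As a first attempt I would assign $f_i$ to $e_i$ and extend antisymmetrically. Since $n\ge2$, Remark \ref{Rem: Def: structure matrix} gives for each $j$ an index $k$ with $a_{j,k}=\pm1$, so Kernel Condition (K1) forces each $f_j$ to be nonzero; moreover condition (ii) holds by Lemma \ref{Lem: linarly inde. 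Cond. for skeletons} together with (K1), and condition (iii) holds by Lemma \ref{Lem: connections for skeltons} together with (K2).

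The remaining axiom (i), that the weights $\Z$-span $\Z^\delta$ at \emph{every} vertex, is the main obstacle, since neither (K1) nor (K2) mentions spanning. First I would note that, because a compatible connection exists along every edge, the sublattice $L_v$ generated by the weights at $v$ is independent of $v$: the relation $f(e')-f(\nabla_e(e'))=a_{e,e'}f(e)$ writes each weight at $t(e)$ as an integral combination of weights at $i(e)$ (and conversely along $\bar e$), so $L_{i(e)}=L_{t(e)}$, and connectedness of $\Gamma$ propagates equality to all vertices. Hence the common lattice $L$ equals the $\Z$-span of all $f_i$, which has rank $\delta$ because the rows of $\mathbf F$ are independent. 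Now I would choose $\mathbf M\in\mathrm{GL}(\delta,\Q)$ carrying a $\Z$-basis of $L$ onto the standard basis of $\Z^\delta$ and define $\operatorname{w}(e_i)=\mathbf M f_i$. This rescaling still yields values in $\Z^\delta\setminus\{0\}$ and preserves (ii) and (iii) (an invertible linear map preserves linear independence and the ``integer multiple of $\operatorname{w}(e_j)$'' relations), while the vertex lattices become $\mathbf M(L)=\Z^\delta$, giving (i). Thus $(\Gamma,\operatorname{w})$ is an abstract $(n,\delta)$-GKM graph, and since the $(\delta\times\delta)$-matrix $\mathbf M$ has rank $\delta$, Proposition \ref{Pro: defect and FS} shows it is supported by the skeleton.

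For uniqueness, let $(\Gamma,\operatorname{w}')$ be any abstract $(n,d)$-GKM graph supported by the skeleton. Proposition \ref{Pro: defect and FS} gives $\delta\ge d$ and, for the \emph{same} fundamental system, a rank-$d$ matrix $\mathbf M'$ with $\mathbf M'f_i=\operatorname{w}'(e_i)$. I would set $\theta:=\mathbf M'\mathbf M^{-1}$, so that $\theta\,\operatorname{w}(e_i)=\operatorname{w}'(e_i)$ for all $i$, and hence $\theta\,\operatorname{w}(e)=\operatorname{w}'(e)$ for all edges by antisymmetry. The decisive point is that axiom (i) for both graphs makes $\theta$ integral: since $\{\operatorname{w}(e_i)\}$ spans $\Z^\delta$ we get $\theta(\Z^\delta)=\Z\text{-span}\{\operatorname{w}'(e_i)\}=\Z^{d}$. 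If $d=\delta$ this means $\theta\in\mathrm{GL}(\delta,\Z)$, so $(\operatorname{Id},\theta)$ is an isomorphism of abstract GKM graphs in the sense of Definition \ref{Def: isomorphisms abstract GKM graphs}; if $d<\delta$ it means $\theta$ is a linear surjection $\Z^\delta\to\Z^{d}$, exhibiting $(\Gamma,\operatorname{w}')$ as a projection of $(\Gamma,\operatorname{w})$ in the sense of Definition \ref{Definition: Projections abstract GKM graphs}.

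I expect the hard part to be axiom (i) in both halves of the argument: in existence it cannot be read off from the Kernel Conditions and must be arranged by the connection-invariance of the vertex lattice $L_v$ followed by a change of basis carrying $L$ to $\Z^\delta$, and in uniqueness it is precisely this spanning property that upgrades the a priori rational comparison matrix $\theta$ to an integral isomorphism or an integral surjection.
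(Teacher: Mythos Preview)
Your proof is correct and follows essentially the same approach as the paper's. The paper packages your connection-invariance argument for the vertex lattices $L_v$ into a separate Lemma \ref{Lemma: technical existence M} (with the auxiliary Lemma \ref{Lem: technical lemma Z span} producing a $\Z$-basis of $L$), and for uniqueness it observes directly that $\operatorname{w}(e_1),\dots,\operatorname{w}(e_{\frac12|E|})$ is itself a fundamental system so that $\theta$ comes immediately from Proposition \ref{Pro: defect and FS}; but the ideas and the order of the argument are the same as yours.
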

	
In order to prove this proposition, we first prove  two auxiliary lemmas.
In the following, for  vectors $f_1,\dots, f_n\in \Q^d$ and for $R=\Z$ or $R=\Q$, we denote the $R$-span
 
$$\left\lbrace \kappa_1 \cdot f_1+...+ \kappa_n \cdot f_n\,\, \vert \,\,\kappa_1,...,\kappa_n \in R \right\rbrace $$
of these vectors by 
$$\operatorname{span}_R\left\lbrace f_1,...,f_n\right\rbrace. $$	
\begin{lemma}\label{Lem: technical lemma Z span}
Let $n$ and $d$ be positive integers with $d\leq n$ and let $f_1,\dots, f_n\in \Q^d$ be vectors. Then there 
exist vectors $w_1,\dots, w_d\in \Q^d$ such that
\begin{align*}
\operatorname{span}_{\Z}\{f_1,\dots, f_n\}\, =\,\operatorname{span}_{\Z}\{w_1,\dots, w_d\}.
\end{align*}
\end{lemma}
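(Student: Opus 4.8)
The plan is to recognise that $\Lambda := \operatorname{span}_{\Z}\{f_1,\dots,f_n\}$ is simply a finitely generated subgroup of the additive group $\Q^d$, and to exploit the structure theory of finitely generated abelian groups. First I would observe that $\Lambda$, being a subgroup of the torsion-free group $\Q^d$, is itself torsion-free, and that it is finitely generated by construction. Hence, by the structure theorem for finitely generated abelian groups, $\Lambda$ is a free abelian group, say of rank $r$.

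Next I would bound the rank. The rank $r$ of $\Lambda$ equals the dimension over $\Q$ of the $\Q$-subspace $\operatorname{span}_{\Q}\{f_1,\dots,f_n\}$, since tensoring the free $\Z$-module $\Lambda$ with $\Q$ gives an $r$-dimensional $\Q$-vector space, and this space is exactly $\operatorname{span}_{\Q}\{f_1,\dots,f_n\}$. As this subspace sits inside $\Q^d$, we conclude $r\leq d$.

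Then I would choose a $\Z$-basis $w_1,\dots,w_r$ of $\Lambda$, which exists because $\Lambda$ is free, and note that each $w_i$ lies in $\Lambda\subseteq \Q^d$ as required. If $r<d$, I would pad the list with zero vectors $w_{r+1}=\dots=w_d=0\in\Q^d$. Since adjoining the zero vector to a $\Z$-generating set does not enlarge the $\Z$-span, we obtain $\operatorname{span}_{\Z}\{w_1,\dots,w_d\}=\operatorname{span}_{\Z}\{w_1,\dots,w_r\}=\Lambda$, which is precisely the claim.

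There is essentially no genuine obstacle here: the statement is just a repackaging of the fact that a finitely generated torsion-free abelian group is free of rank at most the ambient $\Q$-dimension. If one prefers to avoid invoking the structure theorem in the rational setting, an alternative is to clear denominators first: choose a common denominator $N$ so that $Nf_i\in\Z^d$ for all $i$, apply the classical fact that a subgroup of $\Z^d$ is free of rank $\leq d$ to the lattice $N\Lambda\subseteq\Z^d$, take a $\Z$-basis $u_1,\dots,u_r$ of $N\Lambda$, and set $w_i=N^{-1}u_i$. This is the only place where a little care is needed, namely tracking the scaling factor $N^{-1}$ to ensure the resulting vectors again $\Z$-span $\Lambda$ rather than $N\Lambda$.
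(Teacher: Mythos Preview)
Your proof is correct. You recognise $\Lambda=\operatorname{span}_{\Z}\{f_1,\dots,f_n\}$ as a finitely generated torsion-free abelian group, invoke the structure theorem to get a free $\Z$-basis of rank $r\le d$, and pad with zeros. The rank bound via $\Lambda\otimes_{\Z}\Q\subseteq\Q^d$ is clean, and the denominator-clearing alternative you sketch is also fine.

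The paper takes a different, more hands-on route: it fixes $d$ and inducts on $n-d$. The inductive step reduces $d+1$ generators to $d$ by writing them as the columns of a $d\times(d+1)$ matrix $\mathbf{G}$, picking a primitive integer vector $h_0$ in $\ker\mathbf{G}$, extending $h_0$ to a $\Z$-basis $h_0,\dots,h_d$ of $\Z^{d+1}$, and replacing the generators by $\mathbf{G}h_1,\dots,\mathbf{G}h_d$ (since $\mathbf{G}h_0=0$ contributes nothing). This is essentially one step of a Hermite/Smith reduction done by hand.

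The trade-off: your argument is shorter and conceptually transparent, but treats the structure theorem as a black box. The paper's argument is self-contained and constructive, which fits its overall computational flavour (the lemma feeds into an explicit construction of a weight-transformation matrix in the proof of Lemma~\ref{Lemma: technical existence M}). Either approach suffices for the purpose at hand.
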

\begin{proof}
Fix a positive integer $d$ and let $n\in \N$ such that $n \geq d$. 
We prove the claim by induction over $n-d$. The induction base, i.e., $n=d$, is 
obvious. Indeed in this case we can choose $w_i=f_i$ for all $i=1,\dots,d$. Assume that the claim holds for some 
$n$ with $n\geq d$.  Given  $f_1,\dots, f_{n+1}\in \Q^d$, by the induction assumption 
there 
exist $d$ vectors  $w_1',\dots, w_d'\in \Q^d$ such that 
\begin{align*}
\operatorname{span}_{\Z}\{f_1,\dots, f_n\}\, =\,\operatorname{span}_{\Z}\{w_1',\dots, w_d'\}.
\end{align*}
Therefore, we have 
\begin{align*}
\operatorname{span}_{\Z}\{f_1,\dots, f_{n+1}\}\, =\,\operatorname{span}_{\Z}\{w_1',\dots, w_d', f_{n+1}\}.
\end{align*}
Let $\mathbf{G}$ be the $(d\times (d+1))$-matrix given by
\begin{align*}
\mathbf{G}= \left( \begin{matrix}
\vert & &\vert& \vert  \\
w_1'   &\dots &w_d' & f_{n+1}  \\
\vert & &\vert & \vert
\end{matrix}\right) .
\end{align*}
For dimensional reasons the kernel of $\mathbf{G}$ as a map $\Q^{d+1}\rightarrow \Q^d$ is not trivial and  since this 
matrix has 
only rational entries, there exists a primitive vector $h_0\in \Z^{d+1}\setminus\{0\}$ that is an element of 
the kernel of  $\mathbf{G}$. Now let $h_1,\dots h_d\in \Z^{d+1} $ vectors such 
that  $h_0,h_1,\dots, h_d$ is a  
		$\Z$-basis 
		of $\Z^{d+1}$. So we have 
		\begin{align*}
		\operatorname{span}_{\Z}\{w_1',\dots, w_d', f_{n+1}\}=\operatorname{span}_{\Z}\{\mathbf{G}\cdot h_0, 
		\dots,\mathbf{G}\cdot 
		h_d\}. 
		\end{align*}
		We set $w_i:=\mathbf{G}\cdot h_i$ for all $i=1,..., d$. Since $\mathbf{G}\cdot h_0=0$, we conclude that
		\begin{align*}
		\operatorname{span}_{\Z}\{f_1,\dots, f_{n+1}\}\, =\operatorname{span}_{\Z}\{w_1,\dots, w_d\}.
		\end{align*} 
		This completes the induction step. 
\end{proof}
	
\begin{lemma}\label{Lemma: technical existence M}
Let $(\Gamma, \sigma, \operatorname{Ord}(E^\sigma), \mathbf{d})$ be  an $n$-GKM skeleton with  a positive defect 
$\delta$ that satisfies the Kernel Condition (K2) and let $f_1,\dots, f_{\frac{1}{2}\left| E \right| }$ be a 
fundamental system. Then $\delta \leq n$ and there exists an invertible $(\delta\times \delta)$-matrix $\mathbf{M}$ 
such 
that  $\mathbf{M}\cdot f_j\in \Z^\delta$ for all $j=1,...,\frac{1}{2}\left| E\right|$ and 
\begin{align*}
\operatorname{span}_{\Z}\{\mathbf{M}\cdot f_k\,\, \vert\,\, k\in \operatorname{IND}_v \} = \Z^\delta
\end{align*} 
for all vertices $v\in V$.
\end{lemma}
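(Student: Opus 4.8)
The plan is to show that the Kernel Condition (K2) forces the $\Z$-span of the vectors $f_k$ indexed by the edges incident to a vertex to be independent of the vertex, and then to transport the resulting full-rank lattice onto $\Z^\delta$ by a single linear isomorphism. For a vertex $v$ write $L_v:=\operatorname{span}_{\Z}\{f_k \mid k\in\operatorname{IND}_v\}$. Since $\Gamma$ is $n$-valent and simple, $\left|\operatorname{IND}_v\right|=n$, and for an edge $e_j=(v,w)$ in $E^\sigma$ one has $\operatorname{IND}_v\cap\operatorname{IND}_w=\{j\}$. First I would prove the \emph{propagation claim}: for every edge $e_j=(v,w)$ in $E^\sigma$ we have $L_v=L_w$.

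To prove this, fix the connection $\nabla_{e_j}$ supplied by (K2) and its induced bijection $\widetilde{\nabla}_j\colon\operatorname{IND}_v\to\operatorname{IND}_w$ with $\widetilde{\nabla}_j(j)=j$. For $k\in\operatorname{IND}_v\setminus\{j\}$ the entries $a_{j,k}$ and $a_{j,\widetilde{\nabla}_j(k)}$ lie in $\{\pm 1\}$, since by Remark \ref{Rem: Def: structure matrix} the only off-diagonal nonzero entries of the structure matrix are $\pm 1$, occurring exactly at edges sharing an endpoint with $e_j$. Hence the (K2) relation $a_{j,k}f_k+a_{j,\widetilde{\nabla}_j(k)}f_{\widetilde{\nabla}_j(k)}\in\Z f_j$ can be solved for $f_{\widetilde{\nabla}_j(k)}$, giving $f_{\widetilde{\nabla}_j(k)}=\pm f_k+m f_j$ for some $m\in\Z$; as $j\in\operatorname{IND}_v$, this shows $f_{\widetilde{\nabla}_j(k)}\in L_v$. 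Since $\widetilde{\nabla}_j$ is a bijection onto $\operatorname{IND}_w$ and $f_j\in L_v$, every generator of $L_w$ lies in $L_v$, so $L_w\subseteq L_v$; solving the same two-term relation for $f_k$ instead gives the reverse inclusion, whence $L_v=L_w$.

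Next, since $\Gamma$ is connected, the common value $L:=L_v$ is independent of $v$. Because each index $k$ satisfies $k\in\operatorname{IND}_{i(e_k)}$, we obtain $L=\operatorname{span}_{\Z}\{f_1,\dots,f_{\frac{1}{2}\left|E\right|}\}$. The fundamental-system matrix $\mathbf{F}$ has rank $\delta$, so the $f_k$ span $\Q^\delta$; thus $L$ is a finitely generated subgroup of $\Q^\delta$ with $L\otimes\Q=\Q^\delta$, hence a free abelian group of rank $\delta$. As $L=L_v$ is generated by the $n$ vectors $\{f_k\mid k\in\operatorname{IND}_v\}$, a rank count gives $\delta\le n$. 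Finally, choosing a $\Z$-basis $w_1,\dots,w_\delta$ of $L$ and letting $\mathbf{M}$ be the invertible $(\delta\times\delta)$-matrix sending each $w_i$ to the $i$-th standard basis vector of $\Z^\delta$, we get $\mathbf{M}(L)=\Z^\delta$; then $\mathbf{M}\cdot f_j\in\Z^\delta$ for all $j$, and $\operatorname{span}_{\Z}\{\mathbf{M}\cdot f_k\mid k\in\operatorname{IND}_v\}=\mathbf{M}(L_v)=\mathbf{M}(L)=\Z^\delta$ for every $v$, as required.

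The crux, and the step I expect to require the most care, is the propagation claim: the sign bookkeeping that turns the two-term (K2) relation into the clean statement $f_{\widetilde{\nabla}_j(k)}\in\pm f_k+\Z f_j$, and the verification that this single relation yields both inclusions $L_w\subseteq L_v$ and $L_v\subseteq L_w$. Everything afterward, namely propagation across the connected graph, the rank count giving $\delta\le n$, and the standard passage from a rank-$\delta$ lattice to $\Z^\delta$ through a $\Z$-basis, is routine.
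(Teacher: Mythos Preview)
Your proof is correct and follows essentially the same approach as the paper: establish the propagation claim $L_v=L_w$ along edges via (K2) and the $\pm 1$ off-diagonal entries of the structure matrix, propagate by connectedness, deduce $\delta\le n$ from $|\operatorname{IND}_v|=n$, and take $\mathbf{M}$ to be the inverse of a matrix whose columns form a $\Z$-basis of the common lattice $L$. The only cosmetic difference is that the paper proves an auxiliary lemma (Lemma~\ref{Lem: technical lemma Z span}) by explicit induction to extract $\delta$ generators of $L$, whereas you invoke the structure theorem for finitely generated torsion-free abelian groups directly; both yield the same $\Z$-basis and hence the same $\mathbf{M}$.
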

	
\begin{proof}
Note that $f_j\in \Q^\delta$ for all $j=1,...,\frac{1}{2}\left| E\right|$.  Since the defect $\delta$ is the dimension 
of the kernel of $\mathbf{A-D}$, we have that
\begin{align}\label{EQ1:Lemma: technical existence M}
\operatorname{span}_{\Q}\left\lbrace  f_k\, \vert\,  k\in \{1,\dots, \frac{1}{2}\left| E\right| \}\right\rbrace
=\Q^\delta. 
\end{align}
Next we  show that for any two vertices $v,w\in V$
\begin{align}\label{EQ2:Lemma: technical existence M}
\operatorname{span}_\Z\{ f_k\,\, \vert\,\, k\in \operatorname{IND}_v \}=\operatorname{span}_\Z\{ f_k\,\, \vert\,\, k\in 
\operatorname{IND}_w \}.
\end{align}
Note that since $\Gamma$ is connected it is enough to show this for vertices that are connected by an edge.  
Let us  fix an edge $e_j \in E^\sigma$. Since the Kernel Condition (K2) is satisfied, there exists a connection 
along $e_j$ such that for each $k\in\operatorname{IND}_{i(e_j)}$ there exists an integer $c_k$
\begin{align*}
a_{j,k}\cdot f_k+a_{j,\widetilde{\nabla}_j(k)} \cdot f_{\widetilde{\nabla}_j(k)}=c_k\cdot f_j, 
\end{align*}
where $\widetilde{\nabla}_j \colon \operatorname{IND}_{i(e_j)} \rightarrow \operatorname{IND}_{t(e_j)}$ is the 
bijection induced by the connection.  Note that $\widetilde{\nabla}_j(j)=j$. Let $k\in 
\operatorname{IND}_{i(e_j)}\setminus\{j\}$. Since $k\neq j$ and $i(e_k)=i(e_j)$ or $t(e_k)=i(e_j)$, 
by the definition of the structure matrix we have $a_{j,k}=\pm 1$. Moreover, since $\widetilde{\nabla}_j(k)\neq j$ and 
$\widetilde{\nabla}_j(k)\in \operatorname{IND}_{t(e_j)}$ we also have $a_{j,\tilde{\nabla}_j(k)}=\pm 1$. We conclude 
that \eqref{EQ2:Lemma: technical existence M} holds for vertices that are connected by an edge.\\
By combining \eqref{EQ1:Lemma: technical existence M} and \eqref{EQ2:Lemma: technical existence M} we conclude that
\begin{align}\label{EQ3:Lemma: technical existence M}
\operatorname{span}_{\Q}\{ f_k\,\, \vert\,\, k\in \operatorname{IND}_v \}=\Q^\delta
\end{align}
for all $v\in V$. Moreover, since the graph $\Gamma$ is $n$-valent, the cardinality of $\operatorname{IND}_v$ is $n$.
Therefore we have that $\delta \leq n$.
Let us fix a vertex $v_0\in V$. By Lemma \ref{Lem: technical lemma Z span} there exist $\delta$ vectors 
$w_1,\dots,w_\delta$
such that
\begin{align}\label{EQ4:Lemma: technical existence M}
\operatorname{span}_{\Z}\{ f_k\,\, \vert\,\, k\in \operatorname{IND}_{v_0} \}=\operatorname{span}_{\Z}\{ w_1,\dots, 
w_\delta \}.
\end{align}
Since \eqref{EQ3:Lemma: technical existence M} holds, the $\Q$-span of the vectors $w_1,\dots,w_\delta$ is equal to 
$\Q^d$.
Therefore,
\begin{align*}
\mathbf{M}= \left( \begin{matrix}
\vert & &\vert \\
w_1   &\dots &w_{\delta}   \\
\vert & &\vert
\end{matrix}\right) ^{-1}
\end{align*}
is a well defined  $(\delta \times \delta)$-matrix. By using \eqref{EQ2:Lemma: technical existence M} and 
that the equation \eqref{EQ4:Lemma: technical existence M} is satisfied for any two vertices $v,w\in V$, 
it is clear that the matrix $\mathbf{M}$ satisfies the desired properties.
\end{proof}

\begin{proof}[Proof of Proposition \ref{Pro: (K1)+(K2) gives a unique GKM graph}]
Let $(\Gamma, \sigma, \operatorname{Ord}(E^\sigma), \mathbf{d})$ be an $n$-GKM skeleton with positive defect 
$\delta$ that satisfies the Kernel Conditions (K1) and (K2). Let $f_1,\dots, f_{\frac{1}{2}\left| E \right| }$ be a 
fundamental system. 
By Proposition \ref{Pro: defect and FS}, in order to prove  that the GKM skeleton supports an abstract 
$(n,\delta)$-GKM   we 
need to show  that there exists a 
$(\delta\times \delta)$-matrix $\mathbf{M}$ such that the graph $\Gamma$ together with the antisymmetric
map $\operatorname{w}: E \rightarrow \Z^\delta$ given by
\begin{align}\label{EQ1:Pro: (K1)+(K2) gives a unique GKM graph}
\operatorname{w}(e_j)=\mathbf{M}\cdot f_j \text{ for all }j=1,\dots, \frac{1}{2}\left| E\right| \quad 
\end{align}
form an abstract GKM graph.\\
By Lemma \ref{Lemma: technical existence M}, since the GKM skeleton has positive defect 
$\delta$ and satisfies the Kernel Condition (K2), we 
can pick  an invertible $(\delta\times 
\delta)$-matrix $\mathbf{M}$ such 
that  $\mathbf{M}\cdot f_j\in \Z^\delta $ for all $j=1,...,\frac{1}{2}\left| E\right|$ and 
\begin{align}\label{EQ2:Pro: (K1)+(K2) gives a unique GKM graph}
\operatorname{span}_\Z \{\mathbf{M}\cdot f_k\,\, \vert\,\, k\in \operatorname{IND}_v \} = \Z^\delta
\end{align} 
for all vertices $v\in V$.
Note that Kernel condition (K1) implies that for all $j$,  the vector $f_j$ is not zero hence so is $\mathbf{M}\cdot f_j$. So for this selected matrix $\mathbf{M}$, let $\w \colon E \to \Z^d \setminus \{0\} $ be the antisymmetric map as in 
\eqref{EQ1:Pro: (K1)+(K2) gives a 
unique GKM graph}. We need to show that $\w$ satisfies conditions $(i)$, $(ii)$ and $(iii)$ of Definition 
\ref{Def: abstract GKM graph}. 

Since $\w$ is antisymmetric, we have for all vertices $v\in V$
\begin{align*}
\operatorname{span}_\Z\{\w(e)\,\, \vert\,\, e\in E_v^i \}  =
\operatorname{span}_\Z\{\mathbf{M}\cdot f_k\,\, \vert\,\, k\in \operatorname{IND}_v \}. 
\end{align*}
By \eqref{EQ2:Pro: (K1)+(K2) gives a unique GKM graph}
\begin{align*}
\operatorname{span}_\Z\{\w(e)\,\, \vert\,\, e\in E_v^i \}  =
\Z^\delta
\end{align*}
for all $v\in V$, i.e., $\w$ satisfies condition $(i)$ of Definition \ref{Def: abstract GKM graph}. 

Since $\skeleton$ satisfies the Kernel Condition (K1) and since $\mathbf{M}$ is an invertible matrix,
we have that for any two indices $j,k\in \{1,\dots, \frac{1}{2}\left| E\right| \}$ with  $a_{j,k}=1$ or 
$a_{j,k}=-1$ the vectors $\w(e_j)$ and $\w(e_k)$ are linearly independent, 
where $\mathbf{A}=(a_{j,k})_{j,k=1,...,\frac{1}{2}\left| E\right|}$ is the structure matrix. 
Hence, by Lemma \ref{Lem: linarly inde. Cond. for skeletons}, $\w$ satisfies condition $(ii)$ of Definition \ref{Def: 
abstract GKM graph}.

Since $\skeleton$ satisfies the Kernel Condition (K2), for each $e_j\in E^\sigma$  there exists 
a connection $\nabla_{e_j}$ along $e_j$ such that for each $k\in\operatorname{IND}_{i(e_j)}$
\begin{align*}
a_{j,k}\cdot f_k+a_{j,\widetilde{\nabla}_j(k)} \cdot f_{\widetilde{\nabla}_j(k)}\quad \text{is an integer multiple 
of}\, f_j, 
\end{align*}
where $\widetilde{\nabla}_j \colon \operatorname{IND}_{i(e_j)} \rightarrow \operatorname{IND}_{t(e_j)}$ is the 
bijection induced by   $\nabla_{e_j}$.  So we have 
\begin{align*}
a_{j,k}\cdot \w(e_k)+a_{j,\widetilde{\nabla}_j(k)} \cdot \w(e_{\widetilde{\nabla}_j(k)})\quad \text{is an integer 
multiple 
	of}\,\, \w(e_j), 
\end{align*}
for all $k\in\operatorname{IND}_{i(e_j)}$. Hence, by Lemma \ref{Lem: connections for skeltons}, $\w$ satisfies 
condition $(iii)$ of Definition \ref{Def: abstract GKM graph}.
So $(\Gamma, \w)$ is an abstract $(n,\delta)$-GKM graph that is supported by $\skeleton$.\\
 In order to prove that 
$(\Gamma, \w)$ is unique (up to isomorphism and projection), note that $\w(e_1),\dots, \w(e_{\frac{1}{2}\left| 
E\right| })$ is also a fundamental system. Let $(\Gamma, \w')$ be an abstract $(n,d)$-GKM graph that is supported by 
$\skeleton$. By Proposition \ref{Pro: defect and FS} we have $d\leq \delta$ and there exists a $(d\times \delta)$-
matrix $\mathbf{M'}$ such that $\w'(e_i)= \mathbf{M'}\cdot \w(e_i)$ for all $i=1,\dots, \frac{1}{2}\left| E\right|$.
Since the $\Z$-span of  $\w(e_1),\dots, \w(e_{\frac{1}{2}\left| E\right|})$ resp. $\w'(e_1),\dots, 
\w'(e_{\frac{1}{2}\left| E\right|})$ is equal to $\Z^\delta$ resp. $\Z^d$, $\mathbf{M'}$ induces a linear surjection
$\theta: \Z^\delta \rightarrow \Z^d$ such that $\theta(\w(e))=\w'(e)$ for all $e\in E$. Therefore, if $d<\delta$, then
$(\Gamma,\w')$ is a projection of $(\Gamma, \w)$. If $d=\delta$, then $\theta$ is a linear isomorphism and the pair 
$(F,\theta)$, where $F: V \rightarrow V$ is the identity map, is an isomorphism between the abstract GKM graphs 
$(\Gamma, \w)$
and $(\Gamma, \w')$.
\end{proof}

\end{subsubsection}

\begin{subsubsection}{If the Kernel Condition (K2) is not satisfied} \label{subsec:ProjectionTest}
	
Now we consider the case that the Kernel Condition (K2) is not satisfied. We focus on $3$-GKM skeletons
whose defects are equal to $3$. We first look at the general case.\newline

Let $(\Gamma, \sigma, \operatorname{Ord}(E^\sigma), \mathbf{d})$ be an $n$-GKM skeleton with a positive  defect 
$\delta$ that satisfies the Kernel Condition (K1) but not (K2). By Corollary \ref{Cor: low defect and (K2)}, this GKM 
skeleton can not support an abstract $(n,\delta)$-GKM graph, but it may support an abstract $(n,d)$-GKM graph for some 
$d<\delta$. Let $f_1,...,f_{\frac{1}{2}\left| E\right| }$ be a fundamental system. By Proposition \ref{Pro: defect and 
FS}, if the GKM skeleton supports an 
abstract $(n,d)$-GKM graph $(\Gamma, \w)$, then there exist a $(d\times\delta)$-matrix 
$\mathbf{M}$ of full rank  such that 
\begin{align*}
\mathbf{M}\cdot f_i= \operatorname{w}(e_i)
\end{align*}
for all $i=1,\dots, \frac{1}{2}\left| E\right| $. In order to find out if such a weight transformation matrix 
$\mathbf{M}$ exists, we consider 
the following ansatz. Namely, since the Kernel Condition (K2) is not satisfied, there exists at least one edge $e_j\in 
E^\sigma$ where (K2) fails to be true. This means that for any connection $\nabla_{e_j}$ along $e_j$ there exists 
an index $k\in \operatorname{IND}_{i(e_j)}$ such that 
\begin{align*}
a_{j,k}\cdot f_k+a_{j,\widetilde{\nabla}_j(k)} \cdot  
f_{\widetilde{\nabla}_j(k)}\,\, \text{is \textbf{not} an integer multiple of}\,\,   f_j.
\end{align*}

\begin{definition}\label{Def: fails by rational}
A connection $\nabla_{e_j}$ along an edge $e_j\in E^\sigma$ 
\begin{center}
\textbf{fails (K2) by a rational number}
\end{center} 
if for any  fundamental system  $f_1,...,f_{\frac{1}{2}\left| E\right|}$
there exist an index $k\in \operatorname{IND}_{i(e_j)}$ and a rational number $\nu \in \Q \setminus \Z$ such that
\begin{align*}
a_{j,k}\cdot f_k+a_{j,\widetilde{\nabla}_j(k)} \cdot  
f_{\widetilde{\nabla}_j(k)}\,=\,\nu \cdot  f_j, 
\end{align*}
where $\widetilde{\nabla}_j \colon \operatorname{IND}_{i(e_j)} \rightarrow \operatorname{IND}_{t(e_j)}$ is the 
bijection induced by   $\nabla_{e_j}$.
\end{definition}

\begin{lemma}\label{Lem: fails rational}
Let $e_j\in E^\sigma$ be an edge and let $\nabla_{e_j}$ a connection along the  edge $e_j$ that fails  (K2) 
by a rational number. 
Then there exists no abstract GKM graph $(\Gamma, \w)$ that is supported by this GKM skeleton such that the connection
$\nabla_{e_j}$ is compatible. 
\end{lemma}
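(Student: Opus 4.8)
The plan is to argue by contradiction, transporting the rational linear relation guaranteed by the failure of (K2) from the fundamental system to the weight map via the weight transformation matrix of Proposition \ref{Pro: defect and FS}, and then deriving a contradiction from compatibility together with the fact that weights are nonzero. The whole argument is short, so essentially the only thing to arrange carefully is that the compatibility condition matches the concrete linear relation; this matching is exactly the content of Lemma \ref{Lem: connections for skeltons}.

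First I would suppose, for contradiction, that there is an abstract $(n,d)$-GKM graph $(\Gamma,\w)$ supported by the given GKM skeleton such that the connection $\nabla_{e_j}$ is compatible with $\w$. Fix any fundamental system $f_1,\dots, f_{\frac{1}{2}\left| E\right|}$. By Proposition \ref{Pro: defect and FS} there exists a $(d\times\delta)$-matrix $\mathbf{M}$ of rank $d$ with $\mathbf{M}\cdot f_i=\w(e_i)$ for all $i=1,\dots,\frac{1}{2}\left| E\right|$. Since $\nabla_{e_j}$ fails (K2) by a rational number, Definition \ref{Def: fails by rational} applied to this fundamental system provides an index $k\in\operatorname{IND}_{i(e_j)}$ and a rational number $\nu\in\Q\setminus\Z$ with
\begin{align*}
a_{j,k}\cdot f_k+a_{j,\widetilde{\nabla}_j(k)}\cdot f_{\widetilde{\nabla}_j(k)}=\nu\cdot f_j,
\end{align*}
where $\widetilde{\nabla}_j$ is the bijection induced by $\nabla_{e_j}$. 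Applying $\mathbf{M}$ to both sides and using $\mathbf{M}\cdot f_i=\w(e_i)$, I would obtain
\begin{align*}
a_{j,k}\cdot \w(e_k)+a_{j,\widetilde{\nabla}_j(k)}\cdot \w(e_{\widetilde{\nabla}_j(k)})=\nu\cdot \w(e_j).
\end{align*}

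Finally I would invoke compatibility. By the proof of Lemma \ref{Lem: connections for skeltons}, the fact that $\nabla_{e_j}$ is compatible with $\w$ means precisely that the left-hand side above is an integer multiple of $\w(e_j)$, say $m\cdot\w(e_j)$ for some $m\in\Z$. Combining this with the displayed equation gives $(\nu-m)\cdot\w(e_j)=0$, and since $\w(e_j)\in\Z^d\setminus\{0\}$ by the definition of an abstract GKM graph (Definition \ref{Def: abstract GKM graph}), it follows that $\nu=m\in\Z$, contradicting $\nu\in\Q\setminus\Z$. Hence no such abstract GKM graph $(\Gamma,\w)$ can exist. The argument has no real obstacle: the only subtle point is that the rational coefficient $\nu$ is preserved under $\mathbf{M}$ (because $\mathbf{M}$ is linear) and survives as a nonintegral multiple of a nonzero lattice vector, which is what rules out compatibility.
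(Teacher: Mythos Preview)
Your proof is correct and follows essentially the same approach as the paper: both argue by contradiction, transport the rational relation from the fundamental system to the weight map via the matrix $\mathbf{M}$ from Proposition \ref{Pro: defect and FS}, and then use compatibility together with $\w(e_j)\neq 0$ to force $\nu\in\Z$. The only cosmetic difference is the order in which compatibility and the rational relation are invoked.
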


\begin{proof}
We prove this lemma by contradiction. Assume by negation that $(\Gamma, \w)$ is an abstract $(n,d)$-GKM graph that is 
supported by 
$\skeleton$ and that the connection $\nabla_{e_j}$  along the edge $e_j$ is compatible.
 Since $\nabla_{e_j}$ is a 
compatible connection we have that for all $k\in \operatorname{IND}_{i(e_j)}$  
\begin{align}\label{EQ1:Lem: fails rational}
a_{j,k}\cdot \w (e_k)+a_{j,\widetilde{\nabla}_j(k)} \cdot  
\w(e_{\widetilde{\nabla}_j(k)})\quad \text{is an integer multiple of } \,\, \w(e_j), 
\end{align}
where $\widetilde{\nabla}_j \colon \operatorname{IND}_{i(e_j)} \rightarrow \operatorname{IND}_{t(e_j)}$ is the 
bijection induced by   $\nabla_{e_j}$.  
Let $f_1,...,f_{\frac{1}{2}\left| E\right|}$ be a fundamental system of $\skeleton$.
Since $\nabla_{e_j}$ fails (K2) by a rational number there exists at least one index $k\in 
\operatorname{IND}_{i(e_j)}$ such that 
\begin{align*}
a_{j,k}\cdot f_k+a_{j,\widetilde{\nabla}_j(k)} \cdot  
f_{\widetilde{\nabla}_j(k)}\,=\,\nu \cdot  f_j, 
\end{align*}
where $\nu \in \Q\setminus \Z$. By Proposition \ref{Pro: defect and FS}, there exists a matrix $\mathbf{M}$ such that 
$\mathbf{M}\cdot f_i =\w_i$ for all  $i=1,\dots, \frac{1}{2}\left| E\right|$. So we have that 
\begin{align*}
a_{j,k}\cdot \w(e_k)+a_{j,\widetilde{\nabla}_j(k)} \cdot  
\w(e_{\widetilde{\nabla}_j(k)})\,=\,\nu \cdot  \w(e_j). 
\end{align*}
Since \eqref{EQ1:Lem: fails rational} holds and $\nu \notin \Z$, we must have $\w(e_j)=0$, which is a contradiction.
\end{proof}

%Let $(\Gamma, \w)$ be an abstract GKM graph. 

Since in an abstract GKM graph  $(\Gamma, \w)$, for each edge $e\in E$ there exists a compatible connection along 
$e$, a direct consequence of Lemma \ref{Lem: fails rational} is the following corollary.

\begin{corollary}\label{Cor: fails rational}
Let $\skeleton$ be an $n$-GKM skeleton with a positive defect $\delta$. Let $e_j\in E^\sigma$ be an edge 
such that any connection along the  edge $e_j$ fails (K2) by a rational number. 
Then there exists no abstract GKM graph that is supported by the GKM skeleton. 
\end{corollary}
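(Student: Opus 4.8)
The plan is to argue by contradiction, playing the defining property of an abstract GKM graph directly against the hypothesis, so that the work is entirely done by Lemma \ref{Lem: fails rational}. Suppose, toward a contradiction, that there does exist an abstract $(n,d)$-GKM graph $(\Gamma, \w)$ (for some $d$ with $1\leq d\leq\delta$) that is supported by the given $n$-GKM skeleton $\skeleton$. My goal is to manufacture, from this hypothetical $(\Gamma,\w)$, a single connection along $e_j$ that both is compatible with $\w$ and fails (K2) by a rational number, which Lemma \ref{Lem: fails rational} forbids.

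First I would extract the offending connection. By item $(iii)$ of Definition \ref{Def: abstract GKM graph}, since $(\Gamma, \w)$ is an abstract GKM graph, for every edge of $\Gamma$ — and in particular for $e_j$ — there exists a connection $\nabla_{e_j}$ along $e_j$ that is compatible with the weight map $\w$. This is the crucial point: the definition hands us one concrete connection attached to $e_j$. Next I would invoke the hypothesis of the corollary, namely that \emph{every} connection along $e_j$ fails (K2) by a rational number. Applying this to the particular connection $\nabla_{e_j}$ just produced, we conclude that this compatible connection fails (K2) by a rational number. But Lemma \ref{Lem: fails rational} asserts precisely that whenever a connection along $e_j$ fails (K2) by a rational number, there is no abstract GKM graph supported by the skeleton for which that connection is compatible. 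This contradicts the simultaneous facts that $(\Gamma,\w)$ is supported by $\skeleton$ and that $\nabla_{e_j}$ is compatible with $\w$. Hence no such $(\Gamma,\w)$ exists.

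Since each step merely cites Definition \ref{Def: abstract GKM graph} $(iii)$ and Lemma \ref{Lem: fails rational}, there is no computational obstacle; the corollary is genuinely a direct consequence of the lemma. The only subtlety worth spelling out is the role of the universal quantifier in the hypothesis. The lemma controls a single, given connection, whereas the definition of an abstract GKM graph only guarantees the \emph{existence} of some compatible connection along $e_j$ without telling us which one. It is exactly the assumption that \emph{all} connections along $e_j$ fail (K2) by a rational number that lets us apply the lemma to whichever compatible connection the definition supplies, so that we never need to identify or control that connection explicitly.
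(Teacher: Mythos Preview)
Your argument is correct and matches the paper's own reasoning: the paper simply notes that in an abstract GKM graph there is a compatible connection along every edge, so the corollary follows immediately from Lemma~\ref{Lem: fails rational}. Your write-up spells out exactly this contradiction argument in more detail, including the observation about the universal quantifier, but the underlying approach is identical.
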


In the following lemma we give sufficient criteria for a $3$-GKM skeleton that ensure that the GKM skeleton does not 
support an 
abstract GKM graph.  
\begin{lemma}\label{Lemma: Projection Test}
Let $\skeleton$ be a $3$-GKM skeleton whose defect is equal to $3$ that satisfies the Kernel Condition (K1), but 
does not satisfy the Kernel Condition (K2).
Let $\mathbf{A}=(a_{j,k})_{j,k=1,\dots, \frac{1}{2}\left| E\right|}$ be the structure matrix and let 
$f_1,\dots,f_{\frac{1}{2}\left| E\right|}$ be a fundamental system. Let $e_{j_1}, e_{j_2}\in E^\sigma$ be two different 
edges such 
that the following hold. 
\begin{itemize}
\item[$(i)$] There exists exactly one 
connection $\nabla_{e_{j_1}}$ 
along $e_{j_1}$  and 
exactly one connection $\nabla_{e_{j_2}}$ along $e_{j_2}$ that do not fail  (K2) by a rational number.
\item[$(ii)$] There exist indices 
\begin{align*}
k_1\in \operatorname{IND}_{i(e_{j_1})}\setminus \left\lbrace {j_1}\right\rbrace \quad \text{and}\quad k_2\in 
\operatorname{IND}_{i(e_{j_2})}\setminus \left\lbrace {j_2}\right\rbrace
\end{align*}
such that at least  one of the following four pairs of vectors is linearly dependent
\begin{align}\label{EQ3: Lemma: Projection Test}
\left( f_{j_1}, h_2 \right),\quad \left( f_{j_1}, f_{j_2} \right), \quad \left( h_1, f_{j_2} \right)\quad \text{or} 
\quad \left( 
h_1, h_2 \right),
\end{align}
where for $i=1,2$
\begin{align*}
h_i:=a_{j_{i},k_i} \cdot f_{k_i} +a_{j_{i},\widetilde{\nabla}_{{j_i}}(k_i) }\cdot 
f_{\widetilde{\nabla}_{j_i}(k_i)}
\end{align*}
and $\widetilde{\nabla}_{j_i}: \operatorname{IND}_{i(e_i)}\rightarrow \operatorname{IND}_{t(e_i)}$ is the bijection 
induced by $\nabla_{e_{j_i}}$.
\item[$(iii)$] Both vectors $h_1$ and $h_2$ are non-zero.
\item[$(iv)$] The matrix of size $(3\times 4)$ given by
\begin{align*}
 \left( \begin{matrix}
\vert & \vert & \vert &\vert \\
f_{j_1}   & f_{j_2} & h_1& h_2   \\
\vert &\vert&\vert &\vert
\end{matrix}\right) 
\end{align*}
has matrix rank $3$.
\end{itemize}
Then the GKM skeleton $\skeleton$ does not support an abstract GKM graph. 
\end{lemma}

\begin{proof}
Assume by negation that  $\skeleton$ supports an abstract $(3,d)$-GKM graph $(\Gamma, \w)$. 
Since the Kernel Condition (K2) is not satisfied we have $d<3$ and because the graph $\Gamma$ is $3$-valent we have 
that $d\geq 2$ (see Remark \ref{Rem: relation n, d abstract GKM graph}). So  $(\Gamma, \w)$ is an abstract $(3,2)$-GKM 
graph.
By Proposition \ref{Pro: defect and FS} there 
exits a $(2\times 3)$-matrix $\mathbf{M}$ of rank $2$ such that
\begin{align}\label{EQ1: Lemma: Projection Test}
\mathbf{M} \cdot f_i =\w(e_i)
\end{align}
for all $i=1,\dots, \frac{1}{2}\left| E\right|$. Now we show that assumptions $(i)$, $(ii)$, $(iii)$ and $(iv)$ imply 
that the rank of $\mathbf{M}$ is strictly smaller than $2$, which is a contradiction.

Consider the edge $e_{j_1}$. At least one connection along this edge must be compatible with the map $\w$. By Lemma 
\ref{Lem: fails rational}
this edge does not fail (K2) by a rational number. Since by $(i)$ there exists exactly one connection along 
$e_{j_1}$ that does not fail (K2) by a rational number, namely $\nabla_{e_{j_1}}$, we have that 
$\nabla_{e_{j_1}}$ is a compatible connection. This implies that for all $k\in \operatorname{IND}_{i(e_{j_1})}$
\begin{align}\label{EQ2: Lemma: Projection Test}
a_{j_{1},k} \cdot \w(e_{k}) +a_{j_{1},\widetilde{\nabla}_{{j_1}}(k) }\cdot 
\w(e_{\widetilde{\nabla}_{j_1}(k)})\quad \text{is an integer multiple of} \quad \w(e_{j_1}).
\end{align}
Let $k_1\in \operatorname{IND}_{i(e_{j_1})}$ be the index and $h_1$ be the vector as in $(ii)$. Since 
for all $i=1,\dots, \frac{1}{2}\left| E\right|$ \eqref{EQ1: 
Lemma: Projection Test} holds and  for all $k\in \operatorname{IND}_{i(e_{j_1})}$ \eqref{EQ2: Lemma: Projection Test} 
holds, we conclude that there exists an integer $A_1\in \Z$ such that 
$$\mathbf{M}\cdot h_1= A_1 \cdot\mathbf{M}\cdot f_{j_1}.$$
Now consider the edge $e_{j_2}$. For the same reason as above, there exists an integer $A_2\in \Z$ such that 
$$\mathbf{M}\cdot h_2= A_2 \cdot\mathbf{M}\cdot f_{j_2}.$$

Since the kernel Condition (K1) is satisfied, the vectors $f_{j_1}$ and $f_{j_2}$ are both non-zero. By $(iii)$ both 
vector $h_1$ and $h_2$ are non-zero. Hence, since by $(iii)$ one vector pair as in \eqref{EQ3: Lemma: Projection Test} 
is linearly dependent, we have that the matrix $\mathbf{M}$ maps the set of the vectors $f_{j_1}, f_{j_2}, h_1,$ and 
$h_2$ onto a set of pairwise linearly 
dependent vectors.
By $(iv)$  the $(3\times4)$,  whose column vectors are $f_{j_1}, f_{j_2}, h_1,$ and $h_2$ has matrix rank $3$.
So we conclude that the matrix rank of $\mathbf{M}$ is strictly smaller than $2$. This is a contradiction. 
\end{proof}

In the following example we apply Lemma \ref{Lemma: Projection Test}.

\begin{example}\label{Beispiel}

Consider the  $3$-GKM skeleton as in Figure \ref{Figure: GKMSkeletonGraph6one}. 

\begin{figure}[h]
	\begin{center}
		\includegraphics[width=14cm]{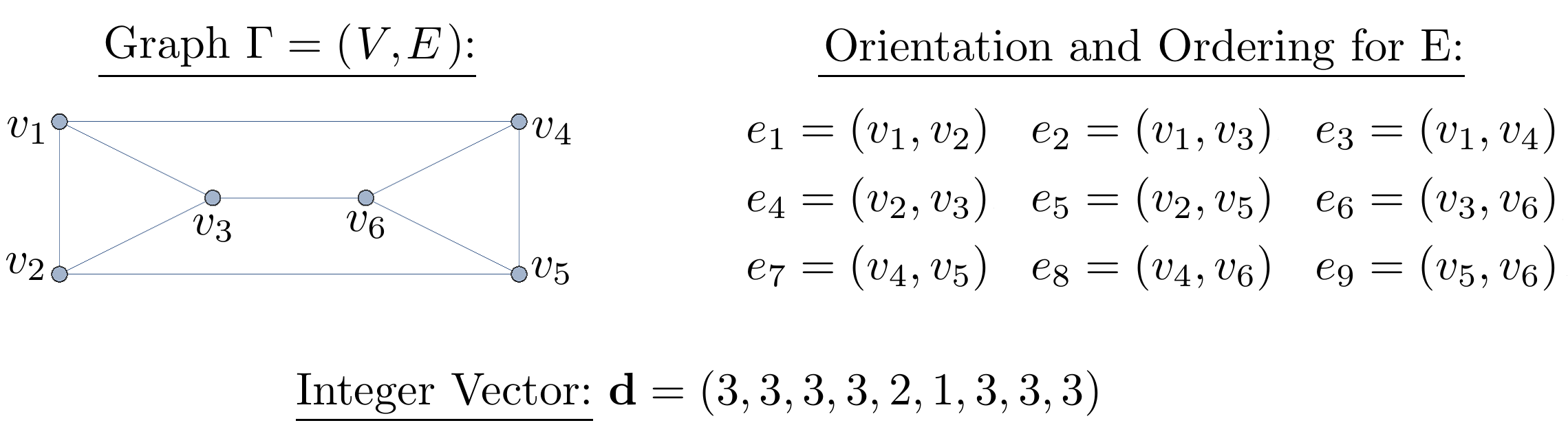}
		\caption{$3$-GKM Skeleton of Example  \ref{Beispiel} }
		\label{Figure: GKMSkeletonGraph6one}
	\end{center}
\end{figure}

The structure matrix is
\begin{align*}
\mathbf{A}=\left( \begin{array}{rrrrrrrrr}
$2$&	$1$&	$1$&	$-1$	&$-1$&	$0$&	$0$&	$0$&	$0$ \\ 
$1$&	$2$&	$1$&	$1$&	$0$&	$-1$&	$0$&	$0$&	$0$ \\
$1$&	$1$&	$2$&	$0$&	$0$&	$0$&	$-1$&	$-1$&	$0$\\
$-1$&	$1$&$0$&	$2$&	$1$&	$-1$&	$0$&	$0$&	$0$\\
$-1$&	$0$&	$0$&	$1$&	$2$&	$0$&	$1$&$	0$&	$-1$\\
$0$&	$-1$&	$0$&	$-1$&	$0$&	$2$&	$0$&	$1$&	$1$\\
$0$&	$0$&	$-1$&	$0$&	$1$&	$0$&	$2$&	$1$&	$-1$\\
$0$&	$0$&	$0$&	$0$&	$-1$&	$1$&	$-1$&	$1$&	$2$\\
\end{array}\right).
\end{align*}
The defect of this $3$-GKM skeleton is $3$ and a fundamental system is given by

\begin{align*}
&\quad\,\,\, f_1\,\,\,\, f_2\,\,\,\,\, f_3\,\,\,\, f_4\,\,\, f_5\,\,\, f_6\,\,\, f_7\,\,\, f_8\,\,\,f_9\\
&\left( \begin{array}{rrrrrrrrr}
$-1$&	$0$&	$0$&	$1$	&$0$&	$0$&	$-1$&	$0$&	$1$ \\ 
$1$&	$1$&	$0$&	$0$&	$0$&	$0$&	$1$&	$1$&	$0$ \\
$1$&	$2$&	$3$&	$1$&	$3$&	$3$&	$0$&	$0$&	$0$\\
\end{array}\right).
\end{align*}

The Kernel Condition (K1) is satisfied. But the Kernel Condition (K2) is not satisfied, it fails to be true at 
the edges $e_3$, $e_5$ and $e_6$. 
Now we use Lemma \ref{Lemma: Projection Test} to show that this $3$-GKM skeleton does not support an abstract GKM-graph.
First consider the edge   $e_3=(v_1,v_4)$.
There exist exactly two connections along $e_3$, namely 
\begin{align*}
\nabla_{e_{3}}:\,\,\left\lbrace \begin{array}{r} (v_1,v_2) \mapsto (v_4,v_6) \\ (v_1,v_3) \mapsto (v_4,v_5)  \\  
(v_1,v_4) \mapsto 
(v_4,v_1) 
\end{array}\right\rbrace 
\quad \text{and} \quad \nabla'_{e_{3}}:\,\,
\left\lbrace \begin{array}{r} (v_1,v_2) \mapsto (v_4,v_5)  \\ (v_1,v_3) \mapsto (v_4,v_6)  \\  (v_1,v_4) \mapsto 
(v_4,v_1) 
\end{array}\right\rbrace. 
\end{align*}

We have
\begin{align*}
\operatorname{IND}_{v_1}=\{1,2,3\} \quad \text{and} \quad \operatorname{IND}_{v_4}=\{3,7,8\}
\end{align*} 
and the maps $\operatorname{IND}_{v_1}\rightarrow \operatorname{IND}_{v_4}$ induced by the two connections 
along $e_3$
are given by
\begin{align}
\widetilde{\nabla}_3:\,\,\left\lbrace \begin{array}{r} 1 \mapsto 8  \\ 2 \mapsto 7  \\  3 \mapsto 3 
\end{array}\right\rbrace 
\quad \text{and} \quad
\widetilde{\nabla}'_3:\,\,\left\lbrace \begin{array}{r} 1 \mapsto 7  \\ 2 \mapsto 8  \\  3 \mapsto 3 
\end{array}\right\rbrace. 
\end{align}

Note that 
\begin{align*}
&a_{3,1}\cdot f_1+a_{3,\widetilde{\nabla}_3(1)}\cdot f_{\widetilde{\nabla}_3(1)}
=a_{3,1}\cdot f_1+a_{3,8}\cdot f_8=(-1,0,1)^\intercal \\
&a_{3,2}\cdot f_2+a_{3,\widetilde{\nabla}_3(2)}\cdot f_{\widetilde{\nabla}_3(2)}=a_{3,1}\cdot f_2+a_{3,7}\cdot 
f_7=(1,0,2)^\intercal\\
&a_{3,3}\cdot f_3+a_{3,\widetilde{\nabla}_3(3)}\cdot f_{\widetilde{\nabla}_3(3)}=a_{3,3}\cdot f_1+a_{3,3}\cdot 
f_3=(0,0,12)^\intercal.
\end{align*}
and since $f_3=(0,0,3)^\intercal$, the connection $\nabla_{e_{3}}$ does not fail (K2) by a rational number.
But since 
\begin{align*}
a_{3,1}\cdot f_1+a_{3,\widetilde{\nabla}'_3(1)}\cdot f_{\widetilde{\nabla}'_3(1)}=a_{3,1}\cdot f_1+a_{3,7}\cdot 
f_7=\frac{1}{3}\cdot f_3.
\end{align*}
the connection $\nabla_{e_{3}}'$ fails (K2) by a rational number. Hence, there exists only one connection 
along $e_3$ that does not fail (K2) by a rational number, namely $\nabla_{e_{3}}$.
Now consider the edge $e_5=(v_2, v_6)$. As above,  we can show that there exists exactly one connection 
$\nabla_{e_{5}}$ along $e_5$ that does not fail (K2) by a rational number. We have
\begin{align*}
\operatorname{IND}_{v_2}=\{1,4,5\} \quad \text{and} \quad \operatorname{IND}_{v_5}=\{5,7,9\}
\end{align*} 
and the map $\operatorname{IND}_{v_2}\rightarrow \operatorname{IND}_{v_5}$ induced by the connection $\nabla_{e_{5}}$
is given by
\begin{align*}
\widetilde{\nabla}_5:\,\,\left\lbrace \begin{array}{r} 1 \mapsto 9  \\ 4 \mapsto 7  \\  5 \mapsto 5
\end{array}\right\rbrace.
 \end{align*} 
Now fix the indices
\begin{align*}
 1 \in \operatorname{IND}_{v_1}\setminus\left\lbrace 3\right\rbrace  \quad \text{and} \quad 5
\in \operatorname{IND}_{v_2}\setminus\left\lbrace 5\right\rbrace
\end{align*}
and let 
\begin{align*}
&h_1=a_{3,1}f_1 + a_{3,8}f_8=(-1,0,1)^\intercal\\
&h_2=a_{5,1}f_5 + a_{5,9}f_9=(0,-1,-1)^\intercal.
\end{align*}
These both vector are non-zero. Moreover, we have $f_3=f_5$ and the matrix 
\begin{align*}
\left( \begin{matrix}
\vert & \vert & \vert &\vert \\
f_{3}   & f_{5} & h_1& h_2   \\
\vert &\vert&\vert &\vert
\end{matrix}\right) 
\end{align*}
has rank $3$. Hence, by Lemma \ref{Lemma: Projection Test} the $3$-GKM skeleton does not support an abstract GKM 
graph. 	
\end{example}

We formulate Lemma \ref{Lemma: Projection Test}  as a test that gives a necessary condition for a $3$-GKM skeleton 
with defect $3$ that does not 
satisfy the Kernel Condition (K2) to support an abstract $(3,2)$-GKM graph. We call this test the \textbf{Projection 
Test}. So let  $(\Gamma, \sigma, 
\operatorname{Ord}(E^\sigma), \mathbf{d})$ be such a $3$-GKM skeleton and let $f_1,\dots, f_{\frac{1}{2}\left| E\right| 
}$ be a fundamental system.

\begin{center}
\underline{The Projection Test}\\
\end{center}
\begin{itemize}
 \item[1.] First let $\operatorname{ListKern}=\left\lbrace \right\rbrace $ be an empty list. 
\item[2.] Start with $j=1$ and do the following as long as $j\leq\frac{1}{2}\left| E\right| $.
Check if the Kernel Condition (K2) at $e_j$ is satisfied. 
\begin{itemize}
\item[$\bullet$] If (K2) is satisfied at $e_j$,  set $j\rightarrow j+1$.
\item[$\bullet$] If (K2) is not  satisfied at $e_j$, then let $k_1\in \operatorname{IND}_{i(e_j)}\setminus\{j\}$
and $k_2,k_3\in \operatorname{IND}_{t(e_j)}\setminus \{j\}$.\\
Check if the conditions
\begin{align*}
&(*)\quad \quad\,\, 	a_{j,k_1}\cdot f_{k_1}+	a_{j,k_2}\cdot f_{k_2} =\nu_1\cdot f_j \,\,\,\, \text{for some}\,\, 
\nu_1 
\in 
\Q\setminus\Z\quad \text{and} \\
 &(**)\quad \quad 	a_{j,k_1}\cdot f_{k_1}+	a_{j,k_3}\cdot f_{k_3} =\nu_2\cdot f_j \,\,\,\, \text{for some}\,\, \nu_2 
 \in 
 \Q\setminus\Z
\end{align*}
are satisfied. 
\begin{itemize}
\item[$\cdot$] If $(*)$ and $(**)$ are true, set $j\rightarrow \frac{1}{2}\left| E\right| +2$.
\item[$\cdot$] If $(*)$ and $(**)$ are false, set $j\rightarrow j+1$.
\item[$\cdot$] If $(*)$ is true and $(**)$ is false, then  add the $(3\times 2)$ matrix
\begin{align*}
\left( \begin{matrix}
\vert  &\vert \\
f_j &a_{j,k_1}\cdot f_{k_1}+	a_{j,k_3}\cdot f_{k_3}  \\
\vert&\vert
\end{matrix}\right)
\end{align*}
to the list $\operatorname{ListKern}$ and set $j\rightarrow j+1$.
\item[$\cdot$] If $(*)$ is false and $(**)$ is true, then  add the $(3\times 2)$ matrix
\begin{align*}
\left( \begin{matrix}
\vert  &\vert \\
f_j &a_{j,k_1}\cdot f_{k_1}+	a_{j,k_2}\cdot f_{k_2}  \\
\vert&\vert
\end{matrix}\right)
\end{align*}
to the list $\operatorname{ListKern}$ and set $j\rightarrow j+1$.
\end{itemize}
\end{itemize}
\item[3.] 
\begin{itemize}
\item[$\bullet$] If $j=\frac{1}{2}\left| E\right|+2 $, then
$$\text{Output: The Projection Test is not satisfied.}$$
\item[$\bullet$] If $j=\frac{1}{2}\left| E\right|+1 $ and if the list $\operatorname{ListKern}$ is empty or contains 
only one element, then
$$\text{Output: The Projection Test makes no statement.}$$
\item[$\bullet$] If $j=\frac{1}{2}\left| E\right|+1 $ and if the list $\operatorname{ListKern}$ contains at least two 
elements, then check  for any two elements 
\begin{align*}
\left( \begin{matrix}
  \vert &\vert \\
 \alpha_1 & \beta_1  \\ 
\vert &\vert
\end{matrix}\right) 
\quad \text{and} \quad
\left( \begin{matrix}
\vert &\vert \\
\alpha_2 & \beta_2  \\ 
\vert &\vert
\end{matrix}\right)
\end{align*}
 of $\operatorname{ListKern}$ if one the vector pairs $(\alpha_1,\alpha_2)$, $(\alpha_1,\beta_2)$, $(\beta_1,\alpha_2)$,
 if $\beta_1$ and $\beta_2$ are both non-zero and $(\beta_1,\beta_2)$ is linearly dependent and if the matrix 

\begin{align*}
\left( \begin{matrix}
\vert & \vert & \vert &\vert \\
\alpha_1   & \alpha_1 & \beta_1& \beta_2   \\
\vert &\vert&\vert &\vert
\end{matrix}\right) 
\end{align*}
has rank $3$. If this holds, then 
$$\text{Output: The GKM skeleton does not support an abstract GKM graph.}$$ 
otherwise
$$\text{Output: The Projection Test makes no statement.}$$
\end{itemize}
\end{itemize}

\end{subsubsection}

\end{subsection}

\begin{subsection}{Positive Hamiltonian Abstract $(3,d)$-GKM Graphs}

In this subsection we formulate  necessary conditions for an abstract $(3,d)$-GKM graph $(\Gamma, \w)$ to 
be realizable by  a Hamiltonian GKM space $\tham$ of dimension six, i.e., the isomorphism class 
of  the abstract GKM graph  $(\Gamma, \w)$ is in the image of the isomorphism class of the  GKM graph of $\tham$ under the 
map $\mathcal{L}_{3,d}$ of 
\eqref{EQ: Lnd}.
\begin{definition}\label{Def: 24-Rule abstract GKM graphs}
Let $(\Gamma, \w)$ be an abstract $(3,d)$-GKM graph.
We say that $(\Gamma, \w)$ satisfies the $\mathbf{24}$-\textbf{Rule} if for any orientation
$\sigma$ of the edge set, we have 
\begin{align*}
\sum_{e\in E^\sigma} \mathcal{C}_1(e)=24.
\end{align*}
Here $\mathcal{C}_1:E \rightarrow \Z$ is the first Chern class map of $(\Gamma, \w)$.
\end{definition}

\begin{definition}\label{Def: positive abtract GKM graphs}
An abstract $(n,d)$-GKM graph $(\Gamma, \w)$ is called positive if $\mathcal{C}_1(e)>0$ for all
$e\in E$. 
\end{definition}
We deduce necessary conditions for an abstract $(3,d)$-GKM to be realizable by a 
(positive) Hamiltonian GKM space.
\begin{corollary}\label{Cor: 24-rule and V leq 16}
Let $(\Gamma, \w)$ be an abstract $(3,d)$-GKM graph.
\begin{itemize}
\item[(i)] If $(\Gamma, \w)$ is Hamiltonian, i.e., it is realizable by  a Hamiltonian GKM space of dimension six, then 
$(\Gamma, \w)$ satisfies the $24$-Rule.
\item[(ii)] If $(\Gamma, \w)$  is realizable by a positive  Hamiltonian GKM space of dimension six, then $(\Gamma, 
\w)$ is positive. In particular,  the 
number of vertices of $\Gamma$ is at most $16$.
\end{itemize}
\end{corollary}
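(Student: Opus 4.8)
The plan is to deduce both parts directly from the corresponding statements for Hamiltonian GKM spaces established in Corollary \ref{Cor: Ingerdients for proof}, using that the first Chern class map of an abstract GKM graph agrees with the one of the Hamiltonian space it comes from (Remark \ref{Rem: Ham vs abstract first Chern class map}) and that it is invariant under isomorphism (Lemma \ref{Lemma: simple properties first chern map invariant under iso and projection.}). Concretely, if $(\Gamma, \w)$ is Hamiltonian, then by definition of the map $\mathcal{L}_{3,d}$ of \eqref{EQ: Lnd} there exist a Hamiltonian GKM space $\tham$ of dimension six with GKM graph $\GKM$ and a linear isomorphism $\chi \colon \ell_T^* \to \Z^d$ such that $(\Gamma, \w)$ is isomorphic to the abstract GKM graph $(\Gamma_{GKM}, \chi \circ \eta)$. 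All of what follows is then a transfer of information along this isomorphism.

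For part (i), I would first fix an orientation $\sigma$ of the edge set $E$ of $\Gamma$. Since $\mathcal{C}_1$ is symmetric (Lemma \ref{Lemma: first Chern map is symmetric}), the value of $\sum_{e \in E^\sigma} \mathcal{C}_1(e)$ does not depend on the choice of $\sigma$, so it suffices to verify the $24$-Rule for a single orientation. The graph isomorphism underlying $(\Gamma, \w) \cong (\Gamma_{GKM}, \chi \circ \eta)$ carries $\sigma$ to an orientation $\sigma'$ of $E_{GKM}$, and by the invariance under isomorphism together with Remark \ref{Rem: Ham vs abstract first Chern class map} the two first Chern class maps agree edge-by-edge. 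Hence $\sum_{e \in E^\sigma} \mathcal{C}_1(e) = \sum_{e' \in E_{GKM}^{\sigma'}} \mathcal{C}_1(e')$, and the right-hand side equals $24$ by Corollary \ref{Cor: Ingerdients for proof}~(ii).

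For part (ii), the same transfer shows that positivity of the Hamiltonian space (Definition \ref{Def: positive Hamiltonian GKM space}) passes to the abstract graph: the Hamiltonian first Chern class map is strictly positive and agrees, up to the isomorphism, with $\mathcal{C}_1$ of $(\Gamma, \w)$, so $\mathcal{C}_1(e) > 0$ for all $e \in E$, i.e. $(\Gamma, \w)$ is positive in the sense of Definition \ref{Def: positive abtract GKM graphs}. For the vertex bound I would argue in either of two ways. Directly, the vertices of $\Gamma$ correspond bijectively to those of $\Gamma_{GKM}$, which are the fixed points $M^T$, so $|V| = |M^T| \le 16$ is immediate from Corollary \ref{Cor: Ingerdients for proof}~(iii). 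Alternatively, since $\mathcal{C}_1(e)$ is a positive integer each summand in the $24$-Rule is at least $1$, whence $|E^\sigma| \le 24$; combining with $|E^\sigma| = \tfrac{1}{2}|E| = \tfrac{3}{2}|V|$ from Remark \ref{Remark: orientation abs GKM graphs cardinality} gives $|V| \le 16$.

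I do not expect any serious obstacle, since the content is entirely a translation of Corollary \ref{Cor: Ingerdients for proof} from the Hamiltonian to the abstract setting. The only point requiring care is bookkeeping: one must check that an orientation of $E$ is carried to a genuine orientation of $E_{GKM}$ under the graph isomorphism and that the edge-wise identification of the two Chern class maps is compatible with summing over a chosen orientation. This is routine given Lemma \ref{Lemma: simple properties first chern map invariant under iso and projection.} and Remark \ref{Rem: Ham vs abstract first Chern class map}.
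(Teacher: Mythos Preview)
Your proposal is correct and follows essentially the same approach as the paper: both parts are reduced to Corollary~\ref{Cor: Ingerdients for proof} via Remark~\ref{Rem: Ham vs abstract first Chern class map}. The paper's proof is terser, omitting the bookkeeping about orientations and isomorphisms that you spell out, but the content is the same.
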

\begin{proof}
\begin{itemize}
\item[(i)] This follows directly form Corollary \ref{Cor: Ingerdients for proof} $(ii)$ and Remark \ref{Rem: Ham vs 
abstract first Chern class map}.
\item[(ii)] That $(\Gamma, \w)$ is positive follows from Remark \ref{Rem: Ham vs abstract first Chern class map}.
That the number of vertices is at most $16$ follows from Corollary \ref{Cor: Ingerdients for proof} $(iii)$.
\end{itemize}
\end{proof}

We note that Definitions \ref{Def: 24-Rule abstract GKM graphs} and \ref{Def: positive abtract GKM graphs} 
 extend naturally to GKM skeletons.

\begin{definition}\label{Def: GKMSkeleton positive and 24-Rule}
Let $\skeleton$ be an $n$-GKM skeleton. The skeleton is called \textbf{positive} if $d_j>0$ for all $j=1,\dots, 
\frac{1}{2}\left| E\right| $, where $d_j$ is the $j$-th entry of the integer vector $\mathbf{d}$.
If $n=3$, then  $\skeleton$ satisfies the\textbf{ $24$-Rule} if 
$$d_1+...+d_{\frac{1}{2}\left| E\right|}=24.$$
\end{definition}

The following is a direct consequence of Corollary \ref{Cor: 24-rule and V leq 16}.

\begin{corollary}\label{Cor: 24Rule Skeleton}
Let $\skeleton$ be a $3$-GKM skeleton that supports a positive Hamiltonian abstract $(3,d)$-GKM graph. Then the 
skeleton is positive and satisfies the $24$-Rule. In particular, the number of the vertices of the graph $\Gamma$ is at 
most $16$. 
\end{corollary}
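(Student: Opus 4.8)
The plan is to reduce the three assertions to Corollary \ref{Cor: 24-rule and V leq 16} by unwinding what it means for the skeleton to support the given abstract GKM graph. Write $\skeleton$ with ordering $\operatorname{Ord}(E^\sigma)$ enumerated as $e_1,\dots,e_{\frac{1}{2}\left| E\right|}$, and let $(\Gamma,\w)$ be the positive Hamiltonian abstract $(3,d)$-GKM graph that it supports, with first Chern class map $\mathcal{C}_1$. By Definition \ref{Def: n-GKM skeleton, supporting abstract GKM graphs}, supporting $(\Gamma,\w)$ means exactly that $\mathcal{C}_1(e_j)=d_j$ for all $j=1,\dots,\frac{1}{2}\left| E\right|$; this identification is the key that converts statements about $\mathcal{C}_1$ into statements about the entries of $\mathbf{d}$.

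First I would establish positivity of the skeleton. Since $(\Gamma,\w)$ is positive, $\mathcal{C}_1(e)>0$ for every edge $e\in E$, so in particular $d_j=\mathcal{C}_1(e_j)>0$ for all $j$. By Definition \ref{Def: GKMSkeleton positive and 24-Rule} this is precisely positivity of $\skeleton$. Next I would establish the $24$-Rule. Since $(\Gamma,\w)$ is Hamiltonian, Corollary \ref{Cor: 24-rule and V leq 16} $(i)$ shows it satisfies the $24$-Rule, that is, $\sum_{e\in E^{\sigma'}}\mathcal{C}_1(e)=24$ for every orientation $\sigma'$ of the edge set. Applying this to the orientation $\sigma$ of the skeleton and using that $\operatorname{Ord}(E^\sigma)$ enumerates $E^\sigma$ as $e_1,\dots,e_{\frac{1}{2}\left| E\right|}$, I obtain
\begin{align*}
d_1+\dots+d_{\frac{1}{2}\left| E\right|}=\sum_{j=1}^{\frac{1}{2}\left| E\right|}\mathcal{C}_1(e_j)=\sum_{e\in E^\sigma}\mathcal{C}_1(e)=24,
\end{align*}
which is the $24$-Rule for $\skeleton$ in the sense of Definition \ref{Def: GKMSkeleton positive and 24-Rule}.

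Finally, the bound on the number of vertices follows by combining the two facts just proved. Each $d_j$ is a positive integer, hence $d_j\geq 1$, so $\frac{1}{2}\left| E\right|\leq d_1+\dots+d_{\frac{1}{2}\left| E\right|}=24$. Since $\Gamma$ is $3$-valent, Remark \ref{Remark: orientation abs GKM graphs cardinality} gives $\left| E\right|=3\left| V\right|$ and hence $\frac{1}{2}\left| E\right|=\frac{3}{2}\left| V\right|$; therefore $\frac{3}{2}\left| V\right|\leq 24$, i.e.\ $\left| V\right|\leq 16$.

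There is essentially no genuine obstacle here: the only point requiring care is the bookkeeping that matches the skeleton's orientation $\sigma$ and its ordering of $E^\sigma$ with the orientation appearing in the definition of the $24$-Rule for abstract GKM graphs, after which the statement is an immediate consequence of Corollary \ref{Cor: 24-rule and V leq 16} together with the cardinality relations of Remark \ref{Remark: orientation abs GKM graphs cardinality}.
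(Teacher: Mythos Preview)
Your proof is correct and follows exactly the approach the paper intends: the paper simply declares this corollary a direct consequence of Corollary~\ref{Cor: 24-rule and V leq 16} without spelling out the details, and you have correctly unwound the definitions (of ``supports'', ``positive skeleton'', and the $24$-Rule for skeletons) to make that deduction explicit.
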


By Corollary \ref{Cor: 24-rule and V leq 16},  necessary  conditions for an abstract 
$(3,d)$-GKM graph to be 
realizable by a positive Hamiltonian GKM space of dimension six are the 24-Rule and the positive condition. These conditions are not sufficient, as we show in Example \ref{Beispie}.
To show this, we use the existence of the special Kirwan class. In the following, we extend the definition of a generic 
vector and an ascending path to abstract GKM graphs. We use these terms to formulate a necessary condition for a 
positive abstract $(3,d)$-GKM graph to be Hamiltonian: the Kirwan Class Test.

\begin{definition}\label{Def: genric ect. for abstract...}
Let $(\Gamma,\w)$ be an abstract  $(n,d)$-GKM graph.  A vector $\xi \in \R^d$ is 
\textbf{generic}  if for any edge $e\in E$ 
$$\left\langle \w(e), \xi\right\rangle_{\R^d} \neq 0,$$ 
where $\left\langle \cdot, \cdot \right\rangle_{\R^d} $ is the standard scalar product on $\R^d$.\\
For a vertex $v\in V$, its \textbf{index} $\lambda(v)$ with respect to $\xi$ is equal to the number of edges that satisfy
\begin{align}\label{EQ: Def: genric ect. for abstract... }
e\in E_v^i\quad \text{and}\quad\left\langle \w(e), \xi\right\rangle_{\R^d} <0.
\end{align}
For a vertex $v\in V$ with $\lambda(v)=1$, its\textbf{ negative Euler class} with respect to $\xi$ is 
\begin{align*}
\tau_v^-:=\w(e),
\end{align*}
where $e$ is the unique edge that satisfies \eqref{EQ: Def: genric ect. for abstract... }.\\

An \textbf{ascending path} from 
a  vertex $v$ to another vertex $w$ is a 
$(k+1)$-tuple $(v_0,...,v_k)$ of vertices in $V$ such that $v_0=v$, $v_k=w$ and 

$$(v_{i-1},v_i)\in E  \,\,\text{and}\,\, \left\langle \w(v_i),\xi \right\rangle_{\R^d} >0. $$
Moreover, for each vertex $v\in V$, the \textbf{stable set of $v$}, denoted by $\Xi_v$, is the set of vertices $w\in V$ 
such that there 
exists an ascending path from $v$ to $w$, including $v$ itself.
\end{definition}

\begin{lemma}\label{Lemma: special KC abstract case}
Let $(\Gamma, \w)$ be a positive and Hamiltonian abstract $(3,d)$-GKM graph. Let $\xi \in \R^d$ be a generic vector
and let $v\in V$ be a vertex with $\lambda(v)=1$.
Then there 	exists a map $\gamma_v:V \rightarrow \Z^d$ such that
\begin{itemize}
\item[(i)] $\gamma_v(w)=0$ if $\lambda(w)=0,1$ and $w\notin \Xi_v$.
\item[(ii)] $\gamma_v(w)=\tau_v^-$ if $\lambda(w)=1$ and $w\in \Xi_v$.
\item [(iii)]For each edge $e=(v_1,v_2)\in E$ the difference $\gamma_v(v_1)-\gamma_{v}(v_2)$ is an integer multiple of
$\w(e)$. 
\end{itemize}  
\end{lemma}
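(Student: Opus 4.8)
The plan is to transport the special Kirwan class of Proposition \ref{Proposition: special Kirwan class lambda=1} from a realizing Hamiltonian GKM space to the abstract graph. Since $(\Gamma,\w)$ is Hamiltonian, its isomorphism class lies in the image of $\mathcal{L}_{3,d}$, so there exist a six-dimensional Hamiltonian GKM space $\tham$ with GKM graph $(\Gamma_{GKM},\eta)$ and a linear isomorphism $\chi\colon\ell_T^*\to\Z^d$ together with an isomorphism $(F,\theta)$ from $(\Gamma,\w)$ to the abstract graph $(\Gamma_{GKM},\chi\circ\eta)$. After relabeling the vertices via $F$ and replacing $\chi$ by $\theta^{-1}\circ\chi$, I would assume without loss of generality that $V=M^T$, $\Gamma=\Gamma_{GKM}$ and $\w=\chi\circ\eta$. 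By Remark \ref{Rem: Ham vs abstract first Chern class map} the first Chern class maps of $(\Gamma,\w)$ and of $\tham$ coincide, so $\tham$ is positive; being six-dimensional, it is weak index increasing with respect to every generic vector by Lemma \ref{Lemma: c1positive implies weak index increasing}.

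Next I would match the combinatorial data. Extending $\chi$ to $\chi_\R\colon\mathfrak{t}^*\to\R^d$, I define $\xi'\in\mathfrak{t}$ by $\langle\alpha,\xi'\rangle=\langle\chi_\R(\alpha),\xi\rangle_{\R^d}$ for all $\alpha\in\mathfrak{t}^*$; then $\langle\w(e),\xi\rangle_{\R^d}=\langle\eta(e),\xi'\rangle$ for every edge $e$. Hence $\xi'$ is generic for $\tham$, and because the index, the ascending paths, the stable sets $\Xi_v$ and the negative Euler class are all defined through the signs of these pairings (using Remark \ref{Rem: images weight relation} to identify $\langle\eta(p,q),\xi'\rangle<0$ with $\phi^{\xi'}(q)<\phi^{\xi'}(p)$), the abstract notions for $(\Gamma,\w,\xi)$ agree with the Hamiltonian notions for $(\tham,\xi')$ under the identification $V=M^T$. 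In particular, a vertex $v$ with $\lambda(v)=1$ corresponds to a fixed point $p$ with $\lambda(p)=1$, and $\tau_v^-=\chi(\Lambda_p^-)$, since for $\lambda(p)=1$ Remark \ref{Remark: LABDA p- GKM case} gives $\Lambda_p^-=\eta(p,q_1)$, the unique weight at $p$ with negative $\xi'$-pairing.

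Now I would apply Proposition \ref{Proposition: special Kirwan class lambda=1} to obtain the Kirwan class $\gamma_p\in H_T^2(M;\Z)$ at $p$ and set $\gamma_v:=\chi\circ\gamma_p$, viewing $\gamma_p$ as the restriction map $M^T\to H^2(BT;\Z)=\ell_T^*$. Conditions (i) and (ii) of the lemma are then immediate from conditions (ii) and (i) of the proposition, together with $\chi(0)=0$ and $\chi(\Lambda_p^-)=\tau_v^-$. For condition (iii), Lemma \ref{Lemma: divisble condition} shows that $\gamma_p(p_1)-\gamma_p(p_2)$ is divisible by $\eta(p_1,p_2)$ in $H^*(BT;\Z)$; since both terms of this difference lie in the degree-two part $H^2(BT;\Z)=\ell_T^*$, and divisibility of a degree-two element by a nonzero degree-two element in the polynomial ring $H^*(BT;\Z)$ forces a degree-zero, hence integer, quotient, we obtain $\gamma_p(p_1)-\gamma_p(p_2)=k\cdot\eta(p_1,p_2)$ with $k\in\Z$. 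Applying $\chi$ linearly then yields $\gamma_v(v_1)-\gamma_v(v_2)=k\cdot\w(e)$ for the edge $e=(v_1,v_2)$.

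The step I expect to require the most care is the reduction of the divisibility condition in the graded ring $H^*(BT;\Z)$ to an honest integer multiple of $\w(e)$, together with the bookkeeping that verifies that the generic vector, the index, the stable sets and the negative Euler class genuinely correspond under $\chi$; the conceptual core, namely the existence of a class with the prescribed restrictions, is entirely supplied by Proposition \ref{Proposition: special Kirwan class lambda=1} and costs nothing new here.
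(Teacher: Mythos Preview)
Your proof is correct and follows essentially the same approach as the paper's own proof: realize the abstract graph by a Hamiltonian GKM space, transfer the generic vector via the isomorphism $\chi$, observe that positivity implies weak index increasing, apply Proposition \ref{Proposition: special Kirwan class lambda=1} to obtain the special Kirwan class, and then push it forward by $\chi$. Your treatment is in fact slightly more explicit than the paper's in two places---the WLOG reduction via the graph isomorphism $(F,\theta)$, and the degree argument showing that divisibility in $H^*(BT;\Z)$ between two elements of $H^2(BT;\Z)$ forces an integer quotient---but these are exactly the bookkeeping steps the paper leaves implicit.
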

\begin{proof}
Since $(\Gamma, \w)$ is Hamiltonian, there exits a Hamiltonian GKM space $\tham$ of dimension $6$ with 
$\operatorname{dim}(T)=d$ and a linear isomorphism $\chi: \ell_{T}^*\rightarrow \Z^d$ such that $\Gamma_{GKM}=\Gamma$ 
and $\w=\chi \circ \eta$, where $\GKM$ is the GKM graph of $\tham$. Let $\bar{\xi}\in \mathfrak{t}$ the unique vector 
that satisfies 
$$\left\langle \eta(e), \bar{\xi}\right\rangle=\left\langle \w(e), \xi\right\rangle_{\R^d}$$ 
for all $e\in E=E_{GKM}$, where  $\left\langle \cdot , \cdot \right\rangle$ is the natural pairing between 
$\mathfrak{t}$ and  $\mathfrak{t}'$. 
Note that the vector  $\bar{\xi}$ is generic for the $T$-action, as defined in Subsection \ref{SubSec: Generic Vectors and Morse Theory}, 
and the index of $v$ as a vertex is 
equal to the index of $v$ as a fixed point with respect to 
$\phi^{\bar{\xi}}$. 
Moreover, a path in $\Gamma$ is ascending with respect to $\bar{\xi}$ as in Definition \ref{Def: paths} if and 
only if  it is ascending with respect to $\xi$. Note that since $\skeleton$ is positive, we have that $\GKM$ is 
positive. So 
by Lemma \ref{Lemma: c1positive implies weak index increasing}, $\GKM$ is weak indexing increasing with respect to 
$\bar{\xi}$. Hence, by Proposition \ref{Proposition: special Kirwan class lambda=1}, there exists a class
\begin{align*}
\widehat{\gamma_{v}}\in H_T^2(M;\Z)\subset\operatorname{Maps}(V, \ell_{T}^*)
\end{align*}
such that 	
\begin{itemize}
\item $\widehat{\gamma_v}(w)=0$ if $\lambda(w)=0,1$ and $w\notin \Xi_v$ and
\item $\widehat{\gamma_v}(w)=\Lambda_v^-$ if $\lambda(w)=1$ and $w\in \Xi_v$,
\end{itemize}
where $\Lambda_v^-\in \ell_{T}^*$ is the equivariant Euler class of the negative tangent bundle at 
$v$ with respect to $\phi^{\bar{\xi}}$. Note that since $\lambda(v)=1$, the class $\Lambda_v^-$ equals the unique weight $\alpha_{v}$ such that $\left \langle \alpha_v,\bar{\xi} \right \rangle<0$, which equals $\eta(e)$ for the unique edge such that  $\left \langle \eta(e),\bar{\xi} \right \rangle<0$. So $\Lambda_v^-$ is mapped under $\chi$ to $\tau_v^-=\w(e)$.
Therefore the map
\begin{align*}
\gamma_v=\chi \circ \widehat{\gamma_{v}}: V\rightarrow \Z^d
\end{align*}
satisfies the properties (i) and (ii). Moreover, since $\widehat{\gamma_{v}}\in H_T^2(M;\Z)$, we have that 
for each edge $e=(v_1,v_2)\in E$, the difference $\widehat{\gamma_v}(v_1)-\widehat{\gamma_{v}}(v_2)$ is an integer 
multiple of $\eta(e)$. So $\gamma_v$ satisfies also (iii). 
\end{proof}

\begin{definition}\label{Def: weight sum map}
Let $(\Gamma, \w)$ be an abstract $(n,d)$-GKM graph. 
The \textbf{weight sum map} for this abstract GKM graph is the map $\varphi: V \rightarrow \Z^d$ given by
\begin{align*}
\varphi(v)=- \sum_{e\in E_v^i}  \w(e).
\end{align*}
Moreover, for a given vector $\xi \in \R^\delta$ the $\xi$-component,  $\varphi^\xi: V \rightarrow \R$, of $\varphi$
is given by
\begin{align*}
\varphi^\xi(v)=\left\langle \varphi(v), \xi\right\rangle_{\R^d}. 
\end{align*}
\end{definition}

\begin{lemma}\label{Lemma: weight sum map positive}
Let $(\Gamma, \w)$ be an abstract $(n,d)$-GKM graph that is positive.  Let $\xi \in \R^d$ be a generic 
vector. For any edge $e=(v,w)$ with $\left\langle \w(e), \xi 
\right\rangle_{\R^d}<0 $
we have $$\varphi^\xi(v)>\varphi^\xi(w).$$ 
\end{lemma}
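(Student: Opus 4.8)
The plan is to express the difference $\varphi^\xi(v) - \varphi^\xi(w)$ entirely in terms of the first Chern class map and the pairing $\langle \w(e), \xi\rangle_{\R^d}$, after which the inequality follows immediately from the two sign hypotheses (positivity of $\mathcal{C}_1$ and negativity of $\langle \w(e), \xi\rangle_{\R^d}$). First I would recall the defining identity of the first Chern class map (Definition \ref{Def: first Chern class map for abstract GKM graphs}): for the edge $e=(v,w)$, which has $i(e)=v$ and $t(e)=w$, it reads
\begin{align*}
\sum_{e'\in E_{v}^i} \w(e') - \sum_{e'\in E_{w}^i} \w(e') = \mathcal{C}_1(e)\cdot \w(e).
\end{align*}

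Next I would compare this with the weight sum map $\varphi(u)=-\sum_{e'\in E_u^i}\w(e')$ of Definition \ref{Def: weight sum map}. The left-hand side above is precisely $\varphi(w)-\varphi(v)$, so
\begin{align*}
\varphi(v)-\varphi(w) = -\,\mathcal{C}_1(e)\cdot \w(e).
\end{align*}
Pairing both sides with $\xi$ using the standard scalar product yields
\begin{align*}
\varphi^\xi(v)-\varphi^\xi(w) = -\,\mathcal{C}_1(e)\cdot \langle \w(e), \xi\rangle_{\R^d}.
\end{align*}
Finally I would invoke the hypotheses: since $(\Gamma,\w)$ is positive we have $\mathcal{C}_1(e)>0$, while $\langle \w(e),\xi\rangle_{\R^d}<0$ by assumption; hence the right-hand side is strictly positive, giving $\varphi^\xi(v)>\varphi^\xi(w)$.

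There is essentially no serious obstacle here: the entire content of the lemma is the single algebraic identity relating $\varphi$ to $\mathcal{C}_1$, and the conclusion is then just a reading-off of signs. The only point that requires a moment's care is matching orientations, namely verifying that $i(e)=v$, $t(e)=w$, and that the sign built into the definition of $\varphi$ is handled so that $\sum_{E_v^i}\w - \sum_{E_w^i}\w$ equals $\varphi(w)-\varphi(v)$ rather than its negative. Conceptually, $\varphi$ plays the role of an abstract substitute for (a negative multiple of) the moment map, in the spirit of Remark \ref{Rem: images weight relation}, and the lemma is the combinatorial shadow of the geometric fact that along an edge the moment map decreases precisely in the direction where the corresponding weight pairs negatively with $\xi$.
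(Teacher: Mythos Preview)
Your proof is correct and follows essentially the same approach as the paper: establish the identity $\varphi(v)-\varphi(w)=-\mathcal{C}_1(e)\cdot\w(e)$ from the definitions of $\varphi$ and $\mathcal{C}_1$, pair with $\xi$, and read off the signs. The paper states the key identity directly ``by the construction of the weight sum map,'' whereas you spell out the derivation in a bit more detail, but the argument is the same.
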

\begin{proof}
Let $e=(v,w)$ be an edge with $\left\langle \w(e), \xi 
\right\rangle_d<0 $ . By the construction of the weight sum map we have 
\begin{align*}
	\varphi(v)-\varphi(w)= - \mathcal{C}_1(e) \cdot {\w}(e),
	\end{align*}
	 Since $(\Gamma, \w)$ is positive, we have in 
	particular $\mathcal{C}_1(e)>0$. So we have that
	\begin{align*}
	\varphi^\xi(v)-\varphi^\xi(w)= -\mathcal{C}_1(e) \cdot \left\langle{\w}(e), \xi \right\rangle_{\R^d} >0.
	\end{align*}
\end{proof}

\begin{remark}\label{Rem: compute KC lambda=1 on higher points}
Let $(\Gamma, \w)$ be an abstract $(3,d)$-GKM graph that is positive and Hamiltonian.
Let $\xi \in \R^d$ be a generic vector.
Given $v\in V$ with $\lambda(v)=1$, let $\gamma_v: V \rightarrow \Z^d$ be a map as in Lemma \ref{Lemma: special KC 
abstract case}.  
We know $\gamma_v(w)$ for vertices with $\lambda(w)\leq 1$.
For vertices with $\lambda(w)\geq2$  it can be computed.
Let $\varphi^\xi: V \rightarrow \R$ be the $\xi$-component of the weight sum map. 
Let $w_1,\dots,w_k$ be the vertices with  index greater than one, ordered so that 
\begin{align*}
\varphi^\xi(w_1)\leq\varphi^\xi(w_2)\leq \dots \leq \varphi^\xi(w_k).
\end{align*}
Suppose we have already computed $\gamma_v(w_i)$ for  $i=1,...,j$, where $j<k$. 
Since $\lambda(w_{j+1})\geq 2$, there exist two different vertices $r_1,r_2\in V$ with 
$e:=(w_{j+1},r_1),e':=(w_{j+1},r_2)\in E$ and 
\begin{align*}
\left\langle {\w}(e), \xi\right\rangle_{\R^d} <0 \quad \text{and}\quad \left\langle {\w}(e'), \xi\right\rangle_{\R^d} 
<0.
\end{align*}
By Lemma \ref{Lemma: weight sum map positive} we have $\varphi^\xi(r_1)<\varphi^\xi(q_{j+1})$. If $\lambda(r_1)\geq2$,  
we 
have  $r_1=w_i$ for some $i=1,...,j$, so we have already  computed $\gamma_v(r_1)$.  If $\lambda(r_1)\leq 1$, we also 
know $\gamma_v(r_1)$. For the same reason we know $\gamma_v(r_2)$.
By property (iii) of $\gamma_v$, there exist integers  $A_1$ and $A_2$ such that
\begin{align*}
\gamma_v(w_{j+1})-\gamma_v(r_1)=A_1 \cdot {\w}(w_{j+1}, r_1) \quad \text{and} \quad 
\gamma_v(w_{j+1})-\gamma_v(r_2)=A_2 \cdot (w_{j+1}, r_2). 
\end{align*}
So we have
\begin{align*}
\gamma_v(r_1)-\gamma_v(r_2)=-A_1 \cdot {\w}(w_{j+1}, r_1) + A_2 \cdot {\w}(w_{j+1}, r_2).
\end{align*}
Since we know $\gamma_v(r_1)$ and $\gamma_v(r_2)$ and since ${\w}(w_{j+1}, r_1)$ and ${\w}(w_{j+1}, r_2)$ are 
linearly independent, we can compute $A_1$ and $A_2$. So we have
\begin{align*}
\gamma_v(w_{j+1})&=\gamma_v(r_1)+ A_1 \cdot {\w}(w_{j+1}, r_1)  \,\, \text{resp.}\\
\gamma_v(w_{j+1})&=\gamma_v(r_2)+ A_2 \cdot {\w}(w_{j+1}, r_2).
\end{align*}
\end{remark}

\begin{remark}\label{Rem: KC Test}
Let $(\Gamma, \w)$ be an abstract $(3,d)$-GKM graph that  is positive.
Let $\xi \in \R^d$ be a generic vector. If for each vertex $v\in V$ with $\lambda(v)=1$, there exists a class
$\gamma_v: V \rightarrow \Z^d$ as in Lemma \ref{Lemma: special KC abstract case}, then we say 
that $(\Gamma, \w)$ satisfies the\textbf{ Kirwan Class Test} with respect to $\xi$.
Note that  if $(\Gamma, \w)$ is Hamiltonian then the Kirwan Class Test is satisfied.
By Remark \ref{Rem: compute KC lambda=1 on higher points}, it is easy to check whether the Kirwan Class Test is satisfied. 
\end{remark} 

\begin{example}\label{Beispie} 
Consider the abstract $(3,2)$-GKM graph $(\Gamma, \w)$ with six vertices that we visualize in Figure \ref{Figure: 
Example 
24Rule but to Hamiltonian}.  The big dots mark the vertices resp. the images of the vertices under the weight sum map 
$\varphi$ in $\Z^2$. The small dots mark the other points of $\Z^2$ that lie in the convex hull of the images of the 
vertices under $\varphi$. For two different vertices  $v_i$ and $v_j$, the pair $(v_i,v_j)$ is an edge of the graph 
$\Gamma$ if and only if there exists a line segment
% connecting the big dots that correspond
from the dot $v_i$ to the dot $v_j$.  
If $(v_i,v_j)$ is an edge and the line segment is blue, then $\w(v_i,v_j)$ is the primitive vector that points in the direction of the oriented line segment from $v_i$ to $v_j$. If the line segment is red, then $\w(v_i,v_j)$ is the 
double of this primitive vector. For example 
\begin{align*}
\w(v_1,v_2)=(-1,2)^\intercal \quad \text{and} \quad \w(v_1,v_6)=(2,0)^\intercal.
\end{align*}
\begin{figure}[h]
\begin{center}
\includegraphics[width=7cm]{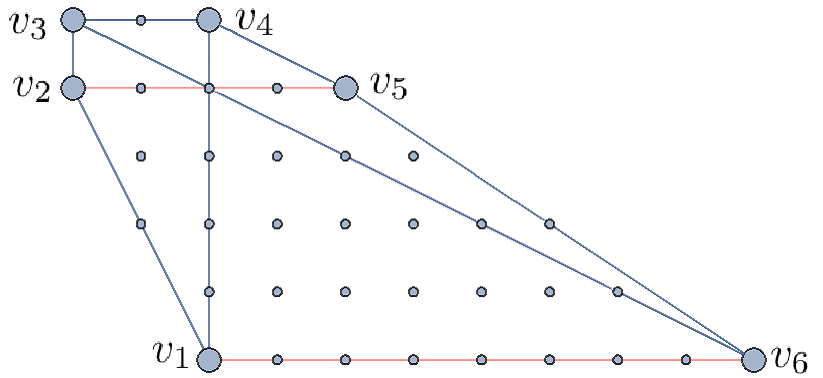}
\caption{The abstract GKM graph of Example  \ref{Beispie} }
\label{Figure: Example 24Rule but to Hamiltonian}
\end{center}
\end{figure}

Moreover, let $(v_{i},v_j)\in E$ be an edge and let $l$ be the number of points in $\Z^2$ that lie on the line 
segment between the dots  $v_i$ and $v_j$, then
\begin{align*}
	\mathcal{C}_1(v_i,v_j) = \begin{cases}
	l-1\quad\,\, \,\,\,\,\,\,\text{if the line segment is blue,} \\
	\frac{1}{2}(l-1) \quad\text{if the line segment is red.}
	\end{cases}
\end{align*}
For example, 
\begin{align*}
\mathcal{C}_1(v_1,v_2)=2 \quad \text{and} \quad \mathcal{C}_1(v_1,v_6)=4.
\end{align*}
So this abstract GKM graph is positive and satisfies the $24$-Rule.

We show by contradiction that this abstract GKM graph is not Hamiltonian. \newline
The vector  $\xi=(1,1)^\intercal\in \R^2$ is generic for this abstract GKM graph. The indices, the stable sets of 
vertices,
and the weight sum map and its $\xi$-component are given in Table \ref{table:1}.

\begin{table}[h]
\begin{center}
\begin{tabular}{|c c c c c|}
\hline
& Index $\lambda$ & $\varphi$&$\varphi^\xi$& $\Xi_{v_i}$  \\
\hline
$v_1$ &0& (-1,-3$)^\intercal$ & -4&$\{v_1,v_2,v_3,v_4,v_5,v_6\}$ \\
$v_2$ & 1&(-3,1$)^\intercal$ & -2& $\{v_2,v_3,v_4,v_5,v_6\}$ \\
$v_3$ & 1&(-3,2$)^\intercal$ & -1&$\{v_3,v_4,v_5,v_6\}$ \\
$v_4$ & 2&(-1,2$)^\intercal$& 1&$\{v_4,v_5,v_6\}$ \\
$v_5$& 2&(1,1$)^\intercal$ & 2& $\{v_5,v_6\}$ \\
$v_6$ & 3&(7,-3$)^\intercal$ & 4& $\{v_6\}$ \\
\hline
\end{tabular}
\caption{Values of $\varphi$ resp. $\varphi^\xi$ and $\Xi_{v_i}$.}
\label{table:1}
\end{center}
\end{table}
The vertex $v_2$ has index one and the unique edge $e\in E$ with $e\in E_{v_4}^i$ and $\left\langle 
\w(e),\xi\right\rangle_{\R^2}<0$ is $(v_2,v_1)$. Hence, the negative Euler class of $v_2$ with respect to
$\xi$ is 
\begin{align*}
\tau_{v_2}^-=\w(v_2,v_1)=(1,-2)^\intercal.
\end{align*} 
Now assume by negation that $(\Gamma,\w)$ is Hamiltonian. By Lemma \ref{Lemma: special KC abstract case} there exists a 
map
$\gamma_{v_2}: V \rightarrow \Z^2$ such that 
$$\gamma_{v_2}(v_1)=(0,0)^\intercal,  \quad \gamma_{v_2}(v_2)=\gamma_{v_2}(v_3)=(1,-2)^\intercal $$
and for any edge $(v_i,v_j)$ the difference $\gamma_{v_2}(v_i)-\gamma_{v_2}(v_j)$ is an integer multiple of 
$\w(v_i,v_j)$. The vertices with index greater than one are $v_4$,$v_5$ and $v_6$
and we have
$$\varphi^\xi(v_4)<\varphi^\xi(v_5)<\varphi^\xi(v_6).$$
As in Remark \ref{Rem: compute KC lambda=1 on higher points}, we can compute $\gamma_{v_2}(v_4)$,  after that 
 $\gamma_{v_2}(v_5)$, and then $\gamma_{v_2}(v_6)$.
 
 We have $(v_4,v_1), (v_4,v_3)\in E$. So there exist integers $A_1$ and $A_2$ such that 
 \begin{align*}
 \gamma_{v_2}(v_4)-\gamma_{v_2}(v_1)=A_1\cdot {\w}(v_4,v_1)\quad \text{and} \quad
 \gamma_{v_2}(v_4)-\gamma_{v_2}(v_3)=A_2\cdot {\w}(v_4,v_3). 
 \end{align*}
 This leads to
 \begin{align*}
 (-1,2)^\intercal=\gamma_{v_2}(v_1)-\gamma_{v_2}(v_3)= -A_1 \cdot (0,-1)^\intercal + A_2 \cdot (-1,0)^\intercal.
 \end{align*}
 Hence, $A_1=2$ , $A_2=1$ and $\gamma_{v_4}(v_2)=(0,-2)^\intercal$.
 
 Moreover, we have $(v_5,v_2), (v_5,v_4)\in E$. So there exist integers $B_1$ and $B_2$ such that 
 \begin{align*}
 \gamma_{v_2}(v_5)-\gamma_{v_2}(v_2)=B_1\cdot {\w}(v_5,v_2)\quad \text{and} \quad
 \gamma_{v_2}(v_5)-\gamma_{v_2}(v_4)=B_2\cdot {\w}(v_5,v_4). 
 \end{align*}
 This leads to
 \begin{align*}
 (1,0)^\intercal=\gamma_{v_2}(v_2)-\gamma_{v_2}(v_4)= -B_1 \cdot (-2,0)^\intercal + B_2 \cdot (-2,1)^\intercal.
 \end{align*}
But this implies $B_1=\frac{1}{2}\notin \Z$, which is a contradiction.
\end{example}

\end{subsection}

\end{section}

\begin{section}{About the Computer Programs}
In order to list all GKM graphs that are coming from positive Hamiltonian GKM spaces of dimension six, we need to 
consider all positive $3$-GKM skeletons that satisfy the $24$-Rule up to isomorphism. The underlying graph of such a 
skeleton is 
connected, simple and $3$-valent, i.e., a \textbf{cubic graph},  with at most $16$  
vertices; see Corollary \ref{Cor: 24Rule Skeleton}.  
The data base \cite{DataBaseCubicGraphs} provides the complete list of cubic graphs with at most $16$
vertices. Here, for $X=4,6,...,16$, we refer to the list that contains all (isomorphism classes of)
 cubic graphs with exactly $X$ vertices as $CubicGraphsX$. We refer to the graph 
 that is the $j$-th element of the list $CubicGraphsX$ as CX.j. 
 Let $\Gamma$ be a graph in the list  $CubicGraphsX$. Note that the number of edges is $3\cdot X$ and that if there exists a 
 map $\w:E\rightarrow \Z^2 \setminus \{0\}$ such that $(\Gamma,\w)$ is an abstract positive $(3,d)$-GKM graph that is 
 Hamiltonian, then for any orientation $\sigma$ of the edge set and any ordering $\operatorname{Ord}(E^\sigma)$ there 
 exists an integer vector $\mathbf{d}\in \Z_{>0}^{\frac{3}{2}X}$
 with 
 $$d_1+...+d_{\frac{3}{2}X}=24$$
 such that the GKM skeleton $\skeleton$ has defect greater or equal to $d$ and satisfies the Kernel Condition (K1), see Proposition \ref{Pro: defect and FS} and Corollary \ref{Cor: (K1) is a necessary condition}; note that since $n=3$, we have $d \geq 2$, see Remark \ref{Rem: relation n, d abstract GKM graph}.
 The following computer program computes the list $GraphsToBeConsideredX$ that contains all cubic graphs
 with $X \leq 16$ vertices that satisfy these conditions.

\begin{center}
\underline{Program: Compute the List $GraphsToBeConsideredX$ }\\
\end{center}
\begin{itemize}
\item[1.]  Compute the list $Permutations$ that contains all integer vectors $\mathbf{d}\in \Z_{>0}^{\frac{3}{2}X}$ 
that satisfy $$d_1+...+d_{\frac{3}{2}X}=24.$$
\item[2.] Let $GraphsToBeConsideredX$=$\left\lbrace \right\rbrace $ be the empty list.
\item[3.] Do the following for each graph $\Gamma$ in the list $CubicGraphsX$. 
Fix an orientation $\sigma$ of the edge set and an ordering $\operatorname{Ord}(E^\sigma)$.\newline 
Start with $j=1$ and do the following as long as $j\leq \operatorname{Length}[Permutations]$.\\
Let $\mathbf{d}$ be the $j$-th element of the list $Permutations$. 
\begin{itemize}
\item[$\bullet$] If the GKM skeleton $\skeleton$ has a defect greater than one and satisfies the Kernel Condition (K1),
then add $\Gamma$ to the list $GraphsToBeConsideredX$ and set $$j\rightarrow \operatorname{Length}[Permutations]+1. $$
\item[$\bullet$] If not, then set $j\rightarrow j+1.$
\end{itemize}
\end{itemize}	

The result of this program is given in Table \ref{table:2}.  Note  that in total there are only $14$ 
cubic graphs that belong to the list $GraphsToBeConsideredX$ for some $X=4,6,\dots,16$. 
If a $3$-GKM skeleton supports a positive and Hamiltonian abstract GKM graph, then the underlying graph is isomorphic to one of these $14$ graphs.    
\begin{table}[h]
\begin{center}
\begin{tabular}{|c| c |c|} 
\hline
X & Number of Cubic Graphs with X vertices& 
$GraphsToBeConsideredX$  \\ 
\hline
4 & 1 & C4.1 \\ 
6 & 2 & C6.1, C6.2 \\
8 & 5 & C8.2, C8.4, C8.5 \\
10 & 19& C10.11, C10.15, C10.16, C10.17 \\
12& 85 & C12.68, C12.69, C12.71, C12.74 \\
14 & 509 & empty list  \\
16 & 4060& empty list\\
\hline
&total: 4681& total: 14\\
\hline
\end{tabular}
\caption{Lists of graphs that need to be considered.}
\label{table:2}
\end{center}
\end{table}

With the following computer program, we analyze the 14 cubic graphs.
Recall that if a $3$-GKM skeleton supports an abstract $(3,d)$-GKM graph then for the defect $\delta$ of the skeleton we have $\delta \geq d \geq 2$. %; see Proposition \ref{} (5.23) and Remark \ref{} (5.4).
If $\delta=d$ then the Kernel Condition (K2) is a necessary condition for a $3$-GKM skeleton to support an abstract $(3,d)$-GKM graph;  see Corollary \ref{Cor: low defect and (K2)}. 
If the Kernel Conditions (K1) and (K2) both hold, the $3$-GKM skeleton supports a unique -up to isomorphism 
and projection- abstract $(3,\delta)$-GKM graph; 
see Proposition 
\ref{Pro: (K1)+(K2) gives a unique GKM graph}. 
In case $\delta=2$ and both Conditions (K1) and (K2) hold, we use the Kirwan Class Test to detect if the 
abstract GKM graph fails to be Hamiltonian; see Lemma \ref{Lemma: special KC abstract case} and Remark \ref{Rem: KC Test}. 
If  $\delta=3$ and the Kernel Condition (K2) does not hold, we use the Projection Test (see Subsection \ref{subsec:ProjectionTest}) to
detect if the $3$-GKM skeleton fails to support an abstract GKM graph.

\begin{center}
\underline{Program: Analyze the Cubic Graphs }\\
\end{center}
\begin{itemize}
\item[1.] Fix an orientation $\sigma$ of the edge set and an ordering $\operatorname{Ord}(E^\sigma)$.	
\item[2.]  Compute the list $Permutations1$ that contains all integer vectors $\mathbf{d}\in \Z_{>0}^{\frac{3}{2}X}$ 
that satisfy 
$$d_1+...+d_{\frac{3}{2}X}=24.$$
\item[3.] Let $Permutations2$ be the list that contains only the first element of  $Permutations1$.\newline
Start with $j=2$ and do the following as long as $j\leq \operatorname{Length}[Permutations1]$.\newline
Let $\mathbf{d}$ be the $j$-th element of the list $Permutations1$. 
If there exists no $j'<j$ such that $\skeleton$ and $(\Gamma, \sigma, \operatorname{Ord}(E^\sigma) \mathbf{d'})$
are isomorphic GKM skeletons, where $\mathbf{d'}$ is the $j'$-th element of the list $Permutations1$,
then add $\mathbf{d}$ to the list $Permutations2$. Set $j\rightarrow j+1.$

\item[4.] Start with $j=1$ and do the following as long as $j\leq \operatorname{Length}[Permutations2]$.\newline
Let $\mathbf{d}$ be the $j$-th element of the list $Permutations2$. 
\begin{itemize}
\item[4.1] If the GKM skeleton $\skeleton$ has a defect of two  and satisfies the Kernel 
Conditions (K1) and (K2), apply the Kirwan class test to the GKM skeleton. If the Kirwan Class test is 
satisfied, then compute a map $\w:E\rightarrow \Z^2$ such that $(\Gamma, \w)$  is an abstract $(3,2)$-GKM skeleton
supported by  $\skeleton$. 
\begin{center}
	Output: Give $(\Gamma, \w)$.
\end{center}
\item[$4.2$] If the GKM skeleton $\skeleton$ has a defect of three  and satisfies the Kernel 
Conditions (K1) and (K2), then compute a map $\w:E\rightarrow \Z^3$ such that $(\Gamma, \w)$ is an abstract $(3,3)$-GKM 
skeleton
supported by  $\skeleton$. 
\begin{center}
Output: Give $(\Gamma, \w)$.
\end{center}
\item[$4.3$] If the GKM skeleton $\skeleton$ has a defect of three  and satisfies the Kernel 
Condition (K1), but not (K2), the apply the Projection Test. If the projection makes no statement, 
then give the following output.
\begin{center}
Output: The permutation $\mathbf{d}$ need to be considered.
\end{center}
 \item[$4.4$] If the GKM skeleton $\skeleton$ has a defect greater than four  and satisfies the Kernel 
 Condition (K1), then give the following output.
 \begin{center}
 	Output: The permutation $\mathbf{d}$ need to be considered.
 \end{center}
\end{itemize}
\end{itemize}

Apply this program to the $14$ cubic graphs that belong to some list $GraphsToBeConsideredX$. We have the following 
result. 
\begin{proposition}\label{Pro: Result Program}
Let $\skeleton$ be a $3$-GKM skeleton that is positive and that satisfies the $24$-Rule. If $\skeleton$ supports an 
abstract GKM graph $(\Gamma, \w)$, then its defect $\delta$ is equal to $2$ or $3$. Moreover, the followings hold.
\begin{itemize}
\item If $\delta=2$ and if $(\Gamma, \w)$ satisfies the Kirwan class test, then   
$(\Gamma, \w)$ is isomorphic to one of the seven abstract $(3,2)$-GKM graphs that are listed in Appendix A. 
\item If $\delta=3$, then  $(\Gamma, \w)$  is a $(3,3)$-abstract GKM graph that comes from a
smooth and reflexive polytope or it is the projection of such a $(3,3)$-abstract GKM graph. 
\end{itemize} 
\end{proposition}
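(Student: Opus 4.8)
The plan is to prove the statement as a verification of the output of the two computer programs described above, whose correctness rests entirely on the structural results established earlier in this section. First I would record the a priori constraints that turn the problem into a finite search. Since the skeleton is positive, every entry $d_j$ of $\mathbf{d}$ satisfies $d_j\geq 1$, and the $24$-Rule gives $\sum_j d_j = 24$, so $|E^\sigma|\leq 24$. As $\Gamma$ is cubic, Remark \ref{Remark: orientation abs GKM graphs cardinality} yields $|E^\sigma| = \tfrac{3}{2}|V|$, whence $|V|\leq 16$. Thus $\Gamma$ is a connected simple cubic graph on at most $16$ vertices and appears in the database \cite{DataBaseCubicGraphs}. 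Because $\skeleton$ supports $(\Gamma,\w)$, Corollary \ref{Cor: (K1) is a necessary condition} forces the defect $\delta$ to be positive with (K1) holding, while Proposition \ref{Pro: defect and FS} together with Remark \ref{Rem: relation n, d abstract GKM graph} gives $\delta\geq d\geq 2$. These are precisely the filters applied by the first program, which leaves the $14$ cubic graphs tabulated above; every other cubic graph on at most $16$ vertices fails to carry a positive $\mathbf{d}$ of defect $\geq 2$ satisfying (K1) and hence cannot support a positive abstract GKM graph obeying the $24$-Rule.

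Next I would carry out the case analysis of the second program on each of these $14$ graphs. For a fixed graph one enumerates all positive integer vectors $\mathbf{d}$ with $\sum_j d_j=24$, discards those whose skeletons are isomorphic (Definition \ref{Def: Isom skeleton}) to an already-treated one, and for each survivor computes $\delta=\dim\ker(\mathbf{A}-\mathbf{D})$. The key point to confirm from the computation is that $\delta\in\{2,3\}$ in every surviving case: $\delta\geq 2$ is automatic from the previous paragraph, and one reads off the kernels directly that no survivor has $\delta\geq 4$ (equivalently, branch $4.4$ is never reached). For $\delta=2$, Corollary \ref{Cor: low defect and (K2)} makes (K2) necessary; when (K1) and (K2) both hold, Proposition \ref{Pro: (K1)+(K2) gives a unique GKM graph} produces the unique abstract $(3,2)$-GKM graph $(\Gamma,\w)$ up to isomorphism, and I would apply the Kirwan Class Test (Remark \ref{Rem: KC Test}, via Lemma \ref{Lemma: special KC abstract case}); the graphs passing the test are exactly the seven recorded in Appendix A. For $\delta=3$ with (K1) and (K2), Proposition \ref{Pro: (K1)+(K2) gives a unique GKM graph} again yields a unique $(3,3)$-GKM graph up to isomorphism and projection; being a positive complexity-zero (toric) GKM graph, it is realized by a compact symplectic toric six-manifold via Theorem \ref{Thm: Delzant}, and comparing its weight data with the finitely many smooth reflexive $3$-polytopes, using Proposition \ref{Pro: toricReflMono}, identifies it as coming from such a polytope.

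The remaining branch, $\delta=3$ with (K1) but not (K2), I would resolve with the Projection Test of Subsection \ref{subsec:ProjectionTest} (Lemma \ref{Lemma: Projection Test}): whenever the test concludes, the skeleton supports no abstract GKM graph at all, so it contributes nothing; whenever the test is inconclusive, the flagged skeleton must be treated by hand, showing that its possible $(3,2)$-projections are either isomorphic to graphs already on the Appendix A list or are projections of the reflexive-polytope $(3,3)$-graphs. Assembling the three branches gives the dichotomy in the statement.

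The hard part will be completeness and bookkeeping rather than any single analytic estimate. The delicate points are: certifying that the isomorphism-reduction in the programs is correct, so that no skeleton is wrongly discarded and the seven Appendix A graphs are genuinely pairwise non-isomorphic and exhaustive; confirming that the Projection Test and the Kirwan Class Test together eliminate every non-Hamiltonian candidate, i.e.\ that each inconclusive case can in fact be settled by hand; and performing the final identification of each $\delta=3$ graph with a smooth reflexive polytope. I expect the reflexive-polytope matching to be the most laborious step, since it requires comparing the computed weight maps against the GKM graphs of the (finitely many) toric Fano threefolds and invoking Proposition \ref{Pro: toricReflMono} to pin down reflexivity.
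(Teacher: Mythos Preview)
Your proposal is correct and matches the paper's approach: Proposition \ref{Pro: Result Program} is stated in the paper simply as the output of the two computer programs, and your write-up accurately reconstructs the logic behind each branch (the finiteness reduction via the $24$-Rule and positivity, the filtering by defect and (K1), the uniqueness from Proposition \ref{Pro: (K1)+(K2) gives a unique GKM graph} in the $\delta=2,3$ cases with (K2), the Kirwan Class Test for $\delta=2$, and the Projection Test for $\delta=3$ without (K2)). One small point of order: in the $\delta=3$ case you should not invoke Delzant's theorem before first verifying computationally that the abstract $(3,3)$-graph comes from a smooth polytope---the identification with a smooth reflexive polytope is the primary computational step, and Delzant then provides the Hamiltonian realization, not the other way around.
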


\begin{subsection}{Proof of Theorem \ref{ManiThm: sympetic fano GKM graphs coming from fano threefolds.}}
Recall that if $X$ is a smooth Fano variety endowed with a holomorphic action of an algebraic torus $T_{\C}$,
then polarisation of $X$ by its anticanonical line bundle induces a symplectic form $\omega_X$
on $X$ such that the induced $T$-action on $X$ is Hamiltonian with respect to $\omega_X$ and the induced almost complex structure $J$ is compatible with $\omega_X$. Here, $T_{\C}$ is the 
complexification of $T$.

\begin{proof}[Proof of Theorem \ref{ManiThm: sympetic fano GKM graphs coming from fano threefolds.}]
Let $\tham$ be a positive six-dimensional Hamiltonian GKM space of complexity one and let $\GKM$ be its GKM graph.
By Proposition \ref{Pro: Result Program}, $\GKM$ is either isomorphic to one of the seven GKM graphs that are listed 
in Appendix A or it is the projection of a GKM graph that comes from a smooth and reflexive polytope of dimension 
three. \\
The seven GKM graphs in Appendix A are isomorphic to GKM graphs that are coming from holomorphic GKM actions on  
smooth Fano varieties, as specified in the appendix.\\ 
If $\GKM$ is the projection of a GKM graph that comes from a smooth and reflexive polytope $\Delta$ of dimension three,
then by Delzant's classification, Theorem \ref{Thm: Delzant}, there exists a compact symplectic toric manifold 
$(\widetilde{M}, \widetilde{\omega}, T \times S^1, \widetilde{\phi})$ of dimension six such that 
$\widetilde{\phi}(\widetilde{M})=\Delta$ and an integrable almost complex structure $J$ on $\widetilde{M}$ that is 
$(T\times S^1)$-invariant and compatible with $\widetilde{\omega}$.  By Proposition \ref{Pro: toricReflMono}, since the moment map
polytope is reflexive, $(\widetilde{M},\widetilde{\omega})$ is positive 
monotone. Hence, due to the Kodaira Embedding Theorem (see \cite[Sect. 14.4]{McDuffSalamon}), $\widetilde{M}$ with the 
complex atlas induced by $J$ is a smooth 
Fano threefold. By Lemma \ref{Lemma: Projections abstract 
GKM graphs}, there exists a two-dimensional subtorus $H$ of $T\times S^1$ such that the $H$-action is Hamiltonian and 
GKM  and, moreover, $\GKM$ is isomorphic to the GKM graph of the $H$-action on $\widetilde{M}$.
\end{proof}
\end{subsection}
\end{section}

\appendix

\begin{section}{Positive Hamiltonian GKM Graphs in Dimension Six}

In this appendix we list all positive Hamiltonian GKM graphs in dimension six that are not projections of GKM graphs 
that are coming from a smooth and 
reflexive polytope. Up to isomorphism, there exist exactly seven such graphs.
We visualize these graphs as in Example \ref{Beispie}. Namely, the big dots mark the vertices. For two different 
vertices  $v_i$ and $v_j$, the pair $(v_i,v_j)$ is an edge of the graph $\Gamma$ if and only if there exists a line 
segment connecting the big dots that correspond to dots $v_i$ and $v_j$.  
If $(v_i,v_j)$ is an edge and the line segment is blue, then $\w(v_i,v_j)$ is the primitive vector that points in the 
direction of the oriented line segment from $v_i$ to $v_j$. If the line segment is red, then $\w(v_i,v_j)$ is the 
double of this primitive vector. All of these GKM graphs are coming from a holomorphic GKM on smooth Fano varieties.
Namely,  S\"uss  classified complexity one actions of algebraic tori on smooth Fano threefold \cite{suss}.
Recall that such an action induces a holomorphic and Hamiltonian action of a compact torus $T=(S^1)^2$, where here 
Hamiltonian 
means with respect to the pullback of the Fubini study form.  In  \cite{suss} the  corresponding Duistermaat-Heckman 
measure \cite{DuistermaatHeckman} is given. Note that the  Duistermaat-Heckman measure contains the information if the 
action is GKM; moreover if the action is GKM then the GKM graph can be recovered from the  Duistermaat-Heckman measure.
Fano threefolds are classified by Mori and Mukai \cite{MoriMukai}. There exists 105 families of Fano threefolds.\\
In our list of GKM graphs, by ID we refer to the family of Fano threefold; according to \cite{MoriMukai}.\\
\vspace{0.5cm}

\noindent\framebox{ID 1.16:\hspace{1cm} Hypersurface of degree $2$ in $\C P^4$/ Quadric threefold\hspace{4cm}}\\
\begin{minipage}[h]{0.3\textwidth}
\includegraphics[width=0.7\textwidth]{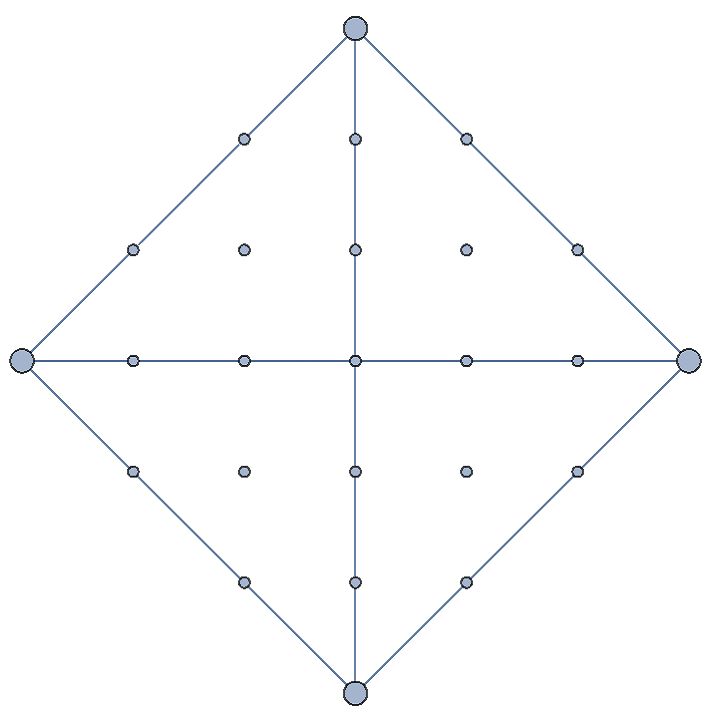}
\end{minipage}
\begin{minipage}[h]{0.15\textwidth}
\end{minipage}
\begin{minipage}[h]{0.5\textwidth}
\begin{itemize}
\item Second Betti Number: 1
\item Index: 3
\item $\int_M (c_1(M))^3=54$
\item Underlying Graph: C4.1
\end{itemize}
\end{minipage}\\
\newpage
\noindent\framebox{ID 2.24:\hspace{1cm} Divisor on $\C P^2 \times \C P^2$ of bidegree $(1,2)$\hspace{5.8cm}}\\
\begin{minipage}[h]{0.3\textwidth}
\includegraphics[width=0.7\textwidth]{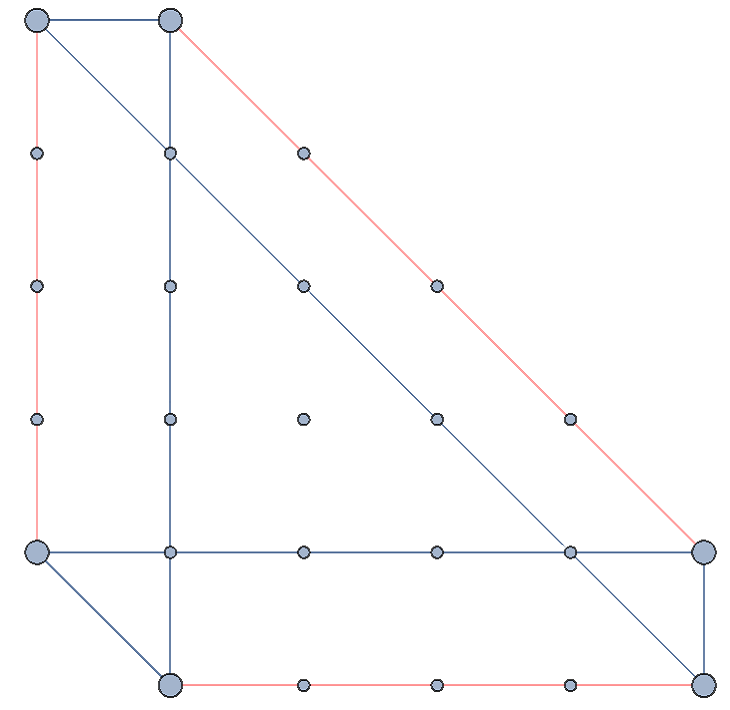}
\end{minipage}
\begin{minipage}[h]{0.15\textwidth}
\end{minipage}
\begin{minipage}[h]{0.5\textwidth}
\begin{itemize}
\item Second Betti Number: 2
\item Index: 1
\item $\int_M (c_1(M))^3=30$
\item Underlying Graph: C6.2
\end{itemize}
\end{minipage}\\
\framebox{ID 2.29:\hspace{1cm} Blowup of 1.16 in a conic\hspace{8.1cm}}\\
\begin{minipage}[h]{0.3\textwidth}
	\includegraphics[width=0.7\textwidth]{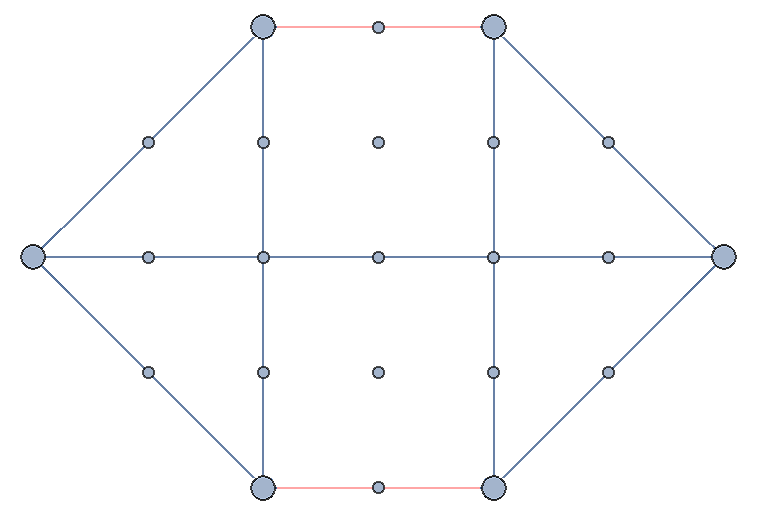}
\end{minipage}
\begin{minipage}[h]{0.15\textwidth}
\end{minipage}
\begin{minipage}[h]{0.5\textwidth}
\hspace{0.2cm}
\begin{itemize}
\item Second Betti Number: 2
\item Index: 1
\item $\int_M (c_1(M))^3=40$
\item Underlying Graph: C6.1
\end{itemize}
\end{minipage}\\
\framebox{ID 2.31:\hspace{1cm} Blowup of 1.16 in a line\hspace{8.35cm}}\\
\begin{minipage}[h]{0.3\textwidth}
\includegraphics[width=0.7\textwidth]{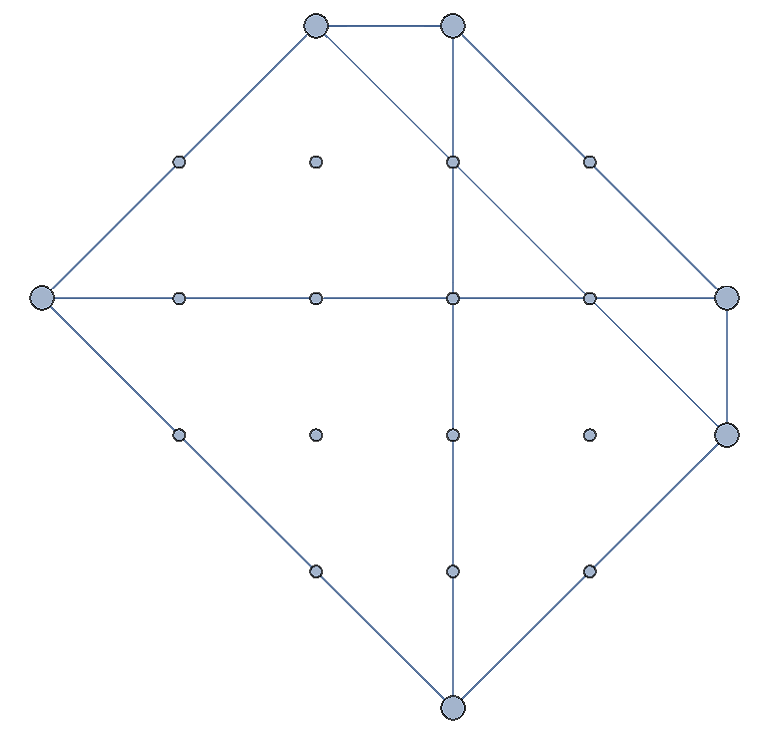}
\end{minipage}
\begin{minipage}[h]{0.15\textwidth}
\end{minipage}
\begin{minipage}[h]{0.5\textwidth}
\begin{itemize}
\item Second Betti Number: 2
\item Index: 1
\item $\int_M (c_1(M))^3=46$
\item Underlying Graph: C6.2
\end{itemize}
\end{minipage}\\
\framebox{ID 2.32:\hspace{1cm} Divisor on $\C P^2\times \C P^2$ of bidegree $(1,1)$\hspace{5.8cm}}\\
\begin{minipage}[h]{0.3\textwidth}
	\includegraphics[width=0.7\textwidth]{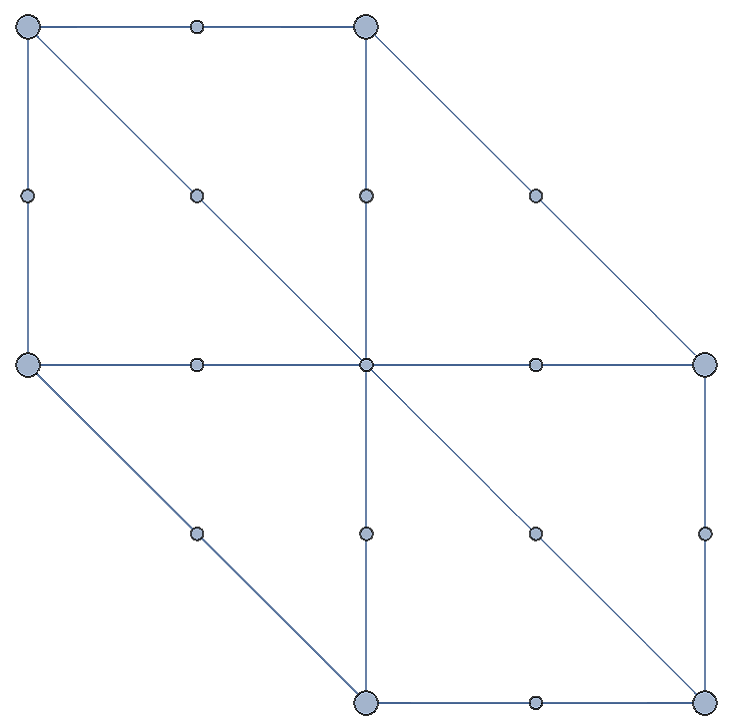}
\end{minipage}
\begin{minipage}[h]{0.15\textwidth}
\end{minipage}
\begin{minipage}[h]{0.5\textwidth}
\begin{itemize}
\item Second Betti Number: 2
\item Index: 2
\item $\int_M (c_1(M))^3=48$
\item Underlying Graph: C6.2
\end{itemize}
\end{minipage}\\

\begin{flushleft} \end{flushleft}
\framebox{ID 3.10:\hspace{1cm} Blowup of 1.16 in the disjoint union of $2$ conics\hspace{4.75cm}}\\
\begin{minipage}[h]{0.3\textwidth}
	\includegraphics[width=0.7\textwidth]{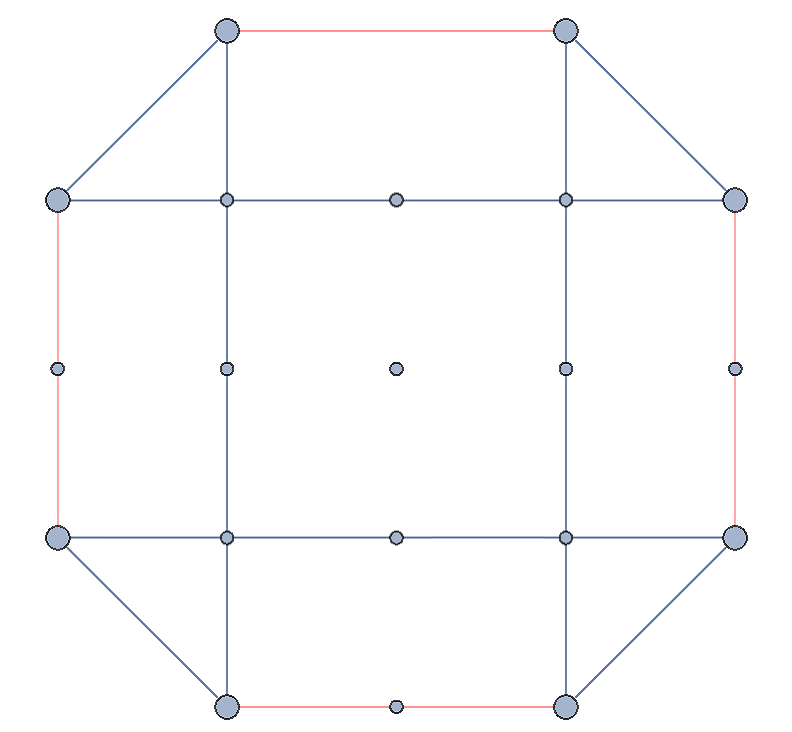}
\end{minipage}
\begin{minipage}[h]{0.15\textwidth}
\end{minipage}
\begin{minipage}[h]{0.5\textwidth}
\begin{itemize}
\item Second Betti Number: 3
\item Index: 1
\item $\int_M (c_1(M))^3=26$
\item Underlying Graph: C8.4
\end{itemize}
\end{minipage}\\
\framebox{ID 3.20:\hspace{1cm} Blowup of 1.16 in the disjoint union of $2$  lines\hspace{5.0cm}}\\
\begin{minipage}[h]{0.3\textwidth}
\includegraphics[width=0.7\textwidth]{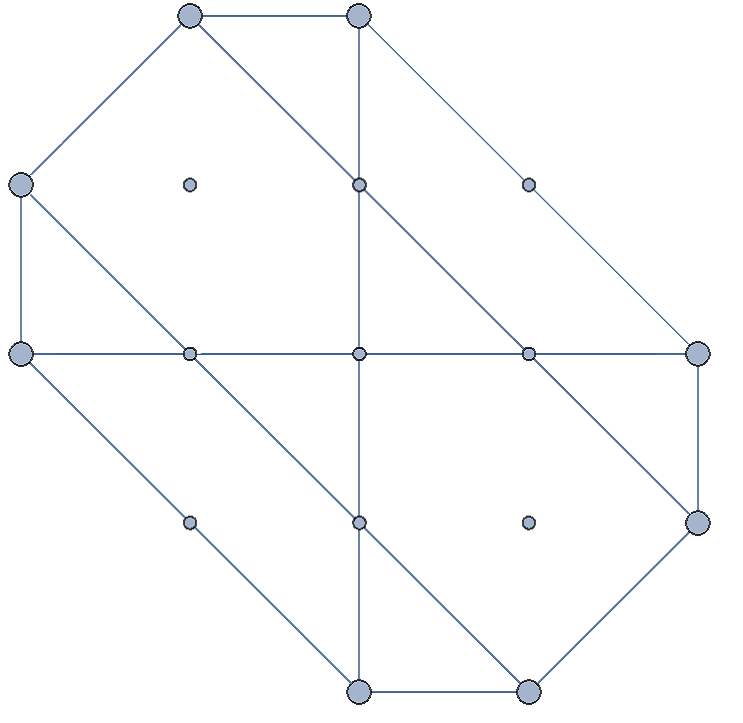}
\end{minipage}
\begin{minipage}[h]{0.15\textwidth}
\end{minipage}
\begin{minipage}[h]{0.5\textwidth}
\begin{itemize}
\item Second Betti Number: 3
\item Index: 1
\item $\int_M (c_1(M))^3=38$
\item Underlying Graph: C8.5
\end{itemize}
\end{minipage}
\end{section}

\begin{section}{Alternative Proof of Theorem \ref{ManiThm: coprime weights GKM graph determines eq. 
cohomology}}\label{sec: alt proof}
In this section we give an alternative proof for Theorem \ref{ManiThm: coprime weights GKM graph determines eq. 
cohomology} by using Kirwan classes.
As in subsection \ref{sec: kirwan}, we consider the equivariant cohomology $H_T^*(M;\Z)$ of a Hamiltonian GKM space as 
a subset of 
$$\operatorname{Maps}\left( M^T, H^*(BT;\Z)\right).$$

\begin{lemma}\label{Lemma: divisble condition}
Let $\tham$ be a Hamiltonian GKM space and let $\GKM$ be its GKM graph. 
Let $\alpha$ be a class in $ H^*_T(M;\Z)$. Then for each edge $e=(p,q)\in 
E_{GKM}$ the difference $\alpha(p)-\alpha(q)$ is divisible by $\eta(p,q)$ in $H^*(BT;\Z)$. 
\end{lemma}

\begin{proof}
Let $e=(p,q)\in 
E_{GKM}$ be an edge and let $S_e$ be the unique two-sphere with $S_e\cap M^T=\{p,q\}$ that is fixed by a 
codimensional one subtorus of $T$. The sphere $S_e$ is $T$-invariant and the weight of the $T$-representation of  
$T_pS_e$ is $\eta(p,q)$ and the weight of the $T$-representation of  $T_qS_e$ is $\eta(q,p)=-\eta(p,q)$. The 
equivariant Euler class of the normal bundle of $p$ resp. $q$ in $S_e$ is $\eta(p,q)$ resp. $-\eta(p,q)$. Let $\beta$ 
be a class in $H^*_T(S_e;\Z)$. Due to the ABBV localization formula (Theorem \ref{Thm:ABBV}), we have
\begin{align*}
\int_{S_e} \beta \,=\, \dfrac{\beta(p)-\beta(q)}{\eta(p,q)}.
\end{align*}
The left-hand side of this equation is an element in $H^*(BT;\Z)$. Therefore, the difference $\beta(p)-\beta(q)$
is divisible by $\eta(p,q)$ in $H^*(BT;\Z)$.\newline
The commutative diagram of inclusion maps 
\begin{center}
\begin{tikzpicture}
% Tell it where the nodes are
		\node (A) {$\lbrace  p,q\rbrace $};
		\node (B) [right=of A] {$S_e$};
		\node (C) [below=of B] {$M$};
		
		% Tell it what arrows to draw
		\draw[-stealth] (A)-- node[left] {\small } (B);
		\draw[-stealth] (B)-- node [below] {\small} (C);
		\draw[-stealth] (A)-- node [above] {\small} (C);
\end{tikzpicture}
\end{center}
induces a commutative diagram of ring homomorphisms
\begin{center}
\begin{tikzpicture}
% Tell it where the nodes are
\node (A) {$H^*_T(M;\Z)$};
\node (B) [right=of A] {$H^*(S_e;\Z)$};
\node (C) [below=of B] {$H^*_T(\{p,q\};\Z)$};
% Tell it what arrows to draw
\draw[-stealth] (A)-- node[left] {\small } (B);
\draw[-stealth] (B)-- node [below] {\small} (C);
\draw[-stealth] (A)-- node [above] {\small} (C);
\end{tikzpicture}.
\end{center}
Let $\alpha \in H_T^*(M;\Z)$ and let $\beta \in H^*_T(S_e; \Z)$ be its image under the map $H^*_T(M;\Z)\rightarrow 
H^*_T(S_e;\Z)$. Since the images of $\alpha$ and $\beta$ under $H^*_T(M;\Z)\rightarrow H^*_T(\{p,q\};\Z)$ resp. 
$H^*_T(S_e;\Z)\rightarrow H^*_T(\{p,q\};\Z)$ are equal, we have $\alpha (p)=\beta(p)$ and $\alpha (q)=\beta(q)$.
Hence, 
$$\alpha(p)-\alpha(q)=\beta(p)-\beta(q)$$
is divisible by $\eta(p,q)$ in $H^*(BT;\Z)$.
\end{proof}

This lemma and and its converse hold for rational coefficients by a work of Goresky, Kottwitz and MacPherson \cite{GKM}.

\begin{theorem}
Let $\tham$ be a Hamiltonian GKM space and let $\GKM$ be its GKM graph.  Let  $\alpha :M^T \rightarrow H^*(BT;\Q)$ be 
a map. Then $\alpha$ lies in $H_T^*(M;\Q)$ if and only if for each edge $(p,q)\in E_{GKM}$ the difference 
$\alpha(p)-\alpha(q)$ is divisible by $\eta(p,q)$ in $H^*(BT;\Q)$. 
\end{theorem}

In the following, we prove an integer version of this theorem under the assumption that for each fixed point $p$ the 
weights of the $T$-representation on $T_pM$ are pairwise coprime (see Definition \ref{Def: coprime}). 

\begin{proposition}\label{Prop: GKM coprime}
Let $\tham$ be a Hamiltonian GKM space such that for each fixed point $p\in M^T$ the weights of the 
$T$-representation on $T_pM$ are pairwise coprime. 
Let $\GKM$ be its GKM graph. Given a map $\alpha\colon M^T \rightarrow H^*(BT;\Z)$, the following conditions are 
equivalent.
\begin{itemize}
\item[(i)] The map $\alpha$ belongs to $H_T^*(M;\Z)$.
\item[(ii)] For each edge $(p,q)\in E_{GKM}$, the difference $\alpha(p)-\alpha(q)$ is divisible by 
$\eta(p,q)$ in $H^*(BT; \Z)$. 
\end{itemize} 
\end{proposition}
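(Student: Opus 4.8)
The implication (i) $\Rightarrow$ (ii) is already contained in Lemma~\ref{Lemma: divisble condition}, which holds for any coefficient ring and in particular for $R=\Z$. Thus the plan is to establish the converse (ii) $\Rightarrow$ (i), the integral sharpening of Theorem~\ref{Theorem: GKM}. Fix a generic vector $\xi\in\mathfrak{t}$ as in Subsection~\ref{SubSec: Generic Vectors and Morse Theory}, so that $\phi^{\xi}$ is a Morse function with critical set $M^{T}$, and order the fixed points $p_{1},\dots,p_{N}$ so that $\phi^{\xi}(p_{1})<\dots<\phi^{\xi}(p_{N})$. I will work with the integer Kirwan (canonical) classes $\kappa_{p_{1}},\dots,\kappa_{p_{N}}$ alluded to in the introduction, whose existence rests on equivariant Morse theory: they form a basis of $H_{T}^{*}(M;\Z)$ over $H^{*}(BT;\Z)\cong\Z[x_{1},\dots,x_{d}]=:\Z[x]$ and satisfy $\kappa_{p_{k}}(p_{k})=\Lambda^{-}_{p_{k}}$, the product of the negative weights at $p_{k}$, together with the triangularity $\kappa_{p_{k}}(p_{i})=0$ whenever $\phi^{\xi}(p_{i})<\phi^{\xi}(p_{k})$. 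Being a $\Z$-basis they are also a $\Q$-basis, so by Theorem~\ref{Theorem: GKM} condition (ii) gives $\alpha\in H_{T}^{*}(M;\Q)$, which we may expand as $\alpha=\sum_{k}f_{p_{k}}\kappa_{p_{k}}$ with $f_{p_{k}}\in H^{*}(BT;\Q)$; everything then reduces to showing that each coefficient $f_{p_{k}}$ is integral.

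I would prove integrality by induction on $i$. Evaluating $\alpha$ at $p_{i}$ and using triangularity gives $\alpha(p_{i})=\sum_{k\le i}f_{p_{k}}\kappa_{p_{k}}(p_{i})$, hence $f_{p_{i}}\,\Lambda^{-}_{p_{i}}=\alpha(p_{i})-\sum_{k<i}f_{p_{k}}\kappa_{p_{k}}(p_{i})=:g_{i}$, and once the $f_{p_{k}}$ with $k<i$ are integral (the inductive hypothesis) the right-hand side $g_{i}$ lies in $\Z[x]$. The crux is therefore to deduce $f_{p_{i}}=g_{i}/\Lambda^{-}_{p_{i}}\in\Z[x]$ from $g_{i}\in\Z[x]$. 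Writing each negative weight as $\alpha=c_{\alpha}\tilde\alpha$ with $c_{\alpha}\in\Z_{>0}$ its content and $\tilde\alpha$ primitive, condition (ii) of Definition~\ref{Def: Hamiltonian GKM spaces} makes the $\tilde\alpha$ pairwise non-proportional; each is a prime of $\Z[x]$, so by Gauss's Lemma each $\tilde\alpha$ divides $g_{i}$ in $\Z[x]$, and hence so does their product. Consequently $f_{p_{i}}\in\Z[x]$ is equivalent to divisibility of the integer polynomial $g_{i}/\prod\tilde\alpha$ by the integer $\prod_{\alpha}c_{\alpha}$, and by pairwise coprimality of the contents this splits prime-by-prime.

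The remaining statement is the heart of the matter: for each rational prime $p$ one must show that $p^{m}$ divides $g_{i}$ coefficientwise, where $m=v_{p}\!\left(\prod_{\alpha}c_{\alpha}\right)$ is the exponent of $p$. I expect this to be the main obstacle, since it is exactly here that the coprimality hypothesis by itself is insufficient -- a single non-primitive isotropy weight would obstruct the conclusion -- and the edge relations (ii) must be brought in. By pairwise coprimality, for a fixed $p$ at most one weight at $p_{i}$ has content divisible by $p$; if that weight is positive or absent then $m=0$ and there is nothing to prove, so assume it is a negative weight $\alpha_{0}$, with $m=v_{p}(c_{\alpha_{0}})$. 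Then $\alpha_{0}$ labels an edge $(p_{i},p_{j})$ with $\phi^{\xi}(p_{j})<\phi^{\xi}(p_{i})$, i.e. $j<i$ (Remark~\ref{Rem: Morse and GKM}).

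Finally I combine three ingredients on this edge (all polynomial congruences being coefficientwise). First, condition (ii) for $(p_{i},p_{j})$ gives $\alpha_{0}\mid\alpha(p_{i})-\alpha(p_{j})$, hence $c_{\alpha_{0}}\mid\alpha(p_{i})-\alpha(p_{j})$ and so $\alpha(p_{i})\equiv\alpha(p_{j})\pmod{p^{m}}$. Second, applying Lemma~\ref{Lemma: divisble condition} to the genuinely integral classes $\kappa_{p_{k}}$ on the same edge gives $\kappa_{p_{k}}(p_{i})\equiv\kappa_{p_{k}}(p_{j})\pmod{p^{m}}$ for every $k$; multiplying by the integral $f_{p_{k}}$ $(k<i)$ and summing yields $\sum_{k<i}f_{p_{k}}\kappa_{p_{k}}(p_{i})\equiv\sum_{k<i}f_{p_{k}}\kappa_{p_{k}}(p_{j})\pmod{p^{m}}$. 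Third, triangularity kills the terms with $j<k<i$ on the right (there $\kappa_{p_{k}}(p_{j})=0$), so that sum equals $\alpha(p_{j})$. Subtracting, $g_{i}=\alpha(p_{i})-\sum_{k<i}f_{p_{k}}\kappa_{p_{k}}(p_{i})\equiv\alpha(p_{i})-\alpha(p_{j})\equiv0\pmod{p^{m}}$, which is precisely the divisibility sought. Running this over all primes $p$ proves $f_{p_{i}}\in\Z[x]$, closing the induction and giving $\alpha\in H_{T}^{*}(M;\Z)$.
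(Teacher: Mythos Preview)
Your argument is correct and follows the same inductive strategy as the paper: order the fixed points by $\phi^{\xi}$, use the Kirwan classes as an $H^{*}(BT;\Z)$--basis, and show inductively that the coefficient $f_{p_i}$ (equivalently, that $\Lambda^{-}_{p_i}$ divides $g_i$ in $\Z[x]$) is integral by exploiting the edge relations along the downward edges at $p_i$.

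The only genuine difference is in the final divisibility step. The paper argues directly: for \emph{each} downward edge $(p_i,r_k)$ one has $\eta(p_i,r_k)\mid g_i$ in $\Z[x]$ (same computation you do for the single edge with weight $\alpha_0$), and since the $\eta(p_i,r_k)$ are pairwise coprime in the UFD $\Z[x_1,\dots,x_d]$---the GKM condition rules out common linear factors and the hypothesis rules out common integer factors---their product $\Lambda^{-}_{p_i}$ divides $g_i$. You instead separate the primitive directions (handled by Gauss) from the contents (handled prime-by-prime via a single edge per prime), and you invoke the rational GKM theorem to get $f_{p_k}\in\Q[x]$. Both detours are harmless but unnecessary: the recursion $f_{p_i}=g_i/\Lambda^{-}_{p_i}$ already gives $f_{p_i}\in\Q[x]$ without Theorem~\ref{Theorem: GKM}, and running your edge argument for \emph{every} downward weight (not just the non-primitive one) gives $\eta(p_i,r_k)\mid g_i$ directly, so the ``pairwise coprime $\Rightarrow$ product divides'' step in the UFD replaces your content analysis in one line.
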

\begin{proof}
That condition $(i)$ implies condition $(ii)$ follows directly from Lemma \ref{Lemma: divisble condition}.\newline 
In order to prove the converse, let $\alpha: M^T \rightarrow H^*(BT;\Z)$ be a map that satisfies condition 
	$(ii)$. Let $\xi \in \mathfrak{t}$ be a generic vector and let $p_1,\dots, p_{\left| M^T\right| } \in M^T$ be the 
	fixed 
	points, ordered such that
	\begin{align}\label{EQ:Prop: GKM coprime1}
	\phi^\xi(p_1)<\phi^\xi(p_2)\leq \dots  \leq \phi^\xi(p_{\left| M^T\right| -1}) < \phi^\xi(p_{\left| M^T\right|}).
	\end{align}
	We show by induction that for all $i=1,\dots \left| M^T\right| $, there exists a class $\beta_i\in H_T^*(M;\Z)$
	such that 
	\begin{align}\label{EQ:Prop: GKM coprime2}
	\beta_i(p_j)=\alpha(p_j) \text{  for all } j=1,\dots,i.
	\end{align}
	Note that $p_1$ is the unique fixed point on which $\phi^\xi$ attains its minimum. The unique Kirwan class 
	$\gamma_{p_1}\in 
	H^0_T(M;\Z)$ that belongs to $p_1$ is the constant map $M^T\rightarrow H^*(BT;\Z)$ that maps each fixed point to 
	the multiplicative identity of $H^*(BT;\Z)$. Hence, $\beta_1:=\alpha(p_1)\cdot \gamma_{p_1}$ is a class in  
	$H_T^*(M;\Z)$ that satisfies \eqref{EQ:Prop: GKM coprime2}. For the induction step, suppose that for a fixed 
	$i\in\left\lbrace 
	1,\dots , \left| M^T\right|-1\right\rbrace $, there exists a 
	class  $\beta_i$ in  $H_T^*(M;\Z)$ that satisfies \eqref{EQ:Prop: GKM coprime2}. Consider the fixed point 
	$p_{i+1}$. 
	Since $\phi^\xi$ does not attain its minimum at $p_{i+1}$, we have $\lambda(p_{i+1})\geq 1$. Let $r_1,\dots , 
	r_{\lambda(p_{i+1})}$ be the unique fixed points such that for all $k=1,\dots, \lambda(p_{i+1})$
	\begin{align*}
	(p_{i+1},r_k) \in E_{GKM} \, \text{ and } \phi^\xi(r_k)< \phi^\xi(p_{i+1}).
	\end{align*}   
	Since the fixed points are ordered such that \eqref{EQ:Prop: GKM coprime1} holds, we have that  $r_1,\dots , 
	r_{\lambda(p_{i+1})}\in \{p_1,\dots, p_i\}$. Consider the map 
	\begin{align*}
	\tau:=\alpha-\beta_i: M^T \rightarrow H^*(BT;\Z).
	\end{align*}
	Note that $\tau(r_k)=0$ for all $k=1,\dots, \lambda(p_{i+1})$. Since $\beta_i\in H_T^*(M;\Z)$, by Lemma 
	\ref{Lemma: divisble condition}
	\begin{align*}
	\beta_i(p_{i+1})-\beta_i(r_k)
	\end{align*}
	is divisible by $\eta(p_{i+1},r_k)$ for all $k=1,\dots, \lambda(p_{i+1})$.
	Hence, since $\alpha$ satisfies condition $(ii)$, we have 
	\begin{align*}
	\tau(p_{i+1})-\tau(r_k)=\tau(p_{i+1})
	\end{align*}
	is divisible by $\eta(p_{i+1},r_k)$ for all $k=1,\dots, \lambda(p_{i+1})$.
	Since the weights $\eta(p_{i+1},r_k)$ are pairwise coprime in $H^*(BT;\Z)$,
	there exists an element $f\in H^*(BT;\Z)$ such that 
	\begin{align*}
	\tau(p_{i+1})=f\cdot \prod_{k=1}^{\lambda(p_{i+1})}\eta(p_{i+1},r_k)= f\cdot \Lambda_{p_{i+1}}^-.
	\end{align*}
	Now let $\gamma_{p_{i+1}}$ be a Kirwan class at $p_{i+1}$, then
	\begin{align*}
	\beta_{i+1}:=\beta_i+ f\cdot \gamma_{p_{i+1}}
	\end{align*}
	is a class in $H_T^*(M;\Z)$ that satisfies \eqref{EQ:Prop: GKM coprime2}.
\end{proof}	

\begin{definition}\label{Def: isomorphism of GKM graphs induces isomorphism Maps() to Maps()}
Given two Hamiltonian GKM spaces $(M_1,\omega_1,T,\phi_1)$ and $(M_2,\omega_2,T,\phi_2)$, let $(F,\theta)$ be an 
isomorphism between the GKM graphs $(\Gamma_{1,GKM},\eta_1)$ and $(\Gamma_{2,GKM},\eta_2)$  of these spaces. Then the 
isomorphism induces a ring isomorphism
\begin{align*}
\Psi_{(F,\theta)} \, :\,\operatorname{Maps}(M_1^T \rightarrow H^*(BT;\Z)) \longrightarrow \operatorname{Maps}(M_2^T
\rightarrow  H^*(BT;\Z)), 
\end{align*}
as follows. Since $\ell_T^*=H^2(BT;\Z)$, the linear isomorphism $\theta: \ell_T^*\rightarrow \ell_T^*$ extends in a 
canonical way to a ring isomorphism $\widetilde{\theta}: H^*(BT;\Z) \longrightarrow H^*(BT;\Z)$.
The image of a map $\alpha: M_1^T \rightarrow H^*(BT;\Z)$ under $\Psi_{(F,\theta)}$ is the map 
$\Psi_{(F,\theta)}(\alpha):M_2^T \rightarrow H^*(BT;\Z)$ given by
	
\begin{align*}
\Psi_{(F,\theta)}(\alpha)(q)= \widetilde{\theta}\left( \alpha( F^{-1}(q))\right) \quad \text{for all }q\in M_2^T.
\end{align*}
\end{definition}

\begin{remark}\label{Rem:Def: isomorphism of GKM graphs induces isomorphism Maps() to Maps()}
It is clear that the map $\Psi_{(F,\theta)}$ is a ring homomorphism. Moreover, if $(F^{-1}, \theta^{-1})$ is the 
inverse isomorphism to $(F,\theta)$, then 
\begin{align*}
\Psi_{(F^{-1},\theta^{-1})} \, :\,\operatorname{Maps}(M_2^T \rightarrow H^*(BT;\Z)) \longrightarrow 
\operatorname{Maps}(M_1^T
\rightarrow  H^*(BT;\Z)), 
\end{align*}
is the inverse of $\Psi_{(F,\theta)}$. Hence, $\Psi_{(F,\theta)}$ is indeed a ring isomorphism.
\end{remark}

\begin{lemma}\label{Lemma: isomorphism of GKM grpahs induces isomorphism Maps() to Maps()}
	Given two Hamiltonian GKM spaces $(M_1,\omega_1,T,\phi_1)$ and $(M_2,\omega_2,T,\phi_2)$, let $(F,\theta)$ be an 
	isomorphism between the GKM graphs $(\Gamma_{1,GKM},\eta_1)$ and $(\Gamma_{2,GKM},\eta_2)$  of these spaces.
	The following holds.
	\begin{itemize}
		\item [(i)] For a map $\alpha: M_1^T \rightarrow H^*(BT;\Z)$, the following two conditions are equivalent.
		\begin{itemize}
			\item[(a)] For all $(p,q)\in E_{1,GKM}$, the difference $\alpha(p)-\alpha(q)$ is divisible by $\eta_1(p,q)$ 
			in 
			$H^*(BT;\Z)$.
			\item[(b)] For all $(p,q)\in E_{2,GKM}$, the difference $\Psi_{(F,\theta)}\left( \alpha\right) 
			(p)-\Psi_{(F,\theta)}\left( 
			\alpha\right) (q) $ is divisible by $\eta_2(p,q)$ in $H^*(BT;\Z)$. 
		\end{itemize}
		\item[(ii)] $\Psi_{(F,\theta)}$ maps the equivariant Chern classes of $(M_1,\omega_1,T,\phi_1)$ to the ones of 
		$(M_2,\omega_2,T,\phi_2)$.
	\end{itemize}
\end{lemma}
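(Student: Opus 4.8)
The plan is to prove the two parts separately, in both cases reducing everything to the two defining features of $\Psi_{(F,\theta)}$: that $\widetilde{\theta}$ is a \emph{ring} isomorphism extending $\theta$, and that $(F,\theta)$ is a GKM graph isomorphism, so that by Definition \ref{Def: isomophic GKM graphs} we have $\theta(\eta_1(p,q)) = \eta_2(F(p),F(q))$ for every edge $(p,q) \in E_{1,GKM}$.

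For part (i), I would first prove that (a) implies (b). Fix an edge $(p,q) \in E_{2,GKM}$; since $F$ is a graph isomorphism, $(F^{-1}(p), F^{-1}(q)) \in E_{1,GKM}$. Using that $\widetilde{\theta}$ is additive,
$$\Psi_{(F,\theta)}(\alpha)(p) - \Psi_{(F,\theta)}(\alpha)(q) = \widetilde{\theta}\bigl(\alpha(F^{-1}(p)) - \alpha(F^{-1}(q))\bigr).$$
By (a), the argument is divisible by $\eta_1(F^{-1}(p), F^{-1}(q))$; writing it as $\eta_1(F^{-1}(p), F^{-1}(q)) \cdot g$ and applying the multiplicativity of $\widetilde{\theta}$ together with $\widetilde{\theta}(\eta_1(F^{-1}(p), F^{-1}(q))) = \theta(\eta_1(F^{-1}(p), F^{-1}(q))) = \eta_2(p,q)$ shows the difference equals $\eta_2(p,q) \cdot \widetilde{\theta}(g)$, hence is divisible by $\eta_2(p,q)$. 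The converse implication (b) $\Rightarrow$ (a) I would obtain essentially for free by running the same argument for the inverse isomorphism $(F^{-1}, \theta^{-1})$, whose induced map is $\Psi_{(F^{-1},\theta^{-1})} = \Psi_{(F,\theta)}^{-1}$ by Remark \ref{Rem:Def: isomorphism of GKM graphs induces isomorphism Maps() to Maps()}.

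For part (ii), the key input is the restriction formula for equivariant Chern classes: by naturality, the restriction of $c_i^T(M_1) = c_i^T(TM_1)$ to a fixed point $p$ is the equivariant Chern class of the $T$-representation $T_pM_1$, which by the computation recalled in Section 2 equals $\sigma_{n,i}$ evaluated on the weights of that representation. By Remark \ref{Rem: Properties GKM graphs}, these weights are precisely the edge labels $\eta_1(p, p_1), \ldots, \eta_1(p, p_n)$ of the $n$ edges of $\Gamma_{1,GKM}$ emanating from $p$, so $c_i^T(M_1)(p) = \sigma_{n,i}(\eta_1(p,p_1), \ldots, \eta_1(p,p_n))$. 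I would then compute, for $q \in M_2^T$ and $p := F^{-1}(q)$,
$$\Psi_{(F,\theta)}(c_i^T(M_1))(q) = \widetilde{\theta}\bigl(\sigma_{n,i}(\eta_1(p,p_1), \ldots, \eta_1(p,p_n))\bigr) = \sigma_{n,i}\bigl(\theta(\eta_1(p,p_1)), \ldots, \theta(\eta_1(p,p_n))\bigr),$$
the second equality holding because $\widetilde{\theta}$, being a ring homomorphism, commutes with the polynomial $\sigma_{n,i}$ (a sum of products of its variables). Applying $\theta(\eta_1(p,p_j)) = \eta_2(q, F(p_j))$ and noting that $F$ carries the $n$ edges at $p$ bijectively onto the $n$ edges at $q$, the right-hand side is exactly $\sigma_{n,i}$ of the weights of $T_qM_2$, i.e. $c_i^T(M_2)(q)$. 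Since this holds for every $q \in M_2^T$, we conclude $\Psi_{(F,\theta)}(c_i^T(M_1)) = c_i^T(M_2)$.

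There is no deep obstacle here; the whole statement is a functoriality and bookkeeping argument. The one point that must be handled with care is the identification of the fixed-point restriction of the equivariant tangent Chern class with the elementary symmetric polynomial in the GKM edge labels — that is, correctly transporting the \emph{multiset} of weights at $p$ to that at $F(p)$ via the bijection $F$ restricted to the star of $p$ — together with the observation that a ring homomorphism commutes with the elementary symmetric polynomials. Once these are in place, both parts follow immediately.
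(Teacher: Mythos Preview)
Your proposal is correct and follows essentially the same approach as the paper's proof: both parts are handled by unwinding the definition of $\Psi_{(F,\theta)}$, using that $\widetilde{\theta}$ is a ring isomorphism together with the edge-weight compatibility $\theta(\eta_1(p,q))=\eta_2(F(p),F(q))$, and for part (ii) invoking the identification of the restricted equivariant Chern classes with elementary symmetric polynomials in the edge labels. The only cosmetic difference is that in part (i) you start from an edge of $E_{2,GKM}$ and pull back via $F^{-1}$, whereas the paper starts from an edge of $E_{1,GKM}$ and pushes forward via $F$; this is immaterial.
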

\begin{proof}
	By definition, $F:M_1^T\rightarrow M_2^T$ induces a bijection 
	\begin{align}\label{EQ1:Lemma: isomorphism of GKM grpahs induces isomorphism Maps() to Maps()}
	E_{1,GKM}\ni (p,q) \longmapsto (F(p),F(q))\in E_{2,GKM}
	\end{align}
	such that 
	\begin{align*}
	\eta_2(F(p),F(q))= \theta (\eta_1 (p,q)).
	\end{align*}
	\begin{itemize}
		\item[(i)] Let $\alpha: M_1^T \rightarrow H^*(BT;\Z)$ be a map and $(p,q)\in E_{1,GKM}$ be an edge such that
		\begin{align*}
		\alpha(p)-\alpha(q)= \eta_1(p,q) \cdot f
		\end{align*}
		for some $f \in H^*(BT;\Z)$. We have 
		\begin{align*}
		\Psi_{(F,\theta)}\left( \alpha\right) 
		(F(p))-\Psi_{(F,\theta)}\left( \alpha\right) 
		(F(q))&= \widetilde{\theta}\left( \alpha( p)\right)- \widetilde{\theta}\left( \alpha( 
		q)\right)\\
		&=\widetilde{\theta}\left( \alpha(p)-\alpha(q)\right)  \\
		&=\widetilde{\theta}\left( \eta_1(p,q)\cdot f\right)\\
		&= \theta\left( \eta_1(p,q)\right)\cdot \widetilde{\theta}\left( f\right) \\ 
		&=  \eta_2(F(p),F(q))\cdot \widetilde{\theta}\left( f\right),
		\end{align*}
		where $\widetilde{\theta}:H^*(BT;\Z)\rightarrow H^*(BT;\Z)$ is the ring isomorphism induced by $\theta: 
		\ell_{T}^* 
		\rightarrow \ell_{T}^*$. Since \eqref{EQ1:Lemma: isomorphism of GKM grpahs induces isomorphism Maps() to 
		Maps()} is a 
		bijection, we conclude that condition (a) implies (b). By applying the same argument to 
		$\Psi_{(F^{-1},\theta^{-1})}$, 
		we have that condition (b) implies (a).
		\item[(ii)] Fix a non-negative integer $k$ and let $c_k^T(M_1)$ resp. $c_k^T(M_2)$ be the $k$-th equivariant 
		Chern class of $(M_1,\omega_1,T,\phi_1)$ resp. $(M_2,\omega_2,T,\phi_2)$. Considered as a map $M_1^T\rightarrow 
		H^{2k}(BT;\Z)$, the class $c_k^T(M_1)$ is given by 
		\begin{align*}
		M_1^T \ni p \longmapsto \sigma_{n,k}(\eta_1(p,p_1),\dots, \eta_1(p,p_n)),
		\end{align*}
		where $\sigma_{n,k}$  is the  elementary symmetric polynomial in $n$ variables of degree $k$ and $p_1,...,p_n$
		are the fixed points with $(p,p_i)\in E_{1,GKM}$. The same holds for $c_k^T(M_2)$.
		Let $q\in M_2^T$ be the fixed point with $F(p)=q$. Then $q_1=F(p_1),..., q_n=F(p_n)$ are the fixed points with 
		$(q,q_i)\in E_{2,GKM}$. So we have
		\begin{align*}
		\Psi_{(F,\theta)}\left( c_k^T(M_1)\right) (q)&= \widetilde{\theta} \left( \sigma_{n,k}(\eta_1(p,p_1),\dots, 
		\eta_1(p,p_n)) 
		\right) \\
		&= \sigma_{n,k} \left(   \widetilde{\theta}(\eta_1(p,p_1)),\dots,\widetilde{\theta}( \eta_2(p,p_n))\right) \\
		&=\sigma_{n,k}(\eta_2(F(p),F(p_1)),\dots, \eta_2(F(p),F(p_n)))\\
		&=\sigma_{n,k}(\eta_2(q,q_1),\dots, \eta_2(q,q_n))\\
		&= c_k^T(M_2)(q).
		\end{align*}
		Hence, $\Psi_{(F,\theta)}\left( c_k^T(M_1)\right)=c_k^T(M_2)$ holds.

	\end{itemize}
	
\end{proof}

\begin{proof}[Alternative Proof of Theorem \ref{ManiThm: coprime weights GKM graph determines eq. cohomology}]
Let  $(M_1,\omega_1,T,\phi_1)$ and $(M_2,\omega_2,T,\phi_2)$ be two Hamiltonian GKM spaces such that the complexity of 
both spaces is one or zero. By Lemma \ref{Lemma: complexity one, zero dim six implies weights pairwise coprime}
for each fixed point $p\in M_1^T$  resp.\  $q\in M_2^T$ the weights of the $T$-representation on $T_pM_1$ resp.
	on $T_qM_2$ are pairwise coprime.   Let $(F,\theta)$ be an 
	isomorphism between the GKM graphs $(\Gamma_{1,GKM},\eta_1)$ and $(\Gamma_{2,GKM},\eta_1)$  of these spaces.\newline
	\textbf{(a):}  
	By Proposition \ref{Prop: GKM coprime}, we have that a map $\alpha: M_1^T\rightarrow H^*(BT;\Z)$ belongs to 
	$H_T^*(M_1;\Z)$ if and only if for each edge $(p,q)\in E_{1,GKM}$ the difference $\alpha(p)-\alpha(q)$ is divisible
	by $\eta_1(p,q)$ in $H^*(BT;\Z)$. The same holds for   $H_T^*(M_2;\Z)$. Let 
	\begin{align*}
	\Psi_{(F,\theta)} \, :\,\operatorname{Maps}(M_1^T \rightarrow H^*(BT;\Z)) \longrightarrow \operatorname{Maps}(M_2^T
	\rightarrow  H^*(BT;\Z)) 
	\end{align*}
	be the ring isomorphism that is induced by $(F,\theta)$. By Lemma \ref{Lemma: isomorphism of GKM grpahs induces 
		isomorphism Maps() to Maps()}, the restriction of $\Psi_{(F,\theta)}$ to $H_T^*(M_1;\Z)$ gives 
	a ring isomorphism from $H_T^*(M_1;\Z)$ to $H_T^*(M_2;\Z)$ that maps the equivariant Chern classes of  
	$(M_1,\omega_1,T,\phi_1)$ to the ones of $(M_2,\omega_2,T,\phi_2)$.
	We denote this ring isomorphism by 
	$$\mathbf{\varphi}: H_T^*(M_1;\Z) \rightarrow H_T^*(M_2;\Z). $$
	\textbf{(b):} Let $\alpha\in H^*_T(M_1;\Z)$  and  let $\beta\in H^*_T(M_2;\Z)$ be the image of $\alpha$ under 
	$\varphi$.
	Let $\widetilde{\theta}:H^*(BT;\Z)\rightarrow H^*(BT;\Z)$ be the ring isomorphism that is induced  by  $\theta: 
	\ell_{T}^* \rightarrow \ell_{T}^*$. Since $\varphi$ is the restriction of  $\Psi_{(F,\theta)}$ to $H_T^*(M_1;\Z)$,  
	for any 
	$f\in H^*(BT;\Z)$ the following conditions are equivalent. 
	\begin{itemize}
		\item[(1)] For each fixed $p\in M_1^T$, $\alpha(p)$ is divisible by $f$.
		\item[(2)] For each fixed $p\in M_2^T$, $\beta(p)$ is divisible by $\widetilde{\theta}(f)$.
	\end{itemize}
	Let 
	$$r_1^*: H_T^*(M_1;\Z) \rightarrow H^*(M_1;\Z)\quad \text{and} \quad r_2^*: H_T^*(M_2;\Z) 
	\rightarrow H^*(M_2;\Z)$$
	be the restriction maps. Due to the Kirwan Surjectivity Theorem \cite{Kirwan}, the kernel of 
	$r_1^*$ 
	resp. $r_2^*$ is the ideal in $H_T^*(M_1;\Z)$ resp. $H_T^*(M_2;\Z)$  generated by $H^2(BT;\Z)$. 
	This means that a class $\alpha\in H_T^*(M_1;\Z)$ belongs to the kernel of $r_1^*$ if and only if 
	there exists an element $f\in H^{2k}(BT;\Z)$ for some $k>0$ such that  for any $p\in M_1^T$, $\alpha(p)$
	is divisible by $f$. The same holds for the kernel of $r_2^*$. Since the conditions (1) and (2) are equivalent,
	we conclude that $\varphi$ gives a bijection from the kernel of $r_1^*$ to the one of $r_2^*$.
	
	So there exists a ring isomorphism $\widetilde{\varphi}: H^*(M_1;\Z)\rightarrow H^*(M_2;\Z)$ such that the 
	following 
	diagram commutes.
	\begin{center}
		\begin{tikzpicture}
		% Tell it where the nodes are
		\node (A) {$H^*_T(M_1;\Z)$};
		\node (B) [below=of A] {$H^*(M_1;\Z)$};
		\node (C) [right=of A] {$H^*_T(M_2;\Z)$};
		\node (D) [right=of B] {$H^*(M_2;\Z)$};
		% Tell it what arrows to draw
		\draw[-stealth] (A)-- node[left] {\small $r_1^*$} (B);
		\draw[-stealth] (B)-- node [below] {\small $\widetilde{\varphi}$} (D);
		\draw[-stealth] (A)-- node [above] {\small $\varphi$} (C);
		\draw[-stealth] (C)-- node [right] {\small $r_2^*$} (D);
		\end{tikzpicture}
	\end{center}
	Note that the restriction maps  $r_1^*$ and $r_2^*$ map the equivariant Chern classes to the ordinary Chern classes.
	Hence, since $\varphi$ maps equivariant Chern classes to equivariant Chern classes, we have that 
	$\widetilde{\varphi}$
	maps the Chern classes of $(M_1,\omega_1)$ to the ones of $(M_2,\omega_2)$.
\end{proof}

\end{section}

\vspace{0.5cm}
\noindent{\Large\textbf{Declaration}}\\

\noindent \textbf{Conflict of Interest} The authors have no conflict of interest to declare that are relevant to this 
article.

\end{document}